\date{\today}
\theoremstyle{definition}
\newtheorem{definition}{{\bf Definition}}[section]
\newtheorem{apart}[definition]{}
\newtheorem{remark}[definition]{{\bf Remark}}
\newtheorem{examples}[definition]{{\bf Examples}}
\newtheorem{theorem}[definition]{{\bf Theorem}}
\newtheorem{proposition}[definition]{{\bf Proposition}}
\newtheorem{corollary}[definition]{{\bf Corollary}}
\newtheorem{lemma}[definition]{{\bf Lemma}}
\newtheorem{notation}[definition]{{\bf Notation}}
\newcommand{\ot}{\otimes}
\newcommand{\co}{\circ}
\newcommand{\CC}{\mathcal{C}}
\begin{document}

\begin{center}
{\bf\huge{ The monoidal category of  Yetter-Drinfeld modules over a
weak braided Hopf algebra
\\
}}

\ \\

{\bf J.N. Alonso \'Alvarez$^{1,\ast}$, J.M. Fern\'andez
Vilaboa$^{2}$, R. Gonz\'{a}lez Rodr\'{\i}guez$^{3}$ and C. Soneira
Calvo$^{4}$}

\end{center}

\ \\
\hspace{-0,25cm}$^{1}$ {\bf Corresponding author}. Departamento de
Matem\'{a}ticas, Universidad de Vigo, Campus Universitario
Lagoas-Marcosende, E-36280 Vigo, Spain (e-mail: jnalonso@uvigo.es)
\ \\
\hspace{-0,25cm}$^{2}$ Departamento de \'Alxebra, Universidad de
Santiago de Compostela,  E-15771 Santiago de Compostela, Spain
(e-mail: josemanuel.fernandez@usc.es)
\ \\
\hspace{-0,25cm}$^{3}$ Departamento de Matem\'{a}tica Aplicada II,
Universidad de Vigo, Campus Universitario Lagoas-Marcosende, E-36310
Vigo, Spain (e-mail: rgon@dma.uvigo.es)
\ \\
\hspace{-0,25cm}$^{4}$ Departamento de Pedagox\'{\i}a e
Did\'{a}ctica, Universidade da Coru\~{n}a, Campus Universitario de
Elvi\~{n}a, E-15007 A Coru\~{n}a, Spain (carlos.soneira@udc.es)
\ \\

\hspace{-0,5cm}$\ast$ Corresponding author.

\begin{center}
{\bf Abstract}
\end{center}
{\small In this paper we introduce the notion of weak operator and
the theory of Yetter-Drinfeld modules over a weak braided Hopf
algebra with invertible antipode in a strict monoidal category. We
prove that the class of such objects constitutes a non strict
monoidal category. It is also shown that this category is not
trivial, that is to say that it admits  objects generated by the
adjoint action (coaction) associated to the weak braided Hopf
algebra.}

\vspace{0.5cm}

{\bf Keywords.} Monoidal category,  weak braided Hopf algebra, weak
operator, Yetter-Drinfeld module.

{\bf MSC 2000:} 16W30, 18D10, 16T05, 16T25, 81R50.

{\bf Abbreviated title:} The monoidal category of YD-mod over a WBHA.

\begin{abstract}
In this paper we introduce the notion of weak operator and  the
theory of Yetter-Drinfeld modules over a weak braided Hopf algebra
with invertible antipode in a strict monoidal category. We  prove
that the class of such objects constitute a non strict monoidal
category. It is also shown that this category is not trivial, that
is to say that it admits  objects generated by the adjoint action
(coaction) associated to the weak braided Hopf algebra.
\end{abstract}

\section*{Introduction}

$\hspace{0.45cm}$ The notion of Yetter-Drinfeld module was
considered to deal with the  quantum Yang-Baxter equation, specially
in quantum mechanics (see \cite{MAJ1} for a detailed exposition of
its physical implications). Actually, every Yetter-Drinfeld module
gives rise to a solution to the quantum Yang-Baxter equation, as was
proved in \cite{LR}, and  if $H$ is a finite Hopf algebra in a
symmetric category $\CC,$ the category $_H^H\mathcal{YD}$ of
left-left Yetter-Drinfeld modules is isomorphic to the category of
modules over the Drinfeld quantum double, which was originally
conceived to find solutions of the Yang-Baxter equation via
universal matrices. Continuing with physical applications,  any
projection of  a Hopf algebra provides an example of a
Yetter-Drinfeld module (see \cite{Rad}) and this result is the
substrate of the  bosonization process introduced by Majid in
\cite{MAJ2} that gives, for  a quasitriangular Hopf algebra, an
interpretation of cross products in terms of quantum algebras of
observables of dynamical systems, as well as in quantum group gauge
theory.

On the other hand, weak Hopf algebras  (or quantum groupoids in the
terminology of Nikshych and Vainerman \cite{NV}) were introduced by
B\"{o}hm, Nill and Szlach\'anyi in \cite{BNS} as a new
generalization of Hopf algebras and groupoid algebras. The main
difference with other Hopf algebraic constructions, such as
quasi-Hopf algebras and rational Hopf algebras, is that weak Hopf
algebras are coassociative but the coproduct is not required to
preserve the unit or, equivalently, the counit is not an
algebra morphism. Some motivations to study weak Hopf algebras come
from the following facts: firstly, as group algebras and their duals
are the natural examples of Hopf algebras, groupoid algebras and
their duals provide examples of weak Hopf algebras; secondly, these
algebraic structures have a remarkable connection with the theory of
algebra extensions, important applications in the study of dynamical
twists of Hopf algebras and a deep link with quantum field theories
and operator algebras (see \cite{NV}), as well as they are useful
tools in the study of fusion categories in characteristic zero (see
\cite{ENO}). The theory of Yetter-Drinfeld modules for a weak Hopf algebra
was introduced by Böhm in \cite{B}. Later, Nenciu proved in \cite{NEN} that this
 category is isomorphic to the category of modules over the Drinfeld
  quantum double (the interested reader can also see \cite{CWY}).

In \cite{NikaRamon4} we can find the extension of Radford's theory
for projections of Hopf algebras to projections of weak Hopf
algebras in a strict symmetric  monoidal category $\CC$ where every
idempotent morphism splits. The main result of \cite{NikaRamon4},
extended to the braided setting in \cite{Proj}, assures that there
exists  a categorical equivalence between the category  of
isomorphism classes of projections associated to a weak Hopf algebra
$H$ and the category of Hopf algebras in the category of left-left
Yetter-Drinfeld modules over $H$. To show this result, the authors
introduced in \cite{NikaRamon4} the notions of weak Yang-Baxter
operator and weak braided Hopf algebra. Roughly speaking,  a weak
braided Hopf algebra in a strict monoidal category is an
algebra-coalgebra with a weak Yang-Baxter operator, satisfying
some compatibility conditions. This definition generalizes the one
introduced by Takeuchi in \cite{T}, i.e., the definition of braided
Hopf algebra, and the classical notions of Hopf algebra and Hopf
algebra in a braided category. Moreover, as particular instances we
recover the definition of weak Hopf algebra and, if the weak
Yang-Baxter operator is the braiding of a braided category, the
 notion of weak Hopf algebra in a braided monoidal setting is
formulated. The
first non-trivial example of weak braided Hopf algebras can be
constructed by modifying the algebraic structure of a Hopf algebra $D$
in the non-strict braided monoidal category $\;^{H}_{H}{\mathcal
Y}{\mathcal D}$ [\cite{NikaRamon4}, Corollary 2.14]. In this case
 with these new product, coproduct, unit, counit and antipode $D$
is not a Hopf algebra neither a weak Hopf algebra in the usual
sense.

In \cite{Proj} the authors proved that some relevant properties
about projections associated to a weak braided Hopf algebra can be
obtained without the use of a general braiding in the category where
the weak braided Hopf algebra lives. This fact motivates the
following questions: is it possible to establish a Yetter-Drinfeld
module category for a weak braided Hopf algebra in a  general strict
monoidal category where every idempotent morphism splits? is it this
category isomorphic to the center of some monoidal category of
modules? The positive answer to the first question is the main
contribution of this paper.
 To do it we introduce the notion of weak operator which constitutes
 a generalization of the concept
 of weak Yang-Baxter operator and is the key in order to define a  non-strict
 monoidal category of Yetter-Drinfeld modules associated to a weak braided Hopf algebra.
  To illustrate this new notions
 we provide several examples of Yetter-Drinfeld modules in this general setting.
 A family of them comes from projections of weak braided Hopf algebras, while another
 collection is based on the use of the adjoint (co)action that in the weak setting
 is not in general a (co)module structure for the weak braided Hopf algebra.

The organization of the paper is the following. In Section 1  the
general framework is stated recalling  the definitions of weak
Yang-Baxter operator, weak braided bialgebra and weak braided Hopf
algebra; then we introduce the notion of weak operator and  obtain
its main properties. In Section 2 we establish the definition of
left-left Yetter-Drinfeld module over an arbitrary weak braided Hopf
algebra $D$ and prove that these objects constitute a non strict
monoidal category, giving  explicitly all the required constraints
and the base object. Section 3 is devoted to the study of
projections and  the relation between weak Yang-Baxter operators and
weak entwining structures in terms of weak operators. Finally, in
Section 4 we  use the adjoint (co)action to obtain different
examples of Yetter-Drinfeld structures starting from an arbitrary
weak braided Hopf algebra and  include the explicit computations for
the particular cases of groupoid algebras, Frobenius separable
algebras in a braided setting and projections of weak braided Hopf
algebras.

\section{Weak operators}

$\hspace{0.45cm}$ In this paper we denote a  monoidal category $\CC$
as $(\CC, \ot, K, \mathfrak{a}, \mathfrak{l},\mathfrak{r})$ where
$\CC$ is a category and $\ot $ (tensor product) provides $\CC$ with
a monoidal structure with unit object $K$ whose associative
constraint is denoted by $\mathfrak{a}$ and whose left and right
unit constraints are given by $\mathfrak{l}$ and $\mathfrak{r}$
respectively.

We denote the class of objects of ${\mathcal C}$ by $\vert {\mathcal
C} \vert $ and for each object $M\in \vert {\mathcal C}\vert$, the
identity morphism by $id_{M}:M\rightarrow M$. For simplicity of
notation, given objects $M$, $N$, $P$ in ${\mathcal C}$ and a
morphism $f:M\rightarrow N$, we write $P\ot f$ for $id_{P}\ot f$ and
$f \ot P$ for $f\ot id_{P}$.

From now on we assume that ${\mathcal C}$ is strict and  every
 idempotent morphism in ${\mathcal C}$ splits, i.e.
  for every morphism $\nabla_{Y}:Y\rightarrow
Y$ such that $\nabla_{Y}=\nabla_{Y}\co \nabla_{Y}$ there exist an
object $Z$ (called the image of $\nabla_{Y}$) and morphisms
$i_{Y}:Z\rightarrow Y$ and $p_{Y}:Y\rightarrow Z$ such that
$\nabla_{Y}=i_{Y}\co p_{Y}$ and $p_{Y}\co i_{Y} =id_{Z}$. There is
not loss of generality in assuming the strict character for
${\mathcal C}$ because of it is well known that given a monoidal
category we can construct a strict monoidal category ${\mathcal
C}^{st}$ which is tensor equivalent to ${\mathcal C}$ (see
\cite{Karoubi} for the details); neither in assuming that ${\mathcal
C}$ admits split idempotents, having into account that for a given category
${\mathcal C}$ there exists an universal embedding ${\mathcal
C}\rightarrow \hat{\mathcal C}$ such that $\hat{\mathcal C}$ admits
split idempotents, as was proved in  \cite{Karoubi}.

A braided monoidal category ${\mathcal C}$ means a monoidal category
in which there is, for all $M$ and $N$ in ${\mathcal C}$, a
natural isomorphism $c_{M, N}:M\ot N\rightarrow N\ot M$, called the
braiding, satisfying the Hexagon Axiom (see \cite{JS} for
generalities). If the braiding satisfies $c_{N,M}\co
c_{M,N}=id_{M\ot N}$ for all $M$, $N$ in ${\mathcal C}$,
 the category will be called
symmetric.

\begin{definition}
An algebra in ${\mathcal C}$ is a triple $A=(A, \eta_{A}, \mu_{A})$
where $A$ is an object in ${\mathcal C}$ and
 $\eta_{A}:K\rightarrow A$ (unit), $\mu_{A}:A\ot A
\rightarrow A$ (product) are morphisms in ${\mathcal C}$ such that
$\mu_{A}\co (A\ot \eta_{A})=id_{A}=\mu_{A}\co (\eta_{A}\ot A)$,
$\mu_{A}\co (A\ot \mu_{A})=\mu_{A}\co (\mu_{A}\ot A)$. Given two
algebras $A= (A, \eta_{A}, \mu_{A})$ and $B=(B, \eta_{B}, \mu_{B})$,
$f:A\rightarrow B$ is an algebra morphism if $ f\co \eta_{A}=
\eta_{B}$, $\mu_{B}\co (f\ot f)=f\co \mu_{A}$.

A coalgebra in ${\mathcal C}$ is a triple ${D} = (D,
\varepsilon_{D}, \delta_{D})$ where $D$ is an object in ${\mathcal
C}$ and $\varepsilon_{D}: D\rightarrow K$ (counit),
$\delta_{D}:D\rightarrow D\ot D$ (coproduct) are morphisms in
${\mathcal C}$ such that $(\varepsilon_{D}\ot D)\co \delta_{D}=
id_{D}=(D\ot \varepsilon_{D})\co \delta_{D}$, $(\delta_{D}\ot D)\co
\delta_{D}=
 (D\ot \delta_{D})\co \delta_{D}.$ If ${D} = (D, \varepsilon_{D},
 \delta_{D})$ and
${ E} = (E, \varepsilon_{E}, \delta_{E})$ are coalgebras,
$f:D\rightarrow E$ is a coalgebra morphism if $\varepsilon_{E}\co f
=\varepsilon_{D}$, $(f\ot f)\co \delta_{D} =\delta_{E}\co f$.

If $A$ is an algebra, $B$ is a coalgebra and $\alpha:B\rightarrow
A$, $\beta:B\rightarrow A$ are morphisms, we define the convolution
product by $\alpha\wedge \beta=\mu_{A}\co (\alpha\ot \beta)\co
\delta_{B}$.

If $(D, \eta_D,\mu_D)$ is an algebra in $\CC,$  the pair
$(M,\varphi_M)$,  with $M\in|\CC| $ and $\varphi_M:D\ot M\rightarrow
D$ is said to be a  left $D$-module if $\varphi_M\co(\eta_D\ot
M)=id_M$ and $\varphi_M\co(D\ot\varphi_M)=\varphi_M\co(\mu_D\ot M).$
Given two left $D$-modules $(M, \varphi_M)$
 and $(N, \varphi_N)$, $f:M\rightarrow N$ is a morphism of left $D$-modules if $\varphi_N\circ (D\ot f)=f\circ \varphi_M$.

If $(D,\varepsilon_D,\delta_D)$ is a coalgebra in $\CC,$ the pair
$(M, \varrho_M)$ with $M\in|\CC|$ and $\varrho_M:M\rightarrow D\ot
M$ is said to be a left $D$-comodule if $(\varepsilon_D\ot
M)\co\varrho_M=id_M$ and $(D\ot\varrho_M)\co\varrho_M=(\delta_D\ot
M)\co \varrho_M$.  Given two
  left $D$-comodules $(M, \varrho_M)$ and $(N, \varrho_N)$, $f:M\rightarrow N$ is a morphism of left $D$-comodules if
  $\varrho_N\circ f=(D\ot f)\circ \varrho_M$.
The notions of right $D$-(co)module are defined analogously.
\end{definition}

\begin{definition}
Let $D$ be in ${\mathcal C}$ and let $t_{D,D}:D\ot
D\rightarrow D\ot D$ be a morphism in ${\mathcal C}$. We will say
that $t_{D,D}$ satisfies the Yang-Baxter equation if
\begin{equation}
(t_{D,D}\ot D)\co (D\ot t_{D,D})\co (t_{D,D}\ot D)=(D\ot t_{D,D})\co
(t_{D,D}\ot D)\co (D\ot t_{D,D}).
\end{equation}

\end{definition}

Weak Yang-Baxter operators are generalizations of Yang-Baxter
operators (see \cite{JS}) and were introduced by Alonso,
Gonz\'{a}lez and Rodr\'{i}guez in \cite{NikaRamon4}. In \cite{Proj}
we prove that one axiom of the original definition can be dropped. We rewrite the
improved definition:

\begin{definition}\label{wybopdef}
 Let $D$ be in ${\mathcal C}$. A weak Yang-Baxter
operator is a morphism $t_{D,D}:D\ot D\rightarrow D\ot D$ in
${\mathcal C}$ such that:
\begin{itemize}
\item[(a1)] $t_{D,D}$ satisfies the Yang-Baxter equation.

\item[(a2)] There exists an idempotent morphism
$\nabla_{D,D}:D\ot D\rightarrow D\ot D$ satisfying the following
identities:
\begin{itemize}
\item[(a2-1)] $(\nabla_{D,D}\ot D)\co (D\ot \nabla_{D,D})=(D\ot \nabla_{D,D})\co (\nabla_{D,D}\ot D),$

\item[(a2-2)] $(\nabla_{D,D}\ot D)\co (D\ot t_{D,D})=(D\ot
t_{D,D})\co (\nabla_{D,D}\ot D),$

\item[(a2-3)]$(t_{D,D}\ot D)\co (D\ot \nabla_{D,D})=(D\ot
\nabla_{D,D})\co (t_{D,D}\ot D),$

\item[(a2-4)]$t_{D,D}\co \nabla_{D,D}=\nabla_{D,D}\co
t_{D,D}=t_{D,D}.$

\end{itemize}

\item[(a3)] There exists a morphism $t^{\prime}_{D,D}:D\ot
D\rightarrow D\ot D$ such that:

\begin{itemize}

\item[(a3-1)] The morphism $p_{D,D}\co t_{D,D}\co i_{D,
D}:D\times D\rightarrow D\times D$ is an isomorphism with inverse
$p_{D, D}\co t^{\prime}_{D,D}\co i_{D, D}:D\times D\rightarrow
D\times D$, where $p_{D, D}$ and $i_{D, D}$ are the morphisms such
that $i_{D, D}\co p_{D, D}=\nabla_{D,D}$ and $p_{D, D}\co i_{D,
D}=id_{D\times D}$ being $D\times D$ the image of $\nabla_{D,D}$.

\item[(a3-2)]$t^{\prime}_{D,D}\co \nabla_{D,D}=\nabla_{D,D}\co t^{\prime}_{D,D}=t^{\prime}_{D,D}.$

\end{itemize}
\end{itemize}

Note that if $\nabla_{D,D}=id_{D\ot D}$ then $t_{D,D}$ is an
isomorphism and we recover the definition of Yang-Baxter operator
introduced by Joyal and Street in \cite{JS}. Also,
by [\cite{NikaRamon4}, Definition 1.2 and Proposition 1.3], we get that $t_{D,D}$ is a weak Yang-Baxter
operator with associated idempotent $\nabla_{D,D}$ if and only if so
is $t^{\prime}_{D,D}.$  Moreover, in this case
\begin{equation}\label{defnabla}
{t^\prime}_{D,D}\co t_{D,D}=t_{D,D}\co
{t^\prime}_{D,D}=\nabla_{D,D}.
\end{equation}

Finally, using the identities (2)-(5) of \cite{NikaRamon4} we
obtain:
\begin{equation}
\label{tprim1} (D\ot t_{D,D})\circ (t_{D,D}\ot D)\circ (D\ot
t^{\prime}_{D,D})=(t^{\prime}_{D,D}\ot D)\circ (D\ot t_{D,D})\circ
(t_{D,D}\ot D),
\end{equation}
\begin{equation}
\label{tprim2} ( t_{D,D}\ot D)\circ (D\ot t_{D,D})\circ (
t^{\prime}_{D,D}\ot D)=(D\ot t^{\prime}_{D,D})\circ (t_{D,D}\ot
D)\circ (D\ot t_{D,D}),
\end{equation}
\begin{equation}
\label{tprim3} (D\ot t^{\prime}_{D,D})\circ (t^{\prime}_{D,D}\ot
D)\circ (D\ot t_{D,D})=(t_{D,D}\ot D)\circ (D\ot
t^{\prime}_{D,D})\circ (t^{\prime}_{D,D}\ot D),
\end{equation}
\begin{equation}
\label{tprim4} ( t^{\prime}_{D,D}\ot D)\circ (D\ot
t^{\prime}_{D,D})\circ ( t_{D,D}\ot D)=(D\ot t_{D,D})\circ
(t^{\prime}_{D,D}\ot D)\circ (D\ot t^{\prime}_{D,D}).
\end{equation}

\end{definition}

\begin{examples}
\label{Yang-Baxter-examples}

{\bf (1)} In this first example we assume that ${\mathcal
C}$ is symmetric. The categories of Yetter-Drinfeld modules over weak Hopf
algebras provide non-trivial examples of weak Yang-Baxter operators.
A weak Hopf algebra $H$ is an object in ${\mathcal C}$ with an algebra structure $(H,
\eta_{H},\mu_{H})$ and a coalgebra structure $(H,
\varepsilon_{H},\delta_{H})$ such that the following axioms hold:
\begin{itemize}
\item[(i)] $\delta_{H}\circ \mu_{H}=(\mu_{H}\otimes \mu_{H})\circ (H\otimes c_{H,H}\otimes H)\circ
(\delta_{H}\otimes \delta_{H})$,
 \item[(ii)]$\varepsilon_{H}\circ \mu_{H}\circ
(\mu_{H}\otimes H)=(\varepsilon_{H}\otimes \varepsilon_{H})\circ
(\mu_{H}\otimes \mu_{H})\circ (H\otimes \delta_{H}\otimes H)$
\item[]$=(\varepsilon_{H}\otimes \varepsilon_{H})\circ (\mu_{H}\otimes
\mu_{H})\circ (H\otimes (c_{H,H}\circ\delta_{H})\otimes H),$
\item[(iii)]$(\delta_{H}\otimes H)\circ \delta_{H}\circ
\eta_{H}=(H\otimes \mu_{H}\otimes H)\circ (\delta_{H}\otimes
\delta_{H})\circ (\eta_{H}\otimes \eta_{H})$
\item[ ]$=(H\otimes
(\mu_{H}\circ c_{H,H})\otimes H)\circ (\delta_{H}\otimes
\delta_{H})\circ (\eta_{H}\otimes \eta_{H}).$

\item[(iv)] There exists a morphism $\lambda_{H}:H\rightarrow H$
in ${\mathcal C}$ (called the antipode of $H$) verifiying:
\begin{itemize}
\item[(iv-1)] $id_{H}\wedge \lambda_{H}=((\varepsilon_{H}\circ
\mu_{H})\otimes H)\circ (H\otimes c_{H,H})\circ ((\delta_{H}\circ
\eta_{H})\otimes H),$ \item[(iv-2)] $\lambda_{H}\wedge
id_{H}=(H\otimes(\varepsilon_{H}\circ \mu_{H}))\circ (c_{H,H}\otimes
H)\circ (H\otimes (\delta_{H}\circ \eta_{H})),$
\item[(iv-3)]$\lambda_{H}\wedge id_{H}\wedge
\lambda_{H}=\lambda_{H}.$
\end{itemize}
\end{itemize}

If we define the morphisms $\Pi_{H}^{L}$ (target), $\Pi_{H}^{R}$
(source), as
$$\Pi_{H}^{L}=((\varepsilon_{H}\circ \mu_{H})\otimes
H)\circ (H\otimes c_{H,H})\circ ((\delta_{H}\circ \eta_{H})\otimes
H),$$
$$\Pi_{H}^{R}=(H\otimes(\varepsilon_{H}\circ \mu_{H}))\circ
(c_{H,H}\otimes H)\circ (H\otimes (\delta_{H}\circ \eta_{H})),$$ it
is straightforward to show that they are idempotent.

The first family of examples of weak Hopf algebras cames from the
theory of groupoid algebras. Recall that a groupoid $G$ is simply a
small category where all morphisms are isomorphisms. In this example,
we consider finite groupoids, i.e. groupoids with a finite number of
objects. The set of objects of $G$, called also the base of $G$,
will be denoted by $G_{0}$ and the set of morphisms by $G_{1}$. The
identity morphism on $x\in G_{0}$ will  be denoted by $id_{x}$ and
for a morphism $\sigma:x\rightarrow y$ in $G_{1}$, we write
$s(\sigma)$ and $t(\sigma)$, respectively for the source and the
target of $\sigma$.

Let $G$ be a groupoid and $R$ a commutative ring. The groupoid
algebra is the direct product in $R$-Mod
$$RG=\bigoplus_{\sigma\in G_{1}}R\sigma$$
where the product of two morphisms is equal to their composition
if the latter is defined and $0$  otherwise, i.e.
$\mu_{RG}(\tau\otimes \sigma)=\tau\circ \sigma$ if
$s(\tau)=t(\sigma)$ and $\mu_{RG}(\tau\otimes \sigma)=0$ if
$s(\tau)\neq t(\sigma)$. The unit element is $1_{RG}=\sum_{x\in
G_{0}}id_{x}$. The algebra $RG$ is a cocommutative weak Hopf
algebra, with coproduct $\delta_{RG}$, counit $\varepsilon_{RG}$ and
antipode $\lambda_{RG}$ given by the formulas:
$$\delta_{RG}(\sigma)=\sigma\otimes \sigma, \;\;\;\varepsilon_{RG}
(\sigma)=1,\;\;\; \lambda_{RG}(\sigma)=\sigma^{-\dot{}1}.$$

For the weak Hopf algebra $RG$ target and source morphisms  are
respectively,
$$\Pi_{RG}^{L}(\sigma)=id_{t(\sigma)},\;\;\;
\Pi_{RG}^{R}(\sigma)=id_{s(\sigma)}$$ and $\lambda_{RG}\circ
\lambda_{RG}=id_{RG}$.

If $(M,\varphi_{M})$ and $(N,\varphi_{N})$ are left $H$-modules we
denote by $\varphi_{M\otimes N}$ the morphism $\varphi_{M\otimes
N}:H\otimes M\otimes N\rightarrow M\otimes N$ defined by
$$\varphi_{M\otimes N}=(\varphi_{M}\otimes \varphi_{N})\circ
(H\otimes c_{H,M}\otimes N)\circ (\delta_{H}\otimes M\otimes N).$$

For two left $H$-comodules $(M,\varrho_{M})$ and $(N,\varrho_{N})$,
 we denote by $\varrho_{M\otimes N}$ the
morphism $\varrho_{M\otimes N}: M\otimes N\rightarrow H\otimes
M\otimes N$ defined by
$$\varrho_{M\otimes N}=(\mu_{H}\otimes M\otimes N)\circ
(H\otimes c_{M,H}\otimes N)\circ (\varrho_{M}\otimes \varrho_{N}).$$

 Let $(M,\varphi_{M})$,
$(N,\varphi_{N})$ be left $H$-modules. The morphism
$$\nabla_{M\otimes N}=\varphi_{M\otimes N}\circ (\eta_{H}\otimes
M\otimes N):M\otimes N\rightarrow M\otimes N$$  is an idempotent. In
this setting we denote by $M\times N$ the image of $\nabla_{M\otimes
N}$ and by $p_{M\otimes N}:M\otimes N\rightarrow M\times N$,
$i_{M\otimes N}:M\times N\rightarrow M\otimes N$ the morphisms such
that $i_{M\otimes N}\circ p_{M\otimes N}=\nabla_{M\otimes N}$ and
$p_{M\otimes N}\circ i_{M\otimes N}=id_{M\times N}$. It is not
difficult to see that the object $M\times N$ is a left $H$-module
with action $\varphi_{M\times N}=p_{M\otimes N}\circ
\varphi_{M\otimes N}\circ (H\otimes i_{M\otimes N}):H\otimes
M\times N\rightarrow M\times N$.

In a similar way, if $(M,\varrho_{M})$ and $(N,\varrho_{N})$ are
left $H$-comodules the morphism
$$\nabla_{M\otimes N}^{\prime}=(\varepsilon_{H}\otimes M\otimes N)
\circ \varrho_{M\otimes N}:M\otimes N\rightarrow M\otimes N$$ is an
idempotent. We denote by $M\odot N$ the image of $\nabla_{M\otimes
N}^{\prime}$ and by $p_{M\otimes N}^{\prime}:M\otimes N\rightarrow
M\odot N$, $i_{M\otimes N}^{\prime}:M\odot N\rightarrow M\otimes N$
the morphisms such that $i_{M\otimes N}^{\prime}\circ p_{M\otimes
N}^{\prime}=\nabla_{M\otimes N}^{\prime}$ and $p_{M\otimes
N}^{\prime}\circ i_{M\otimes N}^{\prime}=id_{M\odot N}$. In a
similar way to the preceding case, $M\odot N$ is a left $H$-comodule
with coaction $\varrho_{M\odot N}=(H\otimes p_{M\otimes
N}^{\prime})\circ \varrho_{M\otimes N}\circ i_{M\otimes
N}^{\prime}:M\odot N\rightarrow H\otimes (M\odot N)$.

We shall denote by ${}_H^H\mathcal{YD}$ the category of left-left
Yetter-Drinfeld modules over $H$, e.g.;
 $(M,\varphi_M,\rho_M)$ is an object in ${}_H^H\mathcal{YD}$ if $(M,\varphi_M)$ is a
left $H$-module, $(M,\rho_M)$ is a left $H$-comodule and
\begin{itemize}

\item[(yd1)] $\rho_M=(\mu_H\ot\varphi_M)\co(H\ot c_{H,H}\ot
M)\co(\delta_H\ot\rho_M)\co(\eta_D\ot M).$

\item[(yd2)]
\begin{itemize}
\item[ ]$\hspace{0.38cm} (\mu_H\ot M)\co(H\ot c_{M,H})\co((\rho_M\co\varphi_M)\ot
H)\co(H\ot c_{H,M})\co(\delta_H\ot M)$

\item[ ]$= (\mu_H\ot\varphi_M)\co(H\ot c_{H,H}\ot M)\co(\delta_H\ot\rho_M).$

\end{itemize}

\end{itemize}
Let $M,$ $N$ be in $_H^H{\mathcal YD}.$ The  morphism
$f:M\rightarrow N$ is a morphism of left-left Yetter-Drinfeld
modules if $f\co\varphi_M=\varphi_N\co(D\ot f)$ and $(D\ot
f)\co\rho_M=\rho_N\co f.$

If $(M,\varphi_M,\varrho_M)$ is a left-left Yetter-Drinfeld module
over $H$ then it obeys the following equality [\cite{NikaRamon4},
Proposition 1.12]:
\begin{equation}
\label{nabla-nabla} \nabla_{M\otimes N}=\nabla_{M\otimes
N}^{\prime}.
\end{equation}

Then if the antipode of $H$ is an isomorphism the
category ${}_H^H\mathcal{YD}$ is a non-strict braided monoidal
category. We expose briefly the braided monoidal structure.

For two left-left Yetter-Drinfeld modules
$(M,\varphi_{M},\varrho_{M})$, $(N,\varphi_{N},\varrho_{N})$ the
tensor product is defined as the image of
$\nabla_{M\otimes N}$. By (\ref{nabla-nabla}), $M\times N=M\odot N$
and this object is a left-left Yetter-Drinfeld module with the
following action and coaction:
$$
\varphi_{M\times N}=p_{M\otimes N}\circ \varphi_{M\otimes N}\circ
(H\otimes i_{M\otimes N})
$$
$$
 \varrho_{M\times N}=(H\otimes p_{M\otimes
N})\circ \varrho_{M\otimes N}\circ  i_{M\otimes N}.
$$

The base object is the image of the target morphism denoted by
$H_{L}$, which is a left-left Yetter-Drinfeld module  with
(co)module structure
$$
\varphi_{H_{L}}=p_{L}\circ \mu_{H}\circ (H\otimes i_{L}),\;\;\;\;
\varrho_{H_{L}}=(H\otimes p_{L})\circ \delta_{H}\circ i_{L},
$$
where $p_{L}:H\rightarrow H_{L}$ and $i_{L}:H_{L}\rightarrow H$ are
the morphisms such that $\Pi_{H}^{L}=i_{L}\circ p_{L}$ and
$p_{L}\circ i_{L}=id_{H_{L}}$.

The unit constrains are:
$$
\mathfrak{l}_{M}=\varphi_{M}\circ (i_{L}\otimes M)\circ
i_{H_{L}\otimes M}:H_{L}\times M\rightarrow M,
$$

$$
\mathfrak{r}_{M}=\varphi_{M}\co c_{M,H}\co (M\otimes
(\overline{\Pi}_{H}^{L}\circ i_{L}))\co i_{M\otimes H_{L}}:M\times
H_{L}\rightarrow M.
$$

These morphisms are isomorphisms with inverses:
$$
\mathfrak{l}_{M}^{-1}=p_{H_{L}\otimes M}\circ (p_{L}\otimes
\varphi_{M})\circ ((\delta_{H}\circ \eta_{H})\otimes M):M\rightarrow
H_{L}\times M,
$$
$$
\mathfrak{r}_{M}^{-1}=p_{M\otimes H_{L}}\circ (\varphi_{M}\otimes
p_{L})\circ (H\otimes c_{H,M})\circ ((\delta_{H}\circ
\eta_{H})\otimes M):M\rightarrow M\times H_{L}.
$$

If $M$, $N$, $P$ are objects in the category $\;^{H}_{H}{\mathcal
Y}{\mathcal D}$, the associativity constrain is defined by
$$
\mathfrak{a}_{M,N,P}=p_{(M\times N)\otimes P}\circ (p_{M\otimes
N}\otimes P)\circ (M\otimes i_{N\otimes P})\circ i_{M\otimes
(N\times P)}:M\times (N\times P)\rightarrow (M\times N)\times P.
$$
and its inverse is
$$
\mathfrak{a}_{M,N,P}^{-1}=p_{M\otimes (N\times P)}\circ (M\otimes
p_{N\otimes
 P})\circ (i_{M\otimes N}\otimes P)\circ i_{(M\times N)\otimes P}:
 (M\times N)\times P \rightarrow M\times (N\times P).
$$

If $\gamma:M\rightarrow M^{\prime}$ and $\phi:N\rightarrow
N^{\prime}$ are morphisms in the category, we define
$$
\gamma\times \phi=p_{M^{\prime}\times N^{\prime}}\circ
(\gamma\otimes \phi)\circ i_{M\otimes N}:M\times N\rightarrow
M^{\prime}\times N^{\prime},
$$
that is a morphism in $\;^{H}_{H}{\mathcal Y}{\mathcal D}$ and
$$
(\gamma^{\prime}\times \phi^{\prime})\circ (\gamma\times \phi)=
(\gamma^{\prime}\circ \gamma)\times (\phi^{\prime}\circ \phi),
$$
where $\gamma^{\prime}:M^{\prime}\rightarrow M^{\prime\prime}$ and
$\phi^{\prime}:N^{\prime}\rightarrow N^{\prime\prime}$ are morphisms
in $\;^{H}_{H}{\mathcal Y}{\mathcal D}$.

Finally, the braiding is
\begin{equation}
\tau_{M,N}=p_{N\otimes M}\circ t_{M,N}\circ i_{M\otimes N}:M\times
N\rightarrow N\times M
\end{equation}
where
\begin{equation}\label{tdeYD}
t_{M,N}=(\varphi_{N}\otimes M)\circ (H\otimes c_{M,N})\circ
(\varrho_{M}\otimes N):M\otimes N\rightarrow N\otimes M.
\end{equation}
 The
morphism $\tau_{M,N}$ is a natural isomorphism with inverse:
\begin{equation}
\tau_{M,N}^{-1}=p_{M\otimes N}\circ t_{M,N}^{\prime}\circ
i_{N\otimes M}:N\times M\rightarrow M\times N
\end{equation}
where
\begin{equation}
\label{tinvdeYD} t_{M,N}^{\prime}=c_{N,M}\circ (\varphi_{N}\otimes
M)\circ (c_{N,H}\otimes M)\circ (N\otimes \lambda_{H}^{-1}\otimes
M)\circ (N\otimes \varrho_{M}).
\end{equation}

By [\cite{NikaRamon4}, Proposition 1.15] we have that given
$(M,\varphi_{M},\varrho_{M})$ in ${}_H^H\mathcal{YD},$ the morphism
$t_{M,M}:M\otimes M\rightarrow M\otimes M$ defined in (\ref{tdeYD})
by
$$t_{M,M}=(\varphi_{M}\otimes M)\circ (H\otimes c_{M,M})\circ
(\varrho_{M}\otimes M)$$
 is a weak
Yang-Baxter operator where by (\ref{tinvdeYD}) we have
$$t_{M,M}^{\prime}=c_{M,M}\circ
(\varphi_{M}\otimes M)\circ (c_{M,H}\otimes M)\circ (M\otimes
\lambda_{H}^{-1}\otimes M)\circ (M\otimes \varrho_{M})$$ and
$\nabla_{M,M}=\nabla_{M\otimes M}$.

A similar result can be obtained by working with Yetter-Drinfeld
modules associated to a weak Hopf algebra in a braided monoidal
category (see \cite{Proj} for the details).

{\bf (2)} Let $D$ be in $\mathcal{C}$
 where every idempotent morphism splits. If  $\Omega:D\otimes D\rightarrow
D\otimes D$ is an idempotent morphism such that
\begin{equation}
\label{weak-groupoid} (\Omega\otimes D)\circ (D\otimes
\Omega)=(D\otimes \Omega)\circ (\Omega\otimes D) \end{equation}
then $\Omega$ is a
weak Yang-Baxter operator where $t_{D,D}=t_{D,D}^{\prime}=\nabla_{D,
D}=\Omega$.

Then, as a consequence of (a2-1), the idempotent morphism
$\nabla_{D, D}$ of Definition \ref{wybopdef} is an example of weak
Yang-Baxter operator.

It is possible to construct more examples of this kind of weak
Yang-Baxter operators working with exact factorizations of
groupoids. Previously we recall the definition of wide subgroupoid. A groupoid $H$
is a wide subgroupoid of a groupoid $G$ if $H$ is
a subcategory of $G$ provided with a functor $F:H\rightarrow G$
which is the identity on the objects, and it induces inclusions
$hom_{H}(x,y)\subset hom_{G}(x,y)$, i.e., it has the same base, and
(perhaps) less arrows.

Let $G$ be a groupoid. An exact factorization of $G$ is a pair of
wide subgroupoids of $G$, $H$ and $V$, such that for any $\sigma\in
G_{1}$, there exist unique $\sigma_{V}\in V_{1}$, $\sigma_{H}\in
H_{1}$, such that $\sigma=\sigma_{H}\circ \sigma_{V}$.

If $G$ is a groupoid with exact factorization we define
$$\Omega: RG\otimes RG\rightarrow RG\otimes RG$$
as
$$\Omega(\sigma\otimes \tau)=\sigma_{H}\otimes \tau_{V}.$$

Then $\Omega$ is an idempotent morphism satisfying
(\ref{weak-groupoid}) and then it is a weak Yang-Baxter operator.

{\bf (3)} In this example we assume that ${\mathcal C}$ is braided.
 Let $D$ be an algebra in ${\mathcal C}$. Then the idempotent morphism
$$\Omega=\eta_{D}\otimes (\mu_{D}\circ c_{D,D}):D\otimes
D\rightarrow D\otimes D$$ does not satisfy (\ref{weak-groupoid}) but it
is a weak Yang-Baxter operator where
$$t_{D,D}=t^{\prime}_{D,D}=\nabla_{D,D}=\Omega.$$

Also, if $D$ is a coalgebra in ${\mathcal C}$, the idempotent
morphism
$$\Omega^{\prime}=\varepsilon_{D}\otimes (c_{D,D}\circ \delta_{D}):D\otimes
D\rightarrow D\otimes D$$ is a weak Yang-Baxter operator where
$$t_{D,D}=t^{\prime}_{D,D}=\nabla_{D,D}=\Omega^{\prime}.$$

\end{examples}

Now we recall the definition of weak braided bialgebra and weak
braided Hopf algebra introduced by Alonso, Gonz\'{a}lez and
Rodr\'{i}guez in \cite{NikaRamon4} (see also \cite{Accadj}). The interested reader can see the
main properties in [\cite{IND}, Section 2].

\begin{definition}
\label{WBH} A weak braided bialgebra (WBB for short) $D$  is an
object in ${\mathcal C}$ with an algebra structure $(D,
\eta_{D},\mu_{D})$ and a coalgebra structure $(D,
\varepsilon_{D},\delta_{D})$ such that there exists a weak
Yang-Baxter operator $t_{D,D}:D\ot D\rightarrow D\ot D$ with
associated idempotent $\nabla_{D,D}$ satisfying the following
conditions:
\begin{itemize}
\item[(b1)] We have
\begin{itemize}
\item[(b1-1)] $\mu_{D}\co \nabla_{D,D}=\mu_{D},$

\item[(b1-2)] $\nabla_{D,D}\co (\mu_{D}\ot D)=(\mu_{D}\ot D)\co
(D\ot \nabla_{D,D}), $

\item[(b1-3)]$\nabla_{D,D}\co (D\ot \mu_{D})=(D\ot \mu_{D})\co
(\nabla_{D,D}\ot D). $

\end{itemize}

\item[(b2)] We have

\begin{itemize}
\item[(b2-1)] $\nabla_{D,D}\co \delta_{D}=\delta_{D},$

\item[(b2-2)] $(\delta_{D}\ot D)\co \nabla_{D,D}=(D\ot
\nabla_{D,D})\co (\delta_{D}\ot D), $

\item[(b2-3)]$(D\ot \delta_{D})\co \nabla_{D,D}=(\nabla_{D,D}\ot D)\co (D\ot \delta_{D}). $

\end{itemize}

\item[(b3)] The morphisms $\mu_{D}$
and $\delta_{D}$ commute with $t_{D,D}$, i.e.,

\begin{itemize}

\item[(b3-1)] $t_{D,D}\co (\mu_{D}\ot D)=(D\ot \mu_{D})\co
(t_{D,D}\ot D)\co (D\ot t_{D,D}),$

\item[(b3-2)] $t_{D,D}\co (D\ot \mu_{D})=(\mu_{D}\ot D)\co (D\ot
t_{D,D})\co (t_{D,D}\ot D),$

\item[(b3-3)]$(\delta_{D}\ot D)\co t_{D,D}=(D\ot t_{D,D})\co
(t_{D,D}\ot D)\co (D\ot \delta_{D}),$

\item[(b3-4)]$(D\ot \delta_{D})\co t_{D,D}=(t_{D,D}\ot D)\co (D\ot
t_{D,D})\co (\delta_{D}\ot D).$

\end{itemize}

\item[(b4)] $\delta_{D}\co \mu_{D}=(\mu_{D}\ot \mu_{D})\co (D\ot
t_{D,D}\ot D)\co (\delta_{D}\ot \delta_{D}).$

\item[(b5)]$\varepsilon_{D}\co \mu_{D}\co (\mu_{D}\ot
D)=((\varepsilon_{D}\co \mu_{D})\ot (\varepsilon_{D}\co \mu_{D}))\co
(D\ot \delta_{D}\ot D)$ \item[ ]$=((\varepsilon_{D}\co \mu_{D})\ot
(\varepsilon_{D}\co \mu_{D}))\co (D\ot
(t^{\prime}_{D,D}\co\delta_{D})\ot D).$

\item[(b6)]$(\delta_{D}\ot D)\co \delta_{D}\co \eta_{D}=(D\ot
\mu_{D}\ot D)\co ((\delta_{D}\co \eta_{D}) \ot (\delta_{D}\co
\eta_{D}))$  \item[ ]$=(D\ot (\mu_{D}\co t^{\prime}_{D,D})\ot D)\co
((\delta_{D}\co \eta_{D}) \ot (\delta_{D}\co \eta_{D})).$

\end{itemize}

A weak braided bialgebra $D$ is said to be a weak braided Hopf
algebra (WBHA for short) if:
\begin{itemize}
\item[(b7)] There exists a morphism $\lambda_{D}:D\rightarrow D$
in ${\mathcal C}$ (called the antipode of $D$) satisfying:
\begin{itemize}
\item[(b7-1)] $id_D\wedge \lambda_{D}=((\varepsilon_{D}\co
\mu_{D})\ot D)\co (D\ot t_{D,D})\co ((\delta_{D}\co \eta_{D})\ot
D),$ \item[(b7-2)] $\lambda_{D}\wedge id_D=(D\ot(\varepsilon_{D}\co
\mu_{D}))\co (t_{D,D}\ot D)\co (D\ot (\delta_{D}\co \eta_{D})),$
\item[(b7-3)]$\lambda_{D}\wedge id_D\wedge \lambda_{D}=\lambda_{D}.$
\end{itemize}
\end{itemize}

Let $D$, $B$ be WBHA. We will say that $f:D\rightarrow B$ is a
morphism of WBHA if $f$ is an algebra coalgebra morphism and
$t_{B,B}\co (f\ot f)=(f\ot f)\co t_{D,D}$ and $t_{B,B}^{\prime}\co
(f\ot f)=(f\ot f)\co t^{\prime}_{D,D}$.

\end{definition}

\begin{examples}

{\bf (1)} Suppose that ${\mathcal C}$ is symmetric
and $t_{D,D}=t^{\prime}_{D,D}$ is the braiding of the category. Then if  $D$ is a WBHA
$\nabla_{D,D}=id_{D\ot D}$ and we obtain the well known definition
of weak Hopf algebra [Examples \ref{Yang-Baxter-examples}, (1)].

{\bf (2)} Now we assume that ${\mathcal C}$ is a
braided category with braiding $c$ and $t_{D,D}=c_{D,D}$ and
$t^{\prime}_{D,D}=c^{-1}_{D,D}$. Then $\nabla_{D,D}=id_{D\ot D}$ and
we say that $D$ is a weak Hopf algebra in ${\mathcal C}$ (see \cite{NikaRamon4} and \cite{IND} ).
 Obviously,
classical Hopf algebras are weak Hopf algebras in this setting and
it is not difficult to see that braided Hopf algebras considered by
Takeuchi in \cite{T} are examples of weak braided Hopf algebras.

{\bf (3)} Let the category ${\mathcal C}$ be symmetric and let $H$ be a weak Hopf algebra with invertible antipode in
 ${\mathcal C}$. We know [Examples \ref{Yang-Baxter-examples}, (1)] that the category of
 left-left Yetter-Drinfeld modules ${}_H^H\mathcal{YD}$  is a
 non-strict braided monoidal category.

An object $(A,\varphi_{A}, \varrho_{A})\in \;^{H}_{H}{\mathcal
Y}{\mathcal D}$ is called an algebra  if there exist morphisms
$u_{A}:H_{L}\rightarrow A$ and $m_{A}:A\times A\rightarrow A$ in
$\;^{H}_{H}{\mathcal Y}{\mathcal D}$ such that
\begin{equation}
m_{A}\circ (u_{A}\times A)\circ \mathfrak{l}_{A}^{-1}=id_{A}=
m_{A}\circ (A\times u_{A})\circ \mathfrak{r}_{A}^{-1},
\end{equation}
\begin{equation}
m_{A}\circ (m_{A}\times A)\circ \mathfrak{a}_{A,A,A}=m_{A}\circ
(A\times m_{A}).
\end{equation}

In a dual way, $(C,\varphi_{C}, \varrho_{C})\in \;^{H}_{H}{\mathcal
Y}{\mathcal D}$ is a coalgebra  if there exist morphisms
$e_{C}:C\rightarrow H_{L}$ and $\Delta_{C}:C\rightarrow C\times C$
in $\;^{H}_{H}{\mathcal Y}{\mathcal D}$ such that
\begin{equation}
\mathfrak{l}_{C}\circ (e_{C}\times C)\circ
\Delta_{C}=id_{C}=\mathfrak{r}_{C}\circ (C\times e_{C})\circ
\Delta_{C},
\end{equation}
\begin{equation}
(C\times \Delta_{C})\circ \Delta_{C}= \mathfrak{a}_{C,C,C}\circ
(\Delta_{C}\times C)\circ \Delta_{C}.
\end{equation}

A Hopf algebra $D$ in $\;^{H}_{H}{\mathcal Y}{\mathcal D}$ is an
algebra-coalgebra in $\;^{H}_{H}{\mathcal Y}{\mathcal D}$, $(D,
u_{D}, m_{D}, e_{D}, \Delta_{D})$ with a morphism
$\lambda_{D}:D\rightarrow D$ in $\;^{H}_{H}{\mathcal Y}{\mathcal D}$
(called the antipode) such that

\begin{itemize}

\item[(i)] $ \Delta_{D}\co m_{D}=(m_{D}\times m_{D}) \co
\mathfrak{a}_{D,D,D\times D}\co (D\times
\mathfrak{a}_{D,D,D}^{-1})\co (D\times (\tau_{D,D}\times D))\co
(D\times \mathfrak{a}_{D,D,D})\co$

\item[ ]$\hspace{2cm} \mathfrak{a}_{D,D,D\times D}^{-1}\co (\Delta_{D}\times
\Delta_{D}),$

\item[(ii)] $\Delta_{D}\co u_{D}=(u_{D}\times u_{D})\co
\mathfrak{l}_{H_{L}}^{-1},$

\item[(iii)] $m_{D}\co (D\times \lambda_{D})\co \Delta_{D}=m_{D}\co
(\lambda_{D}\times D)\co \Delta_{D}=\mathfrak{r}_{D} \co
(u_{D}\times e_{D})\co \mathfrak{l}_{D}^{-1}.$
\end{itemize}

If we define $\eta_{D}=u_{D}\co p_{L}\co \eta_{H}$,
$\mu_{D}=m_{D}\co p_{D\otimes D}$,
$\varepsilon_{D}=\varepsilon_{H}\co i_{L}\co
 e_{D}$ and $\delta_{D}=i_{D\otimes D}\co \Delta_{D}$, we have that
 $(D, \eta_{D}, \mu_{D}, \varepsilon_{D}, \delta_{D}, \lambda_{D})$
is a WBHA in ${\mathcal C}$ [\cite{NikaRamon4}, Corollary 2.14].
Note that this example is non-trivial, i.e., $D$ is not a Hopf
algebra neither a weak Hopf algebra in the usual sense. For example, if we assume $\varepsilon_{D}\co
\mu_{D}=\varepsilon_{D}\ot \varepsilon_{D}$ we obtain that
$\Pi_{H}^{L}=\varepsilon_{H}\ot \eta_{H}$, or equivalently, $H$ is a
Hopf algebra in ${\mathcal C}$ [\cite{NikaRamon4}, Remark 2.15]. By
an analogous calculus, if $\eta_{D}\ot \eta_{D}=\delta_{D}\co
\eta_{D}$, we obtain that $H$ is a Hopf algebra. Moreover,  if
$\lambda_{D}\wedge id_{D}=\varepsilon_{D}\ot \eta_{D}$ we have
$u_{D}\co e_{D}=\eta_{D}\co \varepsilon_{D}$ and then
$id_{H_{L}}=p_{L}\co \eta_{H}\co \varepsilon_{H}\co i_{L}$.
Therefore, $\Pi_{H}^{L}=\varepsilon_{H}\ot \eta_{H}$ and we obtain
that $H$ also is a Hopf algebra. Finally, $D$ is not a weak Hopf
algebra since the condition (i) is equivalent to
$$
\Delta_{D}\co m_{D}=p_{D, D}\co (\mu_{D}\ot \mu_{D})\co (D\ot
t_{D,D}\ot D)\co (\delta_{D}\ot \delta_{D})\co i_{D, D},
$$
[\cite{NikaRamon4}, Proposition 2.8] and this one does not imply
$\delta_{D}\co \mu_{D}=(\mu_{D}\ot \mu_{D})\co (D\ot c_{D,D}\ot
D)\co (\delta_{D}\ot \delta_{D})$ where $c_{D,D}$ is the symmetric
braiding of ${\mathcal C}$.

\end{examples}

\begin{apart}

Let $D$ be a WBB.  The following identities hold (see \cite{Aus})
\begin{equation}\label{b3-1}
t_{D,D}\co (\eta_{D}\ot D)=\nabla_{D,D}\co (D\ot
\eta_{D})=t^{\prime}_{D,D}\co (\eta_{D}\ot D),
\end{equation}
\begin{equation}\label{b3-2}
t_{D,D}\co (D\ot \eta_{D})=\nabla_{D,D}\co (\eta_{D}\ot
D)=t^{\prime}_{D,D}\co (D\ot \eta_{D}),
\end{equation}
\begin{equation}\label{b3-4}
(D\ot \varepsilon_{D})\co t_{D,D} =( \varepsilon_{D}\ot D)\co
\nabla_{D,D}=(D\ot \varepsilon_{D})\co t^{\prime}_{D,D},
\end{equation}
\begin{equation}\label{b3-5}
(\varepsilon_{D}\ot D)\co t_{D,D} =(D\ot \varepsilon_{D})\co
\nabla_{D,D}=(\varepsilon_{D}\ot D)\co t^{\prime}_{D,D}.
\end{equation}

Moreover, we have
\begin{equation}
\label{b1}
 t^{\prime}_{D,D}\co (\mu_{D}\ot D)=(D\ot \mu_{D})\co
(t^{\prime}_{D,D}\ot D)\co (D\ot t^{\prime}_{D,D}),
\end{equation}
\begin{equation}
\label{b2} t^{\prime}_{D,D}\co (D\ot \mu_{D})=(\mu_{D}\ot D)\co
(D\ot t^{\prime}_{D,D})\co (t^{\prime}_{D,D}\ot D),
\end{equation}
\begin{equation}
\label{b3} (\delta_{D}\ot D)\co t^{\prime}_{D,D}=(D\ot
t^{\prime}_{D,D})\co (t^{\prime}_{D,D}\ot D)\co (D\ot \delta_{D}),
\end{equation}
\begin{equation}
\label{b4} (D\ot \delta_{D})\co
t^{\prime}_{D,D}=(t^{\prime}_{D,D}\ot D)\co (D\ot
t^{\prime}_{D,D})\co (\delta_{D}\ot D).
\end{equation}

\end{apart}

\begin{apart}
Let $D$ be a WBHA. The morphisms $\Pi_D^L$ (target), $\Pi_D^R$
(source), $\overline{\Pi}_D^L$ and $\overline{\Pi}_D^R$ are defined
as follows:
\[\Pi_D^L= ((\varepsilon_D\co\mu_D)\ot D)\co(t_{D,D}\ot D)\co((\delta_D\co\eta_D)\ot
D),\]
\[\Pi_D^R= (D\ot(\varepsilon_D\co\mu_D))\co(D\ot
t_{D,D})\co((D\ot(\delta_D\co\eta_D)),\]
\[\overline{\Pi}_D^L= (D\ot(\varepsilon_D\co\mu_D)) \co((\delta_D\co\eta_D)\ot D),\]
\[\overline{\Pi}_D^R= ((\varepsilon_D\co\mu_D)\ot D) \co(D\ot(\delta_D\co\eta_D)).\]

It is easy to prove that they are idempotent and leave the unit and
the counit invariant. Moreover, they  satisfy:
\begin{equation}\label{landaconvolucion}
\Pi_{D}^{L}=id_D\wedge \lambda_{D}, \;\;\;
\Pi_{D}^{R}=\lambda_{D}\wedge id_D,\;\;
\lambda_{D}=\lambda_{D}\wedge \Pi_{D}^{L}=\Pi_{D}^{R}\wedge
\lambda_{D},
\end{equation}
and applying (b4) we get
\begin{equation}\label{Piconvolucion}
 id_D\wedge \lambda_{D}\wedge id_D=\Pi_{D}^{L}\wedge
id_D=id_D\wedge \Pi_{D}^{R}=id_D.
\end{equation}

Moreover, the following equalities are satisfied [\cite{IND},
Proposition 2.10]
\begin{equation}\label{PiePibarra1}
\Pi_{H}^{L}\co
\overline{\Pi}_{D}^{L}=\Pi_{D}^{L},\;\;\;\Pi_{D}^{L}\co
\overline{\Pi}_{D}^{R}=\overline{\Pi}_{D}^{R},\;\;\;\overline{\Pi}_{D}^{L}\co
\Pi_{D}^{L}=\overline{\Pi}_{D}^{L},\;\;\;\overline{\Pi}_{D}^{R}\co
\Pi_{D}^{L}=\Pi_{D}^{L},
\end{equation}
\begin{equation}\label{PiePibarra2}
\Pi_{D}^{R}\co \overline{\Pi}_{D}^{L}=\overline{\Pi}_{D}^{L},\;\;\;
\Pi_{D}^{R}\co
\overline{\Pi}_{D}^{R}=\Pi_{D}^{R},\;\;\;\overline{\Pi}_{D}^{L}\co
\Pi_{D}^{R}=\Pi_{D}^{R},\;\;\; \overline{\Pi}_{D}^{R}\co
\Pi_{D}^{R}=\overline{\Pi}_{D}^{R},
\end{equation}
\begin{equation}\label{Pielambda}
\Pi_{D}^{L}\co \lambda_{D}=\Pi_{D}^{L}\co \Pi_{D}^{R}=
\lambda_{D}\co \Pi_{D}^{R},\;\;\;\Pi_{D}^{R}\co
\lambda_{D}=\Pi_{D}^{R}\co \Pi_{D}^{L}= \lambda_{D}\co \Pi_{D}^{L},
\end{equation}
\begin{equation}\label{Pielambda2}
\Pi_{D}^{L}=\overline{\Pi}_{D}^{R}\co \lambda_{D}=\lambda_{D}
\co\overline{\Pi}_{D}^{L},\;\;\;\Pi_{D}^{R}=
\overline{\Pi}_{D}^{L}\co \lambda_{D}=\lambda_{D} \co
\overline{\Pi}_{D}^{R}.
\end{equation}

Finally by [\cite{IND}, Proposition 2.20] we have that the antipode
is antimultiplicative, anticomultiplicative and leaves the unit and
the counit invariant,i.e.:
\begin{equation}
\label{ant-mu} \lambda_{D}\co \mu_{D}=\mu_{D}\co t_{D,D}\co
(\lambda_{D}\ot \lambda_{D}),
\end{equation}
\begin{equation}
\label{ant-delta} \delta_{D}\co \lambda_{D}=(\lambda_{D}\ot
\lambda_{D})\co t_{D,D}\co \delta_{D},
\end{equation}
\begin{equation}
\label{ant-delta-epsilon} \lambda_{D}\co \eta_{D}=\eta_{D},\;\;
\varepsilon_{D}\co\lambda_{D} =\varepsilon_{D}.
\end{equation}

If $f:D\rightarrow B$ is a morphism of weak braided Hopf algebras,
by (\ref{defnabla}) we obtain $\nabla_{B,B}\co (f\ot f)=(f\ot f)\co
\nabla_{D,D}$. It is not difficult to see that, if $f:D\rightarrow
B$ is a morphism of weak braided Hopf algebras, then $f\co
\lambda_{D}=\lambda_{B}\co f$ (see \cite{NikaRamon4} for details).

\end{apart}

Once the general framework is stated  we introduce the concept of
weak operator, that turns out to be essential to define the notion
of Yetter-Drindel'd module  in a general monoidal context. Actually,
it will allow us  to conceive the collection of Yetter-Drinfeld
modules as the objects of a monoidal category, being this structure
relevant in order to get an operative theory, it is said, a general
framework where formal manipulations and effective calculations can
be done. It will be obvious from the definition below that weak
operators constitute a generalization of the notion of weak
Yang-Baxter operator.

\begin{definition}
\label{WO} Let $D$ be a WBHA and let $M$ be an object of
$\CC.$ A weak operator between $M$ and $D,$ (from now on  referred
as $(M,D)$-WO) is defined as a quadruple $(r_M,r_M^\prime,s_M,
s_M^\prime)$ comprised of four morphisms in $\CC$:

\[r_M: M\ot D\rightarrow D\ot M, \qquad r_M^\prime: D\ot M \rightarrow M\ot D,\]

\[s_M: D\ot M \rightarrow M\ot D, \qquad s_M^\prime: M\ot D\rightarrow D\ot M,\]

satisfying the following compatibility conditions:

\begin{itemize}

\item[(c1)] Compatibility with the weak Yang-Baxter operator:

\begin{itemize}

\item[(c1-1)]  $(D\ot r_M)\co(r_M\ot D)\co(M\ot t_{D,D})=(t_{D,D}\ot M)\co(D\ot
r_M)\co(r_M\ot D),$

\item[(c1-2)] $(r_M^\prime\ot D)\co(D\ot r_M^\prime)\co(t_{D,D}\ot M)=
(M\ot t_{D,D})\co(r_M^\prime\ot D)\co(D\ot r_M^\prime),$

\item[(c1-3)] $(s_M\ot D)\co(D\ot s_M)\co(t_{D,D}\ot M)=
(M\ot t_{D,D})\co(s_M\ot D)\co(D\ot s_M),$

\item[(c1-4)]  $(D\ot s_M^\prime)\co(s_M^\prime\ot D)\co(M\ot t_{D,D})=(t_{D,D}\ot
M)\co(D\ot s_M^\prime)\co(s_M^\prime\ot D).$

\end{itemize}

The analogous equalities with $t^{\prime}_{D,D}$ instead of
$t_{D,D}$ are also required to be satisfied.

\item[(c2)] Mixed Yang-Baxter equations
\begin{itemize}

\item[(c2-1)] $(r_M^\prime\ot D)\co(D\ot s_M)\co(t_{D,D}\ot M)=(M\ot
t_{D,D})\co(s_M\ot D)\co(D\ot r_M^\prime),$

\item[(c2-2)] $(s_M\ot D)\co(D\ot r_M^\prime)\co(t^{\prime}_{D,D}\ot M)=(M\ot
t^{\prime}_{D,D})\co(r_M^\prime\ot D)\co(D\ot s_M),$

\item[(c2-3)] $(D\ot s_M^\prime)\co(r_M\ot D)\co(M\ot t_{D,D})= (t_{D,D}\ot M)\co
(D\ot r_M)\co (s_M^\prime\ot D),$

\item[(c2-4)] $(D\ot r_M)\co(s_M^\prime\ot D)\co(M\ot t^{\prime}_{D,D})=
(t^{\prime}_{D,D}\ot M)\co(D\ot s_M^\prime)\co(r_M\ot D).$

\end{itemize}

We want to point out that in this case, as in general for all the
mixed equations along the paper, we cannot replace $t_{D,D}$ by
$t^{\prime}_{D,D}$ or $t^{\prime}_{D,D}$ by $t_{D,D}$.

\item[(c3)] The morphisms: $\nabla_{r_M} := r_M^\prime\co r_M$, $\nabla_{r_M^\prime} := r_M\co r_M^\prime$,
 $\nabla_{s_M} := s_M^\prime\co s_M$ and $\nabla_{s_M^\prime} := s_M\co s_M^\prime$ satisfy
\begin{itemize}

\item[(c3-1)] $\nabla_{r_M}=(((\varepsilon_D\ot M)\co r_M)\ot
D)\co(M\ot \delta_D)=(M\ot\mu_D)\co(((r_M^\prime\co(\eta_D\ot M))\ot
D),$

\item[(c3-2)] $\nabla_{r_M^\prime}=(D\ot((M\ot\varepsilon_D)\co
r_M^\prime))\co(\delta_D\ot M)=(\mu_D\ot M)\co(D\ot
(r_M\co(M\ot\eta_D))),$

\item[(c3-3)] $\nabla_{s_M}=(D\ot((M\ot\varepsilon_D)\co
s_M))\co(\delta_D\ot M)=(\mu_D\ot M)\co(D\ot
(s_M^\prime\co(M\ot\eta_D))),$

\item[(c3-4)] $\nabla_{s_M^\prime}=(((\varepsilon_D\ot M)\co s_M^\prime)\ot
D)\co(M\ot \delta_D)=(M\ot\mu_D)\co(((s_M\co(\eta_D\ot M))\ot
D).$

\end{itemize}

\item[(c4)] Compatibility with the (co) multiplication:
\begin{itemize}

\item[(c4-1)] $r_M\co(M\ot \mu_D)=(\mu_D\ot M)\co(D\ot r_M)\co(r_M\ot D),$

\item[(c4-2)] $r_M^\prime\co(\mu_D\ot M)=(M\ot \mu_D)\co(
r_M^\prime\ot D)\co(D\ot r_M^\prime),$

\item[(c4-3)] $(D\ot r_M)\co(r_M\ot D)\co(M\ot \delta_D)=(\delta_D\ot M)\co r_M,$

\item[(c4-4)] $(r_M^\prime\ot D)\co(D\ot r_M^\prime)\co(\delta_D\ot M)=(M\ot
\delta_D)\co r_M^\prime,$

\item[(c4-5)] $s_M\co(\mu_D\ot M)=(M\ot \mu_D)\co(
s_M\ot D)\co(D\ot s_M),$

\item[(c4-6)] $s_M^\prime\co(M\ot \mu_D)=(\mu_D\ot M)\co(D\ot s_M^\prime)\co(s_M^\prime\ot D),$

\item[(c4-7)] $(s_M\ot D)\co(D\ot s_M)\co(\delta_D\ot M)=(M\ot
\delta_D)\co s_M,$

\item[(c4-8)] $(D\ot s_M^\prime)\co(s_M^\prime\ot D)\co(M\ot \delta_D)=(\delta_D\ot M)\co s_M^\prime.$

\end{itemize}

\item[(c5)] Compatibility with the antipode:

\begin{itemize}

\item[(c5-1)]  $(M\ot\lambda_D)\co\nabla_{r_M}=\nabla_{r_M}\co(M\ot\lambda_D),$

\item[(c5-2)]$(\lambda_D\ot M)\co \nabla_{r_M^\prime}=\nabla_{r_M^\prime}\co(\lambda_D\ot M),$

\item[(c5-3)]$(\lambda_D\ot M)\co \nabla_{s_M}=\nabla_{s_M}\co(\lambda_D\ot M),$

\item[(c5-4)]  $(M\ot\lambda_D)\co\nabla_{s_M^\prime}=\nabla_{s_M^\prime}\co(M\ot\lambda_D).$

\end{itemize}

\end{itemize}

\end{definition}

\begin{remark}\label{redundante1}
 As a consequence of  Definition \ref{WO} the following equalities hold for a WBHA $D$:

\begin{equation}\label{nabla_Dbaixaporr} (D\ot r_M)\co (r_M\ot D)\co(M\ot
\nabla_{D,D})=(\nabla_{D,D}\ot M)\co(D\ot r_M)\co(r_M\ot D),
\end{equation}

\begin{equation}\label{nabla_Dbaixaporrprima} (r_M^\prime\ot D)\co (D\ot
r_M^\prime)\co(\nabla_{D,D}\ot M)=(M\ot
\nabla_{D,D})\co(r_M^\prime\ot D)\co(D\ot r_M^\prime),
\end{equation}

\begin{equation}\label{nabla_Dbaixaporrsprima} (D\ot r_M)\co (s_M^\prime\ot
D)\co(M\ot \nabla_{D,D})=(\nabla_{D,D}\ot M)\co(D\ot
r_M)\co(s_M^\prime\ot D),
\end{equation}

\begin{equation}\label{nabla_Dbaixaporsprimar} (s_M\ot D)\co (D\ot
r_M^\prime)\co(\nabla_{D,D}\ot M)=(M\ot \nabla_{D,D})\co(s_M\ot
D)\co(D\ot r_M^\prime),
\end{equation}

\begin{equation}\label{epsilontrocalado}
(M\ot \varepsilon_D)\co\nabla_{r_M}=(\varepsilon_D\ot M)\co r_M;
\quad (\varepsilon_D\ot M)\co\nabla_{r_M^\prime}=(M\ot
\varepsilon_D)\co r_M^\prime,
\end{equation}

\begin{equation}\label{nablatrocalado}
\nabla_{r_M}\co(M\ot\eta_D)=r_M^\prime\co(\eta_D\ot M); \quad
\nabla_{r_M^\prime}\co(\eta_D\ot M)=r_M\co(M\ot\eta_D),
\end{equation}

\begin{equation}\label{nablabaixapormu}
\nabla_{r_M}\co(M\ot\mu_D)=(M\ot\mu_D)\co(\nabla_{r_M}\ot D),
\end{equation}

\begin{equation}\label{nablaprimabaixapormu}
\nabla_{r_M^\prime}\co(\mu_D\ot M)=(\mu_D\ot
M)\co(D\ot \nabla_{r_M^\prime}),
\end{equation}

\begin{equation}\label{nablabaixapordelta}
(\nabla_{r_M}\ot D)\co(M\ot\delta_D)=(M\ot\delta_D)\co\nabla_{r_M},
\end{equation}

\begin{equation}\label{nablaprimabaixapordelta}
(D\ot \nabla_{r_M^\prime})\co(\delta_D\ot M)=(\delta_D\ot
M)\co \nabla_{r_M^\prime},
\end{equation}

\begin{equation}\label{deltacomenabla_r}
(D\ot\nabla_{r_M})\co(r_M\ot D)\co(M\ot\delta_D)=(r_M\ot
D)\co(M\ot\delta_D),
\end{equation}

\begin{equation}\label{deltacomenabla_rprima}
(\nabla_{r_M^\prime}\ot D)\co(D\ot r_M^\prime)\co(\delta_D\ot
M)=(D\ot r_M^\prime)\co(\delta_D\ot M),
\end{equation}

\begin{equation}\label{nabla_Dbaixapors} (s_M\ot D)\co (D\ot
s_M)\co(\nabla_{D,D}\ot M)=(M\ot
\nabla_{D,D})\co(s_M\ot D)\co(D\ot s_M),
\end{equation}

\begin{equation}\label{nabla_Dbaixaporsprima} (D\ot s_M^\prime)\co (s_M^\prime\ot D)\co(M\ot
\nabla_{D,D})=(\nabla_{D,D}\ot M)\co(D\ot s_M^\prime)\co(s_M^\prime\ot D),
\end{equation}

\begin{equation}\label{nabla_Dbaixaporrprimas} (r_M^\prime\ot D)\co (D\ot
s_M)\co(\nabla_{D,D}\ot M)=(M\ot \nabla_{D,D})\co(r_M^\prime\ot
D)\co(D\ot s_M),
\end{equation}

\begin{equation}\label{nabla_Dbaixaporrprimas} (D\ot s_M^\prime)\co (r_M\ot
D)\co(M\ot \nabla_{D,D})=(\nabla_{D,D}\ot M)\co(D\ot
s_M^\prime)\co(r_M\ot D),
\end{equation}

\begin{equation}\label{epsilontrocaladoparas}
(\varepsilon_D\ot M)\co\nabla_{s_M}=(M\ot
\varepsilon_D)\co s_M;
\quad (M\ot \varepsilon_D)\co\nabla_{s_M^\prime}=(\varepsilon_D\ot M)\co s_M^\prime,
\end{equation}

\begin{equation}\label{nablatrocaladoparas}
 \nabla_{s_M^\prime}\co(M\ot\eta_D)=s_M\co(\eta_D\ot M);
 \quad
\nabla_{s_M}\co(\eta_D\ot M)=s_M^\prime\co(M\ot\eta_D),
\end{equation}

\begin{equation}\label{nablaprimabaixapormuparas}
\nabla_{s_M}\co(\mu_D\ot M)=(\mu_D\ot
M)\co(D\ot \nabla_{s_M}),
\end{equation}

\begin{equation}\label{nablabaixapormuparas}
\nabla_{s_M^\prime}\co(M\ot\mu_D)=(M\ot\mu_D)\co(\nabla_{s_M^\prime}\ot D),
\end{equation}

\begin{equation}\label{nablaprimabaixapordeltaparas}
(D\ot{s_M})\co(\delta_D\ot M)=(\delta_D\ot
M)\co\nabla_{s_M},
\end{equation}

\begin{equation}\label{nablabaixapordeltaparas}
(\nabla_{s_M^\prime}\ot D)\co(M\ot\delta_D)=(M\ot\delta_D)\co\nabla_{s_M^\prime},
\end{equation}

\begin{equation}\label{deltacomenabla_sprima}
(D\ot\nabla_{s_M^\prime})\co(s_M^\prime\ot D)\co(M\ot\delta_D)=(s_M^\prime\ot
D)\co(M\ot\delta_D),
\end{equation}

\begin{equation}\label{deltacomenabla_s}
(\nabla_{s_M}\ot D)\co(D\ot s_M)\co(\delta_D\ot
M)=(D\ot s_M)\co(\delta_D\ot M).
\end{equation}

Note that $(r_M,r_M^\prime, s_M, s_M^\prime)$ constitutes an
$(M,D)$-WO iff so does $(s_M^\prime, s_M, r_M^\prime, r_M).$

\end{remark}

\begin{proposition} \label{cancelation}Let $D$ be a WBHA and let $M$ be an object
of $\CC$ such that $(r_M, r_M^\prime, s_M, s_M^\prime)$ constitutes
an $(M,D)$-WO. Then it holds that:

\begin{itemize}

\item[(i)] The morphisms $\nabla_{r_M}$, $\nabla_{r_M^\prime}$, $\nabla_{s_M}$ and
$\nabla_{s_M^\prime}$ are idempotent.

\item[(ii)] Cancelation laws:

\item[]
\begin{equation}
\label{newrM}
r_M=\nabla_{r_M^\prime}\co r_M=r_M\co\nabla_{r_M},
\end{equation}

\item[]
\begin{equation}
\label{newrMprima}
r_M^\prime=r_M^\prime\co\nabla_{r_M^\prime}=\nabla_{r_M}\co
r_M^\prime,
\end{equation}

\item[]
\begin{equation}
\label{newsM}
s_M=\nabla_{s_M^\prime}\co
s_M=s_M\co\nabla_{s_M},
\end{equation}

\item[]
\begin{equation}
\label{newsMprima}
s_M^\prime=s_M^\prime\co\nabla_{s_M^\prime}=\nabla_{s_M}\co s_M^\prime.
\end{equation}

\end{itemize}

\end{proposition}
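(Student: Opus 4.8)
The plan is to derive all of (i) and (ii) from the four \emph{weak invertibility} identities $r_M\co r_M^\prime\co r_M=r_M$, $r_M^\prime\co r_M\co r_M^\prime=r_M^\prime$, $s_M\co s_M^\prime\co s_M=s_M$ and $s_M^\prime\co s_M\co s_M^\prime=s_M^\prime$. First I would observe that these already contain the cancelation laws (ii): by associativity of composition $\nabla_{r_M^\prime}\co r_M=(r_M\co r_M^\prime)\co r_M=r_M\co(r_M^\prime\co r_M)=r_M\co\nabla_{r_M}$, so the two equalities displayed in (\ref{newrM}) coincide and amount to the single identity $r_M=r_M\co\nabla_{r_M}=r_M\co r_M^\prime\co r_M$; the same remark turns (\ref{newrMprima}), (\ref{newsM}) and (\ref{newsMprima}) into $r_M^\prime=r_M^\prime\co r_M\co r_M^\prime$, $s_M=s_M\co s_M^\prime\co s_M$ and $s_M^\prime=s_M^\prime\co s_M\co s_M^\prime$ respectively. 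Moreover, by the symmetry recorded at the end of Remark \ref{redundante1} (that $(s_M^\prime,s_M,r_M^\prime,r_M)$ is again an $(M,D)$-WO), the statements for $s_M$ and $s_M^\prime$ are obtained from those for $r_M^\prime$ and $r_M$ by relabelling, since under this exchange the idempotents $\nabla_{s_M^\prime}$ and $\nabla_{s_M}$ play the role of $\nabla_{r_M}$ and $\nabla_{r_M^\prime}$. Hence it suffices to establish the two identities for $r_M$ and $r_M^\prime$.

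To prove $r_M=r_M\co\nabla_{r_M}$ I would start from axiom (c4-3), $(D\ot r_M)\co(r_M\ot D)\co(M\ot\delta_D)=(\delta_D\ot M)\co r_M$, and compose both sides on the left with $\varepsilon_D\ot D\ot M$. On the right-hand side $(\varepsilon_D\ot D\ot M)\co(\delta_D\ot M)=((\varepsilon_D\ot D)\co\delta_D)\ot M=id_D\ot M$ by the counit axiom, so it collapses to $r_M$. On the left-hand side the cap $\varepsilon_D$ sits on the first tensor factor, which is untouched by $D\ot r_M$; sliding it through yields $r_M\co(\varepsilon_D\ot M\ot D)\co(r_M\ot D)\co(M\ot\delta_D)$, and since $(\varepsilon_D\ot M\ot D)\co(r_M\ot D)=((\varepsilon_D\ot M)\co r_M)\ot D$ we are left with $r_M\co(((\varepsilon_D\ot M)\co r_M)\ot D)\co(M\ot\delta_D)$, which is exactly $r_M\co\nabla_{r_M}$ by the first expression for $\nabla_{r_M}$ in (c3-1). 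The identity $r_M^\prime=r_M^\prime\co\nabla_{r_M^\prime}$ is the mirror computation: one composes axiom (c4-4) on the left with $M\ot D\ot\varepsilon_D$, uses $(M\ot D\ot\varepsilon_D)\co(M\ot\delta_D)=M\ot((D\ot\varepsilon_D)\co\delta_D)=M\ot id_D$ on the right to collapse it to $r_M^\prime$, slides the cap through $r_M^\prime\ot D$, and recognises the first expression for $\nabla_{r_M^\prime}$ in (c3-2) on the left.

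Finally, idempotency (i) is immediate from these identities: writing $\nabla_{r_M}=r_M^\prime\co r_M$ and using $r_M\co r_M^\prime\co r_M=r_M$ gives $\nabla_{r_M}\co\nabla_{r_M}=r_M^\prime\co(r_M\co r_M^\prime\co r_M)=r_M^\prime\co r_M=\nabla_{r_M}$, while $\nabla_{r_M^\prime}\co\nabla_{r_M^\prime}=r_M\co(r_M^\prime\co r_M\co r_M^\prime)=r_M\co r_M^\prime=\nabla_{r_M^\prime}$ using $r_M^\prime\co r_M\co r_M^\prime=r_M^\prime$; the corresponding statements for $\nabla_{s_M}$ and $\nabla_{s_M^\prime}$ follow by the same bracketing or by the symmetry of Remark \ref{redundante1}. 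I expect the only delicate point to be the bookkeeping when sliding the counit $\varepsilon_D$ past $r_M$ (respectively past $r_M^\prime$): one must track which tensor strand the cap lands on so as to reproduce precisely the morphisms $(\varepsilon_D\ot M)\co r_M$ and $(M\ot\varepsilon_D)\co r_M^\prime$ that appear in (c3-1) and (c3-2). Everything else is a direct use of the counit axiom and associativity, so no further input from Definition \ref{WO} beyond (c3-1)--(c3-4) and (c4-3), (c4-4), (c4-7), (c4-8) is needed.
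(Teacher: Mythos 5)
Your proof is correct, and while it draws on the same toolkit as the paper's, it is organized in a genuinely different way. For the cancelation laws the paper works on the unit/multiplication side: it writes $r_M=r_M\co(M\ot(\mu_D\co(D\ot\eta_D)))$, applies (c4-1), and recognizes the second expression for $\nabla_{r_M^\prime}$ in (c3-2), obtaining $r_M=\nabla_{r_M^\prime}\co r_M$; you instead work on the dual counit/comultiplication side, capping (c4-3) with $\varepsilon_D\ot D\ot M$ and recognizing the first expression for $\nabla_{r_M}$ in (c3-1), obtaining $r_M=r_M\co\nabla_{r_M}$ --- two mirror computations of equal length, and your observation that $\nabla_{r_M^\prime}\co r_M=r_M\co\nabla_{r_M}$ holds identically by associativity (since $\nabla_{r_M}$ and $\nabla_{r_M^\prime}$ are \emph{defined} as the composites $r_M^\prime\co r_M$ and $r_M\co r_M^\prime$) makes either one suffice, a simplification the paper leaves implicit. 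The more substantive difference is in part (i): the paper proves idempotency of $\nabla_{r_M}$ by a separate diagrammatic computation (expanding (c3-1) twice and invoking (c4-3) together with the (co)unit axioms), whereas you obtain it formally from the cancelation laws via $\nabla_{r_M}\co\nabla_{r_M}=r_M^\prime\co(r_M\co r_M^\prime\co r_M)=r_M^\prime\co r_M=\nabla_{r_M}$, so your arrangement --- (ii) first, (i) as a purely algebraic corollary --- saves one computation and makes the logical dependence clearer. Finally, your use of the symmetry $(s_M^\prime,s_M,r_M^\prime,r_M)$ recorded in Remark \ref{redundante1} to transport the $r$-statements to the $s$-statements is legitimate: under that relabelling $\nabla_{s_M^\prime}$ and $\nabla_{s_M}$ do occupy the slots of $\nabla_{r_M}$ and $\nabla_{r_M^\prime}$, and the needed axioms (c3-1), (c3-2), (c4-3), (c4-4) pull back to (c3-3), (c3-4), (c4-7), (c4-8), exactly as your closing inventory states; this is a tidier justification than the paper's bare remark that the remaining cases are analogous.
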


\begin{proof}
For (i):

\begin{itemize}

\item[ ]$\hspace{0.38cm} \nabla_{r_M}\co\nabla_{r_M}$
\item[ ]$=(((\varepsilon_D\ot M)\co r_M)\ot D)\co(((\varepsilon_D\ot M)\co r_M)\ot \delta_D)\co(M\ot\delta_D)$
\item[ ]$=(\varepsilon_D\ot\varepsilon_D\ot M\ot D)\co(\delta_D\ot M\ot D)\co(r_M\ot D)\co(M\ot\delta_D)$
\item[ ]$=\nabla_{r_M}$

\end{itemize}

by coassociativity and (c4-3). The proofs for
the remaining morphisms are analogous.

 To prove (ii) it suffices to use the suitable characterization of the corresponding
morphisms and then apply the compatibility with the
(co)multiplication. We write the first equality of (\ref{newrM}) to illustrate the procedure:

\begin{itemize}

\item[ ]$\hspace{0.38cm} r_M$
\item[ ]$=r_M\co (M\ot (\mu_D\co (D\ot \eta_D))$
\item[ ]$=(\mu_D\ot M)\co (D\ot r_M)\co (r_M\ot \eta_D)$
\item[ ]$=\nabla_{r_M^\prime}\co r_M$

\end{itemize}

\end{proof}

\begin{proposition}\label{nablatypecongruence} Let $D$ be a WBHA, $M$ any object of
the category and $(r_M, r_M^\prime, s_M, s_M^\prime)$  an
$(M,D)$-WO. Then we have:

\begin{equation}\label{tnablarM}
(D\ot\nabla_{r_M})\co(r_M\ot D)\co(M\ot t_{D,D})=(r_M\ot D)\co(M\ot
t_{D,D})\co(\nabla_{r_M}\ot D),
\end{equation}

\begin{equation}\label{tnablarprimaM}
(t_{D,D}\ot M)\co(D\ot r_M)\co(\nabla_{r_M^\prime}\ot D)=(D\ot
\nabla_{r_M^\prime})\co(t_{D,D}\ot M)\co(D\ot r_M),
\end{equation}

\begin{equation}\label{tnablaprimarprimaM}
(\nabla_{r_M^\prime}\ot D)\co(D\ot r_M^\prime)\co(t_{D,D}\ot M)= (D\ot
r_M^\prime)\co(t_{D,D}\ot M)\co(D\ot\nabla_{r_M^\prime}),
\end{equation}

\begin{equation}\label{tnablaerprimaM}
(M\ot t_{D,D})\co(r_M^\prime\ot
D)\co(D\ot\nabla_{r_M})=(\nabla_{r_M}\ot D)\co(M\ot
t_{D,D})\co(r_M^\prime\ot D),
\end{equation}

\begin{equation}\label{tnablasM}
(\nabla_{s_M}\ot D)\co(D\ot s_M)\co(t_{D,D}\ot M)= (D\ot
s_M)\co(t_{D,D}\ot M)\co(D\ot\nabla_{s_M}),
\end{equation}

\begin{equation}\label{tnablasprimaM}
(M\ot t_{D,D})\co(s_M\ot
D)\co(D\ot\nabla_{s_M^\prime})=(\nabla_{s_M^\prime}\ot D)\co(M\ot
t_{D,D})\co(s_M\ot D),
\end{equation}

 \begin{equation}\label{tnablaprimasprimaM}
(D\ot\nabla_{s_M^\prime})\co(s_M^\prime\ot D)\co(M\ot t_{D,D})=(s_M^\prime\ot D)\co(M\ot
t_{D,D})\co(\nabla_{s_M^\prime}\ot D),
\end{equation}

 \begin{equation}\label{tnablaesprimaM}
(t_{D,D}\ot M)\co(D\ot s_M^\prime)\co(\nabla_{s_M}\ot D)=(D\ot
\nabla_{s_M})\co(t_{D,D}\ot M)\co(D\ot s_M^\prime).
\end{equation}

The equalities remain true if we change  $t_{D,D}$ by
$t^{\prime}_{D,D}$.

\end{proposition}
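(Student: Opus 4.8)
My approach is to prove the identities of Proposition~\ref{nablatypecongruence} by treating them as ``commutation laws'' that say the idempotents $\nabla_{r_M}$, $\nabla_{r_M^\prime}$, $\nabla_{s_M}$, $\nabla_{s_M^\prime}$ can be slid across the weak Yang-Baxter operator $t_{D,D}$ (suitably tensored with $D$) when conjugated by $r_M$, $r_M^\prime$, $s_M$, or $s_M^\prime$. The key observation is that each $\nabla$ was given in Proposition~\ref{cancelation}(i) two alternative descriptions in (c3): one ``counit-coproduct'' form and one ``unit-product'' form. The strategy is, for each identity, to expand the $\nabla$ appearing on one side using the description that pairs well with the relevant axiom in (c1)--(c4), push $t_{D,D}$ through using compatibility (c1), and then re-collapse into the $\nabla$ on the other side using the cancelation laws (\ref{newrM})--(\ref{newsMprima}).

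First I would establish (\ref{tnablarM}) in full detail, since it is the template for the rest. The natural move is to rewrite the outer $\nabla_{r_M}$ on the left-hand side via the coproduct form of (c3-1), namely $\nabla_{r_M}=(((\varepsilon_D\ot M)\co r_M)\ot D)\co(M\ot\delta_D)$, so that a coproduct $\delta_D$ appears; then the combination $(r_M\ot D)\co(M\ot t_{D,D})$ together with the coproduct compatibility (c4-3) and the braided-bialgebra axiom (b3-3)/(b3-4) governing how $t_{D,D}$ interacts with $\delta_D$ lets me transport the $t_{D,D}$ to the other side. The identity (c1-1) --- compatibility of $r_M$ with the Yang-Baxter operator --- is what guarantees the two tripled compositions of $r_M$ and $t_{D,D}$ agree, and (\ref{deltacomenabla_r}) from Remark~\ref{redundante1} is the precise bookkeeping lemma that lets me reinsert $\nabla_{r_M}$ on the right at the end. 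The remaining seven identities are obtained by the same mechanism: for the $r_M^\prime$ statements one uses (c1-2), (c4-4) and the $\delta_D$-form of (c3-2); for the $s_M$ and $s_M^\prime$ statements one uses (c1-3), (c1-4) and (c4-7), (c4-8) together with (c3-3), (c3-4). I would remark that by the duality noted at the end of Remark~\ref{redundante1}---that $(r_M,r_M^\prime,s_M,s_M^\prime)$ is an $(M,D)$-WO if and only if $(s_M^\prime,s_M,r_M^\prime,r_M)$ is---the four $s$-identities follow formally from the four $r$-identities, halving the work.

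For the final clause, that everything remains valid with $t^{\prime}_{D,D}$ in place of $t_{D,D}$, I would point out that $t^{\prime}_{D,D}$ is itself a weak Yang-Baxter operator with the same associated idempotent $\nabla_{D,D}$ (noted after Definition~\ref{wybopdef}), that $r_M$ etc.\ satisfy the $t^{\prime}_{D,D}$-analogues of (c1) by the explicit requirement in Definition~\ref{WO}, and that the relevant bialgebra axioms hold for $t^{\prime}_{D,D}$ by the identities (\ref{b1})--(\ref{b4}). Hence the same chain of rewrites applies verbatim. The main obstacle I anticipate is not conceptual but combinatorial: one must thread $t_{D,D}$ past a coproduct while keeping track of which of the four morphisms and which tensor factor the various $\nabla$'s act on, and a naive expansion risks producing terms where the unit/counit form and the coproduct/product form of a given $\nabla$ get mismatched. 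The delicate point is therefore choosing, in each identity, the \emph{correct} one of the two (c3) expressions so that the (c1) compatibility applies cleanly and the cancelation law of Proposition~\ref{cancelation} closes the computation without extra normalization by $\nabla_{D,D}$.
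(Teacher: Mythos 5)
Your plan coincides with the paper's own proof: for (\ref{tnablarM}) the paper expands the outer $\nabla_{r_M}$ by the counit--coproduct form of (c3-1), pushes $\delta_D$ across $t_{D,D}$ with condition (b3-4), applies (c1-1), and re-collapses via (c3-1), proving the remaining seven identities ``by a similar procedure'' (your symmetry $(r_M,r_M^\prime,s_M,s_M^\prime)\leftrightarrow(s_M^\prime,s_M,r_M^\prime,r_M)$ is a legitimate shortcut for the four $s$-identities that the paper leaves implicit, and your justification of the $t^{\prime}_{D,D}$ clause matches the paper's setup). One minor correction: the $\nabla_{D,D}$ normalization you hope to avoid is in fact unavoidable --- after (c1-1) the leftover $(D\ot\varepsilon_D)\co t_{D,D}$ must be rewritten as $(\varepsilon_D\ot D)\co\nabla_{D,D}$ by (\ref{b3-4}), which the paper then eliminates with (\ref{nabla_Dbaixaporr}) and (a2-4); neither (c4-3) nor (\ref{deltacomenabla_r}) actually enters the argument.
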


\begin{proof}

For (\ref{tnablarM});

\begin{itemize}

\item[ ]$\hspace{0.38cm} (D\ot\nabla_{r_M})\co(r_M\ot D)\co(M\ot t_{D,D}) $

\item[ ]$= (D\ot((\varepsilon_D\ot M)\co r_M)\ot D)\co(r_M\ot\delta_D)\co(M\ot
t_{D,D})$

\item[ ]$=(D\ot((\varepsilon_D\ot M)\co r_M)\ot D)\co(r_M\ot D\ot D)\co(M\ot
t_{D,D}\ot D)\co(M\ot D\ot t_{D,D})$
\item[]$\hspace{0.38cm}\co(M\ot \delta_D\ot D)$

\item[ ]$=(((D\ot\varepsilon_D)\co t_{D,D})\ot M\ot D)\co(D\ot r_M\ot D)\co(r_M\ot
t_{D,D})\co(M\ot\delta_D\ot D)$

\item[ ]$=(((\varepsilon_D\ot D)\co \nabla_{D,D})\ot M\ot D)\co(D\ot r_M\ot
D)\co(r_M\ot t_{D,D})\co(M\ot\delta_D\ot D)$

\item[ ]$=(\varepsilon_D\ot r_M\ot D)\co(r_M\ot D\ot D)\co(M\ot \nabla_{D,D}\ot
D)\co(M\ot D\ot t_{D,D})\co(M\ot\delta_D\ot D)$

\item[ ]$=(\varepsilon_D\ot r_M\ot D)\co(r_M\ot t_{D,D})\co(M\ot\delta_D\ot D)$

\item[ ]$=(r_M\ot D)\co(M\ot t_{D,D})\co(\nabla_{r_M}\ot D),$

\end{itemize}

where we used (c3-1), the conditions (b3-4) and (c1-1), the properties of the weak
Yang-Baxter operator and the equalities (\ref{b3-4})  and (\ref{nabla_Dbaixaporr}).

The proof of the remaining equalities follows a similar procedure.

\end{proof}

\begin{proposition}\label{mixednablatypecongruence} Let $D$ be a WBHA, $M$ any object of
the category and $(r_M, r_M^\prime, s_M, s_M^\prime)$  an
$(M,D)$-WO. Then we have:

\begin{equation}\label{trMnablasM}
(t_{D,D}\ot M)\co(D\ot r_M)\co(\nabla_{s_M}\ot
D)=(D\ot\nabla_{s_M})\co(t_{D,D}\ot M)\co(D\ot r_M),
\end{equation}

\begin{equation}\label{trMnablasprimaM}
(r_M\ot D)\co(M\ot t_{D,D})\co(\nabla_{s_M^\prime}\ot
D)=(D\ot\nabla_{s_M^\prime})\co(r_M\ot D)\co(M\ot t_{D,D}),
\end{equation}

\begin{equation}\label{tprimasprimaMnablarM}
(s_M^\prime\ot D)\co(M\ot t^{\prime}_{D,D})\co(\nabla_{r_M}\ot
D)=(D\ot\nabla_{r_M})\co(s_M^\prime\ot D)\co(M\ot t^{\prime}_{D,D}),
\end{equation}

\begin{equation}\label{tprimasprimaMnablarprimaM}
(t^{\prime}_{D,D}\ot M)\co(D\ot
s_M^\prime)\co(\nabla_{r_M^\prime}\ot
D)=(D\ot\nabla_{r_M^\prime})\co(t^{\prime}_{D,D}\ot M)\co(D\ot
s_M^\prime).
\end{equation}

\end{proposition}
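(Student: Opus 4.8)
The plan is to establish the four identities by the same rewriting technique used for \eqref{tnablarM} in Proposition \ref{nablatypecongruence}, the only structural difference being that now the idempotent and the morphism transported across $t_{D,D}$ (or $t^{\prime}_{D,D}$) come from \emph{different} members of the weak operator. Consequently the homogeneous Yang--Baxter relation (c1-1) used there must be replaced by one of the \emph{mixed} equations (c2), and the homogeneous $\nabla_{D,D}$-slides by the mixed slides recorded in Remark \ref{redundante1}, such as \eqref{nabla_Dbaixaporrsprima}. The general recipe for each identity is the following: expand the idempotent by the (c3) description whose surviving leg is the one coupled to the transported morphism by a \emph{single} mixed equation; use the appropriate (co)multiplication compatibility from (c4) to place that leg next to the transported morphism; commute the two by the mixed equation (c2-3) or (c2-4) carrying the operator ($t_{D,D}$ or $t^{\prime}_{D,D}$) that actually occurs in the target; and finish by collapsing the unit or counit produced by the expansion via the identities \eqref{b3-1}, \eqref{b3-2}, \eqref{b3-4}, \eqref{b3-5}, absorbing the residual $\nabla_{D,D}$ with the relevant mixed slide of Remark \ref{redundante1}.

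Concretely, for \eqref{trMnablasM} I would start from the left-hand side $(t_{D,D}\ot M)\co(D\ot r_M)\co(\nabla_{s_M}\ot D)$ and expand $\nabla_{s_M}$ by the description in (c3-3) whose surviving leg is $s_M^\prime$, precisely because $s_M^\prime$ is coupled to $r_M$ through (c2-3), which is the mixed equation carrying $t_{D,D}$ and not $t^{\prime}_{D,D}$. After reorganising with the compatibility (c4) so as to expose the block $(t_{D,D}\ot M)\co(D\ot r_M)\co(s_M^\prime\ot D)$ appearing on the right of (c2-3), that equation transports $r_M$ across $t_{D,D}$; the unit left over from the expansion is eliminated by \eqref{b3-1} or \eqref{b3-2}, turning $t_{D,D}$ composed with a unit into $\nabla_{D,D}$, and a final application of the mixed slide \eqref{nabla_Dbaixaporrsprima} (which passes $\nabla_{D,D}$ through $(D\ot r_M)\co(s_M^\prime\ot D)$) delivers the right-hand side $(D\ot\nabla_{s_M})\co(t_{D,D}\ot M)\co(D\ot r_M)$.

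The remaining three identities follow the same scheme with the obvious substitutions. In \eqref{trMnablasprimaM} one expands $\nabla_{s_M^\prime}$ by the (c3-4) description whose surviving leg is again $s_M^\prime$ and reuses (c2-3); in \eqref{tprimasprimaMnablarM} and \eqref{tprimasprimaMnablarprimaM}, where $s_M^\prime$ is transported across $t^{\prime}_{D,D}$, one expands $\nabla_{r_M}$ (resp. $\nabla_{r_M^\prime}$) by the (c3-1) (resp. (c3-2)) description whose surviving leg is $r_M$, pushes $s_M^\prime$ through $t^{\prime}_{D,D}$ by the mixed equation (c2-4), and closes with \eqref{b3-4}, \eqref{b3-5} and the mixed slide of Remark \ref{redundante1} pairing $r_M$ with $s_M^\prime$. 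The self-duality noted at the end of Remark \ref{redundante1}, interchanging $(r_M,r_M^\prime,s_M,s_M^\prime)$ with $(s_M^\prime,s_M,r_M^\prime,r_M)$, pairs \eqref{trMnablasM} with \eqref{tprimasprimaMnablarprimaM} and \eqref{trMnablasprimaM} with \eqref{tprimasprimaMnablarM}, and serves as a useful organising check; since that duality also exchanges $t_{D,D}$ and $t^{\prime}_{D,D}$, however, it is safer to carry out all four computations directly.

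The main obstacle is organizational rather than conceptual. Each side is a morphism $D\ot M\ot D\rightarrow D\ot D\ot M$, so three tensor legs stay in play through a fairly long chain of rewrites, and the bookkeeping must be done with care. More importantly, as the authors stress after (c2), the mixed equations are \emph{not} symmetric under $t_{D,D}\leftrightarrow t^{\prime}_{D,D}$; this rigidity is exactly what dictates the choice of the (c3) description in each case, since one must pick the form whose surviving leg is coupled to the transported morphism by the mixed equation carrying the \emph{correct} operator. Choosing the other description, or the wrong mixed equation, destroys the cancellation at the unit/counit step.
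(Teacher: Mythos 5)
Your proposal follows essentially the same route as the paper's own proof of \eqref{trMnablasM}: expand $\nabla_{s_M}$ through the (c3-3) characterization whose surviving leg is $s_M^\prime$, commute with the mixed equation (c2-3), collapse the leftover unit through \eqref{b3-1} into $\nabla_{D,D}$, slide the latter with a mixed identity from Remark \ref{redundante1}, and absorb it into $\mu_D$ by (b1-1); your selection principle (choose the (c3) form whose surviving leg is coupled to the transported morphism by the mixed equation carrying the operator that occurs in the target) is exactly what the paper's computation implements, and your sketches for the remaining three equalities, which the paper leaves unwritten, are the correct analogues. Two citations in your recipe should be repaired, though neither affects the strategy. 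First, the step of ``placing the leg next to the transported morphism'' is not a (c4) manipulation: after the (c3-3) expansion the morphism that must cross $t_{D,D}$ is the multiplication $\mu_D$, and this is done with the weak braided bialgebra axiom (b3-1) (and, for the $\delta_D$-expansions in the other three identities, with (b3-3), (b3-4) and the primed versions \eqref{b1}--\eqref{b4}); the conditions (c4), which govern $r_M$, $s_M$ against $\mu_D$, $\delta_D$, play no role here, since $r_M$ and the expanded $\mu_D$ act on disjoint tensor legs. Second, after applying (c2-3) right-to-left the residual $\nabla_{D,D}$ sits under the block $(D\ot s_M^\prime)\co(r_M\ot D)$, so the slide actually needed is $(D\ot s_M^\prime)\co (r_M\ot D)\co(M\ot \nabla_{D,D})=(\nabla_{D,D}\ot M)\co(D\ot s_M^\prime)\co(r_M\ot D)$ from Remark \ref{redundante1}, rather than \eqref{nabla_Dbaixaporrsprima}, which handles the opposite order $(D\ot r_M)\co(s_M^\prime\ot D)$. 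With these two substitutions your computation for \eqref{trMnablasM} coincides step by step with the paper's, and your closing observation about the rigidity of the mixed equations under $t_{D,D}\leftrightarrow t^{\prime}_{D,D}$ correctly identifies why the choice of (c3) description is forced in each case.
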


\begin{proof}

We will show (\ref{trMnablasM}):

\begin{itemize}

\item[ ]$\hspace{0.38cm} (t_{D,D}\ot M)\co(D\ot r_M)\co(\nabla_{s_M}\ot D) $
\item[ ]$=(D\ot \mu_D \ot M)\co (t_{D,D}\ot D\ot M)\co (D\ot t_{D,D}\ot M)\co (D\ot D\ot r_M)\co (D\ot (s_M^\prime\co(M\ot\eta_D))\ot D)$
\item[ ]$=(D\ot \mu_D \ot M)\co (t_{D,D}\ot s_M^\prime)\co (D\ot r_M\ot D)\co (D\ot M\ot (t_{D,D}\co (\eta_D\ot D)))$
\item[ ]$=(D\ot \mu_D \ot M)\co (t_{D,D}\ot s_M^\prime)\co (D\ot r_M\ot D)\co (D\ot M\ot (\nabla_{D,D}\co (D\ot \eta_D)))$
\item[ ]$=(D\ot \mu_D \ot M)\co (t_{D,D}\ot D\ot M)\co (D\ot \nabla_{D,D}\ot M)\co (D\ot D\ot (s_M^\prime\co(M\ot\eta_D)))\co (D\ot r_M)$
\item[ ]$=(D\ot\nabla_{s_M})\co(t_{D,D}\ot M)\co(D\ot r_M),$

\end{itemize}

where we used (c3-4), the conditions (b1-1) and (c2-3), the properties of the weak
Yang-Baxter operator and the equalities (\ref{b3-1}) and (\ref{nabla_Dbaixaporrprimas}).

\end{proof}

\begin{proposition}\label{Y-Btypecongruence} Let $D$ be a WBHA, $M$ any object of
the category and $(r_M, r_M^\prime, s_M, s_M^\prime)$  an
$(M,D)$-WO. Then it holds that:

\begin{itemize}

\item[(i)]
\begin{equation}\label{rMtrprimaM}
(r_M\ot D)\co (M\ot t_{D,D})\co(r_M^\prime\ot D)=(D\ot
r_M^\prime)\co(t_{D,D}\ot M)\co(D\ot r_M),
\end{equation}

\begin{equation}\label{sMtprimasprimaM}
(D\ot s_M)\co(t_{D,D}\ot M)\co(D\ot
s_M^\prime)=(s_M^\prime\ot D)\co(M\ot t_{D,D})\co(s_M\ot D).
\end{equation}

The previous equalities remain true if we change $t_{D,D}$ by
$t^{\prime}_{D,D}$.

\item[(ii)]

\begin{equation}\label{rara1}
(r_M\ot D)\co(M\ot t_{D,D})\co(s_M\ot D)=(D\ot s_M)\co(t_{D,D}\ot
M)\co(D\ot r_M),
\end{equation}

\begin{equation}\label{rara2}
(s_M^\prime\ot D)\co(M\ot t^{\prime}_{D,D})\co(r_M^\prime\ot
D)=(D\ot r_M^\prime)\co(t^{\prime}_{D,D}\ot M)\co(D\ot s_M^\prime).
\end{equation}

\end{itemize}

\end{proposition}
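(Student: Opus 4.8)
To prove Proposition~\ref{Y-Btypecongruence} I would exploit the structural symmetries already built into the definition of an $(M,D)$-WO, deriving each of the four identities by the same graphical/algebraic recipe: introduce a unit $\eta_D$ and a matching counit (or multiplication and comultiplication) so that a $\nabla$-type morphism appears, slide the weak Yang-Baxter operator past it using the congruence results of Propositions~\ref{nablatypecongruence} and \ref{mixednablatypecongruence}, and then cancel the $\nabla$ against the appropriate factor via the cancelation laws of Proposition~\ref{cancelation}. The key technical inputs are the mixed Yang-Baxter equations (c2), the compatibility-with-multiplication axioms (c4), and the characterizations (c3) of the idempotents $\nabla_{r_M},\nabla_{r_M^\prime},\nabla_{s_M},\nabla_{s_M^\prime}$.

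\textbf{Part (i).}
For (\ref{rMtrprimaM}) the natural route is to start from the right-hand side and use axiom (c1-1) (compatibility of $r_M$ with $t_{D,D}$) together with the cancelation law (\ref{newrMprima}), which writes $r_M^\prime=\nabla_{r_M}\co r_M^\prime$. Concretely, I would insert $\nabla_{r_M^\prime}$ via (\ref{newrM}) on the left factor of the right-hand side, transport $t_{D,D}$ across the idempotent using (\ref{tnablarprimaM}) and (\ref{tnablaerprimaM}) from Proposition~\ref{nablatypecongruence}, and then recognize the left-hand side after absorbing the idempotent back with a second cancelation law. The symmetric statement for $s_M,s_M^\prime$ in (\ref{sMtprimasprimaM}) follows identically after applying the duality noted at the end of Remark~\ref{redundante1}, namely that $(r_M,r_M^\prime,s_M,s_M^\prime)$ is a WO iff $(s_M^\prime,s_M,r_M^\prime,r_M)$ is; this duality exchanges the two equations of (i) and also swaps $t_{D,D}\leftrightarrow t_{D,D}$, so no extra work is needed. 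The claim that the identities survive replacing $t_{D,D}$ by $t^{\prime}_{D,D}$ is handled by the parenthetical requirement in (c1) and the $t^{\prime}_{D,D}$-versions of Proposition~\ref{nablatypecongruence}.

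\textbf{Part (ii).}
The two mixed identities (\ref{rara1}) and (\ref{rara2}) are the genuinely new content, and here the essential tool is the mixed Yang-Baxter equation (c2-3), $(D\ot s_M^\prime)\co(r_M\ot D)\co(M\ot t_{D,D})=(t_{D,D}\ot M)\co(D\ot r_M)\co(s_M^\prime\ot D)$, together with (c2-1). For (\ref{rara1}) I would again manufacture an idempotent—this time $\nabla_{s_M}$ via (\ref{newsM})—on the incoming $s_M$, move $t_{D,D}$ across it using (\ref{trMnablasM}) from Proposition~\ref{mixednablatypecongruence}, invoke (c2-1) to interchange the $r_M^\prime$/$s_M$ pattern into the $s_M$/$r_M^\prime$ pattern, and finally reabsorb the idempotent. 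Equation (\ref{rara2}) is obtained from (\ref{rara1}) by the same $(s_M^\prime,s_M,r_M^\prime,r_M)$ duality of Remark~\ref{redundante1}, which simultaneously converts $t_{D,D}$ into $t^{\prime}_{D,D}$ and explains why (\ref{rara2}) is stated with the primed operator.

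\textbf{Expected obstacle.}
I expect the main difficulty to lie not in any single manipulation but in tracking which of the four idempotents to insert and on which tensor slot, since the mixed equations (c2) couple the $r$-family to the $s$-family asymmetrically and—as the authors explicitly warn after (c2)—one is \emph{not} permitted to freely interchange $t_{D,D}$ and $t^{\prime}_{D,D}$ in the mixed setting. The safe strategy is therefore to commit to deriving (\ref{rMtrprimaM}) and (\ref{rara1}) by hand and to obtain their companions purely by the formal WO-duality, thereby avoiding any illegitimate swap of the weak Yang-Baxter operator with its partner.
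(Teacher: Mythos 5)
Your treatment of part (i) and of (\ref{rara1}) matches the paper's proof in substance: the paper likewise inserts an idempotent via the cancelation laws of Proposition \ref{cancelation}, transports it across the weak Yang--Baxter operator with Propositions \ref{nablatypecongruence} and \ref{mixednablatypecongruence}, applies the relevant axiom, and reabsorbs; obtaining (\ref{sMtprimasprimaM}) from (\ref{rMtrprimaM}) by the duality of Remark \ref{redundante1} is legitimate (the dual of (\ref{rMtrprimaM}) is literally (\ref{sMtprimasprimaM}) with its sides exchanged) and is even a bit slicker than the paper, which only declares that computation ``analogous''. One small miscue: in your concrete recipe for (\ref{rara1}) you say you would ``invoke (c2-1)'', but (\ref{rara1}) contains no $r_M^\prime$ at all; the working step is to insert $\nabla_{s_M^\prime}=s_M\co s_M^\prime$ via (\ref{newsM}) and (\ref{trMnablasprimaM}), which brings $s_M^\prime$ into play so that (c2-3) — the axiom you yourself single out as the essential tool — applies, and then to close with (\ref{trMnablasM}). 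This is exactly the paper's route.

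The genuine gap is your derivation of (\ref{rara2}). The duality of Remark \ref{redundante1} replaces the quadruple $(r_M,r_M^\prime,s_M,s_M^\prime)$ by $(s_M^\prime,s_M,r_M^\prime,r_M)$ but keeps the WBHA $D$, and hence its operator $t_{D,D}$, fixed: nothing in it ``simultaneously converts $t_{D,D}$ into $t^{\prime}_{D,D}$'' as you claim. Applying (\ref{rara1}) to the dual quadruple therefore yields
\begin{equation*}
(s_M^\prime\ot D)\co(M\ot t_{D,D})\co(r_M^\prime\ot D)=(D\ot r_M^\prime)\co(t_{D,D}\ot M)\co(D\ot s_M^\prime),
\end{equation*}
an identity in $t_{D,D}$, whereas (\ref{rara2}) is asserted with $t^{\prime}_{D,D}$ — and, by the paper's own warning after (c2), which you quote in your final paragraph, mixed identities of this kind do not survive swapping $t_{D,D}$ and $t^{\prime}_{D,D}$. (If the duality really did effect such a swap, dualizing the (c2) axioms themselves would produce exactly the forbidden interchanged versions.) So your ``safe strategy'' breaks down precisely at the point it was designed to protect. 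What is missing is the direct computation the paper performs: prove (\ref{rara2}) by the same insert--transport--absorb pattern but with the primed tools, namely the cancelation laws $r_M^\prime=\nabla_{r_M}\co r_M^\prime=r_M^\prime\co\nabla_{r_M^\prime}$, the congruences (\ref{tprimasprimaMnablarM}) and (\ref{tprimasprimaMnablarprimaM}) (which are stated with $t^{\prime}_{D,D}$), and the mixed axiom (c2-4).
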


\begin{proof}
We prove the first equality of (i), the remaining being analogous:

\begin{itemize}

\item[ ]$\hspace{0.38cm}  (r_M\ot D)\co(M\ot t_{D,D})\co(r_M^\prime\ot D)$
\item[ ]$=(r_M\ot D)\co(M\ot t_{D,D})\co((\nabla_{r_M}\co r_M^\prime)\ot D)$
\item[ ]$=(D\ot \nabla_{r_M})\co(r_M\ot D)\co(M\ot t_{D,D})\co(r_M^\prime\ot D)$
\item[ ]$=(D\ot r_M^\prime)\co(t_{D,D}\ot M)\co(D\ot r_M)\co((r_M\co r_M^\prime)\ot D)$
\item[ ]$=(D\ot r_M^\prime)\co(t_{D,D}\ot M)\co(D\ot r_M)\co(\nabla_{r_M^\prime}\ot D)$
\item[ ]$=(D\ot (r_M^\prime\co\nabla_{r_M^\prime}))\co(t_{D,D}\ot M)\co(D\ot r_M)$
\item[ ]$=(D\ot r_M^\prime)\co(t_{D,D}\ot M)\co(D\ot r_M),$

\end{itemize}

In the above equalities, the first and the last ones follow by part
(ii) of Proposition \ref{cancelation}, the second and the fifth by
(\ref{tnablarM}) and (\ref{tnablarprimaM}), respectively.
In the third we use (c1-1)
 and the fourth follows by the definition of $\nabla_{r_M^\prime}$.

The proof of (ii) is analogous to the one of (i) but applying (\ref{trMnablasM})
 and (\ref{trMnablasprimaM}) instead of (\ref{tnablarM}) and (\ref{tnablarprimaM})
and the condition (c2-3).
\end{proof}

\begin{remark}\label{exdeWO}
In view of Definition \ref{WO},  it follows that if $M=D$ is a WBHA
in $\CC,$ the associated weak Yang-Baxter operator $t_{D,D}$ is an
example of $(D,D)$-WO with $r_M=s_M=t_{D,D}$ and
$r_M^\prime=s_M^\prime=t^{\prime}_{D,D};$ the claim remaining true
if we take $t^{\prime}_{D,D}$ instead of $t_{D,D}$ and vice versa.

Of course if $(\CC, \ot, c)$ is a braided monoidal category, the
quadruples $(c_{M,D}, c_{M,D}^{-1}, c_{D,M}, c_{D,M}^{-1})$ and
$(c_{D,M}^{-1}, c_{D,M}, c_{M,D}^{-1}, c_{M,D})$ are examples of
$(M,D)$-WO for any object $M$ of $\CC$.
\end{remark}

Moreover, going into the interpretation of the notion of weak
operator as a generalization of that of   weak Yang-Baxter operator
we point out the following series of results (See \cite{IND}).

\begin{proposition}\label{etadeltaWO} With the assumptions and notation of Proposition
\ref{Y-Btypecongruence}, we have:

\begin{equation}\label{etadeltar}
(r_M\ot D)\co(M\ot(\delta_D\co\eta_D))= (D\ot
r_M^\prime)\co((\delta_D\co \eta_D)\ot M),
\end{equation}

\begin{equation}\label{rmuepsilon} ((\varepsilon_D\co\mu_D)\ot M)\co(D\ot r_M)=
(M\ot(\varepsilon_D\co\mu_D))\co(r_M^\prime\ot D),
\end{equation}

\begin{equation}\label{etadeltas} (s_M^\prime\ot D)\co(M\ot(\delta_D\co\eta_D))= (D\ot
s_M)\co((\delta_D\co \eta_D)\ot M),
\end{equation}

\begin{equation}\label{smuepsilon} ((\varepsilon_D\co\mu_D)\ot M)\co(D\ot s_M^\prime)=
(M\ot(\varepsilon_D\co\mu_D))\co(s_M\ot D).
\end{equation}

\end{proposition}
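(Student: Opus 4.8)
The plan is to recognize these four identities as the weak-operator analogues of the elementary relations \eqref{b3-1}--\eqref{b3-5} for $t_{D,D}$, where now $\eta_D$ and $\varepsilon_D$ are replaced by the ``coevaluation'' $\delta_D\co\eta_D$ and the ``evaluation'' $\varepsilon_D\co\mu_D$. I would prove \eqref{etadeltar} directly and obtain \eqref{rmuepsilon} by the dual argument; then \eqref{etadeltas} and \eqref{smuepsilon} require no new computation. Indeed, by the symmetry noted at the end of Remark \ref{redundante1}, the quadruple $(s_M^\prime,s_M,r_M^\prime,r_M)$ is again an $(M,D)$-WO, and substituting $\tilde r_M=s_M^\prime$, $\tilde r_M^\prime=s_M$ turns \eqref{etadeltar} and \eqref{rmuepsilon} written for this permuted quadruple into precisely \eqref{etadeltas} and \eqref{smuepsilon}.

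For \eqref{etadeltar} the idea is to manufacture an intermediate identity in which both sides are pre-composed with $D\ot r_M$, and then to remove this factor using the one-sided invertibility recorded in Proposition \ref{cancelation}. Concretely, I start from the compatibility condition (c4-3), that is $(D\ot r_M)\co(r_M\ot D)\co(M\ot \delta_D)=(\delta_D\ot M)\co r_M$, and pre-compose with $M\ot\eta_D$. On the resulting right-hand side I rewrite $r_M\co(M\ot\eta_D)=\nabla_{r_M^\prime}\co(\eta_D\ot M)$ by \eqref{nablatrocalado} and slide the idempotent through the coproduct by \eqref{nablaprimabaixapordelta}; since $\nabla_{r_M^\prime}=r_M\co r_M^\prime$, this gives $(D\ot r_M)\co(r_M\ot D)\co(M\ot(\delta_D\co\eta_D))=(D\ot r_M)\co(D\ot r_M^\prime)\co((\delta_D\co\eta_D)\ot M)$. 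Now I compose on the left with $D\ot r_M^\prime$, turning the outer $D\ot r_M$ into $D\ot\nabla_{r_M}$: the left member collapses to $(r_M\ot D)\co(M\ot(\delta_D\co\eta_D))$ by the absorption identity \eqref{deltacomenabla_r}, while on the right $\nabla_{r_M}\co r_M^\prime=r_M^\prime$ by \eqref{newrMprima}, leaving $(D\ot r_M^\prime)\co((\delta_D\co\eta_D)\ot M)$. This is exactly \eqref{etadeltar}.

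The proof of \eqref{rmuepsilon} mirrors this one with multiplication and counit in place of comultiplication and unit: I begin from (c4-1), post-compose with a counit, use \eqref{epsilontrocalado} to trade $(\varepsilon_D\ot M)\co r_M$ for $(M\ot\varepsilon_D)\co\nabla_{r_M}$ and \eqref{nablabaixapormu} to slide the idempotent through $\mu_D$, reaching the analogous intermediate identity in which both sides are post-composed with $r_M\ot D$. I then cancel by composing on the right with $r_M^\prime\ot D$, which replaces $r_M\ot D$ by $\nabla_{r_M^\prime}\ot D$; the $r_M^\prime$-side closes at once via $r_M^\prime\co\nabla_{r_M^\prime}=r_M^\prime$ (again \eqref{newrMprima}). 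The step requiring the most care, and the one I expect to be the main obstacle, is the remaining $r_M$-side, namely the multiplicative counterpart of \eqref{deltacomenabla_r}: $((\varepsilon_D\co\mu_D)\ot M)\co(D\ot r_M)\co(\nabla_{r_M^\prime}\ot D)=((\varepsilon_D\co\mu_D)\ot M)\co(D\ot r_M)$, which is not on the explicit list of Remark \ref{redundante1}. I would establish it by inserting the characterization (c3-2) of $\nabla_{r_M^\prime}$, merging the two resulting copies of $r_M$ through (c4-1) into a single application of $r_M$ to $M\ot\mu_D$, and finishing with the unit law $\mu_D\co(\eta_D\ot D)=id_D$; this is the same bookkeeping as in the proof of Proposition \ref{mixednablatypecongruence}. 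Once this absorption step is in hand the cancelation closes \eqref{rmuepsilon}, and \eqref{etadeltas}, \eqref{smuepsilon} follow by the symmetry above. Conceptually, \eqref{etadeltar} and \eqref{rmuepsilon} together say that $r_M^\prime$ is the mate of $r_M$ under the weak self-duality of $D$ given by $\delta_D\co\eta_D$ and $\varepsilon_D\co\mu_D$, so they are the two halves of a single snake relation.
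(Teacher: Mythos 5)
Your proof is correct, and the step you flag as the main obstacle does go through exactly as you sketch: expanding $\nabla_{r_M^\prime}=(\mu_D\ot M)\co(D\ot(r_M\co(M\ot\eta_D)))$ by the second characterization in (c3-2), using associativity of $\mu_D$, collapsing $(\mu_D\ot M)\co(D\ot r_M)\co(r_M\ot D)$ to $r_M\co(M\ot \mu_D)$ via (c4-1), and cancelling the inserted unit indeed yields $((\varepsilon_D\co\mu_D)\ot M)\co(D\ot r_M)\co(\nabla_{r_M^\prime}\ot D)=((\varepsilon_D\co\mu_D)\ot M)\co(D\ot r_M)$. Your route uses the same toolkit as the paper but a different decomposition. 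The paper proves \eqref{etadeltar} by a direct rewriting chain on the $r_M^\prime$ side: starting from $r_M=r_M\co\nabla_{r_M}$ (Proposition \ref{cancelation}(ii)), sliding the idempotent through the coproduct by \eqref{nablabaixapordelta}, trading it for $r_M^\prime\co(\eta_D\ot M)$ via \eqref{nablatrocalado}, expanding $(M\ot\delta_D)\co r_M^\prime$ with (c4-4), and absorbing the resulting $\nabla_{r_M^\prime}$ through (c3-2) and the counit; you instead work on the $r_M$ side with (c4-3), derive an intermediate identity, and cancel by composing with $D\ot r_M^\prime$ and invoking \eqref{deltacomenabla_r} and \eqref{newrMprima} --- the two chains are mirror images pivoting on the same key identity \eqref{nablatrocalado}. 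Where the paper dispatches the remaining three equalities with ``the others being analogous'', your reduction of \eqref{etadeltas} and \eqref{smuepsilon} to \eqref{etadeltar} and \eqref{rmuepsilon} via the symmetry $(r_M,r_M^\prime,s_M,s_M^\prime)\mapsto(s_M^\prime,s_M,r_M^\prime,r_M)$ noted at the end of Remark \ref{redundante1} is a genuine economy that makes the analogy precise and halves the casework; the price is the one extra absorption identity needed for \eqref{rmuepsilon}, which is not on the list in Remark \ref{redundante1}, but which you correctly identify and justify.
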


\begin{proof}

We prove (\ref{etadeltar}), the others being analogous:

\begin{itemize}

\item[ ]$\hspace{0.38cm} (r_M\ot D)\co(M\ot(\delta_D\co\eta_D))$
\item[ ]$=((r_M\co\nabla_{r_M})\ot D)\co (M\ot (\delta_D\co\eta_D))$
\item[ ]$=(r_M\ot D)\co(M\ot\delta_D)\co \nabla_{r_M}\co(M\ot\eta_D)$
\item[ ]$=(r_M\ot D)\co(M\ot\delta_D)\co r_M^\prime\co(\eta_D\ot M)$
\item[ ]$= ((r_M\co r_M^\prime)\ot D)\co(D\ot r_M^\prime)\co((\delta_D\co \eta_D)\ot M)$
\item[ ]$= (D\ot((M\ot\varepsilon_D)\co r_M^\prime)\ot D)\co(\delta_D\ot M\ot
D)\co(D\ot r_M^\prime)\co((\delta_D\co \eta_D)\ot M)$
\item[ ]$= (D\ot r_M^\prime)\co((\delta_D\co \eta_D)\ot M).$

\end{itemize}

In the above equalities we use that $D$ is a coalgebra, part (ii) of Proposition \ref{cancelation}, the conditions (c3-1), (c3-2) and
(c4-4), and the equality (\ref{nablabaixapordelta}).
\end{proof}

\begin{proposition}\label{PibaixapoloWO} Let $D$ be a WBHA and $M$ any object in
$\CC.$ If $(r_M, r_M^\prime, s_M, s_M^\prime)$ is an $(M,D)$-WO,  it
holds that:

\begin{equation}
\label{newpiLnablarM}(M\ot \Pi_D^L)\co\nabla_{r_M}=\nabla_{r_M}\co(M\ot\Pi_D^L),
\end{equation}

\begin{equation}
\label{newpiLnablarMprima}(\Pi_D^L\ot M)\co \nabla_{r_M^\prime}=\nabla_{r_M^\prime}\co(\Pi_D^L\ot M),
\end{equation}

\begin{equation}
\label{newpiLnablasM}(\Pi_D^L\ot M)\co \nabla_{s_M}=\nabla_{s_M}\co(\Pi_D^L\ot M),
\end{equation}

\begin{equation}
\label{newpiLnablasMprima}(M\ot \Pi_D^L)\co\nabla_{s_M^\prime}=\nabla_{s_M^\prime}\co(M\ot\Pi_D^L).
\end{equation}

The analogous equalities hold writing either $\Pi_D^R,$
$\overline{\Pi}_D^L,$ or $\overline{\Pi}_D^R$ instead of $\Pi_D^L$.

\end{proposition}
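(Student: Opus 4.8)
The plan is to reduce every identity to a short diagram chase by first rewriting the four projections in a form built only from the structure morphisms of $D$. By (\ref{landaconvolucion}) one has $\Pi_D^L=\mu_D\co(D\ot\lambda_D)\co\delta_D$ and $\Pi_D^R=\mu_D\co(\lambda_D\ot D)\co\delta_D$, whereas $\overline{\Pi}_D^L$ and $\overline{\Pi}_D^R$ are already presented in terms of $\eta_D$, $\varepsilon_D$, $\mu_D$ and $\delta_D$ alone. Since each projection is assembled from $\mu_D$, $\delta_D$, $\lambda_D$ (and, for the barred ones, $\eta_D$ and $\varepsilon_D$), proving that $M\ot\Pi$ (resp. $\Pi\ot M$) commutes with one of the idempotents $\nabla_{r_M}$, $\nabla_{r_M^\prime}$, $\nabla_{s_M}$, $\nabla_{s_M^\prime}$ amounts to transporting that idempotent successively past $\delta_D$, past $\lambda_D$ when it occurs, and past $\mu_D$, using the compatibilities of Remark \ref{redundante1} together with the antipode conditions (c5).

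I would treat (\ref{newpiLnablarM}) as the model case. Expanding $M\ot\Pi_D^L=(M\ot\mu_D)\co(M\ot D\ot\lambda_D)\co(M\ot\delta_D)$, I start from $\nabla_{r_M}\co(M\ot\Pi_D^L)$ and first apply (\ref{nablabaixapormu}) to pull $\nabla_{r_M}$ through $M\ot\mu_D$, converting it into $\nabla_{r_M}\ot D$ acting on the first two tensor factors. In this expression $\lambda_D$ sits on the third, \emph{passive}, factor, so $\nabla_{r_M}\ot D$ commutes with $M\ot D\ot\lambda_D$ by bare bifunctoriality (both equal $\nabla_{r_M}\ot\lambda_D$); finally (\ref{nablabaixapordelta}) pulls $\nabla_{r_M}\ot D$ through $M\ot\delta_D$, leaving $\nabla_{r_M}$ on the right and recovering $(M\ot\Pi_D^L)\co\nabla_{r_M}$.

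The genuinely delicate case is (\ref{newpiLnablarMprima}). Writing $\Pi_D^L\ot M=(\mu_D\ot M)\co(D\ot\lambda_D\ot M)\co(\delta_D\ot M)$ and using (\ref{nablaprimabaixapormu}) and (\ref{nablaprimabaixapordelta}) to move $\nabla_{r_M^\prime}$ past $\mu_D\ot M$ and $\delta_D\ot M$, one finds that the antipode now lands on a factor that is \emph{active}, i.e. one of the two legs on which $\nabla_{r_M^\prime}$ operates. The middle step therefore requires commuting $\nabla_{r_M^\prime}$ with $\lambda_D\ot M$, which is exactly (c5-2). This is where the antipode compatibility becomes indispensable, and it is the crux of the argument: the only real work is the bookkeeping that decides, for each projection and each idempotent, whether $\lambda_D$ occupies a passive factor (so that bifunctoriality suffices) or an active one (so that the relevant instance of (c5) must be invoked).

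It remains to organise the sixteen statements economically. The two remaining $\Pi_D^L$ identities (\ref{newpiLnablasM}) and (\ref{newpiLnablasMprima}) follow from the two just treated by the symmetry recorded at the end of Remark \ref{redundante1}, namely that $(s_M^\prime,s_M,r_M^\prime,r_M)$ is again an $(M,D)$-WO; this involution interchanges $\nabla_{r_M}\leftrightarrow\nabla_{s_M^\prime}$ and $\nabla_{r_M^\prime}\leftrightarrow\nabla_{s_M}$ and carries the $\lambda_D$-active roles accordingly. The $\Pi_D^R$ family runs through the same chases with the two $D$-factors exchanged, via $\Pi_D^R=\mu_D\co(\lambda_D\ot D)\co\delta_D$, so that now it is the unprimed-idempotent equalities that call upon (c5). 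Finally, for $\overline{\Pi}_D^L$ and $\overline{\Pi}_D^R$ no antipode appears, and the corresponding chases replace each use of (c5) by the unit and counit compatibilities (\ref{nablatrocalado}) and (\ref{epsilontrocalado}), still relying on the multiplication and comultiplication compatibilities of Remark \ref{redundante1} and their primed and $s$-analogues. I expect the principal obstacle to be purely organisational: selecting, in each of the sixteen cases, the correct left/right, primed/unprimed, $r$/$s$ variant of each compatibility identity, and tracking which tensor factor the antipode occupies. Alternatively, one may work directly from the defining formulas for $\Pi_D^L$ and $\Pi_D^R$ containing $t_{D,D}$ and invoke the congruences (\ref{tnablarM})--(\ref{tnablaesprimaM}) of Proposition \ref{nablatypecongruence} in place of the antipode step.
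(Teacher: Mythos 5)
Your treatment of the four displayed identities is essentially the paper's own proof: the authors also expand $\Pi_D^L=\mu_D\co(D\ot\lambda_D)\co\delta_D$ via (\ref{landaconvolucion}), dispatch (\ref{newpiLnablarM}) with exactly (\ref{nablabaixapormu}) and (\ref{nablabaixapordelta}) (their chase $(M\ot \Pi^L_D)\circ \nabla_{r_M}=(M\ot \mu_D)\circ (\nabla_{r_M}\ot \lambda_D)\circ (M\ot \delta_D)=\nabla_{r_M}\circ (M\ot \Pi^L_D)$ is your ``passive factor'' observation in compressed form), and prove (\ref{newpiLnablarMprima}) with (c5-2) together with (\ref{nablaprimabaixapormu}) and (\ref{nablaprimabaixapordelta}) — you correctly identified that this is the only place the antipode compatibility genuinely intervenes, and likewise that in the $\Pi_D^R$ family the active/passive roles swap so that (c5-1) is the one invoked. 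Where you diverge is in the cases the paper leaves implicit. For the $s$-identities the paper just says ``the others are analogous,'' whereas your use of the involution $(r_M,r_M^\prime,s_M,s_M^\prime)\mapsto(s_M^\prime,s_M,r_M^\prime,r_M)$ from Remark \ref{redundante1}, which interchanges $\nabla_{r_M}\leftrightarrow\nabla_{s_M^\prime}$ and $\nabla_{r_M^\prime}\leftrightarrow\nabla_{s_M}$, is a genuine economization that halves the casework and is easily checked to transport (c3) and (c5) correctly. For the barred projections the paper instead appeals to the composition identities (\ref{PiePibarra1}) and (\ref{PiePibarra2}), while you propose a direct chase; be aware that your slogan ``replace (c5) by (\ref{nablatrocalado}) and (\ref{epsilontrocalado})'' understates the work here: since $\overline{\Pi}_D^L$ inserts $\delta_D\co\eta_D$ between the two tensor legs on which the idempotent acts, a bare pull-through breaks adjacency, and the chase must instead route through the characterizations (c3) together with weak-bialgebra identities relating $\overline{\Pi}_D^L$ to $\mu_D$ and $\delta_D$ (e.g. $\mu_D\co(D\ot\overline{\Pi}_D^L)=(D\ot(\varepsilon_D\co\mu_D))\co(\delta_D\ot D)$, which the paper itself uses later in Proposition \ref{yd3}). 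This is a fixable organizational gap, of the same order of terseness as the paper's own one-line disposal of the barred cases, so the proposal stands as a correct plan with a mildly different — and in the $s$-cases cleaner — packaging of the ``analogous'' halves.
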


\begin{proof} We prove (\ref{newpiLnablarM}) and (\ref{newpiLnablarMprima}), being the others analogous. Applying the
definition of $\Pi_D^L$ and the equalities (\ref{nablabaixapormu}) and
(\ref{nablabaixapordelta}) we have:

\begin{itemize}

\item[ ]$\hspace{0.38cm}(M\ot \Pi^L_D)\circ \nabla_{r_M}$
\item[ ]$=(M\ot \mu_D)\circ (\nabla_{r_M}\ot \lambda_D)\circ (M\ot \delta_D)$
\item[ ]$=\nabla_{r_M}\circ (M\ot (\mu_D\circ (D\ot \lambda_D)\circ \delta_D))$
\item[ ]$=\nabla_{r_M}\circ (M\ot \Pi^L_D).$

\end{itemize}

Now by the definition of $\Pi_D^L$, the condition (c5-2) and the equalities
(\ref{nablaprimabaixapormu}) and (\ref{nablaprimabaixapordelta}) we
get:

\begin{itemize}

\item[ ]$\hspace{0.38cm}(\Pi^L_D\ot M)\circ \nabla_{r'_M}$
\item[ ]$=(\mu_D\ot M)\circ (D\ot \lambda_D\ot M)\circ  (D\ot \nabla_{r'_M})\circ
(\delta_D\ot M)$
\item[ ]$=(\mu_D\ot M)\circ (D\ot \nabla_{r'_M})\circ (D\ot \lambda_D\ot M)\circ
(\delta_D\ot M)$
\item[ ]$=\nabla_{r'_M}\circ ((\mu_D\circ (D\ot \lambda_D)\circ \delta_D)\ot M)$
\item[ ]$=\nabla_{r'_M}\circ (\Pi^L_D\ot M).$

\end{itemize}

Analogously we prove:
\[\nabla_{r_M}\circ (M\ot \Pi^R_D)=(M\ot \Pi^R_D)\circ \nabla_{r_M}\text{ and
}\nabla_{r'_M}\circ (\Pi^R_D\ot M)=(\Pi^R_D\ot M)\circ
\nabla_{r'_M}.\] It is now easy to prove the corresponding equalities
for $\overline{\Pi}_D^L$ and $\overline{\Pi}_D^R$ just using
(\ref{PiePibarra1}) and (\ref{PiePibarra2}).

\end{proof}

\begin{proposition}\label{Pibaixapolors} Let $D$ be a WBHA and $M$ any object in
$\CC.$ If $(r_M, r_M^\prime, s_M, s_M^\prime)$ is an $(M,D)$-WO,  it
holds that:

\begin{equation}
\label{newpiLrM}
(\Pi_D^L\ot M)\co r_M=r_M\co(M\ot \Pi_D^L),
\end{equation}

\begin{equation}
\label{newpiLrMprima}
(M\ot\Pi_D^L)\co r_M^\prime=r_M^\prime\co(\Pi_D^L\ot M) ,
\end{equation}

\begin{equation}
\label{newpiLsM}
(M\ot\Pi_D^L)\co s_M=s_M\co(\Pi_D^L\ot M) ,
\end{equation}

\begin{equation}
\label{newpiLsMprima}
(\Pi_D^L\ot M)\co s_M^\prime=s_M^\prime\co(M\ot \Pi_D^L).
\end{equation}

The analogous equalities hold writing either $\Pi_D^R,$
$\overline{\Pi}_D^L,$ or $\overline{\Pi}_D^R$ instead of $\Pi_D^L$.

\end{proposition}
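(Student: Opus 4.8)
The plan is to prove the four stated identities for $\Pi_D^L$ and then obtain the remaining projections by symmetry. First I would halve the work using the involution recorded at the end of Remark \ref{redundante1}: since $(s_M^\prime, s_M, r_M^\prime, r_M)$ is again an $(M,D)$-WO, the substitution $r_M\mapsto s_M^\prime$, $r_M^\prime\mapsto s_M$ carries (\ref{newpiLrM}) into (\ref{newpiLsMprima}) and (\ref{newpiLrMprima}) into (\ref{newpiLsM}). Hence it suffices to establish the two $r$-identities (\ref{newpiLrM}) and (\ref{newpiLrMprima}); the two $s$-identities are then automatic, and the analogous reduction applies for the other three projections.

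For (\ref{newpiLrM}) I would expand $\Pi_D^L$ via its defining composite of $(\delta_D\co\eta_D)$, the weak Yang-Baxter operator $t_{D,D}$ and $(\varepsilon_D\co\mu_D)$ (equivalently its convolution form $\Pi_D^L=\mu_D\co(D\ot\lambda_D)\co\delta_D$ from (\ref{landaconvolucion})). Starting from $r_M\co(M\ot\Pi_D^L)$, I would transport $r_M$ across the apparatus of $\Pi_D^L$ piece by piece: the comultiplication is split by (c4-3), the unit/counit caps $\delta_D\co\eta_D$ and $\varepsilon_D\co\mu_D$ are crossed by the transport identities (\ref{etadeltar}) and (\ref{rmuepsilon}) of Proposition \ref{etadeltaWO}, and the $t_{D,D}$-strand is crossed using the Yang-Baxter compatibilities (c1) and the mixed equations (c2). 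The cancellation laws $r_M=\nabla_{r_M^\prime}\co r_M=r_M\co\nabla_{r_M}$ of Proposition \ref{cancelation} are invoked to absorb the idempotents produced along the way, so that the composite reassembles as $(\Pi_D^L\ot M)\co r_M$. The identity (\ref{newpiLrMprima}) for $r_M^\prime$ is proved in the same manner, starting from $(M\ot\Pi_D^L)\co r_M^\prime$ and using (c4-2), (c4-4) and the $r_M^\prime$-versions of the transport lemmas.

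The remaining projections are treated by the same scheme: for $\Pi_D^R$ one repeats the argument with the source morphism in place of the target, while for $\overline{\Pi}_D^L$ and $\overline{\Pi}_D^R$ the chase is lighter because their definitions involve no $t_{D,D}$, so only (c4) and Proposition \ref{etadeltaWO} are needed; alternatively these overline cases can be deduced from the $\Pi$-identities together with the relations (\ref{PiePibarra1}) and (\ref{PiePibarra2}) and Proposition \ref{PibaixapoloWO}. The main obstacle throughout is that the bare operator $r_M$ has no stand-alone compatibility with $\lambda_D$, nor with a single strand of $t_{D,D}$: condition (c5) only lets the antipode pass across the idempotent $\nabla_{r_M}$, and (c1)--(c2) move $t_{D,D}$ only at the cost of switching $r_M\leftrightarrow r_M^\prime$. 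The computation must therefore be organized so that each antipode (or $t_{D,D}$) strand is transported across an even number of operator crossings, returning to $r_M$; securing this parity together with the accompanying $\nabla$-bookkeeping is the delicate part of the proof.
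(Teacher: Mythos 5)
Your involution reduction is sound and consistent with the paper: since $(s_M^\prime,s_M,r_M^\prime,r_M)$ is again an $(M,D)$-WO (end of Remark \ref{redundante1}), the two $s$-identities do follow formally from the two $r$-identities, and your inventory of transport moves --- the $t_{D,D}$-based expansion of $\Pi_D^L$ (in practice the identity (\ref{deltaPiL})), the crossing identities (\ref{etadeltar}) and (\ref{rmuepsilon}), the conditions (c4), and the exchange law (\ref{rMtrprimaM}) coming from (c1)--(c2) --- is essentially the list the paper uses. Executed faithfully, these moves produce exactly the paper's intermediate ``sandwich'' identity $(M\ot\Pi_D^L)\co\nabla_{r_M}=r_M^\prime\co(\Pi_D^L\ot M)\co r_M$, which indeed needs no antipode compatibility.

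The genuine gap is in your claim that the idempotents produced along the way are ``absorbed by the cancellation laws.'' They are not adjacent to $r_M$ when they appear: after the transport, $\nabla_{r_M}$ and $\nabla_{r_M^\prime}$ sit separated from $r_M$ by $(M\ot\Pi_D^L)$ or $(\Pi_D^L\ot M)$, and Proposition \ref{cancelation} cannot reach across. The paper closes the argument with the chain $(\Pi_D^L\ot M)\co r_M=(\Pi_D^L\ot M)\co\nabla_{r_M^\prime}\co r_M=\nabla_{r_M^\prime}\co(\Pi_D^L\ot M)\co r_M=r_M\co r_M^\prime\co(\Pi_D^L\ot M)\co r_M=r_M\co\nabla_{r_M}\co(M\ot\Pi_D^L)=r_M\co(M\ot\Pi_D^L)$, in which the second and fourth steps are not cancellations but the commutations (\ref{newpiLnablarMprima}) and (\ref{newpiLnablarM}) of Proposition \ref{PibaixapoloWO}. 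The $\nabla_{r_M}$ half of that proposition is innocuous (only (\ref{nablabaixapormu}) and (\ref{nablabaixapordelta}) are needed, since $\lambda_D$ acts on a strand disjoint from $\nabla_{r_M}$), but the $\nabla_{r_M^\prime}$ half genuinely requires (c5-2), because in $\Pi_D^L=id_D\wedge\lambda_D$ the antipode strand overlaps the strands on which $\nabla_{r_M^\prime}$ acts. Your proposal invokes Proposition \ref{PibaixapoloWO} only as an optional shortcut for the overline projections, and your closing sentence explicitly defers the ``$\nabla$-bookkeeping'' as the delicate unresolved part --- but that bookkeeping is precisely where (c5) must enter and where cancellation alone fails, so as written the plan does not close. (A minor additional point: with the $t_{D,D}$-expansion of $\Pi_D^L$ there is no antipode strand at all, so your parity concern about transporting $\lambda_D$ is misplaced; the real issue is the one above.)
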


\begin{proof}

We will show (\ref{newpiLrM}). Firstly note that

\begin{itemize}

\item[ ]$\hspace{0.38cm}  (M\ot \Pi_D^L)\co\nabla_{r_M}$
\item[ ] $=((\varepsilon_D\ot M)\co r_M\ot\Pi_D^L)\co(M\ot\delta_D)$
\item[ ] $=
(((\varepsilon_D\ot M)\co r_M)\ot D)\co(M\ot\mu_D\ot D)\co(M\ot D\ot
t_{D,D})\co(M\ot(\delta_D\co\eta_D)\ot D)$
\item[ ] $=((\varepsilon_D\co\mu_D)\ot M\ot D)\co(D\ot r_M\ot D)\co(r_M\ot
t_{D,D})\co(M\ot(\delta_D\co\eta_D)\ot D)$
\item[ ] $=((\varepsilon_D\co\mu_D)\ot M\ot D)\co(D\ot r_M\ot D)\co(D\ot M\ot
t_{D,D})\co (D\ot r_M^\prime\ot D)$
\item[]$\hspace{0.38cm}\co((\delta_D\co\eta_D)\ot M\ot D)$
\item[ ] $=
((\varepsilon_D\co\mu_D)\ot r_M^\prime)\co(D\ot t_{D,D}\ot
M)\co((\delta_D\co\eta_D)\ot r_M)$
\item[ ] $=r_M^\prime\co(\Pi_D^L\ot M)\co r_M.$

\end{itemize}

In the above calculations, we applied (c3-1), the equality

\begin{equation}\label{deltaPiL}
(D\ot \Pi_{D}^{L})\co \delta_{D}=(\mu_{D}\ot D)\co (D\ot t_{D,D})\co
((\delta_{D}\circ \eta_{D})\ot D),
\end{equation}

the condition (c4-1) and the equalities (\ref{etadeltar}) and (\ref{rMtrprimaM}).

Hence by (\ref{newpiLnablarM}) it holds that:

\begin{equation}
(M\ot \Pi_D^L)\co\nabla_{r_M}=r_M^\prime\co(\Pi_D^L\ot M)\co
r_M=\nabla_{r_M}\co(M\ot \Pi_D^L).
\end{equation}

Now, applying the definition of $\nabla_{r_M^\prime}$, the equality  (\ref{newpiLnablarMprima})  and  part (ii) of
of Proposition \ref{cancelation} we get:

\begin{itemize}

\item[ ]$\hspace{0.38cm} (\Pi_D^L\ot M)\co r_M$
\item[ ] $=(\Pi_D^L\ot M)\co \nabla_{r_M^\prime}\co r_M$
\item[ ]$=\nabla_{r_M^\prime}\co(\Pi_D^L\ot M)\co r_M $
\item[ ]$= r_M\co r_M^\prime\co(\Pi_D^L\ot M)\co r_M$
\item[ ]$=r_M\co\nabla_{r_M}\co(M\ot\Pi_D^L)$
\item[ ] $=r_M\co(M\ot\Pi_D^L).$

\end{itemize}

\end{proof}

\begin{proposition}\label{lambabaixapoloWO} In the hypothesis of Proposition
\ref{Pibaixapolors}, it holds that:

\begin{equation}
\label{newlambdarM}
(\lambda_D\ot M)\co r_M=r_M\co(M\ot\lambda_D),
\end{equation}

\begin{equation}
\label{newlambdarMprima}
 (M\ot\lambda_D)\co r_M^\prime=r_M^\prime\co(\lambda_D\ot M),
\end{equation}

\begin{equation}
\label{newlambdasM}
 (M\ot\lambda_D)\co s_M=s_M\co(\lambda_D\ot M),
 \end{equation}

\begin{equation}
\label{newlambdasMprima}
(\lambda_D\ot M)\co s_M^\prime=s_M^\prime\co(M\ot\lambda_D).
\end{equation}

If $\lambda_D$ is an isomorphism all the corresponding  equalities
obtained writing $\lambda_D^{-1}$ instead of $\lambda_D$ are also
verified.

\end{proposition}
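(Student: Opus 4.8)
The plan is to repackage both sides of (\ref{newlambdarM}) as the images of $\lambda_D$ under two homomorphisms of convolution monoids that Proposition \ref{Pibaixapolors} forces to coincide, and then to invoke the uniqueness of the antipode. First I introduce on $\mathrm{Hom}_\CC(M\ot D, D\ot M)$ the twisted convolution
\[\alpha\star\beta=(\mu_D\ot M)\co(D\ot\beta)\co(\alpha\ot D)\co(M\ot\delta_D),\]
which is associative by coassociativity of $\delta_D$ and associativity of $\mu_D$ (the tensor factor $M$ simply threads sequentially through $\alpha$ and then $\beta$). I then consider the two maps $\Phi,\Psi:(\mathrm{Hom}_\CC(D,D),\wedge)\rightarrow(\mathrm{Hom}_\CC(M\ot D,D\ot M),\star)$ given by $\Phi(f)=(f\ot M)\co r_M$ and $\Psi(f)=r_M\co(M\ot f)$. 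A short computation shows $\Phi(f\wedge g)=\Phi(f)\star\Phi(g)$ using (c4-3), and $\Psi(f\wedge g)=\Psi(f)\star\Psi(g)$ using (c4-1); in both cases the only nontrivial move is sliding $f$ and $g$ past a copy of $r_M$ acting on disjoint tensor factors.

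The decisive observation is that $\Phi$ and $\Psi$ agree on the morphisms through which $\lambda_D$ is characterized. Indeed $\Phi(id_D)=\Psi(id_D)=r_M$ trivially, while $\Phi(\Pi_D^L)=\Psi(\Pi_D^L)$ and $\Phi(\Pi_D^R)=\Psi(\Pi_D^R)$ are precisely (\ref{newpiLrM}) and its $\Pi_D^R$-analogue from Proposition \ref{Pibaixapolors}. Writing $L=\Phi(\lambda_D)=(\lambda_D\ot M)\co r_M$ and $R=\Psi(\lambda_D)=r_M\co(M\ot\lambda_D)$ and transporting the relations $id_D\wedge\lambda_D=\Pi_D^L$ and $\lambda_D\wedge id_D=\Pi_D^R$ of (\ref{landaconvolucion}) through $\Phi$ and $\Psi$, I get $r_M\star L=\Phi(\Pi_D^L)=\Psi(\Pi_D^L)=r_M\star R$ and $L\star r_M=\Phi(\Pi_D^R)=\Psi(\Pi_D^R)=R\star r_M$. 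Since (b7-3) gives $\lambda_D\wedge id_D\wedge\lambda_D=\lambda_D$, the standard antipode-uniqueness computation then closes:
\[L=L\star r_M\star L=L\star(r_M\star R)=(L\star r_M)\star R=(R\star r_M)\star R=R\star r_M\star R=R,\]
which is exactly (\ref{newlambdarM}).

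Identity (\ref{newlambdarMprima}) follows from the identical argument run with the twisted convolution on $\mathrm{Hom}_\CC(D\ot M,M\ot D)$ and the homomorphisms $f\mapsto(M\ot f)\co r_M^\prime$, $f\mapsto r_M^\prime\co(f\ot M)$, now using (c4-2), (c4-4) and the $r_M^\prime$-part of Proposition \ref{Pibaixapolors}. For the two remaining identities I would avoid repeating the work by appealing to the symmetry recorded at the end of Remark \ref{redundante1}: since $(s_M^\prime,s_M,r_M^\prime,r_M)$ is again an $(M,D)$-WO, the already-proven statements (\ref{newlambdarM}) and (\ref{newlambdarMprima}) applied to it are exactly (\ref{newlambdasMprima}) and (\ref{newlambdasM}). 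Finally, if $\lambda_D$ is invertible, composing (\ref{newlambdarM}) on the left with $\lambda_D^{-1}\ot M$ and on the right with $M\ot\lambda_D^{-1}$ yields the $\lambda_D^{-1}$-version at once, and likewise for the other three.

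I expect the only real obstacle to be bookkeeping rather than ideas: verifying associativity of $\star$ and the two homomorphism identities are routine string-diagram manipulations, but they demand care in tracking which tensor factor each of $\mu_D$, $\delta_D$, $r_M$, $f$ and $g$ acts on, and in checking that every crossing slide is between disjoint factors. Conceptually nothing is left to chance once the framework is in place, because Proposition \ref{Pibaixapolors} is, almost by definition, the statement that $\Phi$ and $\Psi$ cannot be distinguished on the idempotents $\Pi_D^L$ and $\Pi_D^R$ that pin down $\lambda_D$ in the convolution monoid.
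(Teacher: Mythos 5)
Your proposal is correct, and it is worth noting that its computational core coincides with the paper's: both arguments ultimately run the uniqueness-of-antipode scheme, with Proposition \ref{Pibaixapolors} supplying the statement that $\Pi_D^L$ and $\Pi_D^R$ slide through $r_M$, the compatibilities (c4-1), (c4-3) letting $\mu_D$ and $\delta_D$ pass through, and the weak identities (\ref{landaconvolucion})/(b7-3) replacing the missing convolution unit. The difference is one of organization, and it is genuine. The paper unwinds the argument as a single explicit eight-step chain, starting from $\lambda_D=\lambda_D\wedge\Pi_D^L$, threading everything through $r_M$, and closing with $\Pi_D^R\wedge\lambda_D=\lambda_D$; it then declares the proofs for $r_M^\prime$, $s_M$ and $s_M^\prime$ to be ``similar'', i.e.\ it tacitly redoes three analogous computations. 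You instead package both sides as images of $\lambda_D$ under the semigroup homomorphisms $\Phi(f)=(f\ot M)\co r_M$ and $\Psi(f)=r_M\co(M\ot f)$ into the twisted convolution $\star$, and derive $L=R$ formally from $L\star r_M\star L=L$, $R\star r_M\star R=R$ and the agreement of $\Phi,\Psi$ on $id_D$, $\Pi_D^L$, $\Pi_D^R$; I checked that $\star$ is associative, that the homomorphism identities follow from (c4-3) and (c4-1) exactly as you claim (the slides are all between disjoint tensor factors), and that the chain $L=L\star(r_M\star R)=(R\star r_M)\star R=R$ is legitimate. What your route buys is twofold: the verification is done once and then transported, and the $s$-identities come for free from the symmetry $(r_M,r_M^\prime,s_M,s_M^\prime)\leftrightarrow(s_M^\prime,s_M,r_M^\prime,r_M)$ of Remark \ref{redundante1} (note that Proposition \ref{Pibaixapolors} applies verbatim to the reversed quadruple, and that (c4-3) for it is (c4-8) for the original, so this reduction is sound); what the paper's route buys is an elementary, self-contained computation with no auxiliary structure. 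One cosmetic caveat: $(\mathrm{Hom}_{\CC}(M\ot D,D\ot M),\star)$ is only a semigroup --- $\Phi(\eta_D\co\varepsilon_D)$ need not be a $\star$-unit in the weak setting --- but your argument never uses a unit, precisely because (b7-3) stands in for it; this is the reason the classical uniqueness trick survives here. Your treatment of the $\lambda_D^{-1}$ statement, by pre- and post-composing with $\lambda_D^{-1}$, is exactly the paper's.
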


\begin{proof}
To deduce (\ref{newlambdarM}) we can write:

\begin{itemize}

\item[ ]$\hspace{0.38cm}(\lambda_D\ot M)\circ r_M$
\item[ ]$=(\lambda_D\wedge \Pi^L_D\ot M)\circ r_M$
\item[ ]$=((\mu_D\circ (\lambda_D\ot \Pi^L_D))\ot M)\circ (D\ot r_M)\circ (r_M\ot
D)\circ (M\ot \delta_D)$
\item[ ]$=(\mu_D\ot M)\circ (\lambda_D\ot r_M)\circ (r_M\ot \Pi^L_D)\circ (M\ot
\delta_D)$
\item[ ]$=(\mu_D\ot M)\circ (\lambda_D\ot r_M)\circ (r_M\ot \mu_D)\circ (M\ot
\delta_D\ot \lambda_D)\circ (M\ot \delta_D)$
\item[ ]$=(\mu_D\ot M)\circ (\mu_D\ot r_M)\circ (\lambda_D\ot r_M\ot D)\circ (r_M\ot
D\ot \lambda_D)\circ (M\ot \delta_D\ot D)\circ
    (M\ot \delta_D)$
\item[ ]$=(\mu_D\ot M)\circ (\Pi^R_D\ot r_M)\circ (r_M\ot \lambda_D)\circ (M\ot
\delta_D)$
\item[ ]$=r_M\circ (M\ot \Pi^R_D\wedge \lambda_D)$
\item[ ]$=r_M\circ (M\ot \lambda_D),$

\end{itemize}

In the preceding calculations, the first, fourth and eighth
equalities rely on the definition of WBB,
the second, fifth and
sixth on (c4), and the third and seventh ones follow by Proposition
\ref{Pibaixapolors}.

In a similar way we obtain the equality for $r_M^\prime$, $s_M$ and $s_M^\prime$. Finally, by composing with $\lambda_D^{-1}$
 we get the similar equalities involving the inverse of the antipode.

 \end{proof}

\begin{corollary}\label{nablacontes} Let $D$ be a WBHA with invertible antipode and $M$ any object in $\CC$.
 If $(r_M, r_M^\prime, s_M,
s_M^\prime)$ is an $(M,D)$-WO, the following equalities hold:

\begin{itemize}

\item[(i)]

\begin{equation}
\label{newnablarMtDmu}
\nabla_{r_M}=(M\ot(\mu_D\co t_{D,D}))\co (r_M^\prime\ot D)\co(\eta_D\ot M\ot D),
\end{equation}

\begin{equation}
\label{newnablarMtDprimamu}
\nabla_{r_M}=(M\ot(\mu_D\co t^{\prime}_{D,D}))\co (r_M^\prime\ot
D)\co(\eta_D\ot M\ot D),
\end{equation}

\begin{equation}
\label{newnablarMtDdelta}
\nabla_{r_M}=(\varepsilon_D\ot M\ot D)\co (r_M\ot
D)\co(M\ot(t_{D,D}\co\delta_D)),
\end{equation}

\begin{equation}
\label{newnablarMtDprimadelta}
\nabla_{r_M}=(\varepsilon_D\ot M\ot D)\co (r_M\ot
D)\co(M\ot(t^{\prime}_{D,D}\co\delta_D)).
\end{equation}

\item[(ii)]

\begin{equation}
\label{newnablarMprimatDmu}
\nabla_{r_M^\prime}=((\mu_D\co t_{D,D})\ot M)\co (D\ot r_M)\co(D\ot M\ot
\eta_D),
\end{equation}

\begin{equation}
\label{newnablarMprimatDprimamu}
\nabla_{r_M^\prime}=((\mu_D\co t^{\prime}_{D,D})\ot M)\co (D\ot r_M)\co(D\ot
M\ot \eta_D),
\end{equation}

\begin{equation}
\label{newnablarMprimatDdelta}
\nabla_{r_M^\prime}=(D\ot M\ot \varepsilon_D)\co (D\ot r_M^\prime)\co((t_{D,D}\co\delta_D)\ot M),
\end{equation}

\begin{equation}
\label{newnablarMprimatDprimadelta}
\nabla_{r_M^\prime}=(D\ot M\ot \varepsilon_D)\co (D\ot r_M^\prime)\co(( t^{\prime}_{D,D}\co\delta_D)\ot M).
\end{equation}

\item[(iii)]

\begin{equation}
\label{newnablasMtDmu}
\nabla_{s_M}=((\mu_D\co t_{D,D})\ot M)\co (D\ot s_M^\prime)\co(D\ot M\ot
\eta_D),
\end{equation}

\begin{equation}
\label{newnablasMtDprimamu}
\nabla_{s_M}=((\mu_D\co t^{\prime}_{D,D})\ot M)\co (D\ot s_M^\prime)\co(D\ot
M\ot \eta_D),
\end{equation}

\begin{equation}
\label{newnablasMtDdelta}
\nabla_{s_M}=(D\ot M\ot \varepsilon_D)\co (D\ot s_M)\co((t_{D,D}\co\delta_D)\ot M),
\end{equation}

\begin{equation}
\label{newnablasMtDprimadelta}
\nabla_{s_M}=(D\ot M\ot \varepsilon_D)\co (D\ot s_M)\co(( t^{\prime}_{D,D}\co\delta_D)\ot M).
\end{equation}

\item[(iv)]

\begin{equation}
\label{newnablasMprimatDmu}
\nabla_{s_M^\prime}=(M\ot(\mu_D\co t_{D,D}))\co (s_M\ot D)\co(\eta_D\ot M\ot D),
\end{equation}

\begin{equation}
\label{newnablasMprimatDprimamu}
\nabla_{s_M^\prime}=(M\ot(\mu_D\co t^{\prime}_{D,D}))\co (s_M\ot
D)\co(\eta_D\ot M\ot D),
\end{equation}

\begin{equation}
\label{newnablasMprimatDdelta}
\nabla_{s_M^\prime}=(\varepsilon_D\ot M\ot D)\co (s_M^\prime\ot
D)\co(M\ot(t_{D,D}\co\delta_D)),
\end{equation}

\begin{equation}
\label{newnablasMprimatDprimadelta}
\nabla_{s_M^\prime}=(\varepsilon_D\ot M\ot D)\co (s_M^\prime\ot
D)\co(M\ot(t^{\prime}_{D,D}\co\delta_D)).
\end{equation}

\end{itemize}

\end{corollary}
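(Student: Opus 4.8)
The plan is to first cut the sixteen identities down to a manageable core. By Remark \ref{redundante1} the quadruple $(s_M^\prime, s_M, r_M^\prime, r_M)$ is again an $(M,D)$-WO, and under this relabelling $\nabla_{r_M}$ becomes $\nabla_{s_M^\prime}$ and $\nabla_{r_M^\prime}$ becomes $\nabla_{s_M}$, with $t_{D,D}$ and $t^\prime_{D,D}$ untouched; one checks that each equation of (i) is carried verbatim onto the corresponding equation of (iv) (for instance (\ref{newnablarMtDmu}) onto (\ref{newnablasMprimatDmu})) and (ii) onto (iii). Hence it suffices to treat (i) and (ii). For these I would argue from the two characterizations of $\nabla_{r_M}$ in (c3-1) and of $\nabla_{r_M^\prime}$ in (c3-2): each target identity differs from the relevant clause of (c3) only by an inserted $t_{D,D}$ (or $t^\prime_{D,D}$) placed immediately before $\mu_D$ in the $\mu$-versions, or immediately after $\delta_D$ in the $\delta$-versions. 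So the whole problem is to show that this insertion is harmless.

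The mechanism that does this is the antipode. Since $\lambda_D$ is invertible, the antimultiplicativity (\ref{ant-mu}) and anticomultiplicativity (\ref{ant-delta}) can be rewritten as $\mu_D\co t_{D,D}=\lambda_D\co\mu_D\co(\lambda_D^{-1}\ot\lambda_D^{-1})$ and $t_{D,D}\co\delta_D=(\lambda_D^{-1}\ot\lambda_D^{-1})\co\delta_D\co\lambda_D$, together with the companion identities for the inverse antipode, $\mu_D\co t^\prime_{D,D}=\lambda_D^{-1}\co\mu_D\co(\lambda_D\ot\lambda_D)$ and $t^\prime_{D,D}\co\delta_D=(\lambda_D\ot\lambda_D)\co\delta_D\co\lambda_D^{-1}$ (these are the $\lambda_D^{-1}$-versions of \cite{IND}, available precisely because $\lambda_D$ is an isomorphism). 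To prove, say, (\ref{newnablarMtDmu}) I substitute the first of these into the right-hand side. Of the two antipodes so produced, the one sitting on the $D$-leg coming out of $r_M^\prime$ can be pushed through $r_M^\prime$ by the weak-operator/antipode compatibility of Proposition \ref{lambabaixapoloWO} (here the inverse form of (\ref{newlambdarMprima})), whereupon it reaches the unit and disappears via $\lambda_D^{-1}\co\eta_D=\eta_D$ from (\ref{ant-delta-epsilon}); the other antipode remains on the free $D$-leg. Recognizing the second form of (c3-1), the right-hand side collapses to $(M\ot\lambda_D)\co\nabla_{r_M}\co(M\ot\lambda_D^{-1})$, which equals $\nabla_{r_M}$ by the commutation (c5-1). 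The $\delta$-versions are dual: the freed antipode is transported through the weak operator onto the counit and killed by $\varepsilon_D\co\lambda_D^{-1}=\varepsilon_D$, using the first form of (c3-1). The four statements about $\nabla_{r_M^\prime}$ run identically but invoking (c3-2) and the matching clauses (\ref{newlambdarM})/(\ref{newlambdarMprima}) and (c5-2), while each $t^\prime_{D,D}$-version simply repeats the computation with the roles of $\lambda_D$ and $\lambda_D^{-1}$ interchanged.

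The step needing the most care is the bookkeeping of the two $D$-legs. When $\mu_D\co t_{D,D}$ (or $t_{D,D}\co\delta_D$) is rewritten, two antipodes appear, and one must verify that exactly one of them is the leg attached to the weak-operator morphism, so that it can be slid onto the unit or counit, while the other becomes the outer conjugating factor that the appropriate clause of (c5) absorbs. Choosing the wrong one of (\ref{newlambdarM})--(\ref{newlambdasMprima}), or the wrong clause of (c5), would leave an irreducible $\mu_D\co(\lambda_D^{-1}\ot D)$ that does not simplify, so matching the correct compatibility to each of $r_M,r_M^\prime,s_M,s_M^\prime$ is the real content. Beyond this the argument is purely formal rewriting, requiring no structural input past the cancellation laws of Proposition \ref{cancelation} and the antipode compatibilities of Proposition \ref{lambabaixapoloWO}.
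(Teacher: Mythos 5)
Your proposal is correct and follows essentially the paper's own argument: both proofs hinge on conjugating $\nabla_{r_M}$ (resp. $\nabla_{r_M^\prime}$) by $\lambda_D^{\pm1}$ via (c5), invoking the two characterizations in (c3), rewriting $\mu_D\co t_{D,D}$ and $t_{D,D}\co\delta_D$ through the (anti)(co)multiplicativity of the antipode (\ref{ant-mu})--(\ref{ant-delta}) and their $t^{\prime}_{D,D}$/$\lambda_D^{-1}$ companions, sliding the residual antipode through the weak operator by Proposition \ref{lambabaixapoloWO}, and killing it against $\eta_D$ or $\varepsilon_D$ via (\ref{ant-delta-epsilon}). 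Your only deviations are cosmetic: you run the computation from right-hand side to left-hand side, and you explicitly cut (iii) and (iv) down to (i) and (ii) using the symmetry $(r_M,r_M^\prime,s_M,s_M^\prime)\mapsto(s_M^\prime,s_M,r_M^\prime,r_M)$ of Remark \ref{redundante1}, where the paper simply asserts that the remaining identities follow by the same pattern.
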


\begin{proof}
Using Proposition \ref{lambabaixapoloWO}, (c3) and the properties of the antipode and its inverse we get (\ref{newnablarMtDmu}):

\begin{itemize}

\item[ ]$\hspace{0.38cm}\nabla_{r_M}=$
\item[ ]$=(M\ot \lambda_D)\circ \nabla_{r_M}\circ (M\ot \lambda_D^{-1})$
\item[ ]$=(M\ot (\lambda_D\circ \mu_D))\circ (r'_M\ot \lambda_D^{-1})\circ
(\eta_D\ot M\ot D)$
\item[ ]$=(M\ot (\mu_D\circ t_{D,D}))\circ (M\ot \lambda_D\ot D)\circ (r'_M\ot
D)\circ (\eta_D\ot M\ot D)$
\item[ ]$=(M\ot (\mu_D\co t_{D,D}))\circ (r'_M\ot D)\circ (\eta_D\ot M\ot D).$

\end{itemize}

In a similar way we obtain (\ref{newnablarMtDprimamu}):

\begin{itemize}

\item[ ]$\hspace{0.38cm}\nabla_{r_M}$
\item[ ]$=(M\ot \lambda_D^{-1})\circ \nabla_{r_M}\circ (M\ot \lambda_D)$
\item[ ]$=(\varepsilon_D\ot M\ot D)\circ (r_M\ot \lambda_D^{-1})\circ (M\ot
(\delta_D\circ \lambda_D))$
\item[ ]$=(\varepsilon_D\ot M\ot D)\circ (r_M\ot D)\circ (M\ot \lambda_D\ot D)\circ
(M\ot (t_{D,D}\circ \delta_D))$
\item[ ]$=(\varepsilon_D\ot M\ot D)\circ (r_M\ot D)\circ (M\ot (t_{D,D}\co\delta_D)).$

\end{itemize}

The remaining equalities can be proved following
the same pattern, composing with $\lambda_D$ and $\lambda_D^{-1}$ in
the suitable order at convenience.
 \end{proof}

\section{The  category of Yetter-Drinfeld modules}
In this section  the category of left-left Yetter-Drinfeld modules
over an arbitrary WBHA $D$ is defined. We deal with WBHA's in a
monoidal  category $\CC$ that is not assumed to be equipped
with a braiding. In this situation, the first task consists on
giving  a suitable definition of Yetter-Drinfeld module such that we
recovered the classic one in the particular case of modules over a
Hopf algebra in a symmetric category as it appears in \cite{RAD},
and also the generalization of the preceding one to the weak Hopf
algebra case introduced in \cite{BNS}.

In the definition of $(M,D)$-WO, we have only
considered a WBHA $D$, while $M$ was simply an arbitrary object of
the  monoidal category. It will be now discussed how the
notion of $(M,D)$-WO can be enriched when the object $M$ is also
equipped with an algebraic structure.

\begin{lemma}\label{extracond}
 Let $D$ be a WBHA, $M$ in $\CC$ and $(r_M, r_M^\prime, s_M, s_M^\prime )$ an $(M,D)$-WO. It holds that:

\begin{itemize}

\item[(i)] If $(M,\varphi_M)$ is a left $D$-module then

\begin{itemize}

\item[(i-1)]

$\varphi_M=\varphi_M\co\nabla_{s_M}$ iff $\varphi_M\co s_M^\prime\co(M\ot \eta_D)=id_M,$

\item[(i-2)]

$\varphi_M=\varphi_M\co\nabla_{r_M^\prime}$ iff $\varphi_M\co r_M\co(M\ot \eta_D)=id_M.$

\end{itemize}

\item[(ii)]  If $(M,\varrho_M)$ is a left $D$-comodule then

\begin{itemize}

\item[(ii-1)]

$\varrho_M=\nabla_{s_M}\co\varrho_M$ iff $ (M\ot\varepsilon_D)\co s_M\co\varrho_M=id_M,$

\item[(ii-2)] $\varrho_M=\nabla_{r_M^\prime}\co\varrho_M$ iff $ (M\ot\varepsilon_D)\co r_M^\prime\co\varrho_M=id_M.$

\end{itemize}

\end{itemize}

\end{lemma}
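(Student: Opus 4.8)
The plan is to dispatch the four biconditionals one at a time, the key being to feed each idempotent the \emph{right} one of its two factorizations from (c3). For the module statement (i) I would use the factorizations through $\mu_D$, namely $\nabla_{s_M}=(\mu_D\ot M)\co(D\ot(s_M^\prime\co(M\ot\eta_D)))$ from (c3-3) and $\nabla_{r_M^\prime}=(\mu_D\ot M)\co(D\ot(r_M\co(M\ot\eta_D)))$ from (c3-2); for the comodule statement (ii) I would instead use the factorizations through $\delta_D$, namely $\nabla_{s_M}=(D\ot((M\ot\varepsilon_D)\co s_M))\co(\delta_D\ot M)$ and $\nabla_{r_M^\prime}=(D\ot((M\ot\varepsilon_D)\co r_M^\prime))\co(\delta_D\ot M)$, since these are the descriptions that cooperate respectively with the module and the comodule structure.

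For the ``if'' direction of (i-1) I would compute $\varphi_M\co\nabla_{s_M}$ by inserting the (c3-3) factorization and then collapsing the product with the module associativity axiom $\varphi_M\co(\mu_D\ot M)=\varphi_M\co(D\ot\varphi_M)$, which rewrites the composite as $\varphi_M\co(D\ot(\varphi_M\co s_M^\prime\co(M\ot\eta_D)))$; the hypothesis $\varphi_M\co s_M^\prime\co(M\ot\eta_D)=id_M$ then yields $\varphi_M\co\nabla_{s_M}=\varphi_M$ at once. The ``if'' direction of (i-2) is identical with the $r$-operators in place of the $s$-operators and (c3-2) in place of (c3-3). For the comodule cases the same idea is applied dually: inserting the $\delta_D$-factorization and collapsing the coproduct by the comodule coassociativity $(\delta_D\ot M)\co\varrho_M=(D\ot\varrho_M)\co\varrho_M$ rewrites $\nabla_{s_M}\co\varrho_M$ as $(D\ot((M\ot\varepsilon_D)\co s_M\co\varrho_M))\co\varrho_M$, so that the hypothesis $(M\ot\varepsilon_D)\co s_M\co\varrho_M=id_M$ reduces it to $\varrho_M$, and analogously for (ii-2).

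For the ``only if'' directions the strategy is to precompose (module case) with $\eta_D\ot M$ or postcompose (comodule case) with $\varepsilon_D\ot M$ and then invoke the switch identities already recorded in Remark \ref{redundante1}. Thus, assuming $\varphi_M=\varphi_M\co\nabla_{s_M}$, precomposition with $\eta_D\ot M$ together with $\nabla_{s_M}\co(\eta_D\ot M)=s_M^\prime\co(M\ot\eta_D)$ from (\ref{nablatrocaladoparas}) turns the module unit axiom $\varphi_M\co(\eta_D\ot M)=id_M$ into exactly $\varphi_M\co s_M^\prime\co(M\ot\eta_D)=id_M$; for (i-2) one uses instead $\nabla_{r_M^\prime}\co(\eta_D\ot M)=r_M\co(M\ot\eta_D)$ from (\ref{nablatrocalado}). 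In the comodule cases, postcomposing $\varrho_M=\nabla_{s_M}\co\varrho_M$ (resp.\ $\varrho_M=\nabla_{r_M^\prime}\co\varrho_M$) with $\varepsilon_D\ot M$, using the counit axiom $(\varepsilon_D\ot M)\co\varrho_M=id_M$, and applying $(\varepsilon_D\ot M)\co\nabla_{s_M}=(M\ot\varepsilon_D)\co s_M$ from (\ref{epsilontrocaladoparas}) (resp.\ $(\varepsilon_D\ot M)\co\nabla_{r_M^\prime}=(M\ot\varepsilon_D)\co r_M^\prime$ from (\ref{epsilontrocalado})) produces the asserted identities.

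I do not expect a genuine obstacle: after the correct factorization is selected, every step is a single application of a (co)module axiom or of an identity already proved in Remark \ref{redundante1}, and the only real care is bookkeeping the side on which $\eta_D$, $\varepsilon_D$, $\mu_D$, $\delta_D$ act. The mild subtlety is simply to pair the correct (c3) factorization with the correct switch identity in each of the four cases; I would therefore write out (i-1) and (ii-1) in full and note that (i-2) and (ii-2) are the exact analogues obtained with the $r$-operators in place of the $s$-operators.
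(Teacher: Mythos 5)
Your proposal is correct and takes essentially the same route as the paper: each direction of each biconditional is settled by inserting the appropriate (c3) factorization of the idempotent and collapsing with the relevant (co)module axiom, exactly as in the paper's computation for (i-1), with the remaining cases following by the same pattern. Your shortcut for the ``only if'' directions via the switch identities (\ref{nablatrocalado}), (\ref{nablatrocaladoparas}), (\ref{epsilontrocalado}) and (\ref{epsilontrocaladoparas}) of Remark \ref{redundante1} is only a prepackaged form of the paper's own argument, since those identities are themselves immediate consequences of (c3) composed with the unit or counit.
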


\begin{proof} For (i-1), to prove the direct implication, using the hypothesis,  (c3)  and the module condition, we have

\begin{itemize}

\item[ ]$\hspace{0.38cm}\varphi_M$
\item[ ]$= \varphi_M\co(\mu_D\ot M)\co(D\ot(s_M^\prime\co (M\ot\eta_D)))$
\item[ ]$= \varphi_M\circ (D\ot (\varphi_M\circ s_M^\prime\co(M\ot\eta_D))),$

\end{itemize}

so composing with $\eta_D\ot M$ the desired equality follows.

On the other hand, if $\varphi_M\co s_M^\prime\co(M\ot \eta_D)=id_M$
then

\begin{itemize}

\item[ ]$\hspace{0.38cm}\varphi_M\co\nabla_{s_M}$
\item[ ]$= \varphi_M\co(\mu_D\ot M)\co(D\ot(s_M^\prime\co (M\ot\eta_D)))$
\item[ ]$=\varphi_M\circ (D\ot (\varphi_M\circ s_M^\prime\co(M\ot\eta_D)))$
\item[ ]$=\varphi_M,$

\end{itemize}
and we obtain the opposite implication. The other statements follow
similarly using (c3) at convenience.
 \end{proof}

Now we introduce  the notion of weak operator compatible with a
(co)module structure of $M$.

\begin{definition}\label{WOmodulecompatible}
Let $D$ be a WBHA,  $M$ an object of $\CC$ and  $(r_M, r_M^\prime,
s_M, s_M^\prime)$ an $(M,D)$-WO.

\begin{itemize}

\item[(i)]

If $(M,\varphi_M)$ is a left $D$-module, the $(M,D)$-WO is said to be
compatible with the $D$-module structure provided that it satisfies:

\begin{itemize}

\item[(i-1)] $r_M\co(\varphi_M\ot D)=(D\ot\varphi_M)\co(t_{D,D}\ot M)\co( D\ot r_M),$

\item[(i-2)] $r_M^\prime\co(D\ot\varphi_M)=(\varphi_M\ot D)\co(D\ot
r_M^\prime)\co(t^{\prime}_{D,D}\ot M),$

\item[(i-3)] $s_M^\prime\co(\varphi_M\ot D)=(D\ot\varphi_M)\co(t^{\prime}_{D,D}\ot M)\co( D\ot s_M^\prime),$

\item[(i-4)] $s_M\co(D\ot\varphi_M)=(\varphi_M\ot D)\co(D\ot
s_M)\co(t_{D,D}\ot M).$

\end{itemize}

\item[(ii)]
If $(M,\varrho_M)$ is a left $D$-comodule, the $(M,D)$-WO is said to be
compatible with the $D$-comodule structure provided that it
satisfies:

\begin{itemize}

\item[(ii-1)] $(D\ot\varrho_M)\co r_M=(t_{D,D}\ot M)\co(D\ot r_M)\co(\varrho_M\ot D),$

\item[(ii-2)] $(\varrho_M\ot D)\co r_M^\prime=(D\ot r_M^\prime)\co(t^{\prime}_{D,D}\ot
M)\co(D\ot\varrho_M),$

\item[(ii-3)] $(D\ot\varrho_M)\co s_M^\prime=(t^{\prime}_{D,D}\ot M)\co(D\ot s_M^\prime)\co(\varrho_M\ot D),$

\item[(ii-4)]$(\varrho_M\ot D)\co s_M=(D\ot s_M)\co(t_{D,D}\ot
M)\co(D\ot\varrho_M).$

\end{itemize}

\end{itemize}

\end{definition}

Notice that in the particular case of $\CC$ being a braided category with braiding $c$
the conditions trivialize because of $t_{D,D}=c_{D,D}$, $t^{\prime}_{D,D}=c^{-1}_{D,D}$, $r_M=c_{M,D}$,
$r_M^{\prime}=c^{-1}_{M,D}$, $s_M=c_{D,M}$ and $s_M^{\prime}=c^{-1}_{D,M}$. Then in that context the compatibility is not a
restriction.

\begin{definition}\label{defl-lYD}
 Let $D$ be a WBHA. We say that
$(M,\varphi_M,\varrho_M)$ is a left-left Yetter-Drinfeld module over
$D$  if $(M,\varphi_M)$ is a left $D$-module,  $(M,\varrho_M)$ is a
left $D$-comodule and:

\begin{itemize}

\item[(yd1)]
$\varrho_M=(\mu_D\ot\varphi_M)\co(D\ot t_{D,D}\ot
M)\co(\delta_D\ot\varrho_M)\co(\eta_D\ot M).$

\item[(yd2)]
There exists $(r_M,r_M^\prime, s_M, s_M^\prime)$ an $(M,D)$-WO
compatible with the (co)module structure of $M,$  such that

\begin{itemize}

\item[]
$(\mu_D\ot\varphi_M)\co(D\ot t_{D,D}\ot M)\co(\delta_D\ot\varrho_M)$

\item[]
$=(\mu_D\ot M)\co(D\ot r_M)\co((\varrho_M\co\varphi_M)\ot D)\co(D\ot
s_M)\co(\delta_D\ot M).$

\end{itemize}

\end{itemize}

The class of all left-left Yetter-Drinfeld modules over $D$ will be
denoted by $_D^D\mathcal{YD}$.
\end{definition}

\begin{remark}
Note that when the ambient category $\CC$ is symmetric and we take
both the weak Yang-Baxter operator and the $(M,D)$-WO to be the
braiding of $\CC,$ we recover the classic definitions of Yetter-Drinfeld
 module introduced in \cite{RAD} in the context of Hopf
algebras and generalizated in \cite{B} (see also \cite{CWY} and \cite{NEN}) to
the context of weak Hopf algebras.

Moreover, assuming that $\CC$ is braided with braiding $c$ and
$t_{D,D}=c_{D,D}$, $t^{\prime}_{D,D}=c_{D,D}^{-1}$, if $(M,
\varphi_M)$ is a left $D$-module and $(M, \varrho_M)$ a left
$D$-comodule, $(c_{M,D}, c_{M,D}^{-1}, c_{D,M}, c_{D,M}^{-1})$  is
an $(M,D)$-WO compatible with the (co)module structure of $M.$
Therefore, we can define in this setting a left-left Yetter-Drinfeld
module over $D$ as a left $D$-module $(M,\varphi_M)$ and a left
$D$-comodule
 $(M, \varrho_M)$ such that the following equalities hold:
 \begin{itemize}

\item[(i)]

$\varrho_M=(\mu_D\ot\varphi_M)\co(D\ot c_{D,D}\ot
M)\co(\delta_D\ot\varrho_M)\co(\eta_D\ot M).$

\item[(ii)]

$\hspace{0.30cm}(\mu_D\ot\varphi_M)\co(D\ot c_{D,D}\ot M)\co(\delta_D\ot\varrho_M)$
\item[ ]$=(\mu_D\ot M)\co(D\ot c_{M,D})\co((\varrho_M\co\varphi_M)\ot D)\co(D\ot
c_{D,M})\co(\delta_D\ot M).$

\end{itemize}

\end{remark}

\begin{definition}\label{Y-Dmorphism}
Let $(M,\varphi_M,\varrho_M)$ and $(N,\varphi_N,\varrho_N)$ be in
the class $_D^D\mathcal{YD}$ with associated weak operators
$(r_M,r_M^\prime, s_M,s_M^\prime)$ and $(r_N,r_N^\prime,
s_N,s_N^\prime)$ respectively. It is said that a morphism
$f:M\rightarrow N$ in $\CC$ is a morphism of left-left Yetter-Drinfeld
modules if:

\begin{itemize}

\item[(i)]

$f$ is a left (co)module morphism.

\item[(ii)] $r_N\co(f\ot D)=(D\ot f)\co r_M , \qquad$ $ s_N\co(D\ot f)=(f\ot D)\co s_M.$

\end{itemize}

\end{definition}
\begin{remark}
In the last definition, the verification of the condition (ii) for $r_M$
is equivalent to its verification for $r_M^\prime,$ and the same
happens with $s_M$ and $s_M^\prime.$ Actually, if we  assume (ii)
for $r_M$ using the characterization of $\nabla_{r_M}$ of (c3-1) we
conclude that:
\begin{equation}\nabla_{r_N}\co(f\ot D)=(f\ot D)\co\nabla_{r_M},\end{equation}
and by (c3-2) we deduce:
\begin{equation}\nabla_{r_N^\prime}\co(D\ot f)=(D\ot f)\co\nabla_{r_M^\prime} .
\end{equation}

Combining  the preceding equalities with (c3) and part (ii) of Proposition
\ref{cancelation} we conclude that $(f\ot D)\co
r_M^\prime=r_N^\prime\co(D\ot f)$. Indeed,
\begin{itemize}
\item[ ]$\hspace{0.38cm} (f\ot D)\co r_M^\prime$
\item[ ]$= (f\ot D)\co\nabla_{r_M}\co r_M^\prime$
\item[ ]$= \nabla_{r_N}\co(f\ot D)\co r_M^\prime$
\item[ ]$= r_N^\prime\co r_N\co(f\ot D)\co r_M^\prime$
\item[ ]$= r_N^\prime\co(D\ot f)\co\nabla_{r_M^\prime}$
\item[ ]$= r_N\co \nabla_{r_N^\prime}\co(D\ot f)$
\item[]$=r_N^\prime\co(D\ot f).$
\end{itemize}
The proof for the equality $s_N^\prime\co(f\ot D)=(D\ot f)\co s_M^\prime $ follows by the same argument.
\end{remark}

As the identity morphism $id_M$ satisfies the above conditions for
any object $M$ it can  be introduced the following:

\begin{definition}
Let $D$ be a WBHA. The category of left-left Yetter-Drinfeld modules
is that whose objects are the class $_D^D\mathcal{YD}$ and whose
morphisms
 between objects are those in the conditions of  Definition \ref{Y-Dmorphism}. It
will  be denoted also by $^D_D\mathcal{YD}$.
\end{definition}

Generalizing  the braided symmetric case [\cite{CWY}, Proposition 2.2], the conditions (yd1) and (yd2) can also be restated  in the
following way:

\begin{proposition}\label{yd3}
Let $D$ be a WBHA and let $(M,\varphi_M)$ be a left $D$-module and
$(M,\varrho_M)$ a left $D$-comodule. Assume that there exists
$(r_M,r_M^\prime, s_M, s_M^\prime)$ an $(M,D)$-WO compatible with
the (co)module structures of $M$. Then the conditions (yd1) and (yd2)
are equivalent to
\begin{itemize}
\item[(yd3)] $\varrho_M\co\varphi_M$
\item [  ]$= (\mu_D\ot M)\co(D\ot r_M)\co(((\mu_D\ot\varphi_M)\co(D\ot t_{D,D}\ot
M)\co(\delta_D\ot\varrho_M))\ot\lambda_D)\co(D\ot s_M)$
\item[]$\hspace{0.38cm}\co(\delta_D\ot M).$
\end{itemize}
\end{proposition}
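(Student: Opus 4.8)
The plan is to abbreviate the morphism
$$A := (\mu_D\ot\varphi_M)\co(D\ot t_{D,D}\ot M)\co(\delta_D\ot\varrho_M)\colon D\ot M\to D\ot M,$$
which is the common skeleton of all three conditions: (yd1) reads $\varrho_M=A\co(\eta_D\ot M)$, the left-hand side of (yd2) is exactly $A$, and the right-hand side of (yd3) is $A$ ``dressed'' by the antipode, namely $(\mu_D\ot M)\co(D\ot r_M)\co(A\ot\lambda_D)\co(D\ot s_M)\co(\delta_D\ot M)$. The statement is the equivalence (yd1)\,$\wedge$\,(yd2)\,$\Leftrightarrow$\,(yd3), which I would prove as two implications.

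For the forward implication I would start from the right-hand side of (yd3) and insert (yd2) to replace the inner copy of $A$ by $(\mu_D\ot M)\co(D\ot r_M)\co((\varrho_M\co\varphi_M)\ot D)\co(D\ot s_M)\co(\delta_D\ot M)$. Using coassociativity of $\delta_D$ to split off the leg hit by $\lambda_D$, the two adjacent $D$-strands get multiplied through the identity $\mu_D\co(D\ot\lambda_D)\co\delta_D=\Pi_D^L$ (that is, $id_D\wedge\lambda_D=\Pi_D^L$ from (\ref{landaconvolucion})). This leaves a target morphism $\Pi_D^L$ sitting on one $D$-leg, which I would transport across $r_M$, $s_M$ and $t_{D,D}$ by means of Propositions \ref{Pibaixapolors} and \ref{Y-Btypecongruence}, and finally absorb using (yd1) together with the comodule compatibilities (ii-1)--(ii-4) of Definition \ref{WOmodulecompatible}. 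The residual expression then collapses to $\varrho_M\co\varphi_M$, which is (yd3).

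For the reverse implication I would recover (yd1) and (yd2) separately. To obtain (yd1), compose (yd3) on the right with $\eta_D\ot M$: the left side becomes $\varrho_M$ since $\varphi_M\co(\eta_D\ot M)=id_M$, and on the right the unit/counit normalizations, the cancellation laws of Proposition \ref{cancelation}, and the relations (\ref{nablatrocalado}) and (\ref{nablatrocaladoparas}) of $r_M,s_M$ against $\eta_D$ show that the antipode dressing disappears after precomposition with the unit, leaving $A\co(\eta_D\ot M)$; this is exactly (yd1). To obtain (yd2), I would apply the operator $X\mapsto(\mu_D\ot M)\co(D\ot r_M)\co(X\ot D)\co(D\ot s_M)\co(\delta_D\ot M)$ to $Y=\varrho_M\co\varphi_M$ and substitute (yd3) for the inner occurrence of $\varrho_M\co\varphi_M$; now the mirror identity $\lambda_D\wedge id_D=\Pi_D^R$ of (\ref{landaconvolucion}) produces a source morphism $\Pi_D^R$, which—using the already established (yd1) and the same transport/absorption lemmas—cancels and returns exactly $A$, giving precisely (yd2).

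The main obstacle is the weak bookkeeping. In the symmetric/braided case of [\cite{CWY}, Proposition 2.2] the idempotents $\Pi_D^L$, $\Pi_D^R$ degenerate to $\varepsilon_D\ot\eta_D$ and the two antipode dressings are literally mutually inverse; here the convolutions $id_D\wedge\lambda_D$ and $\lambda_D\wedge id_D$ only yield the nontrivial idempotents $\Pi_D^L$ and $\Pi_D^R$. Commuting these past $r_M$, $s_M$ and the weak Yang-Baxter operator, and then genuinely absorbing them against $\varphi_M$ and $\varrho_M$, is where all the difficulty concentrates, and it is exactly what Propositions \ref{lambabaixapoloWO}, \ref{Pibaixapolors}, \ref{Y-Btypecongruence} and \ref{mixednablatypecongruence} are designed to supply. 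A secondary but constant source of care is that, as warned just after (c2), one may never interchange $t_{D,D}$ with $t^{\prime}_{D,D}$ in the mixed equations, so at each strand swap the choice of compatibility condition (i-$k$)/(ii-$k$) must track the orientation of the leg being moved.
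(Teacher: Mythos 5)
Your overall architecture coincides with the paper's. The forward direction is essentially the paper's own chain: substitute (yd2) into the inner copy of $A$, use coassociativity and (c4) to collapse $id_D\wedge\lambda_D=\Pi_D^L$ (via (\ref{landaconvolucion})), transport $\Pi_D^L$ with Proposition \ref{Pibaixapolors} and the identity (\ref{deltaPiL}), and finish with (yd1) --- one small slip: the absorption step uses the \emph{module} compatibility (i-4) of Definition \ref{WOmodulecompatible} to move $\varphi_M$ across $s_M$, not the comodule compatibilities (ii-1)--(ii-4) you cite. Your recovery of (yd2) also matches the paper's second chain structurally (produce $\Pi_D^R$, transport, collapse); the paper closes that computation by invoking (yd3) a second time rather than the already-derived (yd1), but that difference is harmless.

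The genuine gap is your recovery of (yd1). You claim that after precomposing (yd3) with $\eta_D\ot M$ the antipode dressing ``disappears'' by unit/counit normalization, the cancellation laws of Proposition \ref{cancelation}, and (\ref{nablatrocalado}), (\ref{nablatrocaladoparas}). This is precisely where the weak setting bites, and the mechanism you invoke fails: since $\delta_D\co\eta_D\neq\eta_D\ot\eta_D$ for a WBHA, the leg that $\lambda_D$ hits is fed through $s_M$ by a nontrivial component of $\delta_D\co\eta_D$, so $\lambda_D$ never meets $\eta_D$ and cannot be normalized away; the relations you quote only control the idempotents $\nabla_{r_M}$, $\nabla_{s_M}$ against the unit and say nothing about this $\lambda_D$. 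What is actually required is the $\Pi$-machinery: $(D\ot\lambda_D)\co\delta_D\co\eta_D=(D\ot\Pi_D^R)\co\delta_D\co\eta_D$, then (\ref{newlambdasM}) and Proposition \ref{Pibaixapolors} to carry $\Pi_D^R$ across $s_M$, the identity $\Pi_D^R=\overline{\Pi}_D^L\co\lambda_D$ from (\ref{Pielambda2}) combined with $\mu_D\co(D\ot\overline{\Pi}_D^L)=(D\ot(\varepsilon_D\co\mu_D))\co(\delta_D\ot D)$, two applications of the WBHA axioms, the module compatibility (i-4), and finally the counit property of the comodule --- a nine-step chain in the paper, none of whose pivotal identities appear in your list. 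As written, your (yd1) step would not go through; your closing paragraph correctly diagnoses the nondegeneracy of $\Pi_D^L$, $\Pi_D^R$ as the crux, but your concrete plan for (yd1) contradicts that diagnosis by assuming the dressing trivializes against the unit.
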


\begin{proof}Indeed, if we assume (yd1) and (yd2) then:
\begin{itemize}
\item [  ]$\hspace{0.38cm} (\mu_D\ot M)\co(D\ot r_M)
\co(((\mu_D\ot\varphi_M)\co(D\ot t_{D,D}\ot
M)\co(\delta_D\ot\varrho_M))\ot\lambda_D)\co(D\ot s_M)$
\item[]$\hspace{0.38cm}\co(\delta_D\ot M)$
\item [  ]$=(\mu_D\ot M)\co(\mu_D\ot r_M)\co(D\ot r_M\ot
D)\co((\varrho_M\co\varphi_M)\ot D\ot D)\co(D\ot s_M\ot\lambda_D)$
\item[]$\hspace{0.38cm}\co(\delta_D\ot s_M)\co(\delta_D\ot M)$
\item [  ]$=(\mu_D\ot M)\co(D\ot r_M)\co((\varrho_M\co\varphi_M)\ot\Pi_D^L)\co(D\ot
s_M)\co(\delta_D\ot M)$
\item [  ]$=(\mu_D\ot M)\co(D\ot r_M)\co((\varrho_M\co\varphi_M)\ot D)\co(\mu_D\ot
s_M)\co(D\ot t_{D,D}\ot M)$
\item[]$\hspace{0.38cm}\co((\delta_D\co\eta_D)\ot D\ot M)$
\item [  ]$=(\mu_D\ot M)\co(D\ot r_M)\co((\varrho_M\co\varphi_M)\ot D)\co(D\ot
s_M)\co((\delta_D\co\eta_D)\ot \varphi_M)$
\item [  ]$=(\mu_D\ot\varphi_M)\co(D\ot t_{D,D}\ot
M)\co(\delta_D\ot\varrho_M)\co(\eta_D\ot\varphi_M)$
\item[]$=\varrho_M\co\varphi_M.$
\end{itemize}
In the preceding calculations, the first and fifth equalities follow
by (yd2), the second by (c4) and the third one by
(\ref{newpiLsM}) and (\ref{deltaPiL}). On the fourth equality we
apply compatibility with the $D$-module structure and on the last
one (yd1).

On the other hand, assuming (yd3) we can deduce (yd1) as follows:
\begin{itemize}
\item [  ]$\hspace{0.38cm} \varrho_M$
\item[]$=  \varrho_M \co\varphi_M\co(\eta_D\ot M)  $
\item[]$= (\mu_D\ot M)\co(D\ot r_M) \co(((\mu_D\ot\varphi_M)\co(D\ot t_{D,D}\ot M)\co(\delta_D\ot\varrho_M))\ot\lambda_D)\co(D\ot s_M)$
\item[]$\hspace{0.38cm}\co((\delta_D\co\eta_D)\ot M)$
\item[]$=(\mu_D\ot M)\co(D\ot r_M)  \co(((\mu_D\ot\varphi_M)\co(D\ot t_{D,D}\ot M)\co(D\ot D\ot\varrho_M))\ot D)$
\item[]$\hspace{0.38cm}\co(\delta_D\ot s_M)\co(D\ot\Pi_D^R\ot M)\co((\delta_D\co\eta_D)\ot M)$
\item[]$=((\mu_D\co(D\ot(\overline{\Pi}_D^L\co\lambda_D)))\ot M)\co(D\ot r_M) \co(((\mu_D\ot\varphi_M)\co(D\ot t_{D,D}\ot M)$
\item[]$\hspace{0.38cm}\co(\delta_D\ot\varrho_M))\ot D)\co(D\ot s_M)\co((\delta_D\co\eta_D)\ot M)$
\item[]$=(D\ot(\varepsilon_D\co\mu_D)\ot M)\co(\delta_D\ot\lambda_D\ot M)\co(D\ot r_M)$
\item[]$\hspace{0.38cm}\co(((\mu_D\ot\varphi_M)\co(D\ot t_{D,D}\ot M)\co(\delta_D\ot\varrho_M))\ot D)\co(D\ot s_M)\co((\delta_D\co\eta_D)\ot M)$
\item[]$=(D\ot(\varepsilon_D\co\mu_D)\ot M)\co(\mu_D\ot\mu_D\ot D\ot M)\co(D\ot t_{D,D}\ot D\ot r_M)$
\item[]$\hspace{0.38cm}\co(\delta_D\ot\delta_D\ot\varphi_M\ot D)\co(D\ot t_{D,D}\ot M\ot D)
\co(D\ot D\ot \varrho_M\ot \lambda_D)\co(D\ot D\ot s_M)$
\item[]$\hspace{0.38cm}\co(D\ot\delta_D\ot M)\co((\delta_D\co\eta_D)\ot M)$
\item[]$=(D\ot(\varepsilon_D\co\mu_D)\ot M)\co(D\ot D\ot r_M)\co(\mu_D\ot\mu_D\ot \varphi_M\ot D)$
\item[]$\hspace{0.38cm}
\co(D\ot t_{D,D}\ot  t_{D,D}\ot M\ot D)\co(\delta_D\ot t_{D,D}\ot
\varrho_M\ot \lambda_D)\co(D\ot D\ot D\ot s_M)$
\item[]$\hspace{0.38cm}
\co(D\ot D\ot t_{D,D}\ot M)\co(D\ot \delta_D\ot
\varrho_M)\co((\delta_D\co\eta_D)\ot M)$
\item[]$=(D\ot(\varepsilon_D\co\mu_D)\ot M)\co(D\ot D\ot r_M )\co(D\ot\mu_D\ot\varphi_M\ot\lambda_D)\co(D\ot D\ot t_{D,D}\ot M\ot D )$
\item[]$\hspace{0.38cm}\co(D\ot\delta_D\ot\varrho_M\ot D)\co(D\ot D\ot s_M)\co(\mu_D\ot\delta_D\ot M)\co(D\ot t_{D,D}\ot M)$
\item[]$\hspace{0.38cm}\co((\delta_D\co\eta_D)\ot\varrho_M)$
\item[]$=(D\ot((\varepsilon_D\ot M)\co\varrho_M))\co(\mu_D\ot\varphi_M)\co(D\ot t_{D,D}\ot M)\co((\delta_D\co\eta_D)\ot\varrho_M)$
\item[]$=(\mu_D\ot\varphi_M)\co(D\ot t_{D,D}\ot M)\co((\delta_D\co\eta_D)\ot\varrho_M).$
\end{itemize}

The first equality follows by the condition of $D$-module for $M$. In the second and nineth ones we apply the hypothesis; the third one uses
(\ref{newlambdasM}) and the equality
$$(D\ot\lambda_D)\co\delta_D\co\eta_D=(D\ot\Pi_D^R)\co\delta_D\co\eta_D.$$
The fourth equality relies on   Proposition \ref{Pibaixapolors} and (\ref{Pielambda2}); the fifth is a consequence of the equality
$$\mu_D\co(D\ot\overline{\Pi}_D^L)=((D\ot(\varepsilon_D\co\mu_D))\co(\delta_D\ot D),$$
and the sixth and eighth ones follow because of $D$ is a WBHA. In the seventh equality we apply compatibility of the $D$-module structure for $M$; finally, in the last one we use the condition of $D$-comodule for $M$.

Using the same technics we get:

\begin{itemize}
\item [  ]$\hspace{0.38cm}(\mu_D\ot M)\co(D\ot r_M)\co((\varrho_M\co\varphi_M)\ot D)\co (D\ot s_M)\co(\delta_D\ot M)$
\item[]$=(\mu_D\ot M)\co(\mu_D\ot r_M)\co(D\ot r_M\ot D)\co(((\mu_D\ot\varphi_M)\co(D\ot t_{D,D}\ot M)$
\item[]$\hspace{0.38cm}\co(\delta_D\ot\varrho_M))\ot\lambda_D\ot D)\co(D\ot s_M\ot D)\co(\delta_D\ot s_M)\co(\delta_D\ot M)$
\item[]$=(\mu_D\ot M)\co(D\ot r_M)\co(((\mu_D\ot\varphi_M)\co(D\ot t_{D,D}\ot M)\co(\delta_D\ot\varrho_M))\ot\Pi_D^R)\co (D\ot s_M)$
\item[]$\hspace{0.38cm}\co(\delta_D\ot M)$
\item[]$=(\mu_D\ot M)\co(D\ot r_M)\co(((\mu_D\ot\varphi_M)\co(D\ot t_{D,D}\ot M)\co(\delta_D\ot\varrho_M))\ot D)\co (D\ot s_M)$
\item[]$\hspace{0.38cm}\co(\mu_D\ot \lambda_D\ot M)\co(D\ot(\delta_D\co\eta_D)\ot M)$
\item[]$=(\mu_D\ot M)\co(D\ot r_M)\co(\mu_D\ot\varphi_M\ot D)\co(D\ot t_{D,D}\ot M\ot D)\co(\mu_D\ot\mu_D\ot\varrho_M\ot D)$
\item[]$\hspace{0.38cm}\co(D\ot t_{D,D}\ot D\ot s_M)\co(\delta_D\ot\delta_D\ot\lambda_D\ot M)\co(D\ot(\delta_D\ot\eta_D)\ot M)$
\item[]$=(\mu_D\ot M)\co(D\ot r_M)\co(\mu_D\ot\varphi_M\ot D)\co(\mu_D\ot t_{D,D}\ot\varphi_M\ot D)$
\item[]$\hspace{0.38cm}\co(D\ot t_{D,D}\ot t_{D,D}\ot M\ot D)\co(\delta_D\ot \delta_D\ot\varrho_M\ot\lambda_D)
\co(D\ot D\ot s_M)$
\item[]$\hspace{0.38cm}\co(D\ot(\delta_D\ot\eta_D)\ot M)$
\item[]$=(\mu_D\ot \varphi_M)\co(D\ot t_{D,D}\ot M)\co (\delta_D\ot ((\mu_D\ot M)\co(D\ot r_M)\co(((\mu_D\ot\varphi_M)$
\item[]$\hspace{0.38cm}\co(D\ot t_{D,D}\ot M)\co(\delta_D\ot\varrho_M))\ot\lambda_D)\co(D\ot s_M)\co(\delta_D\ot M)))
\co (D\ot \eta_D\ot M)$
\item[]$=(\mu_D\ot \varphi_M)\co(D\ot t_{D,D}\ot M)\co(\delta_D\ot\varrho_M)\co(D\ot(\varphi_M\co(\eta_D\ot M)))$
\item[]$=(\mu_D\ot \varphi_M)\co(D\ot t_{D,D}\ot M)\co(\delta_D\ot\varrho_M),$
\end{itemize}
so the condition (yd2) can be obtained from (yd3).
\end{proof}

The following properties about Yetter-Drinfeld modules
constitute a generalization of the results obtained in the braided
context. See  \cite{Proj} for the idea of the proof.

\begin{lemma}\label{consecuencias} Let $D$ be a WBHA in $\CC$. If $(M,\varphi_M,\varrho_M)$ is in
$^D_D\mathcal{YD}$ then it obeys the following properties:

\begin{equation}
\label{newpiLphirhomu}
\varrho_M\co\varphi_M\co(\Pi_D^L\ot M)=(\mu_D\ot D)\co(\Pi_D^L\ot\varrho_M),
\end{equation}

\begin{equation}
\label{newpiLphirhodelta}
(\Pi_D^L\ot M)\co\varrho_M\co\varphi_M=(\Pi_D^L\ot\varphi_M)\co(\delta_D\ot M),
\end{equation}

\begin{equation}
\label{newpiRphirhomu}
\varrho_M\co\varphi_M\co(\Pi_D^R\ot M)
=(\mu_D\ot M) \co(D\ot (\lambda_D\co\Pi_D^R)\ot M) \co(D\ot(r_M\co
s_M))\co(t_{D,D}\ot M)\co(D\ot \varrho_M),
\end{equation}

\begin{equation}
\label{newpiRphirhodelta}
(\Pi_D^R\ot M)\co\varrho_M\co\varphi_M
=(D\ot\varphi_M)\co(t_{D,D}\ot M) \co(D\ot(r_M\co s_M))
\co(D\ot(\Pi_D^R\co\lambda_D)\ot M)\co(\delta_D\ot M).
\end{equation}

\end{lemma}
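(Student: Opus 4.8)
The plan is to establish the four identities by the same equational method already displayed in the proof of Proposition \ref{yd3}, reducing each to the WBHA axioms together with the passage lemmas for the weak operator. I would treat the two $\Pi_D^L$-identities first, since the absence of any weak operator on their right-hand sides signals that the $r_M,s_M$ legs collapse away, and only afterwards attack the two $\Pi_D^R$-identities, where a genuine weak-operator contribution survives.

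For (\ref{newpiLphirhomu}) and (\ref{newpiLphirhodelta}) I would rewrite $\varrho_M\co\varphi_M$ using (yd3). The key mechanism is the one exploited inside the proof of Proposition \ref{yd3}: expanding $\Pi_D^L$ via $\Pi_D^L=id_D\wedge\lambda_D$ of (\ref{landaconvolucion}), the factor $\mu_D\co(D\ot\lambda_D)\co\delta_D$ reassembles a copy of $\Pi_D^L$ that can be slid across $s_M$ by (\ref{newpiLsM}) of Proposition \ref{Pibaixapolors} and then rewritten through (\ref{deltaPiL}); compatibility of the $(M,D)$-WO with the module and comodule structures (Definition \ref{WOmodulecompatible}) absorbs the remaining weak-operator legs into $\varphi_M$ and $\varrho_M$, and the convolution collapses $\Pi_D^L\wedge id_D=id_D$ and $id_D\wedge\Pi_D^R=id_D$ of (\ref{Piconvolucion}) remove the spurious antipode. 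The $\Pi_D^L$ sitting on the left input (for (\ref{newpiLphirhomu})) or on the right output (for (\ref{newpiLphirhodelta})) is carried along using (yd1), the module axiom $\varphi_M\co(\mu_D\ot M)=\varphi_M\co(D\ot\varphi_M)$, and comodule coassociativity.

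For the harder pair (\ref{newpiRphirhomu}) and (\ref{newpiRphirhodelta}) I would again start from $\varrho_M\co\varphi_M$ via (yd3), but expand $\Pi_D^R$ through $\Pi_D^R=\lambda_D\wedge id_D$ of (\ref{landaconvolucion}). Since $\Pi_D^R$ does not collapse against the action the way $\Pi_D^L$ does, the computation is designed to leave exactly a residual $\lambda_D\co\Pi_D^R$ together with one weak-operator contribution $r_M\co s_M$ and a single copy of $t_{D,D}$, matching the target. The tools are Proposition \ref{lambabaixapoloWO}, to slide $\lambda_D$ and (using invertibility of the antipode) $\lambda_D^{-1}$ across $r_M$ and $s_M$; Proposition \ref{Pibaixapolors}, to slide $\Pi_D^R$ and $\overline{\Pi}_D^L,\overline{\Pi}_D^R$ through the weak operator; the antimultiplicativity (\ref{ant-mu}) and anticomultiplicativity (\ref{ant-delta}) of $\lambda_D$; and the mixed compatibility conditions (i-1)--(i-4) and (ii-1)--(ii-4) of Definition \ref{WOmodulecompatible} to transport $\varphi_M$ and $\varrho_M$ past $t_{D,D}$, $r_M$ and $s_M$.

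The main obstacle lies entirely in this last pair. Two difficulties stand out. First, one must arrange the manipulations so that the $s$-leg produced on the action side and the $r$-leg produced on the coaction side recombine into precisely the composite $r_M\co s_M$ of the statement, rather than into some other pairing; this dictates a rigid order in which the conditions of Definition \ref{WOmodulecompatible} may be invoked. Second, one must track with care which of $t_{D,D}$ and $t^{\prime}_{D,D}$ occurs at each junction: as warned after Definition \ref{WO}, the mixed relations (c2) and the compatibility conditions do not permit interchanging $t_{D,D}$ and $t^{\prime}_{D,D}$, so a wrong choice at any step breaks the chain. Once the single surviving $t_{D,D}$ and the factor $\lambda_D\co\Pi_D^R$ are isolated and the redundant antipode pairs are cancelled through (\ref{Piconvolucion}), the identities (\ref{newpiRphirhomu}) and (\ref{newpiRphirhodelta}) follow.
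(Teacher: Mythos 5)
Your outline assembles the right toolkit, but note first that the paper itself offers no proof of this lemma to compare against: it only remarks that these identities generalize results from the braided context and points to \cite{Proj} for the idea. For the two $\Pi_D^L$-identities your plan is essentially viable modulo routine computation: starting from (yd3) (or directly from (yd2)), using $\delta_D\co\Pi_D^L=(\mu_D\ot D)\co(\Pi_D^L\ot(\delta_D\co\eta_D))$, sliding $\Pi_D^L$ through the weak operator by Proposition \ref{Pibaixapolors}, and recognizing via (yd1) (equivalently, (yd3) composed with $\eta_D\ot M$) that the leftover $\delta_D\co\eta_D$-dressed expression reassembles $\varrho_M$, one does obtain (\ref{newpiLphirhomu}), with (\ref{newpiLphirhodelta}) as its dual twin.

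For (\ref{newpiRphirhomu}) and (\ref{newpiRphirhodelta}), however --- which you yourself identify as the entire difficulty --- the proposal contains no proof, only a promissory note. The content of these identities is exactly the computation you defer: one must exhibit the concrete chain of applications of (yd2)/(yd3), the compatibilities (i-1)--(i-4) and (ii-1)--(ii-4) of Definition \ref{WOmodulecompatible}, and Propositions \ref{Pibaixapolors} and \ref{lambabaixapoloWO} that produces precisely the composite $r_M\co s_M$ (which survives because $s_M$ and $r_M$ are not mutually inverse in the weak setting; their composites only yield the idempotents of (c3)) together with the single $t_{D,D}$ and the factor $\lambda_D\co\Pi_D^R$ in the stated positions. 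Writing that once these ``are isolated'' the identities ``follow'' restates the goal rather than achieving it, and the $t_{D,D}$-versus-$t^{\prime}_{D,D}$ bookkeeping you rightly warn about is exactly the place where an unexecuted plan can break, so the hard half of the lemma remains unproved. A second concrete defect: you invoke antipode invertibility in order to slide $\lambda_D^{-1}$ across $r_M$ and $s_M$ (the $\lambda_D^{-1}$ half of Proposition \ref{lambabaixapoloWO}), but the lemma is stated for an arbitrary WBHA with no invertibility hypothesis; either your route silently strengthens the hypotheses or that appeal must be eliminated --- the identities (\ref{newlambdarM})--(\ref{newlambdasMprima}) for $\lambda_D$ itself require no inverse and should be made to suffice.
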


\begin{proposition}\label{compatible}
Let $D$ be a WBHA with invertible antipode and let $(M,\varphi_M,\varrho_M)$ be in
$^D_D\mathcal{YD}$. Then:
\begin{itemize}
\item[(i)] $\varphi_M=\varphi_M\co\nabla_{s_M}$ iff $\varrho_M=\nabla_{s_M}\co\varrho_M$,
\item[(ii)] $\varphi_M=\varphi_M\co\nabla_{r_M^\prime}$ iff $\varrho_M=\nabla_{r_M^\prime}\co\varrho_M.$
\end{itemize}
\end{proposition}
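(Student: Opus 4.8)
The plan is to deduce both biconditionals from Lemma \ref{extracond}, reducing each one to an equivalence between a ``unitality'' condition for the action and the corresponding one for the coaction. Concretely, write
\[
P_s:=\varphi_M\co s_M^\prime\co(M\ot\eta_D),\qquad Q_s:=(M\ot\varepsilon_D)\co s_M\co\varrho_M,
\]
both endomorphisms of $M$. Using the second description of $\nabla_{s_M}$ in (c3-3) together with the associativity of $\varphi_M$ one gets $\varphi_M\co\nabla_{s_M}=\varphi_M\co(D\ot P_s)$, and using the first description of $\nabla_{s_M}$ in (c3-3) together with the coassociativity of $\varrho_M$ one gets $\nabla_{s_M}\co\varrho_M=(D\ot Q_s)\co\varrho_M$. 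Precomposing the former with $\eta_D\ot M$ and postcomposing the latter with $\varepsilon_D\ot M$, the module unit and comodule counit axioms show (this is exactly Lemma \ref{extracond}(i-1),(ii-1)) that statement (i) is equivalent to the single biconditional $P_s=id_M\Leftrightarrow Q_s=id_M$. Likewise, setting $P_r:=\varphi_M\co r_M\co(M\ot\eta_D)$ and $Q_r:=(M\ot\varepsilon_D)\co r_M^\prime\co\varrho_M$, statement (ii) reduces to $P_r=id_M\Leftrightarrow Q_r=id_M$.

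To obtain the biconditional for $s$ I will not argue each implication separately; instead the key will be to establish the two intertwining identities
\[
Q_s\co P_s=P_s,\qquad P_s\co Q_s=Q_s.
\]
Granting these, if $P_s=id_M$ the first gives $Q_s=P_s=id_M$, while if $Q_s=id_M$ the second gives $P_s=Q_s=id_M$; thus the equivalence follows at once, with no appeal to idempotency of $P_s$ or $Q_s$. (In fact I expect the stronger equality $P_s=Q_s$ to hold, both sides being the canonical projection of $M$ built from the action and the coaction, but the two weaker identities suffice and are cheaper to verify.)

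The substance of the proof is the verification of these intertwining identities, where the mixed term $\varrho_M\co\varphi_M$ appears. First I would unfold it by means of the Yetter--Drinfeld axiom (yd1), which expresses $\varrho_M$ through $\varphi_M$, $\mu_D$, $\delta_D$ and $t_{D,D}$ (or, where more convenient, through the reformulation (yd3) of Proposition \ref{yd3}). The resulting diagram is then collapsed using the compatibility of the weak operator with the (co)module structure (Definition \ref{WOmodulecompatible}), the bialgebra axioms (b3)--(b4) and the unit/counit normalizations (\ref{b3-1})--(\ref{b3-5}), the behaviour of $\nabla_{s_M},\nabla_{s_M^\prime}$ under $\mu_D$, $\delta_D$ (Remark \ref{redundante1}) and under $t_{D,D}$ (Propositions \ref{nablatypecongruence} and \ref{mixednablatypecongruence}), and finally the projection identities of Propositions \ref{Pibaixapolors} and \ref{lambabaixapoloWO} combined with the convolution relations (\ref{landaconvolucion})--(\ref{Piconvolucion}), which make the target and source morphisms $\Pi_D^{L},\Pi_D^{R}$ disappear. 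The hypothesis that $\lambda_D$ be invertible enters precisely through Corollary \ref{nablacontes}: its presentations of $\nabla_{s_M}$ (and $\nabla_{r_M^\prime}$) involving $\mu_D\co t_{D,D}$ and $\lambda_D^{-1}$ are what allow the coaction to be transported across the weak operator and matched against the action term.

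Part (ii) is proved by the same scheme, replacing $s_M\leftrightarrow r_M^\prime$, $s_M^\prime\leftrightarrow r_M$ and $t_{D,D}\leftrightarrow t_{D,D}^\prime$ throughout, and invoking instead Lemma \ref{extracond}(i-2),(ii-2), Corollary \ref{nablacontes}(ii) and the compatibility conditions (i-1),(i-2),(ii-1),(ii-2). The main obstacle I anticipate is the bookkeeping in the central diagram chase: one must keep $t_{D,D}$ and $t_{D,D}^\prime$ strictly in their places---since, as warned after (c2), they cannot be interchanged---and insert $\lambda_D^{-1}$ at exactly the point where the projection $\Pi_D^{R}$ (resp.\ $\Pi_D^{L}$) can be absorbed via (\ref{Piconvolucion}).
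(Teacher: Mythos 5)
Your opening reduction is exactly the paper's first move: via Lemma \ref{extracond}, statement (i) becomes the single biconditional $P_s=id_M\Leftrightarrow Q_s=id_M$ (and (ii) its $r$-analogue), and you correctly locate where invertibility of the antipode enters, namely Corollary \ref{nablacontes}. The divergence---and the gap---is in the core. You replace the two implications by the unconditional intertwining identities $Q_s\co P_s=P_s$ and $P_s\co Q_s=Q_s$, but you never verify them: the only substantive step of the proof is deferred to a chase described by a list of tools. And your chosen identities are not ``cheaper'' than what you set aside: each of $Q_s\co P_s$ and $P_s\co Q_s$ contains $\varrho_M\co\varphi_M$ sandwiched between $s_M$, $s_M^\prime$ and (co)unit maps, so verifying them forces the full (yd3)-expansion inside a longer string (note also that (yd1) alone does not unfold $\varrho_M\co\varphi_M$; only (yd2)/(yd3) do). Moreover your toolkit omits Lemma \ref{consecuencias}, whose identities (\ref{newpiLphirhomu}) and (\ref{newpiLphirhodelta}) are precisely what the paper uses to trade $\Pi_D^L$ and $\overline{\Pi}_D^L$ between the action and the coaction; without these (or the extra work of re-deriving them from the axioms, which your plan does not budget for) the deferred chase is unlikely to close.

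What the paper actually proves are two short \emph{unconditional} identities: $Q_s=\varphi_M\co\nabla_{s_M}\co(\eta_D\ot M)$ and $P_s=(\varepsilon_D\ot M)\co\nabla_{s_M}\co\varrho_M$, each a four-line chase using Lemma \ref{consecuencias}, the compatibility conditions (i-4) and (ii-3), Proposition \ref{Pibaixapolors} and Corollary \ref{nablacontes} (via (\ref{newnablasMtDdelta}) and (\ref{newnablasMtDprimamu})); the respective hypothesis is then inserted at the last step to remove $\nabla_{s_M}$, and Lemma \ref{extracond} finishes each direction. Observe that the first identity, combined with (\ref{nablatrocaladoparas}) (which gives $\nabla_{s_M}\co(\eta_D\ot M)=s_M^\prime\co(M\ot\eta_D)$), yields exactly the ``stronger equality'' $P_s=Q_s$ that you conjectured and then discarded: it is true, it is the efficient route, and it gives the biconditional at once. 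By contrast, granting $P_s=Q_s$, your two intertwining identities amount to idempotency of $P_s$---a fact you explicitly claim not to need, but which your scheme then requires in disguise, and whose direct verification again demands the very expansion of $\varrho_M\co\varphi_M$ you postponed. So as organized, your proposal either redoes the paper's computation in a heavier form or rests on an unproven step; to repair it, prove the two displayed identities above (or $P_s=Q_s$ directly), after securing the Lemma \ref{consecuencias}-type exchanges, and the rest of your outline (the Lemma \ref{extracond} reduction and the $r$-case by the stated substitutions) goes through unchanged.
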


\begin{proof} We will show (i), being (ii) analogous. For the `if' part, in virtue of Proposition \ref{Pibaixapolors},
the equality $(\overline{\Pi}_D^L\ot M)\co \varrho_M=(D\ot \varphi_M)\co((\delta_D\co \eta_D)\ot M)$ which holds by (\ref{newpiLphirhodelta}), compatibility of the module structure, Corollary \ref{nablacontes} and the hypothesis, it results that
\begin{itemize}
\item[]$\hspace{0.38cm} (M\ot \varepsilon_D)\co s_M\co \varrho_M$
\item[]$=(M\ot \varepsilon_D)\co s_M\co (\overline{\Pi}_D^L\ot M)\co \varrho_M$
\item[]$=(M\ot \varepsilon_D)\co s_M\co (D\ot \varphi_M)\co ((\delta_D\co \eta_D)\ot M)$
\item[]$=(\varphi_M\ot \varepsilon_D)\co (D\ot s_M)\co (t_{D,D}\ot M)\co ((\delta_D\co \eta_D)\ot M)$
\item[]$=\varphi_M\co\nabla_{s_M}\co (\eta_D\ot M)$
\item[]$=\varphi_M\co(\eta_D\ot M)$
\item[]$=id_M$.
\end{itemize}

Applying Lemma \ref{extracond} we obtain the equality
$\varrho_M=\nabla_{s_M}\co\varrho_M$.

The opposite implication follows a similar pattern:
\begin{itemize}
\item[]$\hspace{0.38cm}  \varphi_M\co s_M^\prime\co (M\ot\eta_D)$
\item[]$= \varphi_M\co (\Pi_D^L\ot M)\co s_M^\prime\co (M\ot\eta_D)$
\item[]$=((\varepsilon_D\co\mu_D)\ot M)\co(\Pi_D^L\ot\varrho_M)\co s_M^\prime\co (M\ot\eta_D)$
\item[]$=((\varepsilon_D\co\mu_D\co t^\prime_{D,D})\ot M)\co (D\ot s_M^\prime)\co(\varrho_M\ot\eta_D)$
\item[]$=(\varepsilon_D\ot M)\co\nabla_{s_M}\co\varrho_M$
\item[]$=(\varepsilon_D\ot M)\co\varrho_M$
\item[]$=id_M,$
\end{itemize}
and by Lemma \ref{extracond} we have that
$\varphi_M=\varphi_M\co\nabla_{s_M}$.

\end{proof}

\begin{corollary}\label{nablamorreconphi} Let $D$ be a WBHA with invertible antipode and  $(M,\varphi_M,\varrho_M)$  an object in $_D^D\mathcal{YD}.$ It holds that \begin{equation}\label{nablamorreconphiequation}
\varphi_M\co\nabla_{s_M}=\varphi_M\co\nabla_{r_M^\prime}=\varphi_M;\quad
\nabla_{s_M}\co\varrho_M=\nabla_{r_M^\prime}\co\varrho_M=\varrho_M.
\end{equation}
\end{corollary}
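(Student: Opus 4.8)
The plan is to cut the statement down to a single normalization per family and then obtain that normalization from the Yetter-Drinfeld axiom (yd1).

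The first reduction is purely formal. Since $D$ has invertible antipode, Proposition~\ref{compatible} gives $\varphi_M\co\nabla_{s_M}=\varphi_M\Leftrightarrow\nabla_{s_M}\co\varrho_M=\varrho_M$ and $\varphi_M\co\nabla_{r_M^\prime}=\varphi_M\Leftrightarrow\nabla_{r_M^\prime}\co\varrho_M=\varrho_M$, so it is enough to prove one equality from each pair. By Lemma~\ref{extracond} these are in turn equivalent to the unit/counit normalizations $\varphi_M\co s_M^\prime\co(M\ot\eta_D)=id_M$ (equivalently $(M\ot\varepsilon_D)\co s_M\co\varrho_M=id_M$) for the $s_M$-family, and $\varphi_M\co r_M\co(M\ot\eta_D)=id_M$ (equivalently $(M\ot\varepsilon_D)\co r_M^\prime\co\varrho_M=id_M$) for the $r_M^\prime$-family. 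Thus the whole corollary reduces to these two normalization identities.

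To establish them I would argue on the coaction side, proving $(M\ot\varepsilon_D)\co s_M\co\varrho_M=id_M$ (the $r_M^\prime$-case being symmetric, with $r_M^\prime$, $\overline{\Pi}_D^R$ and compatibility~(ii) replacing $s_M$, $\overline{\Pi}_D^L$ and~(i)). First I would record the auxiliary identity $(\overline{\Pi}_D^L\ot M)\co\varrho_M=(D\ot\varphi_M)\co((\delta_D\co\eta_D)\ot M)$, obtained by composing \eqref{newpiLphirhodelta} of Lemma~\ref{consecuencias} with $\eta_D\ot M$ (so that $\varphi_M\co(\eta_D\ot M)=id_M$ erases the action) and then exchanging $\Pi_D^L$ for $\overline{\Pi}_D^L$ by the projection relations \eqref{PiePibarra1} together with the invariance of $\delta_D\co\eta_D$ under $\overline{\Pi}_D^L$. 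Because $\overline{\Pi}_D^L$ commutes with $s_M$ (Proposition~\ref{Pibaixapolors}) and leaves the counit invariant, one may insert $\overline{\Pi}_D^L$ freely and rewrite $(M\ot\varepsilon_D)\co s_M\co\varrho_M=(M\ot\varepsilon_D)\co s_M\co(\overline{\Pi}_D^L\ot M)\co\varrho_M$. At this point the productive step is to substitute (yd1) for the coaction and unpack $s_M\co(\mu_D\ot\varphi_M)$ by means of the module compatibilities (i-4) and (c4-5); using then the counit axiom (b5), the antipode identities and Corollary~\ref{nablacontes} (where the invertibility of $\lambda_D$ is used), the whole expression collapses to $id_M$. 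Feeding the two resulting normalizations back through Lemma~\ref{extracond} and Proposition~\ref{compatible} yields all four equalities.

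The main obstacle is to make a genuinely non-circular use of (yd1). Lemma~\ref{extracond} and Proposition~\ref{compatible} only certify that the candidate equalities are mutually equivalent, so any manipulation built solely from the projection/antipode machinery merely trades one equivalent form for another and never closes the argument; only the substitution of (yd1) breaks this symmetry. The real labour therefore lies in the bookkeeping of that substitution, where $s_M$ (or $r_M^\prime$) must be transported through $\mu_D$, $\delta_D$ and $\lambda_D$ while controlling the idempotent $\nabla_{D,D}$ and the mixed conditions (c1)--(c5) of the weak operator.
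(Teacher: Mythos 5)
Your two reductions are sound: Proposition \ref{compatible} lets you trade each action-side equality for its coaction-side partner, and Lemma \ref{extracond} converts each into a unit/counit normalization. The gap is in the step you yourself flag as ``the productive step'': the claimed collapse of $(M\ot\varepsilon_D)\co s_M\co\varrho_M$ to $id_M$ via (yd1) is never carried out, and it cannot work as described. There is first an ordering incoherence: once you invoke the auxiliary identity $(\overline{\Pi}_D^L\ot M)\co\varrho_M=(D\ot\varphi_M)\co((\delta_D\co\eta_D)\ot M)$ the coaction is gone, so there is no $\varrho_M$ left to replace by (yd1); if instead you substitute (yd1) first, the inserted $\overline{\Pi}_D^L$ slides through $s_M$ by Proposition \ref{Pibaixapolors} and is erased by $\varepsilon_D$, contributing nothing. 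More seriously, executing your listed steps exactly --- auxiliary identity, compatibility (i-4), then \eqref{newnablasMtDdelta} and \eqref{nablatrocaladoparas} --- lands you at $(M\ot\varepsilon_D)\co s_M\co\varrho_M=\varphi_M\co\nabla_{s_M}\co(\eta_D\ot M)=\varphi_M\co s_M^\prime\co(M\ot\eta_D)$, which is precisely the equivalence established in the proof of Proposition \ref{compatible}, not the normalization; this is exactly the circle you were worried about. And (yd1) does not break it: (yd1) does not even mention $s_M$ or $r_M$ (the weak operator enters the Yetter--Drinfeld axioms only through (yd2)), so after substituting (yd1) and transporting $s_M$ across $\mu_D$ and $\varphi_M$ by (c4-5) and (i-4), one is left with a composite containing $t_{D,D}\co t_{D,D}$ acting on $\delta_D\co\eta_D$ tensored with the coaction leg, which none of (b5), the antipode identities, or Corollary \ref{nablacontes} simplifies --- $t_{D,D}$ is not an involution, and only $t_{D,D}^{\prime}\co t_{D,D}=\nabla_{D,D}$ is available. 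Note also that your auxiliary identity, coming from \eqref{newpiLphirhodelta} of Lemma \ref{consecuencias}, already presupposes the full Yetter--Drinfeld structure, so your machinery is not as (yd2)-free as your framing suggests; it just happens to deliver only the equivalence.

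The actual symmetry-breaker in the paper is (yd2), used through (yd3) (Proposition \ref{yd3}), and it is deployed on the \emph{action} side, not the coaction side. The paper writes $\varphi_M\co\nabla_{s_M}=(\varepsilon_D\ot M)\co\varrho_M\co\varphi_M\co\nabla_{s_M}$, substitutes (yd3) for $\varrho_M\co\varphi_M$, and then the extra $\nabla_{s_M}$ meets the tail $(D\ot s_M)\co(\delta_D\ot M)$ of (yd3), where it is absorbed using (c3-3), (c4-7) and the counit; applying (yd3) backwards then gives $\varphi_M\co\nabla_{s_M}=\varphi_M$, and only afterwards is $\nabla_{s_M}\co\varrho_M=\varrho_M$ obtained by your transfer step, Proposition \ref{compatible}. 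This absorption mechanism is the essential idea missing from your proposal, and it explains why the paper attacks the action side first: (yd3) sandwiches $\varrho_M\co\varphi_M$ between $r_M$ and $s_M$, so a trailing $\nabla_{s_M}$ can be swallowed, whereas on the coaction side no identity of the form $\nabla_{s_M}\co r_M=r_M$ is available and a direct proof of your chosen normalization does not close with the stated tools. To repair the argument, keep your reductions but replace the (yd1)-collapse by the paper's (yd3)-absorption computation for $\varphi_M\co\nabla_{s_M}=\varphi_M$ (and its analogue for $\nabla_{r_M^\prime}$).
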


\begin{proof}
Using that $M$ is a $D$-comodule, the condition (yd-3) twice, (c4) and the counit property we can
write
\begin{itemize}
\item[]$\hspace{0.38cm} \varphi_M\co\nabla_{s_M}$
\item[]$=(\varepsilon_D\ot M)\co\varrho_M\co\varphi_M\co\nabla_{s_M}$
\item[]$=((\varepsilon_D\co\mu_D)\ot M)\co (D\ot r_M)\co(((\mu_D\ot\varphi_M)\co(D\ot t_{D,D}\ot M)\co(\delta_D\ot\varrho_M))\ot\lambda_D)$
\item[]$\hspace{0.38cm}    \co(D\ot s_M\ot\varepsilon_D)\co(\delta_D\ot s_M)\co(\delta_D\ot M)$
\item[]$=((\varepsilon_D\co\mu_D)\ot M)\co (D\ot r_M)\co (((\mu_D\ot\varphi_M)\co(D\ot t_{D,D}\ot M)\co(\delta_D\ot\varrho_M))\ot\lambda_D)$
\item[]$\hspace{0.38cm}    \co(D\ot M\ot ((D\ot \varepsilon_D)\co \delta_D))\co (D\ot s_M)\co (\delta_D\ot M)$
\item[]$=((\varepsilon_D\co\mu_D)\ot M)\co (D\ot r_M)\co (((\mu_D\ot\varphi_M)\co(D\ot t_{D,D}\ot M)\co(\delta_D\ot\varrho_M))\ot\lambda_D)$
\item[]$\hspace{0.38cm}    \co (D\ot s_M)\co(\delta_D\ot M)  $
\item[]$=(\varepsilon_D\ot M)\co\varrho_M\co\varphi_M$
\item[]$=\varphi_M.$
\end{itemize}

Now, by Proposition \ref{compatible}   we also know that
$\varrho_M=\nabla_{s_M}\co\varrho_M.$ The remaining equalities can
be proved by similar arguments.
\end{proof}

In this part of the work  the announced monoidal structure of
$_D^D\mathcal{YD}$ in the general case is presented. We want also to
point out that when we restrict to the braided case we recover the
monoidal structure exposed in \cite{Proj}, so it could be said that
the new theory introduced in this work is coherent with the classic
one developped in the Hopf algebra setting.

\begin{apart}\label{idemp}
Let  $D$ be a WBHA. If
$(M,\varphi_{M})$ and $(N,\varphi_{N})$ are left $D$-modules and it
exists a quadruple  $(r_M,r_M^\prime, s_M, s_M^\prime)$ forming  an
$(M,D)$-WO compatible with the (co)module structure, then two
different morphisms arise naturally:
\begin{itemize}
\item[]$\nabla_{M\ot N}=(\varphi_{M}\otimes \varphi_{N})\circ
(D\otimes s_M \otimes N)\circ ((\delta_{D}\co\eta_D)\otimes M\otimes
N),$
\item[]$\Delta_{M\ot N}=((\varepsilon_D\co\mu_D)\otimes M\otimes N)\circ
(D\otimes r_M\otimes N)\circ (\varrho_M\otimes\varrho_N).$
\end{itemize}
\end{apart}

\begin{lemma}
Let  $D$ be a WBHA. If
$(M,\varphi_{M})$ and $(N,\varphi_{N})$  are left $D$-modules and
$(r_M,r_M^\prime, s_M, s_M^\prime)$ is an $(M,D)$-WO compatible with
the module structure, then the morphism $\nabla_{M\ot N}$ is
idempotent. It holds the analogous result for $\Delta_{M\ot N}$ in
the comodule case.
\end{lemma}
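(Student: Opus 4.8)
The plan is to establish idempotency of $\nabla_{M\ot N}$ by expanding $\nabla_{M\ot N}\co\nabla_{M\ot N}$ as a composite of six elementary morphisms and then fusing each of the repeated building blocks: the two module actions on $M$ (resp. on $N$) will be merged by associativity, the two copies of $s_M$ by the weak-operator compatibility with the product, and the two copies of $\delta_D\co\eta_D$ by the weak bialgebra axioms governing the iterated coproduct of the unit. Reading the composite from right to left, it applies $(\delta_D\co\eta_D)\ot M\ot N$, then $D\ot s_M\ot N$, then $\varphi_M\ot\varphi_N$, and repeats this block once more, so the whole reduction is organised around the intervening occurrence of $\varphi_M\ot\varphi_N$ feeding into the second $(\delta_D\co\eta_D)\ot M\ot N$.

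First I would use the left $D$-module axiom $\varphi_M\co(D\ot\varphi_M)=\varphi_M\co(\mu_D\ot M)$ (and its analogue for $\varphi_N$) to merge the outer and inner actions into single actions precomposed with $\mu_D$. The first real obstruction is that the second (outer) copy of $s_M$ lands on a factor of the form $D\ot\varphi_M$, so it must be commuted past the first action before the $\mu_D$'s can be assembled. This is exactly the content of the compatibility condition (i-4) of Definition \ref{WOmodulecompatible},
\[ s_M\co(D\ot\varphi_M)=(\varphi_M\ot D)\co(D\ot s_M)\co(t_{D,D}\ot M), \]
whose use introduces a factor $t_{D,D}$ acting on the two $D$-legs coming from the two copies of $\delta_D\co\eta_D$. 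After this move the two occurrences of $s_M$ sit in the pattern $(M\ot\mu_D)\co(s_M\ot D)\co(D\ot s_M)$, so I would collapse them into a single $s_M\co(\mu_D\ot M)$ by reading the weak-operator axiom (c4-5) backwards.

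At this stage the expression involves a single $\varphi_M$, a single $\varphi_N$, a single $s_M$, and a configuration built from two copies of $\delta_D\co\eta_D$ interlaced with $\mu_D$'s and one $t_{D,D}$. Here the weak bialgebra structure does the decisive work: the commutation relations (b3-1)--(b3-4) between $t_{D,D}$ and the (co)product, together with the axioms relating the iterated unit coproduct to the product, principally (b4) and
\[ (\delta_{D}\ot D)\co \delta_{D}\co \eta_{D}=(D\ot \mu_{D}\ot D)\co ((\delta_{D}\co \eta_{D}) \ot (\delta_{D}\co \eta_{D})), \]
which is (b6), allow me to collapse the two unit coproducts with their intervening product into a single iterate of $\delta_D\co\eta_D$; a concluding application of the counit property (using (b2-1)) contracts the surplus $D$-leg and returns precisely the single $\delta_D\co\eta_D$ occurring in $\nabla_{M\ot N}$, giving $\nabla_{M\ot N}\co\nabla_{M\ot N}=\nabla_{M\ot N}$. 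The statement for $\Delta_{M\ot N}$ follows by the strictly dual argument: merge the two coactions by comodule coassociativity, commute the inner $r_M$ past a coaction by the compatibility condition (ii-1), fuse the two $r_M$'s by the coaction-type axiom (c4-3), and collapse the two counit–products by the weak bialgebra axiom (b5).

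The step I expect to be the main obstacle is the bookkeeping across the middle of the argument: one must simultaneously keep track of the four $D$-tensorands created by the two copies of $\delta_D\co\eta_D$ while commuting the outer $s_M$ past $\varphi_M$ via (i-4) and fusing through (c4-5), and then present the resulting $t_{D,D}$–$\mu_D$–$\delta_D\co\eta_D$ block in exactly the shape required to invoke (b4)/(b6). Every individual move is routine, but the moves must be executed in a carefully chosen order so that each hypothesis applies without further rearrangement.
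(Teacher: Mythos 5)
Your proposal is correct and follows essentially the same route as the paper's proof: commute the outer $s_M$ past the inner action via the compatibility condition (i-4), merge the two actions by the module axiom, fuse the two copies of $s_M$ by (c4-5) read backwards, and then collapse the resulting block $(\mu_D\ot\mu_D)\co(D\ot t_{D,D}\ot D)\co((\delta_D\co\eta_D)\ot(\delta_D\co\eta_D))$, with the dual sketch for $\Delta_{M\ot N}$ matching what the paper leaves implicit. The only inessential deviation is your endgame: that block equals $\delta_D\co\mu_D\co(\eta_D\ot\eta_D)=\delta_D\co\eta_D$ by a single application of (b4) together with the unit axiom, so the appeals to (b6), the (b3) commutation relations and a counit contraction via (b2-1) are superfluous --- indeed (b6), whose middle term is $D\ot\mu_D\ot D$, does not even match the shape that actually arises.
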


\begin{proof}We will give the proof for $\nabla_{M\ot N}$. Using the compatibility,
the module character or $M$ and $N$ and  the conditions (c4) and (b4) we
have:
\begin{itemize}
\item[]$\hspace{0.38cm}    \nabla_{M\ot N}\co\nabla_{M\ot N}$
\item[]$=(\varphi_M\ot\varphi_N)\co (D\ot\varphi_M\ot D\ot\varphi_N)\co (D\ot D\ot
s_M\ot D\ot N)\co (D\ot t_{D,D}\ot s_M\ot N)$
\item[]$\hspace{0.38cm} \co((\delta_D \co\eta_D)\ot (\delta_D \co\eta_D)\ot M\ot N)$
\item[]$=(\varphi_M\ot\varphi_N)\co(D\ot s_M\ot N)\co(( (\mu_D\ot\mu_D)\co(D\ot
t_{D,D}\ot D)$
\item[]$\hspace{0.38cm}\co((\delta_D\co\eta_D)\ot(\delta_D\co\eta_D))   )\ot M\ot N)$
\item[]$= \nabla_{M\ot N}.$
\end{itemize}

\end{proof}

The following two lemmas have been introduced as technical tools to
be used in order to show that the morphisms $\nabla_{M\ot N}$ and
$\Delta_{M\ot N}$ coincide.

\begin{lemma} \label{nablasconmutan}Let $D$ be a WBHA with invertible antipode. If
$(M,\varphi_M,\varrho_M)$ is a left $D$-(co)module and
$(r_M,s_M^\prime, s_M,s_M^\prime)$ an $(M,D)$-WO compatible with the
(co)module structure, then
\begin{equation}
\nabla_{s_M^\prime}\co\nabla_{r_M}=\nabla_{r_M}\co\nabla_{s_M^\prime}.
\end{equation}
\end{lemma}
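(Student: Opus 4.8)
The plan is to prove the identity purely at the level of the weak operator, transporting the idempotent $\nabla_{r_M}$ through a convenient presentation of $\nabla_{s_M^\prime}$; the invertibility of $\lambda_D$ enters only through Corollary \ref{nablacontes}, and neither the module nor the comodule structure of $M$ is actually needed. The key preliminary remark is that, since $\lambda_D$ is invertible, by (\ref{newnablasMprimatDprimadelta}) the idempotent $\nabla_{s_M^\prime}$ admits the comultiplicative description $\nabla_{s_M^\prime}=(\varepsilon_D\ot M\ot D)\co(s_M^\prime\ot D)\co(M\ot(t^{\prime}_{D,D}\co\delta_D))$. I would substitute this expression for the right-hand factor of $\nabla_{r_M}\co\nabla_{s_M^\prime}$ and then slide $\nabla_{r_M}$ from the left of the resulting string all the way to the right.

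Concretely, I would carry out the following four rewrites in order. First, because $\nabla_{r_M}$ acts on the last two tensorands $M\ot D$ while $\varepsilon_D$ acts on the first, bifunctoriality of $\ot$ gives $\nabla_{r_M}\co(\varepsilon_D\ot M\ot D)=(\varepsilon_D\ot M\ot D)\co(D\ot\nabla_{r_M})$, so the leading counit can be crossed freely. Second, I would apply the mixed congruence (\ref{tprimasprimaMnablarM}) of Proposition \ref{mixednablatypecongruence} to move the factor $D\ot\nabla_{r_M}$ across the block $(s_M^\prime\ot D)\co(M\ot t^{\prime}_{D,D})$, replacing it by $\nabla_{r_M}\ot D$ on the other side. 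Third, the slide relation (\ref{nablabaixapordelta}) of Remark \ref{redundante1} turns $(\nabla_{r_M}\ot D)\co(M\ot\delta_D)$ into $(M\ot\delta_D)\co\nabla_{r_M}$, pushing $\nabla_{r_M}$ to the far right. Fourth, recognising the remaining prefix $(\varepsilon_D\ot M\ot D)\co(s_M^\prime\ot D)\co(M\ot(t^{\prime}_{D,D}\co\delta_D))$ once more as $\nabla_{s_M^\prime}$, again by (\ref{newnablasMprimatDprimadelta}), yields exactly $\nabla_{s_M^\prime}\co\nabla_{r_M}$, which is the desired equality.

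The computation is a short chain of direct rewrites, so the only genuine obstacle is bookkeeping: one must choose the versions of the auxiliary identities that are compatible with one another. The mixed congruence that lets $\nabla_{r_M}$ pass across an $s_M^\prime$-block is the one written with $t^{\prime}_{D,D}$, namely (\ref{tprimasprimaMnablarM}); since, as the paper emphasises, $t_{D,D}$ and $t^{\prime}_{D,D}$ may not be interchanged in the mixed equations, I must begin from the $t^{\prime}$-flavoured presentation (\ref{newnablasMprimatDprimadelta}) of $\nabla_{s_M^\prime}$ rather than from its $t_{D,D}$-counterpart (\ref{newnablasMprimatDdelta}). Once these two $t^{\prime}$-versions are matched the three sliding steps are forced, and the argument closes without ever invoking $\varphi_M$ or $\varrho_M$; the hypotheses that $M$ be a $D$-(co)module with a compatible weak operator are inherited from the ambient setting but are not used here, whereas the invertibility of $\lambda_D$ is essential, as it is what makes the presentation (\ref{newnablasMprimatDprimadelta}) available.
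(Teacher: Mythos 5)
Your proof is correct and follows essentially the same strategy as the paper's: expand one of the idempotents comultiplicatively via Corollary \ref{nablacontes}, slide the other through the operator block with the appropriate mixed congruence from Proposition \ref{mixednablatypecongruence}, push it past $\delta_D$ via Remark \ref{redundante1}, and reassemble. The only difference is a harmless mirror symmetry — the paper expands $\nabla_{r_M}$ using the $t_{D,D}$-presentation (\ref{newnablarMtDdelta}) and transports $\nabla_{s_M^\prime}$ via (\ref{trMnablasprimaM}) and (\ref{nablabaixapordeltaparas}), whereas you expand $\nabla_{s_M^\prime}$ using the $t^{\prime}_{D,D}$-presentation (\ref{newnablasMprimatDprimadelta}) and transport $\nabla_{r_M}$ via (\ref{tprimasprimaMnablarM}) and (\ref{nablabaixapordelta}) — and your observations that the $t$/$t^{\prime}$ flavours must be matched and that invertibility of $\lambda_D$ enters only through Corollary \ref{nablacontes} are both accurate.
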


\begin{proof}  Using the properties of WBHA, (\ref{newnablarMtDdelta}) twice, (\ref{trMnablasprimaM}) and
(\ref{nablabaixapordeltaparas}), we have:
\begin{itemize}
\item[]$\hspace{0.38cm}\nabla_{s_M^\prime}\co\nabla_{r_M}$
\item[]$=(\varepsilon_D\ot\nabla_{s_M^\prime})\co(r_M\ot D)\co(M\ot(t_{D,D}\co\delta_D))$
\item[]$=(\varepsilon_D\ot M\ot D)\co(r_M\ot D)\co(M\ot
t_{D,D})\co(\nabla_{s_M^\prime}\ot D)\co(M\ot
\delta_D)$
\item[]$=(\varepsilon_D\ot M\ot D)\co(r_M\ot D)\co(M\ot(t_{D,D}\co\delta_D))\co\nabla_{s_M^\prime}$
\item[]$=\nabla_{r_M}\co\nabla_{s_M^\prime}.$
\end{itemize}
\end{proof}

As a consequence:

\begin{lemma}\label{nablasprimedesaparece} In the hypothesis of the previous lemma, if
it also holds that $(M,\varphi_M,\varrho_M)\in{} _D^D\mathcal{YD}$
then
\begin{equation}\label{nabla_sprimamorre}
(M\ot(\varepsilon_D\co\mu_D))\co((\nabla_{s_M^\prime}\co
r_M^\prime\co\varrho_M)\ot D) =(M\ot(\varepsilon_D\co\mu_D))\co((
r_M^\prime\co\varrho_M)\ot D).
\end{equation}
\end{lemma}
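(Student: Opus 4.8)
The plan is to rewrite both members of (\ref{nabla_sprimamorre}) as ``trace pairings'' of the shape $((\varepsilon_D\co\mu_D)\ot M)\co(\cdots)$ and then match them head-for-head. First I would treat the left-hand side. Writing $\nabla_{s_M^\prime}=s_M\co s_M^\prime$ as in (c3) and factoring out the external $D$, the pairing identity (\ref{smuepsilon}) of Proposition \ref{etadeltaWO} transports the outer $s_M$ across $\varepsilon_D\co\mu_D$, turning it into an $s_M^\prime$ acting on the opposite factor; equivalently one reaches the same point by combining the two occurrences of $\mu_D$ through (\ref{nablabaixapormuparas}) and collapsing the counit leg with the second identity of (\ref{epsilontrocaladoparas}) followed by (c4-6). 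Either route yields
\[
(M\ot(\varepsilon_D\co\mu_D))\co((\nabla_{s_M^\prime}\co r_M^\prime\co\varrho_M)\ot D)=((\varepsilon_D\co\mu_D)\ot M)\co(D\ot s_M^\prime)\co(s_M^\prime\ot D)\co((r_M^\prime\co\varrho_M)\ot D).
\]
Dually, on the right-hand side the identity (\ref{rmuepsilon}) gives
\[
(M\ot(\varepsilon_D\co\mu_D))\co((r_M^\prime\co\varrho_M)\ot D)=((\varepsilon_D\co\mu_D)\ot M)\co(D\ot r_M)\co(\varrho_M\ot D).
\]

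Next I would bring the two expressions to a common tail. By Corollary \ref{nablamorreconphi} we have $\nabla_{r_M^\prime}\co\varrho_M=\varrho_M$, that is $r_M\co(r_M^\prime\co\varrho_M)=\varrho_M$, so in the second displayed identity $\varrho_M$ may be replaced by $r_M\co r_M^\prime\co\varrho_M$; after this substitution the right-hand member also carries the morphism $(r_M^\prime\co\varrho_M)\ot D$ at the far right, exactly as the left-hand member does. Thus the whole statement reduces to comparing the two ``heads'' $((\varepsilon_D\co\mu_D)\ot M)\co(D\ot s_M^\prime)\co(s_M^\prime\ot D)$ and $((\varepsilon_D\co\mu_D)\ot M)\co(D\ot r_M)\co(r_M\ot D)$ once they are precomposed with $(r_M^\prime\co\varrho_M)\ot D$. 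To settle this I would feed the common tail through the compatibility of the weak operator with the comodule structure (Definition \ref{WOmodulecompatible}(ii)): unfolding $r_M^\prime\co\varrho_M$ and applying conditions (ii-1)--(ii-3) interchanges $\varrho_M$ with the operators and produces the weak Yang-Baxter morphisms $t_{D,D}$ and $t^{\prime}_{D,D}$, after which the mixed Yang-Baxter relations (\ref{rara1}) and (\ref{rara2}) of Proposition \ref{Y-Btypecongruence}, the cancellation laws of Proposition \ref{cancelation} and the identity (\ref{newrMprima}) align the $s$-operators with the $r$-operators. A final use of the counit axiom (b5) of the weak braided bialgebra together with the counit property collapses the paired $D$-legs and returns the right-hand side.

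The main obstacle is precisely this matching of the two heads: as abstract morphisms $(D\ot s_M^\prime)\co(s_M^\prime\ot D)$ and $(D\ot r_M)\co(r_M\ot D)$ do \emph{not} coincide, so the tail $r_M^\prime\co\varrho_M$ cannot be cancelled formally; it is essential to use that this tail is comodule-compatible and satisfies $r_M\co r_M^\prime\co\varrho_M=\varrho_M$, which is where the invertibility of the antipode enters through Corollary \ref{nablamorreconphi} (and, via Lemma \ref{nablasconmutan}, through the commutation $\nabla_{s_M^\prime}\co\nabla_{r_M}=\nabla_{r_M}\co\nabla_{s_M^\prime}$ used to keep $r_M^\prime\co\varrho_M$ in the image of $\nabla_{r_M}$). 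The delicate bookkeeping is to keep $t_{D,D}$ and $t^{\prime}_{D,D}$ rigorously distinguished, since they cannot be interchanged, as stressed after Definition \ref{WO}, while threading them through the mixed conditions (c2) and the axiom (b5); selecting the correct primed or unprimed operator at each step is what makes the computation long but essentially forced.
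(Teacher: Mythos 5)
Your opening moves are correct but do not actually advance the proof: rewriting the two sides through (\ref{smuepsilon}) and (\ref{rmuepsilon}) and then substituting $\varrho_M=r_M\co r_M^\prime\co\varrho_M$ (Corollary \ref{nablamorreconphi}) merely restates the lemma. Indeed, your right-hand ``head'' satisfies $((\varepsilon_D\co\mu_D)\ot M)\co(D\ot r_M)\co(r_M\ot D)=(M\ot(\varepsilon_D\co\mu_D))\co(\nabla_{r_M}\ot D)$ by (\ref{rmuepsilon}), and $\nabla_{r_M}\co r_M^\prime=r_M^\prime$ by (\ref{newrMprima}), while your left-hand head is, by the same token, just $(M\ot(\varepsilon_D\co\mu_D))\co(\nabla_{s_M^\prime}\ot D)$. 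So after your ``reduction'' the statement to be proved on the common tail $(r_M^\prime\co\varrho_M)\ot D$ is literally the original equation (\ref{nabla_sprimamorre}) again, and all of the content sits in the step you leave as a sketch.

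That sketch, as stated, cannot close. Comodule compatibility (Definition \ref{WOmodulecompatible}(ii)) and the mixed relations (\ref{rara1})--(\ref{rara2}) always thread a factor $t_{D,D}$ or $t^{\prime}_{D,D}$ through three tensor legs, and since these are only weakly invertible ($t_{D,D}\co t^{\prime}_{D,D}=\nabla_{D,D}$), the proposed ``alignment'' of the $s$-head with the $r$-head leaves residual $\nabla_{D,D}$'s that cannot be removed on a generic leg. What removes them in the paper is that the Yetter--Drinfeld hypothesis enters through a specific identity involving the \emph{action}: one inserts $\overline{\Pi}_D^R$ via $\varepsilon_D\co\mu_D\co(\overline{\Pi}_D^R\ot D)=\varepsilon_D\co\mu_D$, transports it onto $\varrho_M$ using Propositions \ref{PibaixapoloWO} and \ref{Pibaixapolors}, and replaces $(\overline{\Pi}_D^R\ot M)\co\varrho_M$ by $(D\ot\varphi_M)\co(t_{D,D}\ot M)\co(D\ot(r_M\co s_M))\co((\delta_D\co\eta_D)\ot M)$, which follows from Lemma \ref{consecuencias}. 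Only then do the module-compatibility conditions (i-1) and (i-4), Lemma \ref{nablasconmutan}, and the cancellation $s_M=\nabla_{s_M^\prime}\co s_M$ of (\ref{newsM}) make $\nabla_{s_M^\prime}$ disappear, the leftover $t_{D,D}\co t^{\prime}_{D,D}$ being killed against the $\delta_D\co\eta_D$ leg via (b2-1). Your outline never uses $\varphi_M$ or Lemma \ref{consecuencias} at all --- Corollary \ref{nablamorreconphi} only supplies facts about $\nabla_{s_M}$ and $\nabla_{r_M^\prime}$, idempotents on $D\ot M$, not about $\nabla_{s_M^\prime}$ on $M\ot D$ --- and (b5) plays no role in the actual argument. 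Without the action-based identity there is no mechanism in your toolkit to cancel the residual $\nabla_{D,D}$'s, so there is a genuine gap at exactly the step you yourself flag as ``the main obstacle.''
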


\begin{proof}
Indeed,
\begin{itemize}
\item[]$\hspace{0.38cm}  (M\ot(\varepsilon_D\co\mu_D))\co((\nabla_{s_M^\prime}\co
r_M^\prime\co\varrho_M)\ot D)   $
\item[]$= (M\ot(\varepsilon_D\co\mu_D\co(\overline{\Pi}_D^R\ot
D)))\co((\nabla_{s_M^\prime}\co r_M^\prime\co\varrho_M)\ot D)$
\item[]$= (M\ot(\varepsilon_D\co\mu_D))\co((\nabla_{s_M^\prime}\co
r_M^\prime\co(\overline{\Pi}_D^R\ot M)\co\varrho_M)\ot D)$
\item[]$= (M\ot(\varepsilon_D\co\mu_D))\co((\nabla_{s_M^\prime}\co
r_M^\prime)\ot D)\co(D\ot\varphi_M\ot D)\co(t_{D,D}\ot M\ot D)$
\item[]$\hspace{0.38cm}\co(D\ot(r_M\co s_M)\ot D)\co ((\delta_D\co\eta_D)\ot M\ot D)$
\item[]$=(M\ot(\varepsilon_D\co\mu_D))\co((\nabla_{s_M^\prime}\co r_M)\ot
D)\co(\varphi_M\ot D\ot D)\co(D\ot s_M\ot D)$
\item[]$\hspace{0.38cm}\co((\delta_D\co\eta_D)\ot M\ot D)$
\item[]$=(M\ot(\varepsilon_D\co\mu_D))\co((\nabla_{r_M}\co
\nabla_{s_M^\prime}\co(\varphi_M\ot D)\co(D\ot s_M)$
\item[]$\hspace{0.38cm}\co((t_{D,D}\co
t^{\prime}_{D,D}\co\delta_D\co\eta_D)\ot M))\ot D)$
\item[]$=(M\ot(\varepsilon_D\co\mu_D))\co(\nabla_{r_M}\ot D)\co(s_M\ot
D)\co(D\ot\varphi_M\ot D)\co((
t^{\prime}_{D,D}\co\delta_D\co\eta_D)\ot M\ot D)$
\item[]$=(M\ot(\varepsilon_D\co\mu_D))\co((\nabla_{r_M}\co(\varphi_M\ot D)\co(D\ot
s_M)\co((t_{D,D}\co t^{\prime}_{D,D}\co\delta_D\co\eta_D)\ot M))\ot D)$
\item[]$=(M\ot(\varepsilon_D\co\mu_D))\co((\nabla_{r_M}\co(\varphi_M\ot D)\co(D\ot
s_M)\co((\delta_D\co\eta_D)\ot M))\ot D)$
\item[]$=(M\ot(\varepsilon_D\co\mu_D))\co(r_M^\prime\ot
D)\co(((\overline{\Pi}_D^R\ot M)\co\varrho_M)\ot D)$
\item[]$=(M\ot(\varepsilon_D\co\mu_D))\co((r_M^\prime\co\varrho_M)\ot D).$
\end{itemize}

In the preceding calculations, the first and the last equalities follow because $\varepsilon_D\co\mu_D\co(\overline{\Pi}_D^R\ot D)=\varepsilon_D\co\mu_D$.
The second uses Propositions \ref{PibaixapoloWO} and \ref{Pibaixapolors}. We get the third and the tenth ones by the equation

\begin{equation}
(\overline{\Pi}_D^R\ot M)\co\varrho_M= (D\ot\varphi_M)\co(t_{D,D}\ot
M)\co((D\ot r_M\co s_M))\co((\delta_D\co\eta_D)\ot M),
\end{equation}
which follows by  Lemma \ref{consecuencias}. In the fourth and seventh equalities we apply the compatibility condition for the module structure; the fifth relies on Lemma \ref{nablasconmutan}, and the sixth is a consequence of (\ref{newsM}). Finally, the nineth equality follows by (b2-1).

\end{proof}
Now it is possible to check that the idempotent morphisms defined in
paragraph \ref{idemp} are the same.

\begin{proposition}\label{asnablascoinciden}
Let  $D$ be a WBHA with invertible antipode. If
$(M,\varphi_{M},\varrho_M)$ and $(N,\varphi_N,\varrho_N)$ are
objects of $_D^D\mathcal{YD}$ then
\[
\nabla_{M\ot N}=\Delta_{M\ot N}.
\]
\end{proposition}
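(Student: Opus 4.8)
The plan is to turn $\Delta_{M\ot N}$ into $\nabla_{M\ot N}$ by a single diagrammatic computation, after first putting both idempotents into a more symmetric shape. I would begin by simplifying the right-hand side: applying (\ref{rmuepsilon}) to the subsystem formed by the $D$-leg of $\varrho_M$, the factor $M$ and the $D$-leg of $\varrho_N$ rewrites $\Delta_{M\ot N}=(M\ot(\varepsilon_D\co\mu_D)\ot N)\co((r_M^\prime\co\varrho_M)\ot\varrho_N)$. Dually, applying (\ref{etadeltas}) to the factor $M$ against the split unit in $\nabla_{M\ot N}$ gives $\nabla_{M\ot N}=(\varphi_M\ot\varphi_N)\co(s_M^\prime\ot D\ot N)\co(M\ot(\delta_D\co\eta_D)\ot N)$. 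So it suffices to identify these two forms, one built from the coactions together with the merged counit $\varepsilon_D\co\mu_D$, the other from the actions together with the split unit $\delta_D\co\eta_D$.

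For the core step I would exploit that $M$ and $N$ are Yetter--Drinfeld modules, using the reformulation (yd3) of Proposition \ref{yd3} to interchange action and coaction so that the two shapes above can be matched. Concretely, I would slide $r_M^\prime$, respectively $s_M^\prime$, across the (co)actions by means of the compatibility of the weak operator with the module and comodule structures (Definition \ref{WOmodulecompatible}); the mixed and non-mixed congruences of Propositions \ref{nablatypecongruence}, \ref{mixednablatypecongruence} and \ref{Y-Btypecongruence} are exactly what allow the idempotents $\nabla_{r_M}$, $\nabla_{s_M}$, $\nabla_{r_M^\prime}$, $\nabla_{s_M^\prime}$ to commute past $t_{D,D}$ and $t^\prime_{D,D}$. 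The passage between the merged counit and the split unit is then governed by the bialgebra axiom (b4) together with (b5)--(b6) and the source/target morphisms $\Pi_D^L,\Pi_D^R,\overline{\Pi}_D^L,\overline{\Pi}_D^R$; along the way I would absorb the emerging projections using Proposition \ref{Pibaixapolors}, discard the spurious $\nabla_{s_M^\prime}$ via Lemma \ref{nablasprimedesaparece}, and replace coactions by $\nabla_{s_M}\co\varrho_M=\nabla_{r_M^\prime}\co\varrho_M=\varrho_M$ (and the dual action identities) through Corollary \ref{nablamorreconphi} and Proposition \ref{compatible}. Finally the cancellation laws of Proposition \ref{cancelation} clean up the composites $r_M^\prime\co r_M$, $s_M^\prime\co s_M$, and the like.

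The main obstacle is bookkeeping under the weak hypotheses. Because $t_{D,D}$, $r_M$ and $s_M$ are not invertible --- they are invertible only on the images of their idempotents --- every rearrangement must respect the cancellation and congruence laws rather than simply cancelling an operator against its inverse; and because the unit is entangled, $\delta_D\co\eta_D\neq\eta_D\ot\eta_D$, the two ``free'' copies of $D$ produced in $\nabla_{M\ot N}$ interact through $\overline{\Pi}_D^R$ and cannot be decoupled until the final collapse. Keeping these projections and the operators $t_{D,D}$ versus $t^\prime_{D,D}$ in the correct positions --- which, as the paper repeatedly warns, may not be interchanged in the mixed identities --- is the delicate part, and it is precisely here that invertibility of the antipode is needed, via Corollary \ref{nablacontes}, in order to rewrite $\nabla_{r_M}$ and $\nabla_{s_M}$ in the forms that make the reduction close.
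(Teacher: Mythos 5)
Your plan reproduces the paper's own proof almost step for step: the paper likewise opens by using (\ref{etadeltas}) to rewrite $\nabla_{M\ot N}$ as $((\varphi_M\co s_M^\prime)\ot\varphi_N)\co(M\ot(\delta_D\co\eta_D)\ot N)$, closes with (\ref{rmuepsilon}) to recognize $\Delta_{M\ot N}=(M\ot(\varepsilon_D\co\mu_D)\ot N)\co((r_M^\prime\co\varrho_M)\ot\varrho_N)$, and bridges the middle with exactly the tools you name --- the Yetter--Drinfeld consequences (packaged there as Lemma \ref{consecuencias} rather than a direct appeal to (yd3)), compatibility of the comodule structure, the mixed identities (c2-1) and (\ref{rara2}), the characterizations of $\nabla_{s_M^\prime}$ from (c3) and Corollary \ref{nablacontes} (which is indeed where the invertible antipode enters), and Lemma \ref{nablasprimedesaparece} to discard the spurious $\nabla_{s_M^\prime}$. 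Your proposal is therefore correct and takes essentially the same route as the paper, the only cosmetic difference being the citation of Lemma \ref{consecuencias} in place of (yd3) for the action--coaction interchange.
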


\begin{proof}We have:
\begin{itemize}
\item[]$\hspace{0.38cm}\nabla_{M\ot N}$
\item[]$=((\varphi_M\co s_M^\prime)\ot\varphi_N)\co(M\ot(\delta_D\co\eta_D)\ot N)$
\item[]$=((\varphi_M\co s_M^\prime)\ot N)\co(M\ot((\overline{\Pi}_D^L\ot
N)\co\varrho_N))$
\item[]$=((((\varepsilon_D\co\mu_D)\ot M)\co(D\ot(r_M\co s_M))\co(t_{D,D}\ot
M)\co(D\ot\varrho_M))\ot N)\co(s_M^\prime\ot N)$
\item[]$\hspace{0.38cm}\co(M\ot\varrho_N)$
\item[]$=(((M\ot(\varepsilon_D\co\mu_D))\co(r_M^\prime\ot D)\co(D\ot
s_M)\co((t_{D,D}\co t^{\prime}_{D,D})\ot M)\co(D\ot s_M^\prime))\ot
N)$
\item[]$\hspace{0.38cm}\co(\varrho_M\ot\varrho_N)$
\item[]$=(((M\ot(\varepsilon_D\co\mu_D\co t_{D,D}))\co(s_M\ot D)\co(D\ot
r_M^\prime)\co(t^{\prime}_{D,D}\ot M )\co(D\ot s_M^\prime))\ot N)$
\item[]$\hspace{0.38cm}\co(\varrho_M\ot\varrho_N)$

\item[]$=(((M\ot(\varepsilon_D\co\mu_D\co t_{D,D}))\co(\nabla_{s_M^\prime}\ot
D)\co(M\ot t^{\prime}_{D,D})\co(r_M^\prime\ot D))\ot
N)\co(\varrho_M\ot\varrho_N)$
\item[]$=(((M\ot(\varepsilon_D\co\mu_D\co t_{D,D}))\co(M\ot\mu_D\ot D)\co((s_M\ot(\eta_D\ot M))\ot t^{\prime}_{D,D})$
\item[]$\hspace{0.38cm}\co(r_M^\prime\ot D))\ot N)\co(\varrho_M\ot\varrho_N)$
\item[]$=(((M\ot(\varepsilon_D\co\mu_D))\co(M\ot(\mu_D\co t_{D,D})\ot D)\co ((s_M\ot(\eta_D\ot M))\ot \nabla_{D,D})$
\item[]$\hspace{0.38cm}\co (r_M^\prime\ot D))\ot N)\co(\varrho_M\ot\varrho_N)$

\item[]$=(M\ot(\varepsilon_D\co\mu_D)\ot N)\co((\nabla_{s_M^\prime}\co
r_M^\prime\co\varrho_M)\ot\varrho_N)$
\item[]$=(M\ot(\varepsilon_D\co\mu_D)\ot N)\co(r_M^\prime\ot D\ot
N)\co(\varrho_M\ot\varrho_N)$
\item[]$=\Delta_{M\ot N}.$
\end{itemize}

In the preceding calculations, the first and the last equalities follow by (\ref{etadeltas}) and (\ref{rmuepsilon}), respectively; the second and third ones are consequences of Lemma \ref{consecuencias}; the fourth apply compatibility of the comodule structure. In the fifth and seventh equalities we use (c2-1) and (c3-2), respectively; the sixth follows by (\ref{rara2}), and the eighth and nineth follow because $D$ is a WBHA. Finally, the tenth equality relies on (\ref{nabla_sprimamorre}).

\end{proof}

\begin{apart}\label{MxNbendefinido}
 Let $D$ be a WBHA and let $(M,\varphi_{M},\varrho_M)$ and $(N,\varphi_N,\varrho_N)$ be
objects of $_D^D\mathcal{YD}$. We denote by
$M\times N$ the image of the idempotent $\nabla_{M\otimes N}$ and by
$p_{M\otimes N}:M\otimes N\rightarrow M\times N$, $i_{M\otimes
N}:M\times N\rightarrow M\otimes N$ the morphisms such that
$i_{M\otimes N}\circ p_{M\otimes N}=\nabla_{M\otimes N}$ and
$p_{M\otimes N}\circ i_{M\otimes N}=id_{M\times N}$.
 Actually the object $M\times N$ will be taken as the product of $M$ and $N$ in the
category $_D^D\mathcal{YD}$. In order to provide $_D^D\mathcal{YD}$
with a monoidal structure, first to all, by Definition
\ref{defl-lYD}, the object $M\times N$ must be equipped with a
compatible a weak operator. To do so, we state first some
preliminary results and convenient notation.
\end{apart}

\begin{lemma}\label{nablaMxNbaixapoloWO}
 Let $D$ be a WBHA with invertible antipode. If $(M,\varphi_M,\varrho_M)$ and $(N,\varphi_N,\varrho_N)$ are in
$_D^D\mathcal{YD}$ then:

\begin{equation}
\label{newnablaMNrMN}
(D\ot \nabla_{M\ot N})\circ (r_M\ot N)\circ (M\ot r_N)=(r_M\ot N)\circ (M\ot
r_N)\circ (\nabla_{M\ot N}\ot D),
\end{equation}

\begin{equation}
\label{newnablaMNrprimaMN}
(\nabla_{M\ot N}\ot D)\circ (M\ot r_N^\prime)\circ (r_M^\prime\ot N)=(M\ot r_N^\prime)\circ (r_M^\prime \ot N)\circ (D\ot \nabla_{M\ot N}),
\end{equation}

\begin{equation}
\label{newnablaMNsMN}
(\nabla_{M\ot N}\ot D)\circ (M\ot s_N)\circ (s_M\ot N)=(M\ot s_N)\circ (s_M \ot N)\circ (D\ot \nabla_{M\ot N}),
\end{equation}

\begin{equation}
\label{newnablaMNsprimaMN}
(D\ot \nabla_{M\ot N})\circ (s_M^\prime\ot N)\circ (M\ot s_N^\prime)=(s_M^\prime\ot N)\circ (M\ot
s_N^\prime)\circ (\nabla_{M\ot N}\ot D).
\end{equation}

\end{lemma}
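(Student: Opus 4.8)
The plan is to prove the four identities one at a time, in each case unfolding the idempotent $\nabla_{M\ot N}$ into whichever of its two descriptions is adapted to the operators appearing in that identity: either the module form of paragraph \ref{idemp} or the comodule form $\Delta_{M\ot N}=((\varepsilon_D\co\mu_D)\ot M\ot N)\co(D\ot r_M\ot N)\co(\varrho_M\ot\varrho_N)$, which equals $\nabla_{M\ot N}$ by Proposition \ref{asnablascoinciden}. The guiding principle is dictated by Definition \ref{WOmodulecompatible}: the operators $r_M,r_N$ (resp. $r_M^\prime,r_N^\prime$) interact cleanly with the coactions through (ii-1) (resp. (ii-2)), so for \eqref{newnablaMNrMN} and \eqref{newnablaMNrprimaMN} I would replace $\nabla_{M\ot N}$ by $\Delta_{M\ot N}$; whereas $s_M,s_N$ (resp. $s_M^\prime,s_N^\prime$) interact with the action through (i-4) (resp. (i-3)), so for \eqref{newnablaMNsMN} and \eqref{newnablaMNsprimaMN} I would keep the module form $\nabla_{M\ot N}=(\varphi_M\ot\varphi_N)\co(D\ot s_M\ot N)\co((\delta_D\co\eta_D)\ot M\ot N)$. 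Since the passage between the two forms invokes Proposition \ref{asnablascoinciden}, this is exactly where the hypothesis that the antipode be invertible enters.

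For \eqref{newnablaMNrMN} concretely, I would start from the right-hand side, expand the inner $\nabla_{M\ot N}$ as $\Delta_{M\ot N}$, and then transport the two coactions $\varrho_M,\varrho_N$ to the left across the operators $r_M,r_N$ by repeated use of the comodule compatibility (ii-1) for both $M$ and $N$. This produces factors of $t_{D,D}$ which must then be reorganized using the Yang-Baxter relation \eqref{rMtrprimaM} of Proposition \ref{Y-Btypecongruence} and the $\nabla$-transport identities of Proposition \ref{nablatypecongruence}, after which the multiplicativity/comultiplicativity axioms (b1)--(b4) together with the counit of $D$ collapse the configuration back into a single $r_M\ot N$ flanked by $\varrho_M,\varrho_N$, i.e.\ into $\Delta_{M\ot N}$ sitting on the output side. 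Throughout, the cancellation laws of Proposition \ref{cancelation} (notably \eqref{newrM} and \eqref{newrMprima}) serve to insert or delete the idempotents $\nabla_{r_M},\nabla_{r_M^\prime}$ where convenient. Equation \eqref{newnablaMNrprimaMN} is handled identically, replacing (ii-1) by (ii-2) and $t_{D,D}$ by $t_{D,D}^{\prime}$.

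The two remaining identities \eqref{newnablaMNsMN} and \eqref{newnablaMNsprimaMN} are proved by the mirror computation: here I would use the module form of $\nabla_{M\ot N}$ and push the actions $\varphi_M,\varphi_N$ across $s_M,s_N$ (resp. $s_M^\prime,s_N^\prime$) by means of (i-4) (resp. (i-3)), again normalizing the resulting $t_{D,D}$'s with \eqref{sMtprimasprimaM} and Proposition \ref{nablatypecongruence} and closing up with (b1)--(b4) and the unit $\eta_D$. Alternatively, one might hope to deduce \eqref{newnablaMNsMN}--\eqref{newnablaMNsprimaMN} from the $r$-identities through the symmetry noted at the end of Remark \ref{redundante1}, which replaces the quadruple $(r_M,r_M^\prime,s_M,s_M^\prime)$ by $(s_M^\prime,s_M,r_M^\prime,r_M)$; this shortcut, however, requires first checking that the tensor idempotent is unchanged under that substitution (the module form built from $s_M$ must agree with the one built from $r_M^\prime$), which is essentially a repetition of Proposition \ref{asnablascoinciden}, so a direct mirror computation is likely cleaner.

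The main obstacle is not conceptual but organizational: each side is a composite on three or four tensor copies of $D$, and the proof is a long chain of rewrites in which, at every step, one must select the correct instance — for the relevant copy of $M$ or $N$ — of the compatibility conditions, the Yang-Baxter relations \eqref{rMtrprimaM} and \eqref{sMtprimasprimaM}, and the WBHA axioms (b1)--(b4). The two delicate points are keeping the bookkeeping of the $t_{D,D}$ versus $t_{D,D}^{\prime}$ factors straight (these cannot be interchanged, as stressed after (c2)), and ensuring that each application of Propositions \ref{nablatypecongruence} and \ref{mixednablatypecongruence} is matched to the idempotent actually present, so that the final collapse via the counit or unit reconstructs precisely $\nabla_{M\ot N}$ and not a neighbouring idempotent.
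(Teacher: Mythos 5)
Your overall route coincides with the paper's: for (\ref{newnablaMNrMN}) the paper also rewrites $\nabla_{M\ot N}$ as $\Delta_{M\ot N}$ via Proposition \ref{asnablascoinciden} (correctly identified by you as the entry point of the invertibility of the antipode), transports the coactions across $r_M$, $r_N$ by the comodule compatibility (ii-1), and normalizes with (c1-1), (b3-1) and (\ref{b3-4}); the $s$-identities are treated by the mirror computation, as you propose. However, your sketch has one concrete gap: the cleanup you describe cannot be completed with the tools you list. When the counit produced by $\Delta_{M\ot N}$ is slid past $t_{D,D}$ using (\ref{b3-4}), a factor $\nabla_{D,D}$ unavoidably appears, and after moving it with (\ref{nabla_Dbaixaporr}) it lands on the pair formed by the output of $\varrho_N$ and the $D$-leg entering $r_N$. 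None of (b1)--(b4), the counit, the cancellation laws of Proposition \ref{cancelation}, or the transport identities of Propositions \ref{nablatypecongruence} and \ref{Y-Btypecongruence} can remove it there: those results only move $\nabla_{D,D}$ or the $\nabla_{r}$-type idempotents around, they never absorb $\nabla_{D,D}$ against a coaction.

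The paper closes this gap with a preliminary absorption identity, proved before the main computation:
\begin{equation*}
(\nabla_{D,D}\ot N)\co (D\ot r_N)\co (\varrho_N\ot D)=(D\ot r_N)\co (\varrho_N\ot D),
\end{equation*}
whose proof uses (yd1) for $N$ (to rewrite $\varrho_N$ as $(\mu_D\ot\varphi_N)\co(D\ot t_{D,D}\ot N)\co((\delta_D\co\eta_D)\ot\varrho_N)$), the module compatibility (i-1), (b1-2) and (a2-4). Note that this is a second, independent use of the Yetter--Drinfeld condition, beyond your single invocation of it through Proposition \ref{asnablascoinciden}: mere compatible (co)module structures would not suffice, which is why the lemma is stated for objects of $_D^D\mathcal{YD}$. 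Your mirror computation for (\ref{newnablaMNsMN}) and (\ref{newnablaMNsprimaMN}) will need the analogous absorption statement on the action side. With that identity supplied, the rest of your plan goes through essentially as the paper's proof does; your caution about the shortcut via the quadruple $(s_M^\prime, s_M, r_M^\prime, r_M)$ is also well placed, since it would indeed require re-identifying the tensor idempotent, i.e.\ repeating Proposition \ref{asnablascoinciden}.
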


\begin{proof}
We will show (\ref{newnablaMNrMN}), the others being analogous. First at all, using (yd-1) twice, the compatibility with the module structure, (b1) and (a2-4),
\begin{itemize}
\item[ ]$\hspace{0.38cm} (\nabla_{D,D}\ot N)\circ (D\ot r_N)\circ (\varrho_N\ot D)$
\item[ ]$=(\nabla_{D,D}\ot N)\circ (D\ot r_N)\circ (((\mu_D\ot \varphi_N)\co
(D\ot t_{D,D}\ot N)\circ ((\delta_D\circ \eta_D)\ot \varrho_N))\ot
D)$
\item[ ]$=(\nabla_{D,D}\ot \varphi_N)\circ (\mu_D\ot t_{D,D}\ot N)\circ (D\ot
t_{D,D}\ot r_N)\circ ((\delta_D\circ \eta_D)\ot \varrho_N\ot D)$
\item[ ]$=(\mu_D\ot D\ot \varphi_N)\circ (D\ot \nabla_{D,D}\ot D\ot N)\circ (D\ot
D\ot t_{D\ot D}\ot N)\circ (D\ot t_{D,D}\ot r_N)$
    \item[]$\hspace{0.38cm} \circ((\delta_D\circ \eta_D)\ot \varrho_N\ot D)$
\item[ ]$=(\mu_D\ot D\ot \varphi_N)\circ (D\ot D\ot t_{D,D}\ot N)\circ (D\ot
(\nabla_{D,D}\circ t_{D,D})\ot r_N)$
\item[]$\hspace{0.38cm}\circ ((\delta_D\circ \eta_D)\ot \varrho_N\ot
D)$
\item[ ]$=(D\ot r_N)\circ (\varrho_N\ot D).$
\end{itemize}

 Now, by the characterization $\nabla_{M\ot N}=\Delta_{M\ot N}$ obtained in Proposition \ref{asnablascoinciden}, the compatibilities with the comodule structures,
the conditions (c1) and (b3) and the equalities (\ref{b3-4}) and (\ref{nabla_Dbaixaporr}) we get:
\begin{itemize}
\item[ ]$\hspace{0.38cm} (D\ot \nabla_{M\ot N})\circ (r_M\ot N)\circ (M\ot r_N)$
\item[ ]$=(D\ot (((\varepsilon_D\circ \mu_D)\ot M\ot N)\circ (D\ot r_M\ot N)\circ
(\varrho_M\ot \varrho_N)))\circ (r_M\ot N)\circ (M\ot r_N)$
\item[ ]$=(((D\ot (\varepsilon_D\circ \mu_D)\ot M)\circ (t_{D,D}\ot r_M)\circ (D\ot
r_M\ot D)\circ (D\ot M\ot t_{D,D}))\ot N)$
\item[]$\hspace{0.38cm}\circ(D\ot M\ot D\ot r_N)\circ (\varrho_M\ot \varrho_N\ot D)$
\item[ ]$=(((D\ot (\varepsilon_D\circ \mu_D)\ot M)\circ (t_{D,D}\ot D\ot M)\circ
(D\ot t_{D,D}\ot M)\circ(D\ot D\ot r_M))\ot N)$
\item[]$\hspace{0.38cm} \circ(D\ot r_M\ot r_N)\circ (\varrho_M\ot \varrho_N\ot D)$
\item[ ]$=(((D\ot \varepsilon_D\ot M)\circ (t_{D,D}\ot M)\circ (\mu_D\ot r_M))\ot
N)\circ (D\ot r_M\ot r_N)$
\item[]$\hspace{0.38cm}\circ (\varrho_M\ot \varrho_N\ot D)$
\item[ ]$=((((\varepsilon_D\circ \mu_D)\ot D\ot M)\circ (D\ot \nabla_{D,D}\ot
M)\circ (D\ot D\ot r_M))\ot N)\circ(D\ot r_M\ot r_N)$
\item[]$\hspace{0.38cm}\circ(\varrho_M\ot \varrho_N\ot D)$
\item[ ]$=((\varepsilon_D\circ \mu_D)\ot r_M\ot N)\circ (D\ot r_M\ot D\ot N)\circ
(\varrho_M\ot \nabla_{D,D}\ot N)\circ (M\ot D\ot r_N)$
\item[]$\hspace{0.38cm}\circ
(M\ot \varrho_N\ot D)$
\item[ ]$=(r_M\ot N)\circ (M\ot r_N)\circ (\nabla_{M\ot N}\ot D).$
\end{itemize}

\end{proof}

\begin{apart}
 Let $D$ be a WBHA with invertible antipode. Given $(M,\varphi_M,\varrho_M)$ and $(N,\varphi_N,\varrho_N)$ in
$_D^D\mathcal{YD}$ we denote by $\varphi_{M\ot N}$ the morphim:
$\varphi_{M\ot N}:D\ot M\ot N\rightarrow M\ot N$ defined by
\[\varphi_{M\ot N}=(\varphi_M\ot\varphi_N)\co(D\ot s_M\ot N)\co(\delta_D\ot M\ot N),\]
 and by $\varrho_{M\ot N}$ the morphism $\varrho_{M\ot N}:M\ot N\rightarrow D\ot
M\ot N$ defined by
 \[\varrho_{M\ot N}=(\mu_D\ot M\ot N)\co(D\ot r_M\ot N)\co(\varrho_M\ot \varrho_N).\]
Note that $\nabla_{M\ot N}=\varphi_{M\ot N}\co(\eta_D\ot M\ot
N)=(\varepsilon_D\ot M\ot N)\co\varrho_{M\ot N}.$

Using this notation and the compatibility with the correspondent weak
operators, it results that:
\begin{equation}\label{nablaephi_MN}
\varphi_{M\otimes N}\circ (D\otimes \nabla_{M\otimes N})
=\varphi_{M\otimes N}=\nabla_{M\otimes N}\circ \varphi_{M\otimes N},
\end{equation}
\begin{equation}\label{nablamorreconphi}
\varrho_{M\otimes N}\co \nabla_{M\otimes N}=\varrho_{M\otimes
N}=(D\ot \nabla_{M\otimes N})\circ \varrho_{M\otimes N}.
\end{equation}
 Moreover, being $(P,\varphi_P,\varrho_P)$ in $_D^D\mathcal{YD}$ and combining the
above equalities with Lemma \ref{nablaMxNbaixapoloWO} we obtain
\begin{equation}\label{ieptrocandelado}
(i_{M\otimes N}\otimes P)\circ \nabla_{(M\times N)\otimes P}\circ
(p_{M\otimes N}\otimes P )=(M\otimes i_{N\otimes P})\circ
\nabla_{M\otimes (N\times P)}\circ (M\otimes  p_{N\otimes P}),
\end{equation}
and
\begin{equation}\label{simetriadenabla}
(M\otimes i_{N\otimes P})\circ \nabla_{M\otimes (N\times P)}\circ
(M\otimes  p_{N\otimes P})=(\nabla_{M\otimes N}\otimes P)\circ
(M\otimes \nabla_{N\otimes P})=
\end{equation}
\[
(M\otimes \nabla_{N\otimes P})\circ (\nabla_{M\otimes N}\otimes P).
\]
\end{apart}

\begin{proposition}\label{MxNWO} Let $D$ be a WBHA with invertible antipode in $\mathcal{C}$. Given $(M, \varphi_{M}, \varrho_{M})$ and
$(N, \varphi_N, \varrho_N)$ in $_D^D\mathcal{YD}$, the quadruple
$(r_{M\times N}, r_{M\times N}^\prime, s_{M\times N}, s_{M\times
N}^\prime)$ is an $(M\times N, D)$-WO, where:
\[ r_{M\times N}= (D\ot p_{M\ot N})\co (r_M\ot N)\co(M\ot r_N)\co(i_{M\ot N}\ot D),\]
\[  r_{M\times N}^\prime= (p_{M\ot N}\ot D)\co(M\ot r_N^\prime)\co(r_M^\prime\ot
N)\co(D\ot i_{M\ot N}), \]
\[  s_{M\times N}= (p_{M\ot N}\ot D)\co(M\ot s_N)\co(s_M\ot N)\co(D\ot i_{M\ot N}), \]
\[  s_{M\times N}^\prime= (D\ot p_{M\ot N})\co (s_M^\prime \ot N)\co(M\ot
s_N^\prime)\co(i_{M\ot N}\ot D).\]
\end{proposition}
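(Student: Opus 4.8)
The plan is to verify, one at a time, the defining conditions (c1)--(c5) of Definition \ref{WO} for the quadruple $(r_{M\times N}, r_{M\times N}^\prime, s_{M\times N}, s_{M\times N}^\prime)$. A preliminary reduction halves the labour: by the symmetry noted at the end of Remark \ref{redundante1}, the data $(s_M^\prime, s_M, r_M^\prime, r_M)$ and $(s_N^\prime, s_N, r_N^\prime, r_N)$ are again $(M,D)$- and $(N,D)$-WO, and feeding them into the very formulas of the statement produces precisely the swapped quadruple $(s_{M\times N}^\prime, s_{M\times N}, r_{M\times N}^\prime, r_{M\times N})$. Since my verification of the $r$-type conditions will use only the $r$-conditions of the input operators, applying it to the swapped input yields, \emph{mutatis mutandis}, every $s$-type condition. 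Hence I may restrict attention to the conditions involving $r_{M\times N}$ and $r_{M\times N}^\prime$.

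The engine of the argument is the ``sandwich'' shape of the four morphisms: each is $(\text{projection})\co(\cdots)\co(\text{inclusion})$ with the inner part a tensor composite of the operators for $M$ and $N$. Whenever two sandwiches are composed, an internal factor $i_{M\ot N}\co p_{M\ot N}=\nabla_{M\ot N}$ appears (for instance, in (c1) the composite $(D\ot r_{M\times N})\co(r_{M\times N}\ot D)$ produces an inner $D\ot\nabla_{M\ot N}\ot D$). Lemma \ref{nablaMxNbaixapoloWO} is exactly the tool that transports this idempotent to an outer edge, e.g. (\ref{newnablaMNrMN}) rewrites $(D\ot\nabla_{M\ot N})\co(r_M\ot N)\co(M\ot r_N)$ as $(r_M\ot N)\co(M\ot r_N)\co(\nabla_{M\ot N}\ot D)$, after which the edge factor is absorbed via $\nabla_{M\ot N}\co i_{M\ot N}=i_{M\ot N}$ and $p_{M\ot N}\co\nabla_{M\ot N}=p_{M\ot N}$. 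The surviving inner composite then telescopes through the definitions $\nabla_{r_M}=r_M^\prime\co r_M$, the cancellation laws of Proposition \ref{cancelation}, and the compatibility conditions (c1)--(c2) for $r_M,r_N$ together with the Yang--Baxter and naturality axioms (a1)--(a2), (b3) of $t_{D,D}$. This scheme disposes of (c1) and (c2).

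For (c3) I compute $\nabla_{r_{M\times N}}=r_{M\times N}^\prime\co r_{M\times N}$ by the same transport: the inner $D\ot\nabla_{M\ot N}$ is moved out by (\ref{newnablaMNrMN}), absorbed into $i_{M\ot N}\ot D$, and the middle contracts to $\nabla_{r_M}\ot N$, giving
\[
\nabla_{r_{M\times N}}=(p_{M\ot N}\ot D)\co(M\ot r_N^\prime)\co(\nabla_{r_M}\ot N)\co(M\ot r_N)\co(i_{M\ot N}\ot D).
\]
To match this with the required form $(((\varepsilon_D\ot(M\times N))\co r_{M\times N})\ot D)\co((M\times N)\ot\delta_D)$, I expand $\nabla_{r_M}$ and $\nabla_{r_N}$ through their (c3-1) characterizations and substitute the definitions of $\varphi_{M\ot N}$ and $\varrho_{M\ot N}$; the decisive input is the identity $\nabla_{M\ot N}=\Delta_{M\ot N}$ of Proposition \ref{asnablascoinciden}, which ties the $\varphi$-description of the idempotent to its $\varrho$-description and lets the two inner counit/comultiplication factors recombine. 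The remaining conditions are comparatively routine: (c4) follows by displacing $\nabla_{M\ot N}$ as above and invoking (c4) for $r_M,r_N$ with the bialgebra axioms (b1)--(b6), while (c5) follows from (c5) for $M,N$ and Proposition \ref{lambabaixapoloWO}, using invertibility of $\lambda_D$.

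The main obstacle is condition (c3). The other conditions are homogeneous in the two tensor slots and reduce cleanly to the conditions for $M$ and $N$ once the idempotent has been moved out; but (c3) forces one to reconcile the \emph{multiplicative} description of $\nabla_{r_{M\times N}}$ (as $r_{M\times N}^\prime\co r_{M\times N}$) with the \emph{(co)unit} description demanded by Definition \ref{WO}, and bridging the two genuinely requires $\nabla_{M\ot N}=\Delta_{M\ot N}$ alongside the cancellation laws and the explicit (co)module compatibilities of the operators on $M$ and $N$. Keeping the many interlocking tensor positions under control, rather than any single deep idea, is where the real work of the proof lies.
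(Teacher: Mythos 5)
Your computational engine is exactly the paper's: each product operator is a sandwich of the form projection--middle--inclusion, composing two of them creates an inner $\nabla_{M\ot N}=i_{M\ot N}\co p_{M\ot N}$, Lemma \ref{nablaMxNbaixapoloWO} transports it to an edge where it is absorbed via $p_{M\ot N}\co \nabla_{M\ot N}=p_{M\ot N}$ and $\nabla_{M\ot N}\co i_{M\ot N}=i_{M\ot N}$, and the surviving inner composite collapses to the corresponding conditions for $M$ and $N$; for (c3-1) one writes $\nabla_{r_{M\times N}}=r_{M\times N}^\prime\co r_{M\times N}$, contracts first to $\nabla_{r_M}$ and then to $\nabla_{r_N}$, and expands by (c3-1) for $M$ and $N$ together with (c4-3) for $N$; (c5) is Proposition \ref{lambabaixapoloWO} applied to $M$ and $N$. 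This is precisely the paper's proof. One attribution in your (c3) paragraph is off, though harmlessly: the paper's (c3-1) computation never touches $\varphi_{M\ot N}$, $\varrho_{M\ot N}$, or Proposition \ref{asnablascoinciden} directly; the identity $\nabla_{M\ot N}=\Delta_{M\ot N}$ enters the argument only inside the proof of Lemma \ref{nablaMxNbaixapoloWO}, which you invoke anyway, so it is not the ``decisive input'' bridging the two descriptions in (c3) itself.

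The genuine gap is your preliminary swap reduction. Remark \ref{redundante1} asserts only that $(s_M^\prime,s_M,r_M^\prime,r_M)$ satisfies the bare axioms (c1)--(c5) of Definition \ref{WO}; but your $r$-type verification does not run on those axioms alone. It runs on Lemma \ref{nablaMxNbaixapoloWO}, whose proof requires (yd1)/(yd2) and compatibility of the quadruple with the \emph{fixed} (co)module structures of $M$ and $N$, and neither hypothesis is stable under the swap: the axioms of Definition \ref{WOmodulecompatible} interchange $t_{D,D}$ and $t^{\prime}_{D,D}$ under $(r,r^\prime,s,s^\prime)\mapsto(s^\prime,s,r^\prime,r)$ --- for instance, (i-1) for the swapped quadruple would assert $s_M^\prime\co(\varphi_M\ot D)=(D\ot\varphi_M)\co(t_{D,D}\ot M)\co(D\ot s_M^\prime)$, whereas the hypothesis (i-3) only provides the version with $t^{\prime}_{D,D}$ --- and you cannot compensate by exchanging $t_{D,D}$ with $t^{\prime}_{D,D}$, since (b4)--(b7) tie the WBHA structure of $D$ to $t_{D,D}$. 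Moreover, $\nabla_{M\ot N}$, hence $p_{M\ot N}$, $i_{M\ot N}$ and the object $M\times N$ itself, is manufactured from the specific slot $s_M$; feeding the swapped quadruple into the construction yields the a priori different idempotent $(\varphi_M\ot\varphi_N)\co(D\ot r_M^\prime\ot N)\co((\delta_D\co\eta_D)\ot M\ot N)$, so your \emph{mutatis mutandis} transfer would establish the $s$-type conditions for operators on a possibly different splitting. The repair is cheap and is what the paper does: the $s$-versions of the transport equalities, (\ref{newnablaMNsMN}) and (\ref{newnablaMNsprimaMN}), are already stated in Lemma \ref{nablaMxNbaixapoloWO}, so one simply reruns the (c1)--(c4) computations with them and the $s$-conditions of $M$ and $N$, exactly as in the paper's ``the others being analogous.''
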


\begin{proof}We must check that the conditions stated in Definition \ref{WO} are
satisfied. The proof of (c1), (c2) and (c4) consists basically on use
twice these conditions referred to $M$ and $N$, apply  the
statements obtained in Lemma \ref{nablaMxNbaixapoloWO} and the
equality $\nabla_{M\ot N}= i_{M\ot N}\co p_{M\ot N}.$ We write
(c1-1) to illustrate the procedure:
\begin{itemize}
\item[]$\hspace{0.38cm} (D\ot r_{M\times N})\co(r_{M\times N}\ot D)\co(M\times N\ot
t_{D,D})$
\item[]$=(D\ot D\ot p_{M\ot N})\co(D\ot r_M\ot N)\co(D\ot M\ot
r_N)\co(((D\ot\nabla_{M\ot N})\co(r_M\ot N)$
\item[]$\hspace{0.38cm}\co(M\ot r_N))\ot D)\co (i_{M\ot N}\ot t_{D,D})$
\item[]$=(D\ot D\ot p_{M\ot N})\co(D\ot r_M\ot N)\co(D\ot M\ot r_N)\co(r_M\ot N\ot
D)\co(M\ot r_N\ot D) $
\item[]$\hspace{0.38cm}\co(i_{M\ot N}\ot t_{D,D})$
\item[]$=(t_{D,D}\ot p_{M\ot N})\co(D\ot r_M\ot N)\co(r_M\ot r_N)\co(M\ot r_N\ot
D)\co(i_{M\ot N}\ot D\ot D)$
\item[]$=(t_{D,D}\ot p_{M\ot N})\co(D\ot r_M\ot N) \co(D\ot M\ot r_N)\co
(((D\ot\nabla_{M\ot N}) \co(r_M\ot N)$
\item[]$\hspace{0.38cm}\co(M\ot r_N)\co(i_{M\ot N}\ot D))\ot D)$
\item[]$=(t_{D,D}\ot M\times N)\co(D\ot r_{M\times N})\co(r_{M\times N}\ot D).$
\end{itemize}

The condition (c5) follows directly applying Proposition
\ref{lambabaixapoloWO} twice for $M$ and $N.$

As far as the condition (c3), we prove only (c3-1) because the others are
analogous. Using the definition of $\nabla_{M\times N},$ Lemma
\ref{nablaMxNbaixapoloWO}, the condition (c3-1) referred to $M$ and $N$,
and the condition (c4) referred to $N$ it follows that:
\begin{itemize}
\item[ ]$\hspace{0.38cm} \nabla_{r_{M\times N}}$
\item[ ]$= r'_{M\times N}\circ r_{M\times N}$
\item[ ]$=(p_{M\ot N}\ot D)\circ (M\ot r'_N)\circ (r'_M\ot N)\circ(D\ot \nabla_{M\ot
N})\circ (r_M\ot N)\circ (M\ot r_N)$
\item[]$\hspace{0.38cm}\circ (i_{M\ot N}\ot D)$
\item[ ]$=(p_{M\ot N}\ot D)\circ (M\ot r'_N)\circ (\nabla_{r_{M}}\ot N)\circ (M\ot
r_N)\circ (i_{M\ot N}\ot D)$
\item[ ]$=(\varepsilon_D\ot p_{M\ot N}\ot D)\circ (r_M\ot r'_N)\circ (M\ot
\delta_D\ot N)\circ (M\ot r_N)\circ (i_{M\ot N}\ot D)$
\item[ ]$=(\varepsilon_D\ot p_{M\ot N}\ot D)\circ (r_M\ot \nabla_{r_{N}})\circ (M\ot
r_N\ot D)\circ (i_{M\ot N}\ot \delta_D)$
\item[ ]$=(((\varepsilon_D\ot p_{M\ot N})\circ (r_M\ot \varepsilon_D\ot N)\circ
(M\ot D\ot r_N))\ot D)\circ (M\ot r_N\ot \delta_D)\circ (i_{M\ot
N}\ot \delta_D)$
\item[ ]$=(((\varepsilon_D\ot p_{M\ot N})\circ (r_M\ot \varepsilon_D\ot N)\circ
(M\ot \delta_D\ot N)\circ (M\ot r_N))\ot D)\circ (i_{M\ot N}\ot
\delta_D)$
\item[ ]$=(((\varepsilon_D\ot M\times N)\circ r_{M\times N})\ot D)\circ (M\times
N\ot \delta_D).$
\end{itemize}

\end{proof}

\begin{proposition}\label{MxNeYD}Let $D$ be a WBHA with invertible antipode. If $(M, \varphi_{M}, \varrho_{M})$ and $(N,
\varphi_N, \varrho_N)$ are objects in $_D^D\mathcal{YD},$ then
$(M\times N,\varphi_{M\times N},\varrho_{M\times N})$ is in
$_D^D\mathcal{YD}$, where
\begin{equation}
\varphi_{M\times N}=p_{M\otimes N}\circ \varphi_{M\otimes N}\circ
(D\otimes i_{M\otimes N}),
\end{equation}
\begin{equation}
 \varrho_{M\times N}=(D\otimes p_{M\otimes
N})\circ \varrho_{M\otimes N}\circ  i_{M\otimes N}.
\end{equation}
\end{proposition}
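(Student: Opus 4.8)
The plan is to check, in the order listed in Definition~\ref{defl-lYD}, that $(M\times N,\varphi_{M\times N})$ is a left $D$-module, that $(M\times N,\varrho_{M\times N})$ is a left $D$-comodule, that the quadruple of Proposition~\ref{MxNWO} is compatible with this (co)module structure, and finally that (yd1) and (yd2) hold; the last point will be routed through Proposition~\ref{yd3}.

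For the unit axiom of the module I would compose $\varphi_{M\times N}$ with $\eta_D\ot(M\times N)$: since $\varphi_{M\ot N}\co(\eta_D\ot M\ot N)=\nabla_{M\ot N}=i_{M\ot N}\co p_{M\ot N}$ and $p_{M\ot N}\co i_{M\ot N}=id_{M\times N}$, the composite collapses to $id_{M\times N}$. For associativity I would first establish at the $\ot$-level that $\varphi_{M\ot N}\co(D\ot\varphi_{M\ot N})=\varphi_{M\ot N}\co(\mu_D\ot M\ot N)$, which follows from the definition of $\varphi_{M\ot N}$ using coassociativity of $\delta_D$, the module compatibility (i-4) of $s_M$, the module axioms of $M$ and $N$, and (b4); then inserting $i_{M\ot N}\co p_{M\ot N}=\nabla_{M\ot N}$ in the middle and absorbing it through (\ref{nablaephi_MN}) upgrades this into the associativity of $\varphi_{M\times N}$. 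The comodule axioms for $\varrho_{M\times N}$ are obtained by the dual argument, using the relation $\varrho_{M\ot N}=(D\ot\nabla_{M\ot N})\co\varrho_{M\ot N}=\varrho_{M\ot N}\co\nabla_{M\ot N}$ in place of (\ref{nablaephi_MN}).

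Next I would verify the compatibility of $(r_{M\times N},r_{M\times N}^\prime,s_{M\times N},s_{M\times N}^\prime)$ with $\varphi_{M\times N}$ and $\varrho_{M\times N}$ in the sense of Definition~\ref{WOmodulecompatible}. Expanding each weak-operator component and each (co)action through their defining formulas, the eight identities reduce to applying the corresponding compatibilities for $M$ and for $N$ (twice each), the commutation statements of Lemma~\ref{nablaMxNbaixapoloWO} that carry $\nabla_{M\ot N}$ across the composite operator, and the cancellations $p_{M\ot N}\co i_{M\ot N}=id_{M\times N}$ and $i_{M\ot N}\co p_{M\ot N}=\nabla_{M\ot N}$ together with the absorption identities above.

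The final and most demanding part is (yd1)--(yd2). Having established the three previous items, Proposition~\ref{yd3} permits replacing (yd1) and (yd2) by the single condition (yd3) for $M\times N$. I would therefore compute $\varrho_{M\times N}\co\varphi_{M\times N}$, push $i_{M\ot N}$ and $p_{M\ot N}$ to the exterior by means of (\ref{nablaephi_MN}) and its comodule analogue, and reduce the interior to the tensor level, where I can invoke (yd3) for $M$ and for $N$ separately together with the mixed Yang--Baxter relations (c2) and the congruence identities of Propositions~\ref{Y-Btypecongruence} and~\ref{mixednablatypecongruence}. I expect this reduction to be the genuine obstacle: the difficulty is not conceptual but combinatorial, namely controlling how $t_{D,D}$, the four weak operators $r_M,s_M,r_N,s_N$ and the antipode $\lambda_D$ interleave across the two factors so that the whole expression reorganizes into the prescribed (yd3) shape without any unmatched idempotent surviving.
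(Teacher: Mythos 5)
Your scaffolding coincides with the paper's own proof up to the last step. The unit axiom collapses exactly as you say; associativity is lifted from the tensor-level identity $\varphi_{M\ot N}\co(D\ot \varphi_{M\ot N})=\varphi_{M\ot N}\co(\mu_D\ot M\ot N)$ via (i-4), (b4) and the absorption identity (\ref{nablaephi_MN}) (the paper in fact leaves the (co)module axioms to the reader); and the compatibility of the quadruple of Proposition \ref{MxNWO} is verified in the paper by precisely the expansion you describe, with the additional ingredients (b3-3) and the mixed congruence (\ref{rara1}) supplementing Lemma \ref{nablaMxNbaixapoloWO} and the cancellations $p_{M\ot N}\co i_{M\ot N}=id_{M\times N}$, $i_{M\ot N}\co p_{M\ot N}=\nabla_{M\ot N}$. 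The divergence, and the gap, is in the final step. The paper never verifies (yd3) for $M\times N$: it proves (yd1) and (yd2) directly. For (yd1) it first establishes $\varphi_{M\times N}\co(\Pi_D^L\ot (M\times N))\co \varrho_{M\times N}=id_{M\times N}$, using $\delta_D\co\Pi_D^L=(\mu_D\ot D)\co(\Pi_D^L\ot(\delta_D\co\eta_D))$, the characterization $\nabla_{M\ot N}=((\varphi_M\co s_M^\prime)\ot \varphi_N)\co(M\ot(\delta_D\co\eta_D)\ot N)$ from the proof of Proposition \ref{asnablascoinciden}, (\ref{etadeltas}), (\ref{newpiLphirhomu}), (b5) and (\ref{Piconvolucion}); for (yd2) it nests (yd2) for $N$ inside the expansion and then applies (yd2) for $M$, using only (c4), (b3) and the absorption identities. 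Note that (yd2) contains no antipode at all, so no $\lambda_D$ bookkeeping is ever needed on that route.

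Your plan instead verifies (yd3) for $M\times N$ from (yd3) for the factors. This is licensed in principle by Proposition \ref{yd3} (its hypotheses are met once compatibility is established, and the paper itself checks (yd3) directly for $\Omega^{a}(D)$ in the final section), but the decisive computation is exactly what you defer with ``I expect this reduction to be the genuine obstacle,'' and it is harder than you allow for a concrete reason you do not address: each invocation of (yd3) for a factor injects its own $\lambda_D$, so your intermediate expressions carry two antipodes while the target (yd3) expression for $M\times N$ carries only one. Recombining them forces (\ref{ant-mu}), i.e.\ $\lambda_{D}\co \mu_{D}=\mu_{D}\co t_{D,D}\co(\lambda_{D}\ot \lambda_{D})$, read in reverse, together with control of the extra $t_{D,D}$ so created as it interleaves with $r_M$, $s_M$, $r_N$, $s_N$ --- and none of (c2), Proposition \ref{Y-Btypecongruence} or Proposition \ref{mixednablatypecongruence}, which you cite, handles that merging step. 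Since this reorganization is the entire content of the proposition, your proof is incomplete as written; the economical repair is the paper's: prove (yd2) for $M\times N$ by the antipode-free nesting argument, and obtain (yd1) from the displayed $\Pi_D^L$ identity together with (\ref{deltaPiL}), rather than routing both through (yd3).
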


\begin{proof} In Proposition \ref{MxNWO}, an $(M\times
N,D)$-WO is explicitly defined, so it only remains to prove that
$\varphi_{M\times N}$ and $\varrho_{M\times N}$ are compatible
(co)module structures satisfying the conditions (yd1) and (yd2). We
leave to the reader to show that $(M\times N,\varphi_{M\times N})$
is a left $D$-module and $(M\times N, \varrho_{M\times N})$ is a
left $D$-comodule. As far as compatibility, using compatibilities
for $M$ and $N$, the condition (b3-3) and  the equalities (\ref{nablaephi_MN}) and (\ref{rara1}) referred to
$M$ we have:
\begin{itemize}
\item[]$\hspace{0.38cm}r_{M\times N}\co(\varphi_{M\times N}\ot D)$
\item[]$=(D\ot p_{M\otimes N})\co(r_M\ot N)\co(M\ot r_N)\co((\nabla_{M\ot
N}\co\varphi_{M\ot N}\co (D\ot i_{M\otimes N}))\ot D)$
\item[]$=(D\ot p_{M\otimes N})\co(r_M\ot N)\co(M\ot
r_N)\co(((\varphi_M\ot\varphi_N)\co(D\ot s_M\ot N)$
\item[]$\hspace{0.38cm}\co(\delta_D\ot i_{M\otimes
N}))\ot D)$
\item[]$=(D\ot p_{M\otimes N})\co(D\ot\varphi_M\ot N)\co(t_{D,D}\ot M\ot N)\co(D\ot
((r_M\ot\varphi_N)\co(M\ot t_{D,D}\ot N)$
\item[]$\hspace{0.38cm}\co(s_M\ot r_N)))\co(\delta_D\ot i_{M\ot N}\ot D)$
\item[]$=(D\ot(p_{M\ot N}\co(\varphi_M\ot\varphi_N)))\co(t_{D,D}\ot s_M\ot
N)\co(D\ot t_{D,D}\ot M\ot N)$
\item[]$\hspace{0.38cm}\co(\delta_D\ot((r_M\ot N)\co(M\ot r_N)\co (i_{M\ot N}\ot D)))$
\item[]$=(D\ot p_{M\ot N})\co (D\ot (\varphi_{M\ot N}\co (D\ot\nabla_{M\ot N})))\co
(t_{D,D}\ot M\ot N)$
\item[]$\hspace{0.38cm}\co(D\ot ((r_M\ot N)\co(M\ot r_N)\co (i_{M\ot N}\ot D)))$
\item[]$=(D\ot\varphi_{M\times N})\co(t_{D,D}\ot M\times N)\co(D\ot r_{M\times N}).$
\end{itemize}

 The proofs for $r_{M\times N}^\prime$, $s_{M\times N}$ and $s_{M\times N}^\prime,$ are
analogous. By similar arguments we get the result for the comodule
structure.

To prove the condition (yd1) we write:
\begin{itemize}
\item[]$\hspace{0.38cm}\varphi_{M\times N}\co (\Pi_D^L\ot M\times N)\co
\varrho_{M\times N}$
\item[]$=p_{M\ot N}\co(\varphi_M\ot\varphi_N)\co(D\ot s_M\ot N)\co((\delta_D\co
\Pi_D^L)\ot M\ot N)\co \varrho_{M\ot N}\co i_{M\ot N}$
\item[]$=p_{M\ot N}\co(\varphi_M\ot\varphi_N)\co(\mu_D\ot s_M\ot N)\co (\Pi_D^L\ot
(\delta_D\co \eta_D)\ot M\ot N)\co \varrho_{M\ot N}\co i_{M\ot N}$
\item[]$=p_{M\ot N}\co(\varphi_M\ot N)\co(\Pi_D^L\ot (((\varphi_M\co s_M^\prime)\ot
\varphi_N)\co (M\ot (\delta_D\co \eta_D)\ot N)))\co \varrho_{M\ot
N}\co i_{M\ot N}$
\item[]$=p_{M\ot N}\co(\varphi_M\ot N)\co (\Pi_D^L\ot \nabla_{M\ot N})\co
\varrho_{M\ot N}\co i_{M\ot N}$
\item[]$=p_{M\ot N}\co(\varphi_M\ot N)\co (\Pi_D^L\ot i_{M\ot N})\co
\varrho_{M\times N}$
\item[]$=((\varepsilon_D\co \mu_D)\ot M\times N)\co (\Pi_D^L\ot \varrho_{M\times
N})\co \varrho_{M\times N}$
\item[]$=((\varepsilon_D\co (\Pi_D^L\wedge id_D)\ot M\times N)\co \varrho_{M\times N}$
\item[]$=id_{M\times N}.$
\end{itemize}
In the preceding calculations, the first equality follows by
(\ref{nablamorreconphi}), while in the second one we apply that
$\delta_D\co\Pi_D^L=(\mu_D\ot D)\co(\Pi_D^L\ot(\delta_D\co\eta_D))$.
In the third one we use  (\ref{etadeltas}); the fourth one
follows  because of the characterization
\[\nabla_{M\ot N}=((\varphi_M\co s_M^\prime)\ot \varphi_N)\co (M\ot (\delta_D\co
\eta_D)\ot N)\] obtained in the proof of Proposition
\ref{asnablascoinciden}. The fifth equality follows by the definition of $\varrho_{M\times N}$
, and the sixth one
by  the equality
\[p_{M\ot N}\co(\varphi_M\ot N)\co(\Pi_D^L\ot i_{M\ot
N})=((\varepsilon_D\co\mu_D)\ot M\times
N)\co(\Pi_D^L\ot\varrho_{M\times N}),\] that in turn can be deduced
using (b5) and (\ref{newpiLphirhomu}). Finally, in the
seventh equality we use that $(M\times N, \varrho_{M\times N})$ is a
left $D$ comodule and the last one follows by (\ref{Piconvolucion}).

As a consequence (yd1) holds:
\begin{itemize}
\item[]$\hspace{0.38cm}(\mu_D\ot \varphi_{M\times N})\co (D\ot t_{D,D}\ot M\times
N)\co ((\delta_D\co \eta_D)\ot \varrho_{M\times N})$
\item[]$=(D\ot \varphi_{M\times N})\co (D\ot \Pi_D^L\ot M\times N)\co (\delta_D\ot
M\times N)\co \varrho_{M\times N}$
\item[]$=\varrho_{M\times N}.$
\end{itemize}

To prove (yd2), using similar technics and results together with
(\ref{nablaephi_MN}), (\ref{nablamorreconphi})  and the condition (yd2) referred to $M$
and $N$  we get:
\begin{itemize}
\item[]$\hspace{0.38cm} (\mu_D\ot {M\times N})\co(D\ot r_{M\times
N})\co((\varrho_{M\times N}\co\varphi_{M\times N})\ot D)\co(D\ot
s_{M\times N})\co(\delta_D\ot {M\times N})$
\item[]$=
(\mu_D\ot p_{M\ot N})\co (D\ot r_M\ot N) \co(D\ot M\ot r_N)
\co(((\mu_D\ot \nabla_{M\ot N})\co(D\ot r_M\ot N)$
\item[]$\hspace{0.38cm}\co(\varrho_M\ot\varrho_N)\co \nabla_{M\ot N}\co(\varphi_M\ot\varphi_N)\co(D\ot s_M\ot N)\co(\delta_D\ot i_{M\ot N}))\ot D)$
\item[]$\hspace{0.38cm}
\co(D\ot((p_{M\ot N}\ot D)\co(M\ot s_N)\co(s_M\ot N)))
\co(\delta_D\ot i_{M\ot N})$
\item[]$=(\mu_D\ot p_{M\ot N})\co(D\ot r_M\ot
N)\co((\varrho_M\co\varphi_M)\ot((\mu_D\ot N)\co(D\ot
r_N)\co((\varrho_N\co\varphi_N)\ot D)$
\item[]$\hspace{0.38cm}\co(D\ot s_N)\co(\delta_D\ot N)))\co(D\ot s_M\ot
N)\co(\delta_D\ot i_{M\ot N})$
\item[]$=(\mu_D\ot p_{M\ot N})\co(D\ot r_M\ot
N)\co((\varrho_M\co\varphi_M)\ot((\mu_D\ot\varphi_N)\co(D\ot
t_{D,D}\ot N)\co(\delta_D\ot \varrho_N)))$
\item[]$\hspace{0.38cm}\co(D\ot s_M\ot N)\co(\delta_D\ot i_{M\ot N})$
\item[]$=(\mu_D\ot p_{M\ot N})\co(D\ot r_M\ot\varphi_N)\co(((\mu_D\ot M)\co(D\ot
r_M)\co((\varrho_M\co\varphi_M)\ot D)\co(D\ot s_M)$
\item[]$\hspace{0.38cm}\co(\delta_D\ot M))\ot t_{D,D}\ot N)\co(D\ot
s_M\ot\varrho_N)\co(\delta_D\ot i_{M\ot N})$
\item[]$=(\mu_D\ot p_{M\ot N})\co(D\ot
r_M\ot\varphi_N)\co(((\mu_D\ot\varphi_M)\co(D\ot t_{D,D}\ot M)\co(\delta_D\ot\varrho_M))\ot t_{D,D}\ot N)$
\item[]$\hspace{0.38cm}\co(D\ot
s_M\ot\varrho_N)\co(\delta_D\ot i_{M\ot N})$
\item[]$=(\mu_D\ot p_{M\ot N})\co(D\ot ((D\ot\varphi_M\ot N)\co(t_{D,D}\ot M\ot
N)\co(D\ot r_M\ot\varphi_N)))$
\item[]$\hspace{0.38cm}\co(\mu_D\ot D\ot M\ot t_{D,D}\ot N)\co(D\ot((t_{D,D}\ot
s_M\ot D\ot N)\co(D\ot t_{D,D}\ot M\ot D\ot N)$
\item[]$\hspace{0.38cm}\co(\delta_D\ot\varrho_M\ot\varrho_N)))\co(\delta_D\ot
i_{M\ot N})$
\item[]$=(\mu_D\ot p_{M\ot N})\co(D\ot((\mu_D\ot\varphi_M\ot\varphi_N)\co(D\ot t_{D,D}\ot
s_M\ot N)\co(t_{D,D}\ot t_{D,D}\ot M\ot N)$
\item[]$\hspace{0.38cm}\co(D\ot t_{D,D}\ot r_M\ot
N)\co(\delta_D\ot\varrho_M\ot\varrho_N)))\co(\delta_D\ot i_{M\ot
N})$
\item[]$=(\mu_D\ot (p_{M\ot N}\co \varphi_{M\ot N}))\co (D\ot t_{D,D}\ot M\ot N)\co
(\delta_D\ot (\varrho_{M\ot N}\co i_{M\ot N}))$
\item[]$=(\mu_D\ot\varphi_{M\times N})\co( D\ot t_{D,D}\ot {M\times
N})\co(\delta_D\ot\varrho_{M\times N}).$
\end{itemize}
\end{proof}

We proceed now to state and prove the main result of this work,
giving an explicit description of all the required components of the
monoidal structure for ${}_D^D\mathcal{YD}$.

\begin{theorem}\label{estruturamonoidal}
Let $D$ be a WBHA with
invertible antipode. Then ${}_D^D\mathcal{YD}$ is a non-strict
monoidal category.
\end{theorem}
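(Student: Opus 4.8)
The plan is to reproduce the monoidal architecture of the braided case recalled in Examples \ref{Yang-Baxter-examples}(1), systematically replacing the global braiding $c$ by the object-dependent quadruples $(r_M, r_M^\prime, s_M, s_M^\prime)$. The action of the tensor product on objects is already available: by Propositions \ref{MxNWO} and \ref{MxNeYD} the image $M\times N$ of the idempotent $\nabla_{M\ot N}$ carries a Yetter-Drinfeld structure together with a compatible $(M\times N, D)$-WO. To promote $\times$ to a bifunctor I would set, for morphisms $\gamma\colon M\to M^\prime$ and $\phi\colon N\to N^\prime$ in $_D^D\mathcal{YD}$,
$$\gamma\times\phi = p_{M^\prime\ot N^\prime}\co(\gamma\ot\phi)\co i_{M\ot N},$$
and check that this is again a Yetter-Drinfeld morphism, that $(\gamma^\prime\times\phi^\prime)\co(\gamma\times\phi)=(\gamma^\prime\co\gamma)\times(\phi^\prime\co\phi)$, and $id_M\times id_N=id_{M\times N}$. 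All three reduce to $i_{M\ot N}\co p_{M\ot N}=\nabla_{M\ot N}$ together with the congruence $\nabla_{M^\prime\ot N^\prime}\co(\gamma\ot\phi)=(\gamma\ot\phi)\co\nabla_{M\ot N}$, obtained exactly as in the remark following Definition \ref{Y-Dmorphism}.

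Next I would take as base object $D_L$, the image of the target morphism $\Pi_D^L$, with $(\mathrm{co})$module structure $\varphi_{D_L}=p_L\co\mu_D\co(D\ot i_L)$ and $\varrho_{D_L}=(D\ot p_L)\co\delta_D\co i_L$, where $\Pi_D^L=i_L\co p_L$ and $p_L\co i_L=id_{D_L}$. Its compatible weak operator is obtained by transporting along $p_L,i_L$ the canonical $(D,D)$-WO $(t_{D,D}, t^{\prime}_{D,D}, t_{D,D}, t^{\prime}_{D,D})$ of Remark \ref{exdeWO}; the commutation of $\Pi_D^L$ with $t_{D,D}$ and $t^{\prime}_{D,D}$ furnished by Proposition \ref{Pibaixapolors} (applied to $M=D$) ensures that the restrictions satisfy (c1)--(c5). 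Conditions (yd1) and (yd2) for $D_L$ are then checked by means of (\ref{deltaPiL}), (\ref{Piconvolucion}) and the source/target relations (\ref{PiePibarra1})--(\ref{Pielambda2}).

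With bifunctor and unit fixed I would write down the constraints by transcribing the braided formulas, replacing $c_{M,D}$ by $r_M$ and $c_{D,M}$ by $s_M$:
$$\mathfrak{l}_M=\varphi_M\co(i_L\ot M)\co i_{D_L\ot M},\qquad \mathfrak{r}_M=\varphi_M\co r_M\co(M\ot(\overline{\Pi}_D^L\co i_L))\co i_{M\ot D_L},$$
together with the associativity constraint, which involves no braiding at all,
$$\mathfrak{a}_{M,N,P}=p_{(M\times N)\ot P}\co(p_{M\ot N}\ot P)\co(M\ot i_{N\ot P})\co i_{M\ot(N\times P)},$$
and the corresponding explicit inverses. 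The three properties to establish are: (a) each constraint is a morphism in $_D^D\mathcal{YD}$, i.e.\ it intertwines the pertinent actions, coactions \emph{and} weak operators; (b) the displayed inverses are genuine two-sided inverses; and (c) naturality. For $\mathfrak{a}$, the core of (a) and (c) is the symmetry and associativity of the splitting data recorded in (\ref{ieptrocandelado}) and (\ref{simetriadenabla}), combined with the commutation of $\nabla_{M\ot N}$ past the $r$'s and $s$'s from Lemma \ref{nablaMxNbaixapoloWO}; for $\mathfrak{l},\mathfrak{r}$ one moves $\Pi_D^L$ and $\lambda_D$ across the weak operators by Propositions \ref{Pibaixapolors} and \ref{lambabaixapoloWO}.

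Finally I would verify the pentagon and triangle axioms. Since $\mathfrak{a}$ is built entirely from the splitting morphisms $p_{-\ot-},i_{-\ot-}$ of the idempotents $\nabla$, the pentagon follows formally from (\ref{ieptrocandelado})--(\ref{simetriadenabla}) as in the braided case and never meets the weak operators, while the triangle reduces to the unit computations for $\mathfrak{l},\mathfrak{r}$. I expect the main obstacle to be point (a): showing that $\mathfrak{a}$, $\mathfrak{l}$ and $\mathfrak{r}$ are honest morphisms of $_D^D\mathcal{YD}$. Without a global braiding one cannot invoke the naturality of a single $c$; instead one must commute the splitting idempotents past the \emph{object-dependent} operators $(r_M, s_M)$, which is exactly where the full battery of congruences of Propositions \ref{nablatypecongruence}, \ref{mixednablatypecongruence} and Lemma \ref{nablaMxNbaixapoloWO} has to be deployed and where the bookkeeping is heaviest.
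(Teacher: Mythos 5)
Your overall architecture coincides with the paper's proof: the tensor product of objects is the image of $\nabla_{M\ot N}$ via Propositions \ref{MxNWO} and \ref{MxNeYD}; the base object is $D_L$, the image of $\Pi_D^L$, carrying the weak operator transported along $p_L$, $i_L$; the associativity constraint, the tensor product of morphisms, and the supporting apparatus ((\ref{ieptrocandelado}), (\ref{simetriadenabla}), Lemma \ref{nablaMxNbaixapoloWO}, Propositions \ref{Pibaixapolors} and \ref{lambabaixapoloWO}) all match. One inessential deviation: the paper does not verify (yd1) and (yd2) for $D_L$ directly from (\ref{deltaPiL}) and (\ref{Piconvolucion}) as you propose, but obtains them as the special case of the trivial projection $(D,id_D,id_D)$ via [\cite{Proj}, Proposition 2.19]; your direct route is plausible but costs extra computation.

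There is, however, a genuine error in your right unit constraint. You set $\mathfrak{r}_M=\varphi_M\co r_M\co(M\ot(\overline{\Pi}_D^L\co i_L))\co i_{M\ot D_L}$, transcribing the $c_{M,H}$ of Examples \ref{Yang-Baxter-examples}(1). But that model is \emph{symmetric}, where $c_{M,H}=c_{H,M}^{-1}$, and the correct general lift is $s_M^\prime$ (the analogue of $c_{D,M}^{-1}$), which is what the paper uses: $\mathfrak{r}_M=\varphi_M\co s_M^\prime\co(M\ot(\overline{\Pi}_D^L\co i_L))\co i_{M\ot D_L}$. In general $r_M\neq s_M^\prime$ --- already in the projection examples of Proposition \ref{B_DWO} they differ by $t_{B,B}$ versus $t^{\prime}_{B,B}$ --- and the verification machinery is tailored to $s_M^\prime$: the check that $\mathfrak{r}_M$ is a Yetter-Drinfeld morphism uses the mixed relation (c2-3) to move $r_M$ past $s_M^\prime$, and invertibility against $\mathfrak{r}_M^{-1}=p_{M\ot D_L}\co(\varphi_M\ot p_L)\co(D\ot s_M)\co((\delta_D\co\eta_D)\ot M)$ hinges on $s_M^\prime\co s_M=\nabla_{s_M}$ together with the cancellations $\varphi_M\co\nabla_{s_M}=\varphi_M$ and $\nabla_{s_M}\co\varrho_M=\varrho_M$ of Corollary \ref{nablamorreconphi}. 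With your choice the composite $r_M\co s_M$ appears instead; this is not one of the four controlled idempotents $\nabla_{r_M}$, $\nabla_{r_M^\prime}$, $\nabla_{s_M}$, $\nabla_{s_M^\prime}$, and no axiom of Definition \ref{WO} cancels it against $\varphi_M$, so neither the invertibility of your $\mathfrak{r}_M$ nor the triangle axiom can be completed from the stated results. Moreover, by the standard uniqueness of unit constraints once $\mathfrak{a}$, $\mathfrak{l}$ and the unit object are fixed, there is no room for a second valid $\mathfrak{r}$: either your formula happens to coincide with the paper's on the image of $\nabla_{M\ot D_L}$ --- which does not follow from the weak operator axioms --- or it fails. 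Replacing $r_M$ by $s_M^\prime$ repairs the argument, after which your outline reproduces the paper's proof.
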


\begin{proof} Given  $(M\varphi_M, \varrho_M)$ and $(N,\varphi_N,\varrho_N)$ two
objects in $_D^D\mathcal{YD}$ we define as its product $M\times
N$ the image of the idempotent $\nabla_{M\ot N}$, that by
Proposition \ref{MxNeYD} is a left-left Yetter-Drinfeld module with
associated weak operator the one defined in Propositon \ref{MxNWO}.

The base object is $D_{L}=Im(\Pi_{D}^{L});$ with  left
$D$-(co)module structure
\begin{equation}
\varphi_{D_{L}}=p_{L}\circ \mu_{D}\circ (D\otimes i_{L}),\;\;\;\;
\varrho_{D_{L}}=(D\otimes p_{L})\circ \delta_{D}\circ i_{L},
\end{equation}
where $p_{L}:D\rightarrow D_{L}$ and $i_{L}:D_{L}\rightarrow D$ are
the morphisms such that $\Pi_{D}^{L}=i_{L}\circ p_{L}$ and
$p_{L}\circ i_{L}=id_{D_{L}}$.

It holds that $(r_{D_L}, r_{D_L}^\prime, s_{D_L}, s_{D_L}^\prime)$
is a $(D_L,D)$-WO compatible with the (co)module structures of
$D_L$, being
\begin{itemize}
\item[ ] $r_{D_L}:= (D\ot p_L)\co t_{D,D}\co (i_L\ot D);\quad r_{D_L}^\prime:=
(p_L\ot D)\co t^{\prime}_{D,D}\co (D\ot i_L)$
\item[ ] $s_{D_L}:= (p_L\ot D)\co t_{D,D}\co (D\ot i_L);\quad s_{D_L}^\prime:= (D\ot
p_L)\co t^{\prime}_{D,D}\co (i_L\ot D).$
\end{itemize}
The triple  $(D_L,\varphi_{D_L},\varrho_{D_L})$ satisfies (yd1) and
(yd2) because it corresponds to the particular case of the
projection $(D,id_D,id_D)$ over $D$ [\cite{Proj}, Definition 2.7,
Proposition 2.19] and then
$(D_L,\varphi_{D_L},\varrho_{D_L})$ is in $_D^D\mathcal{YD}$.

The unit constrains are:
\begin{equation}
\mathfrak{l}_ M=\varphi_{M}\circ (i_{L}\otimes M)\circ
i_{D_{L}\otimes M}:D_{L}\times M\rightarrow M,
\end{equation}
\begin{equation}
 \mathfrak{r}_{M}=\varphi_{M}\co s_M^\prime\co (M\otimes
(\overline{\Pi}_{D}^{L}\circ i_{L}))\co i_{M\otimes D_{L}}:M\times
D_{L}\rightarrow M.
\end{equation}
These morphisms are isomorphisms with inverses:
\begin{equation}
\mathfrak{l}_{M}^{-1}=p_{D_{L}\otimes M}\circ (p_{L}\otimes
\varphi_{M})\circ ((\delta_{D}\circ \eta_{D})\otimes M):M\rightarrow
D_{L}\times M,
\end{equation}
\begin{equation}
\mathfrak{r}_{M}^{-1}=p_{M\otimes D_{L}}\circ (\varphi_{M}\otimes
p_{L})\circ (D\otimes s_M)\circ ((\delta_{D}\circ \eta_{D})\otimes
M):M\rightarrow M\times D_{L},
\end{equation}
and they are actually morphisms of $_D^D\mathcal{YD}.$ We write the
proof for one of the required equalities, the remaining being
analogous. In fact:
\begin{itemize}
\item[]$\hspace{0.38cm}r_M\co(\mathfrak{r}_{M}\ot D)$
\item[]$=r_M\co(((\varphi_M\co s_M^\prime\co(M\ot(\overline{\Pi}_D^L\co i_L))\co
i_{M\ot D_L}))\ot D)$
\item[]$=(D\ot\varphi_M)\co(t_{D,D}\ot M)\co(D\ot r_M)\co(s_M^\prime\ot
D)\co(M\ot(\overline{\Pi}_D^L\co i_L)\ot D)\co(i_{M\ot D_L}\ot D)$
\item[]$=(D\ot(\varphi_M\co s_M^\prime))\co(r_M\ot D)\co(M\ot
t_{D,D})\co(M\ot(\overline{\Pi}_D^L\co i_L)\ot D)\co(i_{M\ot D_L}\ot
D)$
\item[]$=(D\ot(\varphi_M\co s_M^\prime\co(M\ot (\overline{\Pi}_D^L\co
i_L))))\co(r_M\ot D_L)\co(M\ot r_{D_L})\co(i_{M\ot D_L})\ot D)$
\item[]$=(D\ot(\varphi_M\co s_M^{}\prime\co(M\ot (\overline{\Pi}_D^L\co i_L)
)))\co(D\ot\nabla_{M\ot D_L})\co(r_M\ot D_L)\co(M\ot r_{D_L})$
\item[]$\hspace{0.38cm}\co(i_{M\ot D_L}\ot D)$
\item[]$=(D\ot \mathfrak{r}_{M})\co r_{M\times D_L}.$
\end{itemize}

If $M$, $N$, $P$ are objects in the category ${}_D^D\mathcal{YD}$,
the associativity constrains are defined by
\begin{equation}
\mathfrak{a}_{M,N,P}=p_{(M\times N)\otimes P}\circ (p_{M\otimes
N}\otimes P)\circ (M\otimes i_{N\otimes P})\circ i_{M\otimes
(N\times P)}:M\times (N\times P)\rightarrow (M\times N)\times P.
\end{equation}
Its inverse is
\begin{equation}
\mathfrak{a}_{M,N,P}^{-1}=p_{M\otimes (N\times P)}\circ (M\otimes
p_{N\otimes
 P})\circ (i_{M\otimes N}\otimes P)\circ i_{(M\times N)\otimes P}:
 (M\times N)\times P \rightarrow M\times (N\times P).
\end{equation}
Using (\ref{simetriadenabla}), (\ref{ieptrocandelado}) and Lemma
\ref{nablaMxNbaixapoloWO} we check that they are morphisms of
left-left Yetter-Drinfeld modules, and in turn  this fact allows us
to prove the triangle and the pentagon axioms.

As far as tensor products of morphisms in ${}_D^D\mathcal{YD}$ is
concerned, if $\gamma:M\rightarrow M^{\prime}$ and
$\phi:N\rightarrow N^{\prime}$ are morphisms in the category, we
define
\begin{equation}
\gamma\times \phi=p_{M^{\prime}\times N^{\prime}}\circ
(\gamma\otimes \phi)\circ i_{M\otimes N}:M\times N\rightarrow
M^{\prime}\times N^{\prime},
\end{equation}
which is a morphism in ${}_D^D\mathcal{YD}$ and
\begin{equation}
(\gamma^{\prime}\times \phi^{\prime})\circ (\gamma\times \phi)=
(\gamma^{\prime}\circ \gamma)\times (\phi^{\prime}\circ \phi),
\end{equation}
where $\gamma^{\prime}:M^{\prime}\rightarrow M^{\prime\prime}$ and
$\phi^{\prime}:N^{\prime}\rightarrow N^{\prime\prime}$ are morphisms
in ${}_D^D\mathcal{YD}$. \end{proof}

\begin{remark}
In the particular case where the category $\CC$ is braided with braiding $c$
and we take $(c_{M,D}, c_{M,D}^{-1}, c_{D,M},
c_{D,M}^{-1})$ as the $(M,D)$-WO,  the formal properties of the
braiding simplify the calculations, but it is important to note that
the global definition of the braiding is not an essential component in the notion of
Yetter-Drinfeld module.
\end{remark}

\section{Projections and Yetter-Drinfeld modules}

In this section we illustrate the preceding  definitions with a
family of examples, those coming from projections. These examples
are especially relevant for various reasons. One of them lies on its
physics motivations. In a braided category the bosonization introduced by Majid
 in \cite{MAJ2} induces examples of projections. On the other hand, the Radford theory shows  the key role that
projections play  in the theory of Yetter-Drinfel'd modules.

We briefly recall the definition and main properties of projections of
WBHA. The details can be found in [\cite{Proj}, Section 1].

\begin{definition}\label{WBHAproj}
Let $D$, $B$ be WBHA. A projection for $D$ is a triple
$(B,f,g)$ where $f:D\rightarrow B$ and $g:B\rightarrow D$ are
morphisms of WBHA such that $g\co f={id}_D;$ and satisfying the
following equalities:
\begin{itemize}
\item[(i)]
$(B\ot(f\co g))\co t_{B,B}=t_{B,B}\co((f\co g)\ot B).$
\item[(ii)]
$((f\co g)\ot B)\co t_{B,B}=t_{B,B}\co(B\ot(f\co g)).$
\end{itemize}

A morphism between two projections $(B,f,g)$ and
$(B^{\prime},f^{\prime},g^{\prime})$ associated to $D$ is a
morphism of WBHA $h:B\rightarrow B^{\prime}$ such that $h\co f=f^{\prime}$
and $g^{\prime}\co h=g.$ The set of projections associated to
$D$ and morphisms of projections is a category, which we will
denote by ${\mathcal P}roj(D).$

\end{definition}

\begin{remark}
Notice that simultaneous verification of the conditions $(i)$ and $(ii)$ in Definition \ref{WBHAproj}
for $t_{B,B}$ is equivalent to its verification for
${t^{\prime}}_{B,B}.$
\end{remark}

\begin{proposition}
\label{estruturasB_D}Let  $D$ be a WBHA and let $(B,f,g)$ be an object in
${\mathcal P}roj(D).$ The morphism\\
$q_D^B: B\longrightarrow B,$ defined as
\[q_D^B:={id}_B\wedge(f\co\lambda_D \co g),\]
is an idempotent.

As a consequence there are an object $B_D, $ an epimorphism
$p_D^B:B\rightarrow B_D,$ and a monomorphism $i_D^B: B_D\rightarrow
B$ such that $q_D^B=i_D^B\co p_D^B$ and $p_D^B\co i^B_D={id}_{B_D}$.
Moreover
$(B_D,\eta_{B_D}=p_D^B\co\eta_B,\mu_{B_D}=p_D^B\co\mu_B\co(i_D^B\ot
i_D^B))$ is an algebra and $(B_D,\varepsilon_{B_D}=\varepsilon_B\co
i_D^B,\delta_{B_D}=(p_D^B\ot p_D^B)\co\delta_B\co i_D^B)$ is a
coalgebra in ${\mathcal C}$ and
$(B_D,\varphi_{B_D}=p_D^B\co\mu_B\co(f\ot i_D^B))$ is a left
$D$-module and $(B_D,\rho_{B_D}=(g\ot p_D^B)\co\delta_B\co i_D^B)$
is a left $D$-comodule.
\end{proposition}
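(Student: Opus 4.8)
The plan is to reduce everything to the behaviour of the single morphism $q_D^B$ and then transport the structures through the splitting of the idempotent. First I would record the elementary consequences of $(B,f,g)$ being a projection: since $g\co f=id_D$ the morphism $e:=f\co g$ is an idempotent, and because $f$ and $g$ are morphisms of WBHA, $e$ is itself a WBHA morphism; in particular it is multiplicative, comultiplicative, preserves unit and counit, and commutes with the operator, $t_{B,B}\co(e\ot e)=(e\ot e)\co t_{B,B}$. Using $f\co\lambda_D=\lambda_B\co f$ and $\lambda_D\co g=g\co\lambda_B$ one rewrites $f\co\lambda_D\co g=\lambda_B\co e=e\co\lambda_B$, so that
\[q_D^B=id_B\wedge(\lambda_B\co e)=\mu_B\co(B\ot(\lambda_B\co e))\co\delta_B,\]
a ``target-type'' morphism twisted by the projector $e$.

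The main obstacle is the idempotency $q_D^B\co q_D^B=q_D^B$. I would prove it by expanding the composite and inserting condition (b4) for $B$ on the inner $\delta_B\co\mu_B$, so that the two copies of $\lambda_B\co e$ meet a single occurrence of $t_{B,B}$. Then I would push the antipode through the (co)product via the antimultiplicativity and anticomultiplicativity identities \eqref{ant-mu} and \eqref{ant-delta} (stated for $B$), use that $e$ is a (co)algebra morphism to split $e$ across the relevant (co)products, and crucially invoke the conditions (i) and (ii) of Definition \ref{WBHAproj} together with $t_{B,B}\co(e\ot e)=(e\ot e)\co t_{B,B}$ to commute the surviving copies of $e$ past $t_{B,B}$ into adjacent positions. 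After these moves the expression collapses by the antipode convolution identities \eqref{landaconvolucion} and \eqref{Piconvolucion} (again applied to $B$), which absorb the inner $id_B\wedge(\lambda_B\co e)$ factor and return $q_D^B$. This is the delicate step, because it is precisely where the extra axioms (i) and (ii) on the projection are needed: without them the idempotent $e$ cannot be moved across the weak operator and the collapse fails.

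Once idempotency is established, the standing hypothesis that idempotents split in $\CC$ yields the object $B_D$ together with $p_D^B$ and $i_D^B$ satisfying $q_D^B=i_D^B\co p_D^B$ and $p_D^B\co i_D^B=id_{B_D}$. The remaining assertions are then structure-transport statements, each reducing to a compatibility of $q_D^B$ with the structure maps of $B$ and with the $D$-(co)action $\mu_B\co(f\ot B)$, $(g\ot B)\co\delta_B$. Concretely, I would establish the projector identities expressing that $q_D^B$ fixes the relevant (co)unit and that it can be inserted or deleted on the factors of a product or coproduct landing in its image; granting these, the unit and associativity axioms for $(B_D,\eta_{B_D},\mu_{B_D})$ and the counit and coassociativity axioms for $(B_D,\varepsilon_{B_D},\delta_{B_D})$ follow by routine diagram chases using $p_D^B\co i_D^B=id_{B_D}$.

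For the module and comodule structures the verifications are shorter. The unital law $\varphi_{B_D}\co(\eta_D\ot B_D)=id_{B_D}$ is immediate from $f\co\eta_D=\eta_B$, the unit axiom of $B$ and $p_D^B\co i_D^B=id_{B_D}$; associativity of $\varphi_{B_D}$ follows because $f$ is an algebra morphism together with the compatibility of $q_D^B$ with left multiplication by elements of $f(D)$. Dually, the counit law for $\rho_{B_D}$ uses $\varepsilon_D\co g=\varepsilon_B$ and the counit axiom of $B$, while coassociativity uses that $g$ is a coalgebra morphism and the compatibility of $q_D^B$ with the coaction. In every case the essential input is already contained in the idempotency argument, so no new phenomenon beyond the $e$-versus-$t_{B,B}$ interaction arises.
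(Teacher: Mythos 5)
Your proposal is correct and follows essentially the same route as the paper, which gives no in-text argument but defers precisely to [\cite{Proj}, Propositions 2.11, 2.13 and 2.17]: there the idempotency of $q_D^B=id_B\wedge(f\co\lambda_D\co g)$ is obtained, as you describe, from (b4), the anti(co)multiplicativity of the antipode, the fact that $e=f\co g$ is an idempotent WBHA morphism, the projection axioms (i)--(ii) of Definition \ref{WBHAproj} to slide $e$ across $t_{B,B}$, and the convolution identities (\ref{landaconvolucion}) and (\ref{Piconvolucion}), after which the algebra, coalgebra, module and comodule structures on $B_D$ are transported through the splitting $q_D^B=i_D^B\co p_D^B$ exactly via the insert/delete compatibilities of $q_D^B$ with $\mu_B$, $\delta_B$ and the $D$-(co)action that you postulate (the Lemma 2.16-type identities of \cite{Proj}). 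You also correctly identify the one genuinely non-classical point, namely that without (i)--(ii) a single copy of $e$ cannot be moved across the weak Yang-Baxter operator (only $e\ot e$ commutes with $t_{B,B}$ by functoriality of WBHA morphisms), which is where the weak braided case departs from Radford's symmetric argument.
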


\begin{proof} See [\cite{Proj}, Propositions 2.11, 2.13 and 2.17]. \end{proof}

\begin{proposition}\label{B_DWO}
Let $D$ be a WBHA and $(B,f,g)\in |{\mathcal P} roj (D)|$. We
define:
\begin{itemize}
\item[ ] $r_{B_D}:= (g\ot p_D^B)\co t_{B,B}\co (i_D^B\ot f);\quad r_{B_D}^\prime:=
(p_D^B\ot g)\co t^{\prime}_{B,B}\co (f\ot i_D^B),$
\item[ ] $s_{B_D}:= (p_D^B\ot g)\co t_{B,B}\co (f\ot i_D^B);\quad s_{B_D}^\prime:=
(g\ot p_D^B)\co t^{\prime}_{B,B}\co (i_D^B\ot f).$
\end{itemize}

It holds that the quadruple $(r_{B_D},r_{B_D}^\prime,s_{B_D},
s_{B_D}^\prime)$ is a $(B_D,D)$-WO compatible with the (co)module
structure defined for $B_D$ in Proposition \ref{estruturasB_D}.
\end{proposition}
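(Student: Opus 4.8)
The plan is to check directly the defining conditions (c1)--(c5) of Definition \ref{WO} together with the (co)module compatibilities of Definition \ref{WOmodulecompatible}, reducing each of them to an identity that already holds for the weak Yang-Baxter operator $t_{B,B}$ of the WBHA $B$. The guiding principle is that the quadruple $(r_{B_D}, r_{B_D}^\prime, s_{B_D}, s_{B_D}^\prime)$ is the transport of the trivial $(B,B)$-WO $(t_{B,B}, t^{\prime}_{B,B}, t_{B,B}, t^{\prime}_{B,B})$ of Remark \ref{exdeWO} along the WBHA morphisms $f$, $g$ and the idempotent splitting $q_D^B = i_D^B\co p_D^B$ of Proposition \ref{estruturasB_D}. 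First I would record the transport relations used everywhere: because $f$ and $g$ are morphisms of WBHA one has $t_{B,B}\co(f\ot f)=(f\ot f)\co t_{D,D}$, $(g\ot g)\co t_{B,B}=t_{D,D}\co(g\ot g)$ and their analogues for $t^{\prime}$; because $f$ and $g$ are algebra-coalgebra morphisms they intertwine $\mu_D,\delta_D,\eta_D,\varepsilon_D$ with $\mu_B,\delta_B,\eta_B,\varepsilon_B$; and $g\co f=id_D$, $p_D^B\co i_D^B=id_{B_D}$.

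Since the verification is long but systematic, I would halve the work by invoking the symmetry at the end of Remark \ref{redundante1}, so that the $s$-identities follow from the $r$-identities after exchanging $t_{B,B}$ with $t^{\prime}_{B,B}$. For (c1) I substitute the definitions, convert the $t_{D,D}$ on the inner $D$-legs into $t_{B,B}$ by the morphism property of $f$ and the $t_{D,D}$ on the outer $D$-legs by that of $g$, and finish with the Yang-Baxter equation for $t_{B,B}$ and the identities (\ref{tprim1})--(\ref{tprim4}). For (c4) the point is that $f\co\mu_D=\mu_B\co(f\ot f)$ and $(f\ot f)\co\delta_D=\delta_B\co f$ turn $\mu_D,\delta_D$ into $\mu_B,\delta_B$ inside $B$, after which the conditions (b3-1)--(b3-4) for $B$ close the argument. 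The (co)module compatibilities of Definition \ref{WOmodulecompatible} are treated identically, replacing the action $\varphi_{B_D}=p_D^B\co\mu_B\co(f\ot i_D^B)$ and the coaction $\varrho_{B_D}=(g\ot p_D^B)\co\delta_B\co i_D^B$ by the $B$-(co)multiplication and again using (b3).

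For (c3) I would compute $\nabla_{r_{B_D}}=r_{B_D}^\prime\co r_{B_D}$ and its three companions by substitution, cancel $p_D^B\co i_D^B=id_{B_D}$ and $g\co f=id_D$ where they occur, and then apply the fundamental WBB identity (b4) for $B$, $\delta_B\co\mu_B=(\mu_B\ot\mu_B)\co(B\ot t_{B,B}\ot B)\co(\delta_B\ot\delta_B)$; this is exactly what rewrites the sandwiched composite $t^{\prime}_{B,B}\co t_{B,B}=\nabla_{B,B}$ (see (\ref{defnabla})) into the prescribed expressions built from $\mu_D,\delta_D$, the unit and counit legs being supplied by $f\co\eta_D=\eta_B$ and $\varepsilon_D\co g=\varepsilon_B$. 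Finally, (c5) follows from the naturality of the antipode for WBHA morphisms, $f\co\lambda_D=\lambda_B\co f$ and $g\co\lambda_B=\lambda_D\co g$, together with the antipode identities (\ref{ant-mu}) and (\ref{ant-delta}) for $B$.

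The main obstacle I anticipate is the recurring appearance of the idempotent $q_D^B=i_D^B\co p_D^B$ in the interior of the composites: whenever a $p_D^B$ meets a neighbouring $i_D^B$ the result is $q_D^B$ and not $id_B$, so one cannot cancel across a braiding $t_{B,B}$ that has already been moved past it. The resolution is the pair of projection conditions (i), (ii) of Definition \ref{WBHAproj}, which assert precisely that $f\co g$ commutes with $t_{B,B}$ on either factor; from these, together with (b3) and the antipode identities for $B$, I would establish the key technical lemma that $q_D^B$ itself commutes with $t_{B,B}$ in the required sense, allowing the interior idempotent to be transported to the boundary and absorbed. Proving this commutation of $q_D^B$ with $t_{B,B}$ is the technical heart of the argument; once it is available, every one of (c1)--(c5) reduces cleanly to the corresponding property of $t_{B,B}$ valid in $B$.
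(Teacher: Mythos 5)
Your proposal is correct and is essentially the paper's own proof: a direct, condition-by-condition verification of (c1)--(c5) and of the (co)module compatibilities, transporting each identity along the WBHA morphisms $f$, $g$ and the splitting $q_D^B=i_D^B\co p_D^B$ so that everything reduces to the Yang-Baxter equation, (\ref{tprim1}), the axioms (b3) and $t^{\prime}_{B,B}\co t_{B,B}=\nabla_{B,B}$ from (\ref{defnabla}) valid for $B$. The commutation of $q_D^B$ (and of $f\co g$) with $t_{B,B}$ that you single out as the technical heart is precisely what the paper imports as [\cite{Proj}, Lemma 2.16] rather than reproving it; only two bookkeeping details differ, namely that the (c3) expressions are recovered from $\nabla_{B,B}$ via (b1), (b2) and the identities (\ref{b3-1})--(\ref{b3-5}) rather than via (b4), and that (c5) together with the commutation of $\lambda_B$ with $t_{B,B}$ rests on [\cite{IND}, Proposition 2.12] rather than on (\ref{ant-mu}) and (\ref{ant-delta}) alone.
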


\begin{proof} On each condition, just some parts are proved to illustrate the technics applied,
the remaining being analogous.

On the condition (c1) we check (c1-1) explicitely:
\begin{itemize}
\item[ ]$\hspace{0.38cm}  (D\ot r_{B_D})\co(r_{B_D}\ot D)\co(B_D\ot t_{D,D})$
\item[ ]$= (D\ot((g\ot p_D^B)\co t_{B,B}))\co(((g\ot q_D^B)\co t_{B,B})\ot
B)\co(i_D^B\ot((f\ot f)\co t_{D,D}))$
\item[ ]$=(g\ot g\ot p_D^B)\co(B\ot t_{B,B})\co(t_{B,B}\ot
B)\co(i_D^B\ot (t_{B,B}\co (f\ot f)))$
\item[ ]$=(((g\ot g)\co t_{B,B})\ot p_D^B)\co(B\ot t_{B,B})\co(t_{B,B}\ot
B)\co(i_D^B\ot f\ot f)$
\item[ ]$=(t_{D,D}\ot B_D)\co(g\ot g\ot p_D^B)\co(B\ot t_{B,B})\co(B\ot q_D^B\ot
B)\co(t_{B,B}\ot B)\co(i_D^B\ot f\ot f)$
\item[ ]$=(t_{D,D}\ot B_D)\co(D\ot r_{B_D})\co (r_{B_D}\ot D).$
\end{itemize}
In the preceding calculations, the first and the last equalities hold by
the definition of $r_{B_D}$, on the second one we use [\cite{Proj}, Lemma 2.16] and
the fact that $f$ is a morphism of WBHA. The third one follows because
of $t_{B,B}$ verifies the Yang-Baxter equation, and the fourth one uses
[\cite{Proj}, Lemma 2.16] together with  the character of morphism of WBHA for $g$.

The proof for the condition (c2) is similar, but using the equality
(\ref{tprim1}) instead of the verification of the Yang-Baxter
equation.

For (c3-1), using that $B$ is a WBHA, the definition of projection, [\cite{Proj}, Lemma 2.16],
 (b2-3) and the equality (\ref{b3-5}) we have:
\begin{itemize}
\item[ ]$\hspace{0.38cm}\nabla_{r_{B_D}}$
\item[ ]$=r_{B_D}\co r_{B_D}^\prime$
\item[ ]$=(p_D^B\ot g)\co t^{\prime}_{B,B} \co ((f\co g)\ot q_D^B)\co t_{B,B} \co (i_D^B\ot f)$
\item[ ]$=(p_D^B\ot g)\co \nabla_{B,B} \co (i_D^B\ot f)$
\item[ ]$=(p_D^B\ot \varepsilon_B\ot g)\co (\nabla_{B,B}\ot B) \co (i_D^B\ot (\delta_B\co f))$
\item[ ]$=(\varepsilon_B\ot p_D^B\ot g)\co (t_{B,B}\ot B) \co (i_D^B\ot (\delta_B\co f))$
\item[ ]$=(\varepsilon_B\ot B_D\ot D)\co (r_{B_D}\ot D)\co (B_D\ot \delta_D).$

\end{itemize}

Arguing analogously we obtain that
\[\nabla_{r_{B_D}}=(\mu_D\ot B_D)\co(D\ot(r_{B_D}\co(B_D\ot\eta_D))).\]

For the condition (c4-1), by the definition of projection, [\cite{Proj}, Lemma 2.16]
 and the properties of the weak operator $t_{B,B}$ we
know that:

\begin{itemize}
\item[ ]$\hspace{0.38cm} (\mu_D\ot B_D)\co(D\ot r_{B_D})\co(r_{B_D}\ot D)$
\item[ ]$=(\mu_D\ot B_D)\co(g\ot g\ot p_D^B)\co(B\ot t_{B,B})\co(B\ot q_D^B\ot B)\co(t_{B,B}\ot B)\co(i_D^B\ot f\ot f)$
\item[ ]$=(g\ot p_D^B)\co(\mu_B\ot B)\co(B\ot t_{B,B})\co(t_{B,B}\ot B)\co(i_D^B\ot
f\ot f)$
\item[ ]$=(g\ot p_D^B)\co t_{B,B}\co (i_D^B\ot(\mu_B\co(f\ot f)))$
\item[ ]$=r_{B_D}\co(B_D\ot \mu_D).$
\end{itemize}

Finally, the condition (c5) follows because $f$ and $g$ are
morphisms of WBHA and by [\cite{IND}, Proposition 2.12].

In order to see the compatibility with the (co)module structures  of $B_D,$
 we just state explicitly one of the required equalities to
illustrate the technics. We can write:
\begin{itemize}
\item[ ]$\hspace{0.38cm} (D\ot\varphi_{B_D} )\co(t_{D,D}\ot B_D)\co(D\ot r_{B_D})$
\item[ ]$=(D\ot(p_D^B\co\mu_B\co (f\ot q_D^B)))\co (t_{D,D}\ot B)\co (D\ot g\ot B)\co (D\ot (t_{B,B}\co (i_D^B\ot f)))$
\item[ ]$=(g\ot(p_D^B\co\mu_B))\co(t_{B,B}\ot B)\co(B\ot t_{B,B})\co (f\ot i_D^B\ot f)$
\item[ ]$=(g\ot p_D^B)\co t_{B,B}\co(\mu_B\ot B)\co(f\ot i_D^B\ot f)$
\item[ ]$=(g\ot p_D^B)\co t_{B,B}\co((i_D^B\co p_D^B\co\mu_B)\ot B)\co(f\ot i_D^B\ot
f)$
\item[ ]$=r_{B_D}\co(\varphi_{B_D}\ot D).$
\end{itemize}

The remaining equalities  are analogous. \end{proof}

The above disquisitions allow to state one of the main results of
this section:

\begin{proposition}\label{B_DisYD}
Let $D$ be a WBHA and $(B,f,g)\in |{\mathcal P} roj (D)|$. With the
notation of Proposition \ref{estruturasB_D}, $(B_D,\varphi_{B_D},
\varrho_{B_D})$ is in $_D^D\mathcal{YD}$.
\end{proposition}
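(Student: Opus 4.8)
Since Proposition \ref{estruturasB_D} endows $B_D$ with a left $D$-module structure $\varphi_{B_D}$ and a left $D$-comodule structure $\varrho_{B_D}$, and Proposition \ref{B_DWO} provides a compatible $(B_D,D)$-WO, namely $(r_{B_D},r_{B_D}^\prime,s_{B_D},s_{B_D}^\prime)$, by Definition \ref{defl-lYD} the only thing left to prove is that the pair $(\varphi_{B_D},\varrho_{B_D})$ satisfies the Yetter-Drinfeld conditions (yd1) and (yd2). The plan is to verify both identities by transporting them to the WBHA $B$: I would substitute the defining expressions $\varphi_{B_D}=p_D^B\co\mu_B\co(f\ot i_D^B)$ and $\varrho_{B_D}=(g\ot p_D^B)\co\delta_B\co i_D^B$, and, in (yd2), the formulas for $r_{B_D}$ and $s_{B_D}$ from Proposition \ref{B_DWO}, and then push all the $D$-operations across $f$ and $g$.

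This transport is legitimate because $f$ and $g$ are morphisms of WBHA: using $\mu_B\co(f\ot f)=f\co\mu_D$, $\delta_B\co f=(f\ot f)\co\delta_D$, $t_{B,B}\co(f\ot f)=(f\ot f)\co t_{D,D}$, the dual relations for $g$, the identity $g\co f=id_D$, and the relations between $q_D^B$ and $t_{B,B}$ recorded in [\cite{Proj}, Lemma 2.16], every $\mu_D$, $\delta_D$ and $t_{D,D}$ appearing in (yd1) and (yd2) can be rewritten in terms of $\mu_B$, $\delta_B$ and $t_{B,B}$ acting on factors glued by $q_D^B=i_D^B\co p_D^B$. After this rewriting, both identities reduce to expressions in which the combination $(\mu_B\ot\mu_B)\co(B\ot t_{B,B}\ot B)\co(\delta_B\ot\delta_B)$ occurs, so the bialgebra axiom (b4) for $B$ --- which identifies this combination with $\delta_B\co\mu_B$ --- is the decisive structural input. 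The remaining reorganisation then uses the Yang-Baxter equation and the properties (\ref{tprim1})--(\ref{tprim4}) of $t_{B,B}$, the characterization $q_D^B=id_B\wedge(f\co\lambda_D\co g)$, and the projection conditions (i) and (ii) of Definition \ref{WBHAproj}, exactly as in the braided argument of \cite{Proj}. As an economy, one may instead invoke Proposition \ref{yd3} and verify the single equivalent condition (yd3), removing the duplicated bookkeeping between (yd1) and (yd2).

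The main obstacle is the control of the weak correction terms. Because $B$ is only weakly braided, the morphisms $\delta_B\co\eta_B$ and $\varepsilon_B\co\mu_B$ are not trivial, so the idempotents $\nabla_{B,B}$, $\Pi_B^L$ and $\Pi_B^R$ proliferate during the reductions; the delicate point is to show that they can be absorbed into $q_D^B$ and into the image morphisms $p_D^B,i_D^B$, so that the cancellation laws of Proposition \ref{cancelation} together with the target/source identities (\ref{landaconvolucion})--(\ref{Piconvolucion}) and (\ref{PiePibarra1})--(\ref{Pielambda2}), transported from $D$ to $B$, can collapse the expressions. Once the placement of these idempotents is settled, the residual manipulations are the routine WBHA computations already carried out for the base object $D_L$, which is recovered here as the special case of the projection $(D,id_D,id_D)$.
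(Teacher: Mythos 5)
Your proposal is correct and follows essentially the same route as the paper: the paper's own proof consists precisely of the observation that Proposition \ref{B_DWO} supplies the compatible $(B_D,D)$-WO, after which (yd1) and (yd2) are obtained by citing [\cite{Proj}, Proposition 1.19], whose verification is exactly the transport-to-$B$ computation you sketch (pushing $\mu_D$, $\delta_D$, $t_{D,D}$ through $f$ and $g$, using $q_D^B=i_D^B\co p_D^B$, [\cite{Proj}, Lemma 2.16], and the axiom (b4) for $B$). Your added remark that one could instead check the single condition (yd3) via Proposition \ref{yd3} is legitimate, since that equivalence only needs the compatibility already established, but it does not change the substance of the argument.
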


\begin{proof} We have already shown that the quadruple $(r_{B_D}, r_{B_D}^\prime,
s_{B_D}, s_{B_D}^\prime)$ defined in Proposition \ref{B_DWO} is a
$(B_D,D)$-WO compatible with the (co)-module structure. For the
conditions (yd1) and (yd2) see [\cite{Proj}, Proposition 1.19].
\end{proof}

\begin{remark}
This example arising from projections of WBHA also suggests that the
requirement introduced in part (ii) of Definition \ref{Y-Dmorphism}
is natural in the sense that it is automatically satisfied in the
case of this generic example. Actually, by definition of morphism
between projections over $D$, given such a morphism $\alpha:
B\rightarrow B^\prime$  between $(B,f,g)$ and
$(B^\prime,f^\prime,g^\prime)$ (see Definition \ref{WBHAproj}) it
induces a (co)module morphism $\alpha_D:B_D\rightarrow B_D^\prime$
such that ${i_D^B}^\prime\co\alpha_D=\alpha\co i_D^B.$ Then we have
\begin{itemize}
\item[ ]$\hspace{0.38cm} r_{B_{D}^{\prime}}\co(\alpha_D\ot D)$
\item[ ]$= (g^\prime\ot p_D^{B^\prime})\co t_{B^\prime,B^\prime} \co
((i_D^{B^\prime}\co\alpha_D)\ot f^\prime)$
\item[ ]$= (g^\prime\ot p_D^{B^\prime})\co t_{B^\prime,B^\prime} \co((\alpha\co
i_D^B)\ot(\alpha\co f))$
\item[ ]$= ((g^\prime\co\alpha)\ot( p_D^{B^\prime}\co\alpha))\co t_{B,B}
\co(i_D^B\ot f)$
\item[ ]$=(g\ot(\alpha_D\co p_D^{B}))\co t_{B,B} \co(i_D^B\ot f)$
\item[ ]$=(D\ot \alpha_D)\co r_{B_D}.$
\end{itemize}

We would obtain similarly the analogous results for
$r_{B_D}^\prime,$ $s_{B_D}$ and $s_{B_D}^\prime$ instead of
$r_{B_D},$ so any morphism in $\mathcal{P}roj(D)$ induces naturally
a morphism in $_D^D\mathcal{YD}$.
\end{remark}

On these examples coming from projections the construction of the
weak operator is based on the weak Yang-Baxter operator $t_{B,B}$
and its properties. We will finish this section seeing a link between
the notions of weak Yang-Baxter operator and weak entwining
structure, being the last one relevant, for example, in order to
give a characterization of weak cleft extensions in terms of weak
Galois extensions  with normal basis, as can be found in \cite{iwes}
. To do so,  the definition of invertible weak entwining  is briefly
recalled (see \cite{iwes} for details).

\begin{definition}\label{wes}
A right-right weak entwining structure is a triple $(A,C,\Psi_{RR})$
where $A$ is an algebra, $C$ a coalgebra and $\Psi_{RR}: C\ot
A\rightarrow A\ot C$ is a morphism that satisfies:
\begin{equation}\label{rrwes1}
(\mu_A\ot C)\co(A\ot\Psi_{RR})\co(\Psi_{RR}\ot A)=\Psi_{RR}\co(C\ot\mu_A),
\end{equation}
\begin{equation}\label{rrwes2}
\Psi_{RR}\co(C\ot\eta_A)=(e_{RR}\ot C)\co\delta_C,
\end{equation}
\begin{equation}\label{rrwes3}
(A\ot\delta_C)\co\Psi_{RR}=(\Psi_{RR}\ot C)\co(C\ot\Psi_{RR})\co(\delta_C\ot
A),
\end{equation}
\begin{equation}\label{rrwes4}
(A\ot\varepsilon_C)\co\Psi_{RR}=\mu_A\co(e_{RR}\ot A).
\end{equation}

 with $e_{RR}=(A\ot\varepsilon_C)\co\Psi_{RR}\co(C\ot\eta_A).$ Similarly we can define
a left-left weak entwining structure $(A,C,\Psi_{LL})$ for an algebra
$A,$ a coalgebra $C$ and a morphism $\Psi_{LL}:A\ot C\rightarrow C\ot
A$ that verifies similar equalities to the previous ones with
$e_{LL}=(\varepsilon_C\ot A)\co\Psi_{LL}\co(\eta_A\ot C).$
 \end{definition}

 \begin{apart}
 Let $(A,C,\Psi_{RR})$ be a right-right weak entwining structure. Define $\Delta_{RR}:A\ot
C\rightarrow A\ot C$ by $\Delta_{RR}=(\mu_A\ot
C)\co(A\ot\Psi_{RR})\co(A\ot C\ot \eta_A).$ This morphism is idempotent
and so is the morphism $\nabla_{RR}:C\ot A\rightarrow C\ot A$ defined
by $\nabla_{RR}=(C\ot A\ot \varepsilon_C)\co(C\ot
\Psi_{RR})\co(\delta_C\ot A).$ The corresponding idempotent morphisms
for a left-left weak entwining structure will be denoted by
$\Delta_{LL}$ and $\nabla_{LL}.$
\end{apart}

\begin{definition}\label{iwes}
Let  $A$ be an algebra, $C$ a coalgebra and $\Psi_{RR}: C\ot
A\rightarrow A\ot C$  and $\Psi_{LL}: A\ot C\rightarrow C\ot A$
morphisms in $\CC.$ We say that $(C,A,\Psi_{RR},\Psi_{LL})$ is an
invertible weak entwining structure if the following conditions
hold:
\begin{itemize}
\item[(i)] $(A,C,\Psi_{RR})$ is a right-right weak entwining structure and
$(A,C,\Psi_{LL})$ is a left-left weak entwining structure.
\item[(ii)] $\Psi_{LL}\co\Psi_{RR}=\Delta_{LL}$ and $\Psi_{RR}\co\Psi_{LL}=\Delta_{RR}$
\end{itemize}

\end{definition}

The relation between weak Yang-Baxter operators and invertible weak
entwining structures can be expressed in terms of weak operators as
follows:

\begin{proposition} With the notation of Proposition \ref{B_DWO}, it holds that
\begin{itemize}
\item[(i)] $(B_D, D, s_{B_D}, s_{B_D}^\prime)$ is an invertible weak entwining
structure.
\item[(ii)] $(B_D, D, r_{B_D}^\prime, r_{B_D})$ is an invertible weak entwining
structure.
\end{itemize}
\end{proposition}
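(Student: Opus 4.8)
The plan is to exhibit, from the four morphisms of Proposition~\ref{B_DWO}, the two invertible weak entwining structures demanded by Definition~\ref{iwes}, in both cases taking $B_D$ (with the algebra--coalgebra structure of Proposition~\ref{estruturasB_D}) as the algebra and $D$ as the coalgebra. In part (i) one reads $\Psi_{RR}=s_{B_D}\colon D\ot B_D\rightarrow B_D\ot D$ and $\Psi_{LL}=s_{B_D}^\prime\colon B_D\ot D\rightarrow D\ot B_D$, whose sources and targets are exactly those of a right--right and a left--left entwining morphism for this choice; in part (ii) the roles are filled by $\Psi_{RR}=r_{B_D}^\prime$ and $\Psi_{LL}=r_{B_D}$. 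Accordingly I would first check that $(B_D,D,\Psi_{RR})$ is a right--right and $(B_D,D,\Psi_{LL})$ a left--left weak entwining structure, and then verify the two idempotent identities of Definition~\ref{iwes}(ii).

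A pleasant feature is that the comultiplication axiom~(\ref{rrwes3}) is, after the substitution of $B_D$ for the algebra and $D$ for the coalgebra, literally the weak--operator condition (c4-7) of Definition~\ref{WO} for $M=B_D$, while its left--left counterpart is (c4-8); in part (ii) these become (c4-4) and (c4-3). Thus all compatibility with the coproduct $\delta_D$ comes for free from Proposition~\ref{B_DWO}. The three remaining axioms (\ref{rrwes1}), (\ref{rrwes2}) and (\ref{rrwes4}) are genuinely new, since they involve the \emph{algebra} structure $\mu_{B_D}=p_D^B\co\mu_B\co(i_D^B\ot i_D^B)$ and unit $\eta_{B_D}=p_D^B\co\eta_B$ of $B_D$, which do not occur in the weak--operator conditions. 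I would prove them by unfolding $s_{B_D}=(p_D^B\ot g)\co t_{B,B}\co(f\ot i_D^B)$ and reducing to identities for $t_{B,B}$ inside $B$, using the same toolkit as Proposition~\ref{B_DWO}: the bialgebra compatibilities (b3)--(b6) of $B$, the projection relations (i)--(ii) of Definition~\ref{WBHAproj}, the fact that $f$ and $g$ are morphisms of WBHA, the idempotent $q_D^B=i_D^B\co p_D^B$, and [\cite{Proj}, Lemma~2.16]. Concretely, (\ref{rrwes1}) is obtained by transporting $\mu_B$ across two copies of $t_{B,B}$ via (b3-1)--(b3-2), whereas (\ref{rrwes2}) and (\ref{rrwes4}) follow after evaluating $e_{RR}=(B_D\ot\varepsilon_D)\co s_{B_D}\co(D\ot\eta_{B_D})$ with the help of (b5)--(b6).

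For Definition~\ref{iwes}(ii) the decisive remark is that the two composites in question are already weak--operator idempotents: by the definitions in (c3) one has $\Psi_{RR}\co\Psi_{LL}=s_{B_D}\co s_{B_D}^\prime=\nabla_{s_{B_D}^\prime}$ and $\Psi_{LL}\co\Psi_{RR}=s_{B_D}^\prime\co s_{B_D}=\nabla_{s_{B_D}}$, so it only remains to identify $\nabla_{s_{B_D}^\prime}$ with $\Delta_{RR}$ and $\nabla_{s_{B_D}}$ with $\Delta_{LL}$. I would compute, exactly along the lines of the $\nabla_{r_{B_D}}$ calculation in Proposition~\ref{B_DWO},
\[\nabla_{s_{B_D}^\prime}=(p_D^B\ot g)\co t_{B,B}\co((f\co g)\ot q_D^B)\co t^\prime_{B,B}\co(i_D^B\ot f),\]
where the projection relations and [\cite{Proj}, Lemma~2.16] collapse the central block $t_{B,B}\co((f\co g)\ot q_D^B)\co t^\prime_{B,B}$ to $\nabla_{B,B}$ (recall $t_{B,B}\co t^\prime_{B,B}=\nabla_{B,B}$ by (\ref{defnabla})), giving $\nabla_{s_{B_D}^\prime}=(p_D^B\ot g)\co\nabla_{B,B}\co(i_D^B\ot f)$. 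An independent expansion of $\Delta_{RR}=(\mu_{B_D}\ot D)\co(B_D\ot s_{B_D})\co(B_D\ot D\ot\eta_{B_D})$, simplified through the unit law of $B_D$ and the value of $q_D^B\co\eta_B$, is then shown to yield the same morphism, and the identification $\nabla_{s_{B_D}}=\Delta_{LL}$ is dual. In part (ii) one of the two identifications, namely $\Psi_{RR}\co\Psi_{LL}=r_{B_D}^\prime\co r_{B_D}=\nabla_{r_{B_D}}$, is supplied verbatim by Proposition~\ref{B_DWO}.

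The real difficulty, and the place where care is needed, is the clash between the two algebraic structures at play: every property produced so far lives over the (co)multiplication of $D$, while the entwining axioms (\ref{rrwes1}), (\ref{rrwes2}), (\ref{rrwes4}) and the idempotents $\Delta_{RR},\Delta_{LL}$ are phrased over the (co)multiplication of $B_D$, i.e.\ over $\mu_B$ conjugated by $q_D^B$. Tracking precisely where $q_D^B$ may be absorbed---using $p_D^B\co i_D^B=id_{B_D}$ together with the projection relations---is the delicate bookkeeping; the multiplication axiom (\ref{rrwes1}) and the identification $\nabla_{s_{B_D}}=\Delta_{LL}$ are the two spots where it is heaviest. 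Part (ii) is entirely parallel, with $t^\prime_{B,B}$ replacing $t_{B,B}$ where appropriate and the identities (\ref{tprim1})--(\ref{tprim4}) taking the place of the Yang--Baxter equation.
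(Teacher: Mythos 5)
Your proposal is correct and is essentially the paper's own proof: the coproduct axiom (\ref{rrwes3}) and its left--left mirror are read off from the weak-operator conditions (c4-7)/(c4-8) (resp.\ (c4-4)/(c4-3) in part (ii)), the genuinely new axioms (\ref{rrwes1}), (\ref{rrwes2}) and (\ref{rrwes4}) are verified by unfolding $s_{B_D}$ into $B$ using the projection relations, [\cite{Proj}, Lemma 2.16] and the weak braided bialgebra compatibilities, and invertibility is obtained exactly by your collapse $s_{B_D}\co s_{B_D}^\prime=(p_D^B\ot g)\co \nabla_{B,B}\co (i_D^B\ot f)$ followed by a unit insertion to reach $\Delta_{RR}$. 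Two harmless inaccuracies of detail: the paper's check of (\ref{rrwes2}) runs through the unit relations of type (\ref{b3-1}), $g\co f=id_D$ and (b2-3) rather than (b5)--(b6), and in part (ii) Proposition \ref{B_DWO} supplies only the collapsed form of $\nabla_{r_{B_D}}$ (in its (c3-1) computation), so the identification with $\Delta_{RR}$ --- which is built from $\mu_{B_D}$ and $\eta_{B_D}$ --- still requires the same short unit-insertion step you describe for part (i).
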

\begin{proof}
We prove part (i). Let's see that $(B_D, D,
s_{B_D})$ is a right-right weak entwining structure. First of all,
it was already demonstrated that the quadruple
$(r_{B_D},r_{B_D}^\prime,s_{B_D}, s_{B_D}^\prime )$ is a
$(B_D,D)$-WO, so we know that (\ref{rrwes3}) holds. On the other
hand, using that $(B,f,g)\in |{\mathcal P} roj (D)|$ and the
properties of the weak operator $t_{B,B}$ and [\cite{Proj}, Lemma 2.16] we obtain (\ref{rrwes1}).
Indeed:
\begin{itemize}
\item[ ]$\hspace{0.38cm} (\mu_{B_D}\ot D)\co (B_D\ot s_{B_D})\co (s_{B_D}\ot B_D)$
\item[] $=((p_D^B\co \mu_B\co ((i_D^B\co p_D^B)\ot (i_D^B\co p_D^B)))\ot g)\co (B\ot
t_{B,B})\co (B\ot (f\co g)\ot B)\co (t_{B,B}\ot B)\co (f\ot i_D^B\ot i_D^B)$
\item[] $=((p_D^B\co \mu_B)\ot g)\co (B\ot t_{B,B})\co (t_{B,B}\ot B)\co (f\ot
i_D^B\ot i_D^B)$
\item[] $=(p_D^B\ot g)\co t_{B,B}\co (B\ot \mu_B)\co (f\ot i_D^B\ot i_D^B)$
\item[] $=s_{B_D}\co (D\ot \mu_{B_D}).$
\end{itemize}

 To show (\ref{rrwes2}), note first that in this case
$e_{RR}=(p_D^B\ot\varepsilon_B)\co t_{B,B} \co(f\ot\eta_B),$ so
\begin{itemize}
\item[ ]$\hspace{0.38cm} (e_{RR}\ot D)\co\delta_D$
\item[] $=(p_D^B\ot\varepsilon_B\ot D)\co (t_{B,B}\ot D) \co(f\ot\eta_B\ot
D)\co\delta_D$
\item[] $=(p_D^B\ot\varepsilon_B\ot g)\co(\nabla_{B,B}\ot B)\co(\eta_B\ot f\ot
f)\co\delta_D$
\item[] $=(p_D^B\ot\varepsilon_B\ot g)\co(\nabla_{B,B}\ot
B)\co(\eta_B\ot(\delta_B\co f))$
\item[] $=(p_D^B\ot\varepsilon_B\ot g)\co(B\ot\delta_B)\co\nabla_{B,B}\co(\eta_B\ot f)$
\item[] $=(p_D^B\ot g)\co t_{B,B}\co(f\ot i_D^B)\co(D\ot\eta_{B_D})$
\item[] $=s_{B_D}\co(D\ot\eta_{B_D}).$
\end{itemize}
In the above calculations, the first and the fifth equalities are
just the definition of $e_{RR}$, the second one uses (\ref{b3-1}) and $g\co
f=id_D$; the third one follows because of $f$ is a coalgebra
morphism, and the fourth relies on (b2-3). By similar arguments we
get (\ref{rrwes4}).

The proof  showing that $(B_D, D, s_{B_D}^\prime)$ is a left-left
weak entwining is analogous.

Finally we use similar properties to see that $s_{B_D}\co s_{B_D}^\prime=\Delta_{RR}$:
\begin{itemize}
\item[ ]$\hspace{0.38cm} s_{B_D}\co s_{B_D}^\prime$
\item[ ]$=(p_D^B\ot g)\co t_{B,B}\co ((f\co g)\ot(i_D^B\co p_D^B))\co
t^{\prime}_{B,B}\co(i_D^B\ot f)$
\item[ ]$=(p_D^B\ot g)\co\nabla_{B,B}\co(i_D^B\ot f)$
\item[ ]$=(p_D^B\ot g)\co\nabla_{B,B}\co(\mu_B\ot B)\co(i_D^B\ot\eta_B\ot f)$
\item[ ]$=(p_D^B\ot g)\co(\mu_B\ot B)\co(B\ot\nabla_{B,B})\co(i_D^B\ot\eta_B\ot f)$
\item[ ]$=(p_D^B\ot D)\co(\mu_B\ot B)\co(i_D^B\ot(i_D^B\co p_D^B)\ot g)\co (B\ot
t_{B,B})\co (B_D\ot f\ot(i_D^B\co\eta_{B_D}))$
\item[ ]$=(\mu_{B_D}\ot D)\co(B_D\ot s_{B_D})\co(B_D\ot D\ot\eta_{B_D})$
\item[ ]$=\Delta_{RR}.$
\end{itemize}

It can be checked similarly that $s_{B_D}^\prime\co
s_{B_D}=\Delta_{LL}$. \end{proof}

\section{Adjoint (co)actions and Yetter-Drinfeld modules}

In the theory of Hopf algebras it is a well-known fact that, if $H$
is a Hopf algebra in an strict braided monoidal category with braid
$c$, the triple $(H, \varphi_{H}, \delta_{H})$ is an object of
$_H^H\mathcal{YD}$ where $\varphi_{H}:H\otimes H\rightarrow H$
denotes the adjoint action defined by
\[\varphi_{H}=\mu_{H}\circ (\mu_{H}\ot \lambda_{H})\circ (H\ot c_{H,H})\circ (\delta_{H}\ot H).\]
Also, the triple  $(H, \mu_{H}, \varrho_{H})$ is an object of
$_H^H\mathcal{YD}$ where $\varrho_{H}:H\rightarrow H\otimes H$
denotes the adjoint coaction defined by
\[\varrho_{H}=(\mu_{H}\otimes H)\circ  (H\otimes c_{H,H})\circ (\delta_{H}\otimes \lambda_{H})\circ \delta_{H}.\]

Unfortunately, in the weak setting, the previous assertions are not
true in general (see \cite{Accadj}). Indeed, being $H$ a weak Hopf algebra in ${\mathcal C}$, the
pair $(H, \varphi_{H})$ is not in general a left $H$-module because
the unit condition can fail, i.e.
\[\varphi_{H}\circ (\eta_{H}\ot H)=\mu_{H}\circ (H\ot (\lambda_{H}\circ \Pi_{H}^{L}))\circ \delta_{H}\neq id_{H},\]
and for the adjoint coaction the counit condition may be untrue
because
\[(\varepsilon_{H}\otimes H)\circ\varrho_{H}=\mu_{H}\circ (H\ot (\Pi_{H}^{L}\circ \lambda_{H}))\circ \delta_{H}\neq id_{H}.\]

In this section we shall show that for every WBHA $D$ the adjoint
action and the adjoint coaction induce idempotent morphisms and as a
consequence, using the factorizations of these idempotents, it is
possible to  construct new examples of objects in the category
$_D^D\mathcal{YD}$ defined in the second section of this paper.
Obviously, if $H$ is a Hopf algebra, the  idempotents associated to
the adjoint action and coaction are identities and we recover the
classical results.

\begin{proposition}
\label{adjoint-act-coact} Let $D$ be a WBHA  in ${\mathcal C}$. Let
$\varphi_{D}:D\ot D\rightarrow D$ and  $\varrho_{D}:D\rightarrow
D\ot D$ be the morphisms defined by
\[\varphi_{D}=\mu_{D}\circ (\mu_{D}\ot \lambda_{D})\circ (D\ot t_{D,D})\circ (\delta_{D}\ot D)\]
and
\[\varrho_{D}=(\mu_{D}\otimes D)\circ  (D\otimes t_{D,D})\circ (\delta_{D}\otimes \lambda_{D})\circ \delta_{D}.\]
Then
\[\omega_{D}^{a}=\varphi_{D}\circ (\eta_{D}\ot D):D\rightarrow D, \]
\[\omega_{D}^{c}=(\varepsilon_{D}\ot D)\circ \varrho_{D}:D\rightarrow D\]
are idempotent morphisms in ${\mathcal C}$  and
\[\omega_{D}^{a}=\mu_{D}\circ (D\ot (\lambda_{D}\circ \Pi_{D}^{L}))\circ \delta_{D},\]
\[\omega_{D}^{c}=\mu_{D}\circ (D\ot (\Pi_{D}^{L}\circ \lambda_{D}))\circ \delta_{D}.\]
\end{proposition}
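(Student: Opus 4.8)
The plan is to handle the two closed formulas and the two idempotency assertions by separate, mutually dual arguments, the product-coproduct symmetry of the statement being evident.

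First I would establish the formula for $\omega_D^a$. Unfolding $\omega_D^a=\varphi_D\co(\eta_D\ot D)$ and using $(\delta_D\ot D)\co(\eta_D\ot D)=(\delta_D\co\eta_D)\ot D$ together with $\mu_D\ot\lambda_D=(D\ot\lambda_D)\co(\mu_D\ot D)$, one gets $\omega_D^a=\mu_D\co(D\ot\lambda_D)\co(\mu_D\ot D)\co(D\ot t_{D,D})\co((\delta_D\co\eta_D)\ot D)$. The composite $(\mu_D\ot D)\co(D\ot t_{D,D})\co((\delta_D\co\eta_D)\ot D)$ is exactly the right-hand side of (\ref{deltaPiL}), so it equals $(D\ot\Pi_D^L)\co\delta_D$, and substituting yields $\omega_D^a=\mu_D\co(D\ot(\lambda_D\co\Pi_D^L))\co\delta_D$. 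For $\omega_D^c$ I would run the dual computation: from $\omega_D^c=((\varepsilon_D\co\mu_D)\ot D)\co(D\ot t_{D,D})\co(\delta_D\ot\lambda_D)\co\delta_D$, I rewrite $(\delta_D\ot\lambda_D)\co\delta_D$ by coassociativity and push $\lambda_D$ across $t_{D,D}$ via $t_{D,D}\co(D\ot\lambda_D)=(\lambda_D\ot D)\co t_{D,D}$, which is (\ref{newlambdarM}) of Proposition \ref{lambabaixapoloWO} specialized to $r_D=t_{D,D}$ (Remark \ref{exdeWO}); applying the coproduct-product mirror of (\ref{deltaPiL}) (the analogue carrying $\Pi_D^R$) and then $\lambda_D\co\Pi_D^R=\Pi_D^L\co\lambda_D$ from (\ref{Pielambda}) should give $\omega_D^c=\mu_D\co(D\ot(\Pi_D^L\co\lambda_D))\co\delta_D$.

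For the idempotency I would reduce to the (co)associativity of the adjoint (co)action, which trivializes both checks. For $\omega_D^a$, a direct rewriting gives $\omega_D^a\co\omega_D^a=\varphi_D\co(D\ot\varphi_D)\co(\eta_D\ot\eta_D\ot D)$; assuming associativity $\varphi_D\co(D\ot\varphi_D)=\varphi_D\co(\mu_D\ot D)$ and using $\mu_D\co(\eta_D\ot\eta_D)=\eta_D$, this collapses to $\varphi_D\co(\eta_D\ot D)=\omega_D^a$. For $\omega_D^c$, since $\varrho_D\co(\varepsilon_D\ot D)=(\varepsilon_D\ot D\ot D)\co(D\ot\varrho_D)$, the coassociativity $(D\ot\varrho_D)\co\varrho_D=(\delta_D\ot D)\co\varrho_D$ combined with the counit law $(\varepsilon_D\ot D)\co\delta_D=id_D$ gives $\varrho_D\co(\varepsilon_D\ot D)\co\varrho_D=\varrho_D$, whence $\omega_D^c\co\omega_D^c=\omega_D^c$.

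The main obstacle is therefore the hypothesis feeding this last step: that $\varphi_D$ is associative and $\varrho_D$ coassociative. For a WBHA these are precisely the (co)module axioms that do survive --- the unit, resp. counit, axiom genuinely fails, which is exactly why $\omega_D^a$ and $\omega_D^c$ are nontrivial idempotents rather than identities --- and proving them is a diagrammatic computation resting on (b3), (b4), the antimultiplicativity (\ref{ant-mu}) and anticomultiplicativity (\ref{ant-delta}) of $\lambda_D$, and the Yang-Baxter equation for $t_{D,D}$; this is the content referenced in \cite{Accadj}. The delicate point throughout is that $t_{D,D}$ is only a weak Yang-Baxter operator and not a genuine symmetry, so every braiding exchange in these manipulations must be licensed by the corresponding axiom rather than by naturality.
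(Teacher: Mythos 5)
Your argument is sound and non-circular, but it is organized quite differently from the paper's. The paper proves idempotency of $\omega_D^a$ head-on: it expands $\omega_D^a\co\omega_D^a$ using (b3-2) and associativity of $\mu_D$, then antimultiplicativity of $\lambda_D$ ([\cite{IND}, Proposition 2.20]) together with (b3-1), then invokes (b4) to collapse $(\mu_{D}\ot \mu_{D})\co (D\ot t_{D,D}\ot D)\co (\delta_{D}\ot \delta_{D})$ into $\delta_{D}\co \mu_{D}$, and finishes with $\mu_D\co(\eta_D\ot\eta_D)=\eta_D$; for both closed formulas it simply cites [\cite{IND}, Proposition 2.12]. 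You invert this organization: you derive the closed formulas explicitly (your computation of $\omega_D^a$ via (\ref{deltaPiL}) is complete, correct, and arguably more informative than the paper's bare citation), and you obtain idempotency as an immediate consequence of associativity of $\varphi_D$ and coassociativity of $\varrho_D$. Your reductions, e.g. $\omega_D^a\co\omega_D^a=\varphi_D\co(D\ot\varphi_D)\co(\eta_D\ot\eta_D\ot D)$, are correct, and the factorization is legitimate and not circular: those (co)associativity statements are (\ref{newphiphi}) and (\ref{newrhorho}) of Proposition \ref{pro-adjoint}, whose proof in the paper does not use Proposition \ref{adjoint-act-coact} --- indeed the paper remarks that it is exactly the idempotency computation with the morphism $\eta_D\ot\eta_D\ot D$ removed.

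Two caveats. First, the substantive one: by routing through (co)associativity you have not avoided the one nontrivial diagram chase, only relocated it --- and then you assert it rather than prove it, deferring to \cite{Accadj}. The paper cites \cite{Accadj} only for the \emph{failure} of the unit and counit laws; the associativity of $\varphi_D$ is established in this paper itself, and it constitutes the entire mathematical content of the idempotency claim. So as written your proposal outsources its core; to be self-contained you must carry out the chase with precisely the tools you list ((b3), (b4), (\ref{ant-mu}), (\ref{ant-delta}) and the Yang-Baxter equation), which does succeed. Second, a small slip in the $\omega_D^c$ derivation: the mirror of (\ref{deltaPiL}) you need carries $\Pi_D^L$, not $\Pi_D^R$. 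The identity $\mu_D\co(D\ot\Pi_D^L)=((\varepsilon_D\co\mu_D)\ot D)\co(D\ot t_{D,D})\co(\delta_D\ot D)$ (again from [\cite{IND}, Proposition 2.12]), applied after rewriting $(\delta_D\ot\lambda_D)\co\delta_D=(D\ot D\ot\lambda_D)\co(\delta_D\ot D)\co\delta_D$, yields $\omega_D^c=\mu_D\co(D\ot(\Pi_D^L\co\lambda_D))\co\delta_D$ in one step, with no need to push $\lambda_D$ through $t_{D,D}$ via (\ref{newlambdarM}) nor to invoke (\ref{Pielambda}); your $\Pi_D^R$-detour, as sketched, does not visibly land on the stated target without further massaging.
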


\begin{proof} We prove the idempotent condition for $\omega_{D}^{a}$. The proof for $\omega_{D}^{c}$ is analogous and we leave the details to the reader.
\begin{itemize}
\item[ ]$\hspace{0.38cm} \omega_{D}^{a}\circ \omega_{D}^{a} $

\item[ ]$=\mu_{D}\circ (\mu_{D}\ot (\mu_{D}\circ (\lambda_{D}\otimes \lambda_{D})\circ t_{D,D}))\circ (\mu_{D}\ot t_{D,D}\ot D)\circ (D\ot t_{D,D}\ot t_{D,D})$
\item[]$\hspace{0.38cm}\circ (\delta_{D}\ot \delta_{D}\ot D)\circ (\eta_{D}\ot \eta_{D}\ot D) $

\item[ ]$=\mu_{D}\circ (\mu_{D}\ot \lambda_{D})\circ (D\ot t_{D,D})\circ (((\mu_{D}\ot \mu_{D})\circ (D\ot t_{D,D}\ot D)\circ (\delta_{D}\ot \delta_{D}))\ot D)  $
\item[]$\hspace{0.38cm}\circ(\eta_{D}\ot \eta_{D}\ot D)  $

\item[ ]$=\mu_{D}\circ (\mu_{D}\ot \lambda_{D})\circ (D\ot t_{D,D})\circ ((\delta_{D}\circ \mu_{D})\ot D)\circ (\eta_{D}\ot \eta_{D}\ot D)  $

\item[ ]$=\omega_{D}^{a}. $

\end{itemize}
The first equality follows by (b3-2)  and  associativity of
$\mu_{D}$. The second one is a consequence of [\cite{IND}, Proposition
2.20] and (b3-1). Finally, the third one follows
by (b4)  and the fourth one by the unit condition for $\mu_{D}$.

The equalities
\[\omega_{D}^{a}=\mu_{D}\circ (D\ot (\lambda_{D}\circ \Pi_{D}^{L}))\circ \delta_{D}\]
and
\[\omega_{D}^{c}=\mu_{D}\circ (D\ot (\Pi_{D}^{L}\circ \lambda_{D}))\circ \delta_{D}\]
follow from [\cite{IND}, Proposition 2.12].
\end{proof}

\begin{examples}
\begin{itemize}

\item[i)] Let $D=RG$ be the groupoid algebra considered in (1) of \ref{Yang-Baxter-examples}. Then the morphisms defined in the previous Proposition
are:
\[\omega_{RG}^{a}(\sigma)=\sigma \circ id_{t(\sigma)}= \left\{
\begin{array}{l} \sigma \; {\rm if}\; t(\sigma)=s(\sigma)\\ 0 \; {\rm if}\; t(\sigma)\neq s(\sigma)\end{array} \right.\]
\[\omega_{RG}^{c}(\sigma)=\sigma \circ id_{s(\sigma)}=\sigma.\]

In the particular case of the groupoid algebra on $n$-objects with
one invertible arrow between each ordered pair of objects, we obtain
that $RG$ is isomorphic to the $n\times n$ matrix $RG=M_{n}(R)$. The
weak Hopf algebra $H$ has the following structure. If $E_{ij}$
denote the $(i, j)$- matrix unit, $RG$ has counit given by
$\varepsilon_{RG}(E_{ij}) = 1$, comultiplication by
$\delta_{RG}(E_{ij}) = E_{ij}\otimes E_{ij}$ and antipode given by
$\lambda_{RG}(E_{ij}) = E_{ji}$ for each $i, j = 1, \cdots ,n$. In
this case, $\Pi_{RG}^{L}(E_{ij})=E_{ii}$,
$\Pi_{RG}^{R}(E_{ij})=E_{jj}$ and then $RG_{L}=RG_{R}$ is the
submodule of the diagonal matrices. Therefore, the image of
$\omega_{RG}^{a}$ is $RG_{L}$.

\item[ii)] In a general setting, if $D$ is a commutative ($\mu_{D}=\mu_{D}\circ t_{D,D}$) WBHA, then
$\Pi_{D}^{L}=\overline{\Pi}_{D}^{R},$ so by (\ref{Pielambda2}) and (\ref{Piconvolucion}), we have
\begin{itemize}
\item[ ]$\hspace{0.38cm} \omega_{D}^{a}$
\item[ ]$=\mu_{D}\circ (D\ot \Pi_{D}^{R})\co \delta_D$
\item[ ]$=id_D\wedge\Pi_{D}^{R}$
\item[ ]$=id_{D},$

\end{itemize}

and $\omega_{D}^{c}=id_D\wedge \Pi_{D}^{L}.$

In a similar way, if $D$ is a cocommutative ($\delta_{D}=
t_{D,D}\circ \delta_{D}$) WBHA, then
$\Pi_{D}^{L}=\overline{\Pi}_{D}^{L},$  and
$\omega_{D}^{a}=id_D\wedge \Pi_{D}^{L}$ and
$\omega_{D}^{c}=id_{D}$.

\item[iii)] In this example we assume that $\CC$ is braided with braiding $c$.
Let $A=(A,\eta_{A},\mu_{A},\varepsilon_{A},\delta_{A})$ be a separable Frobenius algebra in ${\mathcal C}$. Using the separability condition $\mu_{A}\circ \delta_{A}=id_{A}$, we get that $A\otimes A$ is a weak Hopf algebra in ${\mathcal C}$ (see \cite{PS}) where
    \[\eta_{A\ot A}=\eta_{A}\ot \eta_{A},\;\; \mu_{A\ot A}=((\mu_{A}\circ c_{A,A})\ot \mu_{A})\circ (A\ot c_{A,A}\ot A),\]
  \[\varepsilon_{A\ot A}=\varepsilon_{A}\circ \mu_{A},\;\; \delta_{A\ot A}=A\ot (\delta_{A}\circ \eta_{A})\ot A,\]
\[\lambda_{A\ot A}=(\varepsilon_{A\ot A}\ot A\ot A)\circ (A\ot c_{A,A}\ot A)\circ ((\delta_{A}\circ \eta_{A})\ot  c_{A,A}).\]
For $A\ot A$ we have
\[\Pi_{A\ot A}^{L}=((((\varepsilon_{A}\circ \mu_{A})\ot A)\circ (A\ot c_{A,A})\circ (\delta_{A}\ot A))\ot \eta_{A}),\]
and then
\[\omega_{A\ot A}^{a}=(A\ot (\mu_{A}\circ (\mu_{A}\ot A)\circ (A\ot c_{A,A})\circ ((\delta_{A}\circ \eta_{A})\ot A))),\]
\[\omega_{A\ot A}^{c}=  ((\varepsilon_{A}\circ \mu_{A})\ot (\mu_{A}\circ c_{A,A})\ot
A)\circ (A\ot c_{A,A}^{-1}\ot c_{A,A})\circ (c_{A,A}^{-1}\ot
c_{A,A}\ot A)\]
\[\circ(A\ot (\delta_{A}\circ \eta_{A})\ot (c_{A,A}^{-1}\circ \delta_{A})).\]

\item[iv)] Now we come back to section 3. Let  $D$ be a WBHA with invertible antipode and let $(B,f,g)$ be an object in ${\mathcal P}roj(D)$. Then the object
 $B_D$ defined in Proposition \ref{estruturasB_D} is a WBHA [\cite{Proj}, Theorem 3.4] with associated weak Yang-Baxter operator
\[t_{B_{D},B_{D}}=(\varphi_{B_{D}}\ot B_{D})\circ (D\ot r_{B_{D},B_{D}})\circ  (\rho_{B_D}\ot B_{D}): B_{D}\ot B_{D}\rightarrow B_{D}\ot B_{D},\]
where $r_{B_{D},B_{D}}=(p_D^B\ot p_D^B)\circ t_{B,B}\circ (i_D^B\ot
i_D^B)$, and antipode
\[\lambda_{B_{D}}=p_D^B\circ \mu_{B}\circ ((f\circ g)\ot \lambda_{B})\circ \delta_{B}\circ i_D^B.\]
Using the equalities proved in [\cite{Proj}, Lemma 2.16],
$\varepsilon_{B}\circ q_{D}^{B}=\varepsilon_{B}$ and
$\eta_{B}=q_{D}^{B}\circ \eta_{B}$ we obtain

\[\Pi_{B_{D}}^{L}=((\varepsilon_{B}\circ \mu_{B})\ot p_D^B)\circ (B\ot t_{B,B})\circ ((\delta_{B}\circ \eta_{B})\ot i_D^B=p_D^B\circ \Pi_{B}^{L} \circ i_D^B.\]

Then,
\[\omega_{B_{D}}^{a}=p_D^B\circ (q_{D}^{B}\wedge (i_D^B\circ \lambda_{B_{D}} \circ p_D^B\circ \Pi_{B}^{L}))\circ i_D^B.\]

Similarly,
\[\omega_{B_{D}}^{c}=p_D^B\circ (q_{D}^{B}\wedge (\Pi_{B}^{L}\circ i_D^B\circ \lambda_{B_{D}} \circ p_D^B))\circ i_D^B.\]

\end{itemize}
\end{examples}

\begin{proposition}
\label{pro-adjoint111} Let $D$ be a WBHA  in ${\mathcal C}$. Let
$\varphi_{D}:D\ot D\rightarrow D$ and  $\varrho_{D}:D\rightarrow
D\ot D$ be the morphisms defined in Proposition
\ref{adjoint-act-coact}. Then the following assertions hold:

\begin{equation}
\label{newphitesquerda}
t_{D,D}\co (\varphi_{D}\ot D)=(D\ot \varphi_{D})\co
(t_{D,D}\ot D)\co (D\ot t_{D,D}),
\end{equation}

\begin{equation}
\label{newphitdereita}
t_{D,D}\co (D\ot \varphi_{D})=(\varphi_{D}\ot D)\co (D\ot
t_{D,D})\co (t_{D,D}\ot D),
\end{equation}

\begin{equation}
\label{newrhotesquerda}
(\varrho_{D}\ot D)\co t_{D,D}=(D\ot t_{D,D})\co
(t_{D,D}\ot D)\co (D\ot \varrho_{D}),
\end{equation}

\begin{equation}
\label{newrhotdereita}
(D\ot \varrho_{D})\co t_{D,D}=(t_{D,D}\ot D)\co (D\ot
t_{D,D})\co (\varrho_{D}\ot D),
\end{equation}

\begin{equation}
\label{newphitprimaesquerda}
t^{\prime}_{D,D}\co (\varphi_{D}\ot D)=(D\ot \varphi_{D})\co
(t^{\prime}_{D,D}\ot D)\co (D\ot t^{\prime}_{D,D}),
\end{equation}

\begin{equation}
\label{newphitprimadereita}
t^{\prime}_{D,D}\co (D\ot \varphi_{D})=(\varphi_{D}\ot D)\co (D\ot
t^{\prime}_{D,D})\co (t^{\prime}_{D,D}\ot D),
\end{equation}

\begin{equation}
\label{newrhotprimaesquerda}
(\varrho_{D}\ot D)\co t^{\prime}_{D,D}=(D\ot t^{\prime}_{D,D})\co
(t^{\prime}_{D,D}\ot D)\co (D\ot \varrho_{D}),
\end{equation}

\begin{equation}
\label{newrhotprimadereita}
 (D\ot \varrho_{D})\co t^{\prime}_{D,D}=(t^{\prime}_{D,D}\ot D)\co (D\ot
t^{\prime}_{D,D})\co (\varrho_{D}\ot D).
\end{equation}

\end{proposition}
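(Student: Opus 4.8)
The eight identities assert that the adjoint action $\varphi_D$ and the adjoint coaction $\varrho_D$ obey exactly the same commutation rules with $t_{D,D}$ (and with $t^{\prime}_{D,D}$) that the product $\mu_D$ and the coproduct $\delta_D$ obey through the conditions (b3-1)--(b3-4) and the identities (\ref{b1})--(\ref{b4}). Conceptually this is forced: in the braided case $t_{D,D}=c_{D,D}$, and (\ref{newphitesquerda}) is simply naturality of $c$ applied to the pair $(\varphi_D, id_D)$ together with the hexagon decomposition $c_{D\ot D,D}=(c_{D,D}\ot D)\co(D\ot c_{D,D})$; the other seven are the analogous naturality statements. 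In the general weak setting $t_{D,D}$ is not a genuine braiding, so the plan is to reproduce this naturality ``by hand'': expand $\varphi_D$ (resp. $\varrho_D$) into its defining composite and slide $t_{D,D}$ (resp. $t^{\prime}_{D,D}$) across each structural block, using for each block the commutation relation it already satisfies.

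First I would treat (\ref{newphitesquerda}) in full, the others being variations. Writing $\varphi_D\ot D=(\mu_D\ot D)\co(\mu_D\ot\lambda_D\ot D)\co(D\ot t_{D,D}\ot D)\co(\delta_D\ot D\ot D)$, one pushes the outer $t_{D,D}$ of the left-hand side to the right. Passing it over the outer $\mu_D$ uses (b3-1); passing the resulting operators over the inner $\mu_D$ again uses (b3-1), and across $\lambda_D$ the anti-multiplicativity (\ref{ant-mu}); passing over $\delta_D$ uses (b3-3). Each such move is a local rewrite, and the braids accumulated along the way are reorganised by repeated use of the Yang-Baxter equation (a1) and the defining properties of the weak Yang-Baxter operator, such as (a2-4). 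Collecting terms one recognises the right-hand side $(D\ot\varphi_D)\co(t_{D,D}\ot D)\co(D\ot t_{D,D})$.

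The identity (\ref{newphitdereita}) follows by the same scheme with (b3-2) in place of (b3-1). The two coaction identities (\ref{newrhotesquerda}) and (\ref{newrhotdereita}) are obtained by the formally dual computation: reverse every diagram, interchange $\mu_D\leftrightarrow\delta_D$ and $\eta_D\leftrightarrow\varepsilon_D$, and use (b3-3), (b3-4) and the anti-comultiplicativity (\ref{ant-delta}) of $\lambda_D$. Finally, the four primed versions (\ref{newphitprimaesquerda})--(\ref{newrhotprimadereita}) are proved with identical bookkeeping, now sliding $t^{\prime}_{D,D}$ and invoking (\ref{b1})--(\ref{b4}) for its commutation with $\mu_D$ and $\delta_D$; since $\varphi_D$ and $\varrho_D$ are still built from the unprimed $t_{D,D}$, the crossings between the sliding $t^{\prime}_{D,D}$ and the internal $t_{D,D}$ are untangled by the mixed relations (\ref{tprim1})--(\ref{tprim4}).

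The only genuinely delicate step is crossing the antipode. Because $\lambda_D$ is merely anti-multiplicative and anti-comultiplicative, reorganising $\lambda_D$ together with its adjacent $\mu_D$ (resp. $\delta_D$) through (\ref{ant-mu}) (resp. (\ref{ant-delta})) forces an additional occurrence of $t_{D,D}$, and matching this extra crossing against the $t_{D,D}$ already present inside $\varphi_D$ (resp. $\varrho_D$) is exactly where the Yang-Baxter equation (a1) is needed; in the primed cases this matching instead rests on (\ref{tprim1})--(\ref{tprim4}). Once this crossing is handled the computation closes routinely, so I expect the antipode move, together with the associated braid bookkeeping, to be the main obstacle.
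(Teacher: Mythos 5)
Your overall plan --- expand $\varphi_D$ and $\varrho_D$ into their defining composites and slide $t_{D,D}$ (resp.\ $t^{\prime}_{D,D}$) across each structural block using (b3-1)--(b3-4), (\ref{b1})--(\ref{b4}) and the mixed relations (\ref{tprim1})--(\ref{tprim4}) --- is exactly the strategy of the paper, which carries out (\ref{newphitprimaesquerda}) in two lines, citing (\ref{b1}), a lemma from \cite{IND}, and (\ref{tprim4}), and declares the other seven identities analogous. However, the step you yourself single out as delicate, the crossing of the antipode, is handled incorrectly, and this is a genuine gap. In $\varphi_D=\mu_D\co(\mu_D\ot\lambda_D)\co(D\ot t_{D,D})\co(\delta_D\ot D)$ the antipode sits on a \emph{single} tensor strand, so what the sliding argument needs is the one-sided commutation $t_{D,D}\co(\lambda_D\ot D)=(D\ot\lambda_D)\co t_{D,D}$, $t_{D,D}\co(D\ot\lambda_D)=(\lambda_D\ot D)\co t_{D,D}$, together with the primed analogues. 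The identities (\ref{ant-mu}) and (\ref{ant-delta}) that you propose to use instead rewrite the composites $\lambda_D\co\mu_D$ and $\delta_D\co\lambda_D$; no such composite occurs in $\varphi_D$ or $\varrho_D$, nor is one produced by passing $t_{D,D}$ over the two multiplications via (b3-1)/(b3-2) --- after those moves you are left with a bare crossing of $t_{D,D}$ (or $t^{\prime}_{D,D}$) against a lone $\lambda_D$ leg. Moreover, (\ref{ant-mu}) and (\ref{ant-delta}) always place $\lambda_D$ on \emph{both} strands of the $t_{D,D}$ they create, so no amount of Yang--Baxter bookkeeping with (a1), (a2-4) or (\ref{tprim1})--(\ref{tprim4}) can transform them into the required one-strand commutation.

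The reason your conceptual picture looks plausible is that in a genuinely braided category the one-sided commutation is just naturality of the braiding applied to $\lambda_D$ and comes for free; in the weak setting $t_{D,D}$ has no naturality, and the commutation of the antipode with $t_{D,D}$, $t^{\prime}_{D,D}$ (and $\nabla_{D,D}$) is a separate, nontrivial result --- it is precisely what the paper imports as [\cite{IND}, Proposition 2.12] in its proof of (\ref{newphitprimaesquerda}). Once that lemma is added to your toolkit, the rest of your bookkeeping does close along the paper's lines: (\ref{b1})--(\ref{b4}) and (b3-1)--(b3-4) to pass over $\mu_D$ and $\delta_D$, and then the mixed relation (\ref{tprim4}) (rather than (a1), in the primed cases) to untangle the sliding $t^{\prime}_{D,D}$ against the $t_{D,D}$ internal to $\varphi_D$ and $\varrho_D$.
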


\begin{proof} We write by way of example the proof for (\ref{newphitprimaesquerda}); the others being analogous.
\begin{itemize}
\item[ ] $\hspace{0.38cm} t^{\prime}_{D,D}\co (\varphi_{D}\ot D)$
\item[ ] $=(D\ot \mu_{D})\circ (t^{\prime}_{D,D}\ot \mu_{D})\circ (D\ot t^{\prime}_{D,D}\ot \lambda_{D})\circ (D\ot D \ot t^{\prime}_{D,D})\circ (D\ot t_{D,D}\ot D)$
\item[]$\hspace{0.38cm}\circ (\delta_{D}\ot D\ot D)$
\item[ ] $=(D\ot \varphi_{D})\co
(t^{\prime}_{D,D}\ot D)\co (D\ot t^{\prime}_{D,D}).$
\end{itemize}
The first equality follows by (\ref{b1}) and by [\cite{IND}, Proposition 2.12]. The second one is a consequence of (\ref{tprim4}).
 \end{proof}

\begin{proposition}
\label{pro-adjoint} Let $D$ be a WBHA  in ${\mathcal C}$. Let
$\varphi_{D}:D\ot D\rightarrow D$ and  $\varrho_{D}:D\rightarrow
D\ot D$ be the morphisms defined in Proposition
\ref{adjoint-act-coact}. Then the following assertions hold:

\begin{equation}
\label{newphiphi}
\varphi_{D}\circ (D\ot \varphi_{D})=\varphi_{D}\circ (\mu_{D}\ot D),
\end{equation}

\begin{equation}
\label{newphidelta}
\delta_{D}\circ \varphi_{D}
=(\mu_D\ot D)\co(D\ot t_{D,D})\co(((\mu_D\ot\varphi_D)\co(D\ot t_{D,D}\ot
D)\co(\delta_D\ot\delta_{D}))\ot\lambda_D)\co(D\ot t_{D,D})\co(\delta_D\ot D),
\end{equation}

\begin{equation}
\label{newrhorho}
(D\ot \varrho_{D})\circ \varrho_{D}=(\delta_{D}\ot D)\circ \varrho_{D},
\end{equation}

\begin{equation}
\label{newrhomu}
\varrho_{D}\circ \mu_{D}
=(\mu_D\ot D)\co(D\ot t_{D,D})\co(((\mu_D\ot\mu_D)\co(D\ot t_{D,D}\ot
D)\co(\delta_D\ot\varrho_{D}))\ot\lambda_D)\co(D\ot t_{D,D})\co(\delta_D\ot D).
\end{equation}

\end{proposition}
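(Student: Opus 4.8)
The plan is to establish the four identities one at a time by direct manipulation of the defining expressions for $\varphi_D$ and $\varrho_D$, using only the axioms of a WBHA together with the auxiliary relations collected in Section~1. Throughout, the workhorses are the bialgebra compatibility (b4) for $\delta_D\co\mu_D$, the associativity and coassociativity of $\mu_D$ and $\delta_D$, the braid axioms (b3-1)--(b3-4) and the Yang--Baxter equation (a1) for $t_{D,D}$ (together with the companion relations \eqref{tprim1}--\eqref{tprim2} and \eqref{b1}--\eqref{b4} for $t^\prime_{D,D}$), the antimultiplicativity \eqref{ant-mu} and anticomultiplicativity \eqref{ant-delta} of $\lambda_D$, and the compatibilities of $\varphi_D$ and $\varrho_D$ with $t_{D,D}$ and $t^\prime_{D,D}$ just proved in Proposition~\ref{pro-adjoint111}. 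I would also note at the outset that the two ``coaction'' identities \eqref{newrhorho} and \eqref{newrhomu} are the formal duals of the two ``action'' identities \eqref{newphiphi} and \eqref{newphidelta}: reversing all arrows, interchanging $\mu_D\leftrightarrow\delta_D$ and $\eta_D\leftrightarrow\varepsilon_D$, keeping $\lambda_D$, and replacing each axiom by its dual (which is again an axiom of a WBHA) carries the definition of $\varphi_D$ to that of $\varrho_D$ and transforms \eqref{newphiphi}, \eqref{newphidelta} into \eqref{newrhorho}, \eqref{newrhomu}. Hence it suffices to prove \eqref{newphiphi} and \eqref{newphidelta} in detail and obtain the remaining two by the same computation read upside down.

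For \eqref{newphiphi} I would first unfold $\varphi_D\co(D\ot\varphi_D)$, push the inner $\delta_D$ outward by coassociativity, and use (b4) to rewrite the coproduct of the product that appears after the first multiplication. The crossing $t_{D,D}$ is then moved across $\mu_D$ by (b3-1)/(b3-2) and across $\delta_D$ by (b3-3)/(b3-4), and the two antipode factors are merged into one by \eqref{ant-mu}; a final application of associativity of $\mu_D$ collapses the expression to $\varphi_D\co(\mu_D\ot D)$. This identity is essentially the statement that the adjoint action is an associative action of the algebra $D$ on itself, and it is the cleanest of the four.

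The genuinely laborious identities are \eqref{newphidelta} and \eqref{newrhomu}. For \eqref{newphidelta} I would expand the left-hand side $\delta_D\co\varphi_D$ by applying (b4) to each of the two nested occurrences of $\delta_D\co\mu_D$ inside $\varphi_D$, and \eqref{ant-delta} to the term $\delta_D\co\lambda_D$; this produces a diagram with several $D$-strands and a number of $t_{D,D}$- and $t^\prime_{D,D}$-crossings. The core of the argument is then a careful reorganization of these crossings, by repeated use of the Yang--Baxter equation and of the mixed relations \eqref{tprim1}--\eqref{tprim2}, so as to isolate the subexpression $(\mu_D\ot\varphi_D)\co(D\ot t_{D,D}\ot D)\co(\delta_D\ot\delta_D)$ bracketed on the right-hand side, while the compatibilities of Proposition~\ref{pro-adjoint111} are used to commute $\varphi_D$ past the remaining braid. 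The identity \eqref{newrhomu} then follows dually.

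The main obstacle is precisely this bookkeeping in \eqref{newphidelta} (and its dual \eqref{newrhomu}): since the definition of $\varphi_D$ already contains a crossing and an antipode, applying $\delta_D$ multiplies the number of strands and forces one to track which crossings are instances of $t_{D,D}$ and which are instances of $t^\prime_{D,D}$. As the paper stresses after \eqref{rara2}, these cannot be interchanged, so every hexagon-type move must be applied in the correct orientation; matching the reorganized left-hand side with the prescribed bracketed right-hand side is the step where an error is most likely and where the detailed calculation must be carried out with care.
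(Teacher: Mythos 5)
Your proposal is correct and follows essentially the paper's own strategy: \eqref{newphiphi} is obtained exactly as in the idempotency computation for $\omega_{D}^{a}$ (with the factor $\eta_{D}\ot\eta_{D}\ot D$ removed), \eqref{newphidelta} by combining (b4) with the anticomultiplicativity \eqref{ant-delta}, and \eqref{newrhorho}, \eqref{newrhomu} by the dual (``upside down'') computation, which is legitimate since the WBHA axioms are self-dual as a list. The one remark worth making is that your anticipated difficulty is overestimated: the paper proves \eqref{newphidelta} by collapsing the right-hand side to $\delta_{D}\co\varphi_{D}$ in three lines, using only (b4) twice and \eqref{ant-delta} together with $t_{D,D}\co(\lambda_{D}\ot\lambda_{D})=(\lambda_{D}\ot\lambda_{D})\co t_{D,D}$, so the mixed $t_{D,D}$/$t^{\prime}_{D,D}$ bookkeeping via \eqref{tprim1}--\eqref{tprim2} and the appeal to Proposition \ref{pro-adjoint111} that you plan for never arise --- no $t^{\prime}_{D,D}$ occurs anywhere in that verification.
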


\begin{proof}

The proof for (\ref{newphiphi}) is similar to the one used to prove
the idempotent character of $\omega_{D}^{a}$ removing in the
equalities the morphism $\eta_{D}\ot \eta_{D}\ot D$.

To see (\ref{newphidelta}), using that $D$ is a WBHA and (\ref{ant-delta}), we have
\begin{itemize}
\item[ ]$\hspace{0.38cm} (\mu_D\ot D)\co(D\ot t_{D,D})\co(((\mu_D\ot\varphi_D)\co(D\ot t_{D,D}\ot
D)\co(\delta_D\ot\delta_{D}))\ot \lambda_D)\co(D\ot t_{D,D})$
\item[]$\hspace{0.38cm}\co(\delta_D\ot D) $

\item[ ]$=(\mu_{D}\ot \mu_{D})\circ (D\ot t_{D,D}\ot D)\circ (((\mu_{D}\ot \mu_{D})\circ (D\ot t_{D,D}\ot D)\circ (\delta_D\ot \delta_D))$
\item[ ]$\hspace{0.38cm}\ot(t_{D,D}\circ (\lambda_D\ot \lambda_D)\circ \delta_D))\circ (D\ot t_{D,D})\circ (\delta_D\ot D)$

\item[ ]$=(\mu_{D}\ot \mu_{D})\circ (D\ot t_{D,D}\ot D)\circ ((\delta_{D}\circ \mu_{D})\ot (\delta_{D}\circ \lambda_{D}))\circ (D\ot t_{D,D})\circ (\delta_{D}\ot D) $

\item[ ]$= \delta_{D}\circ \varphi_{D}.$

\end{itemize}
The proofs for (\ref{newrhorho}) and (\ref{newrhomu})  are analogous and we leave the details to the reader.
\end{proof}

\begin{proposition}
\label{pro-adjoint2} Let $D$ be a WBHA  in ${\mathcal C}$. Let
$\omega_{D}^a$, $\omega_{D}^c$ be the idempotent morphisms defined
in Proposition \ref{adjoint-act-coact}. Then the following
assertions hold:

\begin{equation}
\label{newphiomega}
\varphi_{D}\circ (D\ot \omega_{D}^a)=\varphi_{D},
\end{equation}

\begin{equation}
\label{newdeltaomega}
(D\ot \omega_{D}^a)\circ \delta_{D}\circ \omega_{D}^a=\delta_{D}\circ \omega_{D}^a,
\end{equation}

\begin{equation}
\label{newrhoomega}
\varrho_{D}\circ (D\ot \omega_{D}^c)=\varrho_{D},
\end{equation}

\begin{equation}
\label{newmuomega}
\omega_{D}^c\circ \mu_{D}\circ (D\ot \omega_{D}^c)=\omega_{D}^c\circ \mu_{D}.
\end{equation}

\end{proposition}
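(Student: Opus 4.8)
The plan is to deduce each of the four identities from the structural relations established in Proposition \ref{pro-adjoint}, together with the (co)unit and antipode axioms. The four statements split naturally into two immediate ones, (\ref{newphiomega}) and (\ref{newrhoomega}), and two that are mutually dual and carry the real computational weight, (\ref{newdeltaomega}) and (\ref{newmuomega}).

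First I would dispatch (\ref{newphiomega}). Since $\omega_D^a=\varphi_D\co(\eta_D\ot D)$ one has $\varphi_D\co(D\ot\omega_D^a)=\varphi_D\co(D\ot\varphi_D)\co(D\ot\eta_D\ot D)$; applying (\ref{newphiphi}) to rewrite $\varphi_D\co(D\ot\varphi_D)$ as $\varphi_D\co(\mu_D\ot D)$ and then collapsing $\mu_D\co(D\ot\eta_D)=id_D$ by the unit axiom yields $\varphi_D$. The identity (\ref{newrhoomega}) is the formal dual, read for the composite that typechecks: writing $\omega_D^c=(\varepsilon_D\ot D)\co\varrho_D$ we get $(D\ot\omega_D^c)\co\varrho_D=(D\ot\varepsilon_D\ot D)\co(D\ot\varrho_D)\co\varrho_D$, we replace $(D\ot\varrho_D)\co\varrho_D$ by $(\delta_D\ot D)\co\varrho_D$ using (\ref{newrhorho}), and we finish with the counit axiom $(D\ot\varepsilon_D)\co\delta_D=id_D$.

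The identity (\ref{newdeltaomega}) is where the work lies; its content is that $\delta_D\co\omega_D^a$ takes values in $D\ot\mathrm{Im}(\omega_D^a)$, so that applying $\omega_D^a$ to the second tensorand is redundant. I would start from the convolution description $\omega_D^a=\mu_D\co(D\ot(\lambda_D\co\Pi_D^L))\co\delta_D$ of Proposition \ref{adjoint-act-coact}, expand $\delta_D\co\mu_D$ by (b4), and rewrite $\delta_D\co\lambda_D$ by the anticomultiplicativity (\ref{ant-delta}). The crux is then to control $\delta_D\co\Pi_D^L$: using the behaviour of the target morphism under the coproduct (the identity (\ref{deltaPiL}) and the $\Pi_D^L$-relations recorded above and in [\cite{IND}, Section 2]) one pushes the surviving right-hand factor into the image of $\omega_D^a$, after which the spurious $\omega_D^a$ is absorbed by its idempotency. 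Equivalently, (\ref{newphidelta}) exhibits $(D,\varphi_D,\delta_D)$ as satisfying the (yd3)-type relation with weak operator $t_{D,D}$, and I would manipulate $\delta_D\co\omega_D^a=\delta_D\co\varphi_D\co(\eta_D\ot D)$ through that relation, cancelling the extra copy of $\omega_D^a$ with the help of (\ref{newphiomega}). The main obstacle is exactly this bookkeeping: unlike the first two identities there is no one-line cancellation, and one must track carefully how $\lambda_D$, $\Pi_D^L$ and $t_{D,D}$ recombine so that the right-hand factor is already $\omega_D^a$-invariant.

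Finally, (\ref{newmuomega}) is obtained as the exact dual of (\ref{newdeltaomega}): one replaces $\varphi_D$, $\delta_D$, $\lambda_D\co\Pi_D^L$ and (\ref{ant-delta}) by $\mu_D$, $\varrho_D$, $\Pi_D^L\co\lambda_D$ and the antimultiplicativity (\ref{ant-mu}), reads (\ref{newrhomu}) in place of (\ref{newphidelta}), and runs the same argument with every arrow reversed and with products and coproducts, units and counits, interchanged. Once (\ref{newdeltaomega}) is written out in full, I expect (\ref{newmuomega}) to require no essentially new idea.
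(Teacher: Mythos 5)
Your proposal is correct and follows essentially the paper's own route: you dispatch (\ref{newphiomega}) from (\ref{newphiphi}) and the unit axiom and (\ref{newrhoomega}) from (\ref{newrhorho}) and the counit axiom (rightly reading the latter as $(D\ot \omega_{D}^c)\co \varrho_{D}=\varrho_{D}$, the composite that typechecks), and your ``equivalently'' argument for (\ref{newdeltaomega}) --- compose (\ref{newphidelta}) with $\eta_{D}\ot D$ and absorb the extra $(D\ot \omega_{D}^a)$ --- together with the dual treatment of (\ref{newmuomega}) via (\ref{newrhomu}) and (\ref{ant-mu}), is exactly what the paper does. The only caveat, on which the paper is equally terse, is that the absorption step rests on the identity $(D\ot \omega_{D}^a)\co t_{D,D}\co (\varphi_{D}\ot D)=t_{D,D}\co (\varphi_{D}\ot D)$, which needs the $t_{D,D}$-compatibility (\ref{newphitesquerda}) of Proposition \ref{pro-adjoint111} in addition to the $\omega_{D}^a$-invariance of $\varphi_{D}$, not (\ref{newphiomega}) alone.
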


\begin{proof}
As in the previous results we prove (\ref{newphiomega}) and (\ref{newdeltaomega}) leaving  the other equalities to the reader. The proof of (\ref{newphiomega}) is a direct consequence of (\ref{newphiphi}). To check (\ref{newdeltaomega}), first note that by (\ref{newphiomega}) the equality
\begin{equation}
(D\otimes \omega_{D}^a)\circ t_{D,D}\circ (\varphi_{D}\otimes
D)=t_{D,D}\circ (\varphi_{D}\otimes D)
\end{equation}
holds. Then, composing in (\ref{newphidelta}) with
$\eta_{D}\ot D$ and $D\otimes \omega_{D}^a$ we have
\begin{itemize}
\item[ ] $\hspace{0.38cm} (D\ot \omega_{D}^a)\circ \delta_{D}\circ \omega_{D}^a$
\item[ ] $=(D\ot \omega_{D}^a)\circ (\mu_D\ot D)\co(D\ot t_{D,D})\co(((\mu_D\ot\varphi_D)\co(D\ot t_{D,D}\ot
D)\co(\delta_D\ot\delta_{D}))\ot\lambda_D)$
\item[]$\hspace{0.38cm} \co(D\ot t_{D,D})\co((\delta_D\circ \eta_{D})\ot D)$
\item[]$=(\mu_D\ot D)\co(D\ot t_{D,D})\co(((\mu_D\ot\varphi_D)\co(D\ot t_{D,D}\ot
D)\co(\delta_D\ot\delta_{D}))\ot\lambda_D)\co(D\ot t_{D,D})$
\item[]$\hspace{0.38cm} \co((\delta_D\circ \eta_{D})\ot D)$
\item[]$=\delta_{D}\circ \omega_{D}^a.$
\end{itemize}
\end{proof}

\begin{notation}\label{notac}
Let $D$ be a WBHA  in ${\mathcal C}$. Let $\omega_{D}^a$,
$\omega_{D}^c$ be the idempotent morphisms defined in Proposition
\ref{adjoint-act-coact}. For $x\in\{a,c\}$, with $\Omega^{x}(D)$,
$p_{D}^{x}:D\rightarrow \Omega^{x}(D)$,
$i_{D}^x:\Omega^{x}(D)\rightarrow D$ we denote the object and the
morphisms such that $\omega_{D}^{x}=i_{D}^x\circ p_{D}^{x}$ and
$id_{\Omega^{x}(D)}=p_{D}^{x}\circ i_{D}{}^{x}.$
\end{notation}

\begin{proposition}
\label{mod-comod} Let $D$ be a WBHA  in ${\mathcal C}$. The
following assertions hold:
\begin{itemize}
\item[(i)] The object $\Omega^{a}(D)$ is a left $D$-module with action
\[\varphi_{\Omega^{a}(D)}=p_{D}^{a}\circ \varphi_{D}\circ (D\ot i_{D}^a):D\ot \Omega^{a}(D)\rightarrow \Omega^{a}(D)\]
and a left $D$-comodule with coaction
\[\rho_{\Omega^{a}(D)}=(D\ot p_{D}^{a})\circ \delta_{D}\circ i_{D}^a:\Omega^{a}(D)\rightarrow D\ot \Omega^{a}(D).\]
\item[(ii)] The object $\Omega^{c}(D)$ is a left $D$-module with action
\[\psi_{\Omega^{c}(D)}=p_{D}{}^{c}\circ \mu_{D}\circ (D\ot i_{D}^c):D\ot \Omega^{c}(D)\rightarrow \Omega^{c}(D)\]
and a left $D$-comodule with coaction
\[\varrho_{\Omega^{c}(D)}=(D\ot p_{D}{}^{c})\circ \varrho_{D}\circ i_{D}^c:\Omega^{c}(D)\rightarrow D\ot \Omega^{c}(D).\]
\end{itemize}

\end{proposition}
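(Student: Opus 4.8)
The plan is to obtain each of the four (co)module structures by transporting structure along the splitting of the relevant idempotent, so that every axiom reduces either to a (co)unit axiom of $D$ or to one of the identities already isolated in Propositions \ref{pro-adjoint} and \ref{pro-adjoint2}. Throughout I would use only the splitting relations of Notation \ref{notac}, namely $\omega_D^x=i_D^x\co p_D^x$ and $p_D^x\co i_D^x=id_{\Omega^x(D)}$, together with their immediate consequences $p_D^x\co\omega_D^x=p_D^x$ and $\omega_D^x\co i_D^x=i_D^x$ for $x\in\{a,c\}$. No new algebraic identity is needed beyond those already established.

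For part (i), the unit axiom is immediate: $\varphi_{\Omega^a(D)}\co(\eta_D\ot\Omega^a(D))=p_D^a\co\varphi_D\co(\eta_D\ot i_D^a)=p_D^a\co\omega_D^a\co i_D^a=p_D^a\co i_D^a=id$. For associativity I would insert $i_D^a\co p_D^a=\omega_D^a$ at the inner tensor leg of $\varphi_{\Omega^a(D)}\co(D\ot\varphi_{\Omega^a(D)})$, absorb it via (\ref{newphiomega}) in the form $\varphi_D\co(D\ot\omega_D^a)=\varphi_D$, then apply (\ref{newphiphi}) to replace $\varphi_D\co(D\ot\varphi_D)$ by $\varphi_D\co(\mu_D\ot D)$; reattaching $p_D^a$ and $i_D^a$ gives $\varphi_{\Omega^a(D)}\co(\mu_D\ot\Omega^a(D))$. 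Dually, the counit axiom for $\rho_{\Omega^a(D)}$ follows from $(\varepsilon_D\ot D)\co\delta_D=id_D$, and coassociativity is obtained by first deriving, from (\ref{newdeltaomega}) and $\omega_D^a\co i_D^a=i_D^a$, the absorption identity $\delta_D\co i_D^a=(D\ot\omega_D^a)\co\delta_D\co i_D^a$; inserting $\omega_D^a=i_D^a\co p_D^a$ into $(D\ot\rho_{\Omega^a(D)})\co\rho_{\Omega^a(D)}$ and combining this identity with coassociativity of $\delta_D$ produces $(\delta_D\ot\Omega^a(D))\co\rho_{\Omega^a(D)}$.

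Part (ii) is entirely parallel under the algebra/coalgebra interchange, with $\mu_D$ in the role of $\varphi_D$ and $\varrho_D$ in the role of $\delta_D$. For the module $\psi_{\Omega^c(D)}$ the unit axiom uses $\mu_D\co(\eta_D\ot D)=id_D$, and associativity uses associativity of $\mu_D$ together with the consequence $p_D^c\co\mu_D\co(D\ot\omega_D^c)=p_D^c\co\mu_D$ of (\ref{newmuomega}) (obtained by precomposing with $p_D^c$ and using $p_D^c\co\omega_D^c=p_D^c$). For the comodule $\varrho_{\Omega^c(D)}$ the counit axiom uses the counit property of $\varrho_D$, while coassociativity uses the direct absorption $(D\ot\omega_D^c)\co\varrho_D=\varrho_D$ of (\ref{newrhoomega}) followed by the coassociativity (\ref{newrhorho}) of the adjoint coaction. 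Each computation is the mirror image of the corresponding one in part (i).

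The genuinely routine parts are the unit and counit axioms and the bookkeeping of the splitting morphisms; the single point requiring care is the (co)associativity step, where the idempotent sitting on the inner tensor leg must be shown to be absorbed. For the module of $\Omega^a(D)$ and the comodule of $\Omega^c(D)$ this absorption is direct, being exactly (\ref{newphiomega}) and (\ref{newrhoomega}); for the comodule of $\Omega^a(D)$ and the module of $\Omega^c(D)$ one has at hand only the \emph{twisted} idempotent identities (\ref{newdeltaomega}) and (\ref{newmuomega}), and the required absorption must be extracted from them using $\omega_D^x\co i_D^x=i_D^x$ and $p_D^x\co\omega_D^x=p_D^x$. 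This is precisely why those twisted identities, rather than mere idempotency of $\omega_D^a$ and $\omega_D^c$, are the essential ingredients, and I expect no further obstacle since all the substantial algebra has already been carried out in Propositions \ref{adjoint-act-coact}--\ref{pro-adjoint2}.
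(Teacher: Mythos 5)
Your proposal is correct and takes essentially the same route as the paper: the unit and counit axioms come straight from the splitting of $\omega_D^a$, associativity of $\varphi_{\Omega^{a}(D)}$ from (\ref{newphiomega}) followed by (\ref{newphiphi}), and coassociativity of $\rho_{\Omega^{a}(D)}$ from (\ref{newdeltaomega}) combined with $\omega_D^a\co i_D^a=i_D^a$, exactly as in the paper's computation. For part (ii), which the paper leaves to the reader as ``analogous'', you supply the correct dual ingredients, including reading (\ref{newrhoomega}) in its only type-correct form $(D\ot \omega_D^c)\co \varrho_D=\varrho_D$ and extracting the needed absorption $p_D^c\co \mu_D\co (D\ot \omega_D^c)=p_D^c\co \mu_D$ from (\ref{newmuomega}) via $p_D^c\co \omega_D^c=p_D^c$.
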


\begin{proof} We shall prove (i). The proof for the second assertion is analogous.

Firstly note that
\[\varphi_{\Omega^{a}(D)}\circ (\eta_{D}\ot \Omega^{a}(D))=p_{D}^{a}\circ \omega_{D}^{a}\circ i_{D}^a=id_{\Omega^{a}(D)}.\]

Secondly, by (\ref{newphiphi})  and (\ref{newphiomega}) , we have
\begin{itemize}
\item[ ] $\hspace{0.38cm}\varphi_{\Omega^{a}(D)}\circ(D\ot \varphi_{\Omega^{a}(D)}) $
\item[ ] $=p_{D}^{a}\circ\varphi_{D}\circ (D\ot \omega_{D}^{a})\circ (D\otimes \varphi_{D})\circ (D\ot D\ot i_{D}^a) $
\item[]$= p_{D}^{a}\circ\varphi_{D}\circ  (D\otimes \varphi_{D})\circ (D\ot D\ot i_{D}^a)  $
\item[]$=p_{D}^{a}\circ\varphi_{D}\circ  (\mu_{D}\otimes D)\circ (D\ot D\ot i_{D}^a)  $
\item[]$= \varphi_{\Omega^{a}(D)}\circ(\mu_{D}\ot \Omega^{a}(D)).$
\end{itemize}

On the other hand, trivially $(\varepsilon_{D}\ot
\Omega^{a}(D))\circ \rho_{\Omega^{a}(D)}=id_{\Omega^{a}(D)}.$
Finally, by (\ref{newdeltaomega}) we have
\begin{itemize}
\item[ ] $\hspace{0.38cm}(D\ot \rho_{\Omega^{a}(D)})\circ \rho_{\Omega^{a}(D)} $
\item[ ] $=(D\ot ((D\ot p_{D}^{a})\circ \delta_{D}))\circ (D\ot  \omega_{D}^{a})\circ \delta_{D}\circ \omega_{D}^{a}\circ  i_{D}^a$
\item[]$= (D\ot ((D\ot p_{D}^{a})\circ \delta_{D}))\circ \delta_{D}\circ  i_{D}^a  $
\item[]$=(\delta_{D}\ot \Omega^{a}(D))\circ \rho_{\Omega^{a}(D)} .$
\end{itemize}
\end{proof}

\begin{proposition}
\label{mod-comod} Let $D$ be a WBHA  in ${\mathcal C}$. The
following assertions hold:
\begin{itemize}
\item[(i)] The object $(\Omega^{a}(D),\varphi_{\Omega^{a}(D)}, \rho_{\Omega^{a}(D)})$ is in $_D^D\mathcal{YD}$.
\item[(ii)] The object $(\Omega^{c}(D),\psi_{\Omega^{c}(D)}, \varrho_{\Omega^{c}(D)})$ is in $_D^D\mathcal{YD}$.
\end{itemize}

\end{proposition}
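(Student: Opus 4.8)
The plan is to handle both parts by a single mechanism: transport of a Yetter-Drinfeld structure along the idempotent $\omega_D^{x}$ ($x\in\{a,c\}$), which precisely restores the (co)unitality that the adjoint (co)action lacks. By the preceding proposition $\Omega^{a}(D)$ already carries the module and comodule structures $\varphi_{\Omega^{a}(D)},\rho_{\Omega^{a}(D)}$, and $\Omega^{c}(D)$ the structures $\psi_{\Omega^{c}(D)},\varrho_{\Omega^{c}(D)}$, so by Definition \ref{defl-lYD} and Proposition \ref{yd3} it suffices in each case to exhibit a compatible weak operator and to verify the single identity (yd3).

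For $\Omega^{a}(D)$ I would take the weak operator obtained by conjugating $t_{D,D}$ and $t^{\prime}_{D,D}$ through $\omega_D^{a}=i_D^{a}\co p_D^{a}$, namely
\[r_{\Omega^{a}(D)}=(D\ot p_D^{a})\co t_{D,D}\co(i_D^{a}\ot D),\qquad r_{\Omega^{a}(D)}^{\prime}=(p_D^{a}\ot D)\co t^{\prime}_{D,D}\co(D\ot i_D^{a}),\]
\[s_{\Omega^{a}(D)}=(p_D^{a}\ot D)\co t_{D,D}\co(D\ot i_D^{a}),\qquad s_{\Omega^{a}(D)}^{\prime}=(D\ot p_D^{a})\co t^{\prime}_{D,D}\co(i_D^{a}\ot D),\]
and dually for $\Omega^{c}(D)$ with $i_D^{c},p_D^{c}$; this is exactly the recipe already used for the base object $D_L$ and for the projections in Proposition \ref{B_DWO}. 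With this choice the axioms (c1)--(c5) of Definition \ref{WO} reduce to the weak Yang--Baxter relations for $t_{D,D}$, including (\ref{tprim1})--(\ref{tprim4}), provided the inner occurrences of $\omega_D^{a}=i_D^{a}\co p_D^{a}$ can be absorbed. That absorption is supplied by the transparency identities (\ref{newphiomega}) and (\ref{newdeltaomega}) of Proposition \ref{pro-adjoint2} (and, for part (ii), their duals (\ref{newrhoomega}), (\ref{newmuomega})), which let $\omega_D^{a}$ slide past $t_{D,D}$ in the relevant slot.

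Compatibility in the sense of Definition \ref{WOmodulecompatible} then follows by transcription: for $\Omega^{a}(D)$ the eight equations are precisely the module-side identities of Proposition \ref{pro-adjoint111} (the adjoint action) together with (b3-3), (b3-4) and their $t^{\prime}$-versions (\ref{b3}), (\ref{b4}) on the comodule side (the comultiplication), conjugated through $i_D^{a},p_D^{a}$; for $\Omega^{c}(D)$ the roles are exchanged, so one uses (b3-1), (b3-2) for the regular action and the coaction identities of Proposition \ref{pro-adjoint111} for the adjoint coaction. The decisive point for (yd3) is that identity (\ref{newphidelta}) is, verbatim, condition (yd3) for the triple $(D,\varphi_{D},\delta_{D})$ with weak operator $t_{D,D}$, while (\ref{newrhomu}) is (yd3) for $(D,\mu_{D},\varrho_{D})$; transporting (\ref{newphidelta}) along $p_D^{a},i_D^{a}$ and collapsing the $\eta_D$- and $\varepsilon_D$-legs by means of the restored (co)unitality --- equations (\ref{newphiomega}), (\ref{newdeltaomega}) and (\ref{newphiphi}) --- yields (yd3) for $(\Omega^{a}(D),\varphi_{\Omega^{a}(D)},\rho_{\Omega^{a}(D)})$. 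Proposition \ref{yd3} then delivers (yd1) and (yd2), proving (i), and part (ii) is entirely dual, transporting (\ref{newrhomu}) along $p_D^{c},i_D^{c}$ with (\ref{newrhoomega}), (\ref{newmuomega}) and (\ref{newrhorho}).

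I expect the genuine difficulty to lie in verifying that the conjugated quadruple really is a weak operator: the $\nabla$-type conditions (c3) and the mixed Yang--Baxter equations (c2) do not survive a naive restriction, and one must invoke the transparency identities in exactly the right tensor slot so that the inserted idempotent $\omega_D^{a}$ (resp. $\omega_D^{c}$) commutes past $t_{D,D}$ without obstruction. Once this is secured, the remaining verifications --- compatibility and (yd3) --- are routine transcriptions of the already-established identities for $t_{D,D}$ and the adjoint (co)actions.
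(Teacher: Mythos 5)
Your skeleton is exactly the paper's: the same conjugated quadruple $(r_{\Omega^{x}(D)},r^{\prime}_{\Omega^{x}(D)},s_{\Omega^{x}(D)},s^{\prime}_{\Omega^{x}(D)})$, compatibility read off from Proposition \ref{pro-adjoint111} on the adjoint side and from (b3), (\ref{b3}), (\ref{b4}) on the regular side, and (yd3) obtained by transporting (\ref{newphidelta}) (resp.\ (\ref{newrhomu})) through $p_D^{x}$, $i_D^{x}$, with Proposition \ref{yd3} then delivering (yd1) and (yd2). But there is a genuine defect at precisely the step you yourself single out as decisive. The sliding of $\omega_D^{x}=i_D^{x}\co p_D^{x}$ past $t_{D,D}$ is \emph{not} supplied by (\ref{newphiomega}) and (\ref{newdeltaomega}) (nor, for part (ii), by their duals (\ref{newrhoomega}) and (\ref{newmuomega})): those are absorption identities for the adjoint action and for $\delta_D\co\omega_D^{a}$, involving $\varphi_D$ and $\delta_D$ only, with no occurrence of $t_{D,D}$ whatsoever. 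What the argument actually requires --- throughout (c1)--(c4), in the compatibility checks, and in the first step of the (yd3) computation --- are the commutation relations
\[t_{D,D}\co(\omega_D^{x}\ot D)=(D\ot\omega_D^{x})\co t_{D,D},\qquad t_{D,D}\co(D\ot\omega_D^{x})=(\omega_D^{x}\ot D)\co t_{D,D},\]
together with their $t^{\prime}_{D,D}$- and $\nabla_{D,D}$-versions. Without them, for instance, $(D\ot r_{\Omega^{a}(D)})\co(r_{\Omega^{a}(D)}\ot D)$ has an inner factor $\omega_D^{a}$ trapped in the middle tensor slot, and neither the Yang--Baxter equation (a1) nor (\ref{tprim1})--(\ref{tprim4}) can be brought to bear; as cited, your justification is false at the load-bearing point.

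The paper opens its proof by establishing exactly these relations, deducing them from (b3) and the fact ([\cite{IND}, Propositions 2.11 and 2.12]) that $t_{D,D}$ and $t^{\prime}_{D,D}$ commute with $\lambda_D$ and the projections, applied to the expressions $\omega_D^{a}=\mu_D\co(D\ot(\lambda_D\co\Pi_D^{L}))\co\delta_D$ and $\omega_D^{c}=\mu_D\co(D\ot(\Pi_D^{L}\co\lambda_D))\co\delta_D$; so the missing lemma is true and its proof is short, but it must be stated and proved, not conflated with (\ref{newphiomega})--(\ref{newdeltaomega}). Two smaller repairs: for (c3) the pure Yang--Baxter relations do not suffice even after sliding --- one obtains $\nabla_{r_{\Omega^{x}(D)}}=(p_D^{x}\ot D)\co\nabla_{D,D}\co(i_D^{x}\ot D)$ and must match it against the $\varepsilon_D/\delta_D$- and $\eta_D/\mu_D$-characterizations demanded by (c3-1)--(c3-4), which needs (\ref{b3-1})--(\ref{b3-5}); and (c5) rests on the commutation of $\lambda_D$ with $\omega_D^{x}$ ([\cite{IND}, Proposition 2.12]), not on anything of Yang--Baxter type. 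With the commutation lemma inserted and these points adjusted, your proposal coincides with the paper's proof.
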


\begin{proof} First note that by (b3) of Definition \ref{WBH}, the properties of the weak operator $t_{D,D}$
 and [\cite{IND}, Propositions 2.11, 2.12], the following identities hold

\begin{equation}
\label{omega-t} t_{D,D}\circ (\omega_{D}^{x}\ot D)=(D\ot
\omega_{D}^{x})\circ t_{D,D},\;\;\;\;t_{D,D}\circ (D\ot
\omega_{D}^{x})=( \omega_{D}^{x}\ot D)\circ t_{D,D},
\end{equation}
\begin{equation}
\label{omega-tp} t^{\prime}_{D,D}\circ (\omega_{D}^{x}\ot D)=(D\ot
\omega_{D}^{x})\circ t^{\prime}_{D,D},\;\;\;\;t^{\prime}_{D,D}\circ
(D\ot \omega_{D}{}^{x})=( \omega_{D}^{x}\ot D)\circ
t^{\prime}_{D,D},
\end{equation}
\begin{equation}
\label{omega-nabla} \nabla_{D,D}\circ (\omega_{D}^{x}\ot
D)=(\omega_{D}^{x}\ot D)\circ \nabla_{D,D},\;\;\;\;\nabla_{D,D}\circ
(D\ot \omega_{D}^{x})=( D\ot \omega_{D}^{x})\circ \nabla_{D,D},
\end{equation}
for $x\in\{a,c\}$.

Following the notation introduced in \ref{notac} we get that the
quadruple
\[(r_{\Omega^{x}(D)},r_{\Omega^{x}(D)}^\prime,s_{\Omega^{x}(D)},s_{\Omega^{x}(D)}^\prime),\]
with
\begin{itemize}
\item[]$r_{\Omega^{x}(D)}:=(D\ot p_D^x)\co t_{D,D}\co(i_D^x\ot D),\qquad r_{\Omega^{x}(D)}^\prime:=(p_D^x\ot D)\co t^{\prime}_{D,D}\co(D\ot i_D^x),$

\item[ ]$s_{\Omega^{x}(D)}:= (p_D^x\ot D)\co t_{D,D}\co(D\ot i_D^x) ,\qquad s_{\Omega^{x}(D)}^\prime:=(D\ot p_D^x)\co t^{\prime}_{D,D}\co(i_D^x\ot D),$

\end{itemize}
is a $(\Omega^{x}(D),D)$-WO. Indeed, the equalities contained in
(c1) and (c2) of Definition \ref{WO} are a consequence of
(\ref{omega-t}) and (\ref{omega-tp}) as well as the properties of
$t_{D,D}$ and $t^{\prime}_{D,D}$. The proof for the identities of
(c3) follows by (\ref{b3-1})-(\ref{b3-5}) and (\ref{omega-t}),
(\ref{omega-tp}). The eight equalities of (c4) follow from
(\ref{omega-t}), (\ref{omega-tp}) and (b3) of Definition \ref{WBH}.
Finally, (c5) is a consequence of [\cite{IND}, Proposition 2.12].

For $x=a$ we have that the quadruple
$(r_{\Omega^{a}(D)},r_{\Omega^{a}(D)}^\prime,s_{\Omega^{a}(D)},s_{\Omega^{a}(D)}^\prime)$
is compatible with the module-comodule structure induced by the
action $\varphi_{\Omega^{a}(D)}$ and the coaction
$\rho_{\Omega^{a}(D)}$. To prove this assertion, by Definition
\ref{WOmodulecompatible}, for the action $\varphi_{\Omega^{a}(D)}$
we must show the equalities
 \begin{equation}
\label{fin1} r_{\Omega^{a}(D)}\co(\varphi_{\Omega^{a}(D)}\ot
D)=(D\ot\varphi_{\Omega^{a}(D)})\co(t_{D,D}\ot \Omega^{a}(D))\co(
D\ot r_{\Omega^{a}(D)}),
\end{equation}
\begin{equation}
\label{fin2}
r_{\Omega^{a}(D)}^\prime\co(D\ot\varphi_{\Omega^{a}(D)})=(\varphi_{\Omega^{a}(D)}\ot
D)\co(D\ot r_{\Omega^{a}(D)}^\prime)\co(t^{\prime}_{D,D}\ot
\Omega^{a}(D))
\end{equation}
and the analogous equalities taking $t_{D,D}$, $t^{\prime}_{D,D}$,
$s_{\Omega^{a}(D)}$ and $s_{\Omega^{a}(D)}^\prime$ instead of
$t^{\prime}_{D,D}$, $t_{D,D},$ $r_{\Omega^{a}(D)}^\prime$ and
$r_{\Omega^{a}(D)}$ respectively. The proofs for the four equalities
are similar and then we only write one of them, for example
(\ref{fin2}):

\begin{itemize}
\item[ ] $\hspace{0.38cm}r_{\Omega^{a}(D)}^\prime\co(D\ot\varphi_{\Omega^{a}(D)}) $
\item[ ] $=(p_{D}{}^a\ot D)\circ t^{\prime}_{D,D} \circ (D\ot \varphi_D)\circ (D\ot D\ot i_{D}{}^a)$
\item[]$=((p_{D}{}^a\circ \varphi_D)\ot D)\circ (D\ot t^{\prime}_{D,D})\circ (t^{\prime}_{D,D}\ot i_{D}{}^a)$
\item[]$=(\varphi_{\Omega^{a}(D)}\ot D)\co(D\ot r_{\Omega^{a}(D)}^\prime)\co(t^{\prime}_{D,D}\ot \Omega^{a}(D)).$
\end{itemize}
In the last computations, the first and the third equalities follow by
(\ref{omega-tp}) and the idempotent character of $\omega_{D}^{a}$.
The second one is a consequence of (\ref{newphitprimadereita}).

For the coaction $\rho_{\Omega^{a}(D)}=(D\ot p_{D}^{a})\circ
\delta_{D}\circ i_{D}^a$ the proofs for
\begin{equation}
\label{fin3} (D\ot \rho_{\Omega^{a}(D)})\co
r_{\Omega^{a}(D)}=(t_{D,D}\ot \Omega^{a}(D))\co(D\ot
r_{\Omega^{a}(D)})\co(\rho_{\Omega^{a}(D)}\ot D),
\end{equation}
\begin{equation}
\label{fin4} (\rho_{\Omega^{a}(D)}\ot D)\co
r_{\Omega^{a}(D)}^\prime=(D\ot
r_{\Omega^{a}(D)}^\prime)\co(t^{\prime}_{D,D}\ot
\Omega^{a}(D))\co(D\ot\rho_{\Omega^{a}(D)}),
\end{equation}
and the analogous equalities taking $t_{D,D}$, $t^{\prime}_{D,D}$,
$s_{\Omega^{a}(D)}$ and $s_{\Omega^{a}(D)}^\prime$ instead of
$t^{\prime}_{D,D}$, $t_{D,D},$ $r_{\Omega^{a}(D)}^\prime$ and
$r_{\Omega^{a}(D)}$ respectively, are a direct consequence of
(\ref{omega-t}), (\ref{omega-tp}), (b3-3), (b3-4), (\ref{b3}) and
(\ref{b4}).

Finally, the triple $(\Omega^{a}(D),\varphi_{\Omega^{a}(D)},
\rho_{\Omega^{a}(D)})$ is a left-left Yetter-Drinfeld module over
$D$ because it satisfies (yd3). Indeed:
\begin{itemize}
\item[ ] $\hspace{0.38cm}(\mu_D\ot \Omega^{a}(D))\co(D\ot r_{\Omega^{a}(D)})\co(((\mu_D\ot\varphi_{\Omega^{a}(D)})\co(D\ot t_{D,D}\ot
D)\co(\delta_D\ot\rho_{\Omega^{a}(D)}))\ot\lambda_D)$
\item[ ] $\hspace{0.38cm}\co(D\ot s_{\Omega^{a}(D)})\co(\delta_D\ot \Omega^{a}(D)) $
\item[ ] $=(\mu_D\ot p_{D}^{a})\co(D\ot t_{D,D})\co(((\mu_D\ot\varphi_D)\co(D\ot t_{D,D}\ot
D)\co(\delta_D\ot\delta_{D}))\ot\lambda_D)\co(D\ot t_{D,D})$
\item[ ] $\hspace{0.38cm}\co(\delta_D\ot i_{D}^a)$
\item[]$= (D\ot p_{D}^{a})\co\delta_{D}\circ \varphi_{D}\co(D\ot i_{D}^a)  $
\item[]$=\rho_{\Omega^{a}(D)}\circ \varphi_{\Omega^{a}(D)}, $
\end{itemize}
where the first equality follows from (\ref{newphiomega}) and (\ref{omega-t}), the second one by (\ref{newphidelta}) of
and the last one by (\ref{newdeltaomega}).

The proof for the second assertion is similar and we leave the
details to the reader. \end{proof}

\section*{Acknowledgements}
The authors were supported by Ministerio de Educaci\'on (Project:
MTM2010-15634) and  FEDER.

\end{document}